 \tikzset{cross/.style={cross out, draw=black, minimum size=2*(#1-\pgflinewidth), inner sep=0pt, outer sep=0pt},
cross/.default={4pt}}
\tikzstyle std=[line width=0.7pt]   
\tikzstyle stdthin=[line width=0.3pt]   
\tikzstyle stdthick=[line width=1.0pt]   
\tikzstyle fwd=[line width=0.7pt, ->]   
\tikzstyle fwdthin=[line width=0.3pt, ->]   
\tikzstyle fwdthick=[line width=1.0pt, ->]   
\tikzstyle fwddash=[line width=0.7pt, dashed, ->]   
\tikzstyle bwd=[double, line width=0.3pt, ->]  
\tikzstyle refl=[double,  dashed, line width=0.2pt, ->]    
\tikzstyle{every node}=[font=\small] 
\tikzset{
  >=stealth', 
  invisible/.style={opacity=0}, 
  alt/.code args={<#1>#2#3}{\alt<#1>{\pgfkeysalso{#2}}{\pgfkeysalso{#3}}}, 
  visible on/.style={alt=#1{}{invisible}}, 
  smallnode/.style={circle, fill=black, thick, inner sep=1pt, minimum size=1.5pt}, 
  punkt/.style={
           rectangle,
           rounded corners,
           draw=black, very thick,
           text width=5.5em,
           minimum height=2em,
           text centered},
  punkt_big/.style={
           rectangle,
           rounded corners,
           draw=black, very thick,
           text width=7em,
           minimum height=2em,
           text centered},
}
\newtheorem{fact}{Fact}
\newcommand{\reals}{\mathbb{R}}
\newcommand{\complex}{\mathbb{C}}
\newcommand{\graph}{\mathrm{graph}}
\newcommand{\hilbert}{\mathcal{H}}
\newcommand{\cut}[1]{{}}
\newcommand{\CC}{\mathbb{C}}				
\newcommand{\dom}[1]{{\mathrm{dom}(#1)}} 	
\newcommand{\binfty}{{\boldsymbol \infty}}
\renewcommand{\Re}{\operatorname{Re}} 	
\renewcommand{\Im}{\operatorname{Im}}	
\newcommand{\ecomplex}{\overline{\mathbb{C}}}	
\newcommand{\DeclareAutoPairedDelimiter}[3]{%
	\expandafter\DeclarePairedDelimiter\csname Auto\string#1\endcsname{#2}{#3}%
	\begingroup\edef\x{\endgroup
		\noexpand\DeclareRobustCommand{\noexpand#1}{%
			\expandafter\noexpand\csname Auto\string#1\endcsname*}}%
	\x}
\DeclareAutoPairedDelimiter{\p}{(}{)} 					
\DeclareAutoPairedDelimiter{\sp}{[}{]} 					
\DeclareAutoPairedDelimiter{\abs}{|}{|} 					
\DeclareAutoPairedDelimiter{\cp}{\{}{\}} 				
\DeclareAutoPairedDelimiter{\dotp}{\langle}{\rangle} 	
\DeclareAutoPairedDelimiter{\n}{\Vert}{\Vert} 			
\DeclareAutoPairedDelimiter{\cl}{\lceil}{\rceil}
\newcommand{\cA}{{\mathcal{A}}}
\newcommand{\cB}{{\mathcal{B}}}
\newcommand{\cC}{{\mathcal{C}}}
\newcommand{\cF}{{\mathcal{F}}}
\newcommand{\cG}{{\mathcal{G}}}
\newcommand{\cH}{{\mathcal{H}}}
\newcommand{\cI}{{\mathcal{I}}}
\newcommand{\cL}{{\mathcal{L}}}
\newcommand{\cM}{{\mathcal{M}}}
\newcommand{\cN}{{\mathcal{N}}}
\DeclareFontFamily{U}{tipa}{}
\DeclareFontShape{U}{tipa}{m}{n}{<->tipa10}{}
\newcommand{\arc@char}{{\usefont{U}{tipa}{m}{n}\symbol{62}}}%
\newcommand{\arc}[1]{\mathpalette\arc@arc{#1}}
\newcommand{\arc@arc}[2]{%
  \sbox0{$\m@th#1#2$}%
  \vbox{
    \hbox{\resizebox{\wd0}{\height}{\arc@char}}
    \nointerlineskip
    \box0
  }%
}
\newcommand{\rarc}{\mathrm{Arc}^+}
\newcommand{\larc}{\mathrm{Arc}^-}
\definecolor{lightgrey}{gray}{0.8}
\definecolor{medgrey}{gray}{0.6}
\definecolor{darkgrey}{gray}{0.4}
\begin{document}

\title{Scaled Relative Graphs:
\thanks{This work was partially supported by AFOSR MURI FA9550-18-1-0502, NSF Grant DMS-1720237, ONR Grant N000141712162, the New Faculty Startup Fund from Seoul National University, the National Research Foundation of Korea (NRF) Grant funded by the Korean Government (MSIP) [No. 2020R1F1A1A01072877], and the National Research Foundation of Korea (NRF) Grant funded by the Korean Government (MSIP) [No. 2017R1A5A1015626].}
}
\subtitle{Nonexpansive operators via 2D Euclidean Geometry}

\titlerunning{Scaled Relative Graphs}        

\author{Ernest K. Ryu         \and Robert Hannah \and Wotao Yin
}

\authorrunning{Ryu, Hannah, and Yin} 

\institute{E. K. Ryu \at
              Seoul National University, department of mathematical sciences \\
              \email{ernestryu@snu.ac.kr}           
           \and
           R. Hannah \at
              UCLA, department of mathematics\\
              \email{RobertHannah89@math.ucla.edu}
              \and
        W. Yin \at
        UCLA, department of mathematics\\
        \email{wotaoyin@math.ucla.edu}
}

\date{Received: date / Accepted: date}

\maketitle

\begin{abstract}
Many iterative methods in applied mathematics can be thought of as fixed-point iterations, and such algorithms are usually analyzed analytically, with inequalities. In this paper, we present a geometric approach to analyzing contractive and nonexpansive fixed point iterations with a new tool called the scaled relative graph (SRG). The SRG provides a correspondence between nonlinear operators and subsets of the 2D plane. Under this framework, a geometric argument in the 2D plane becomes a rigorous proof of convergence.
\keywords{Fixed-point iteration \and Euclidean geometry \and Inversive geometry \and Contraction mapping \and Douglas--Rachford splitting \and Metric subregularity \and Monotone operator \and Douglas--Rachford splitting}
\end{abstract}

\section{Introduction}
Fixed-point iterations abound in applied mathematics and engineering.
This classical technique, dating back to \cite{newton1669,picard1890,lindelof1894}, involves the following two steps.
First, find an operator $T:\mathcal{X}\rightarrow\mathcal{X}$, where $\mathcal{X}$ is some space, such that if $x^\star=T(x^\star)$, i.e., if $x^\star$ is a fixed point, then $x^\star$ is a solution to the problem at hand.
Second, perform the \emph{fixed-point iteration} $x^{k+1}=T(x^k)$.
Convergence of such iterative methods is usually proved analytically, through a series of inequalities.


In this paper, we present a \emph{geometric} approach to analyzing contractive and nonexpansive fixed-point iterations with a new tool called the scaled relative graph (SRG).
We can think of the SRG as a signature of an operator analogous to how eigenvalues are a signature of a matrix.
The SRG provides a correspondence between algebraic operations on nonlinear operators and geometric operations on subsets of the 2D plane.
Using this machinery and elementary Euclidean geometry, we can establish properties of operators (such as contractiveness) and establish the convergence of fixed-point iterations through showing the SRG, a set in the 2D plane, resides within certain circles.
These geometric arguments form rigorous proofs, not just illustrations.

One advantage of geometric proofs is that a single or a few geometric diagrams concisely capture and communicate the core insight.
In contrast, it is much more difficult to extract a core insight from a classical analytic proof based on inequalities.
Another advantage is that tightness, loosely defined as being unable to improve a stated result without additional assumptions, is often immediate.
In contrast, discerning whether it is possible to make improvements when examining a proof based on inequalities is usually more difficult; providing a matching lower bound is often the only way to establish tightness of such results.


\subsection{Proving convergence with operator properties}
Given $T\colon\hilbert\rightarrow\hilbert$, where $\hilbert$ is a real Hilbert space with norm $\|\cdot\|$, consider the fixed-point iteration given by 
\[
x^{k+1}=T(x^k)
\]
for $k=0,1,\dots$ where $x^0\in\hilbert$ is a starting point.
We say $x^\star$ is a \emph{fixed point} of $T$ if $x^\star=T(x^\star)$.
We say $T\colon\hilbert\rightarrow\hilbert$ is \emph{nonexpansive} if
\[
\|Tx)-T(y)\|\le \|x-y\|,\qquad\forall x,y\in \hilbert.
\]
In this case, $\|x^{k}-x^\star\|$ is a nonincreasing sequence, but $x^k$ need not converge.
For instance, if $T=-I$, then $x^k$ oscillates between $x^0$ and $-x^0$.
We say $T\colon\hilbert\rightarrow\hilbert$ is \emph{contractive} if
\[
\|T(x)-T(y)\|\le L\|x-y\|,\qquad\forall x,y\in \hilbert
\]
for some $L<1$.
In this case, $x^k\rightarrow x^\star$ strongly with rate $\|x^k-x^\star\|\le L^k\|x^0-x^\star\|$.
This classical argument is the Banach contraction principle \cite{banach1922}.
We say $T\colon\hilbert\rightarrow\hilbert$ is \emph{averaged} if $T=(1-\theta)I+\theta R$ for some nonexpansive operator $R$ and $\theta\in(0,1)$, where $I$ is the identity operator.
In this case,
$x^k\rightarrow x^\star$ weakly for a fixed point $x^\star$ provided that $T$ has a fixed point.
This result is the Krasnosel'ski\u{\i}--Mann theorem \cite{mann1953,krasnoselskii1955}.
The assumption of averagedness is stronger than nonexpansiveness and weaker than contractiveness, as illustrated in Figure~\ref{fig:cont-avg-nonexp}.

We now have a general rubric for proving convergence of a fixed-point iteration:
\begin{enumerate}
    \item Prove the operator $T$ is contractive or averaged.
    \item Apply the convergence argument of Banach or Krasnosel'ski\u{\i}--Mann.
\end{enumerate}
Many, although not all, fixed-point iterations are analyzed through this rubric.
Step 2 is routine.
This work presents a geometric approach to step 1, the more difficult step.


\begin{figure}
    \centering
\begin{tabular}{ccccc}
\raisebox{-.5\height}{
\begin{tikzpicture}[scale=1.2]
\fill[fill=medgrey] (0,0) circle (0.75);
\draw[dashed] (0,0) circle (1);
\draw [<->] (-1.2,0) -- (1.2,0);
\draw [<->] (0,-1.2) -- (0,1.2);
\draw (1,0) node [above right] {$1$};
\filldraw (1,0) circle ({0.6*1.5/1.2pt});
\draw (0.75,0) node [above left] {$L$};
\filldraw (0.75,0) circle ({0.6*1.5/1.2pt});
\draw (0,-1.4) node  {Contractive};
\end{tikzpicture}}
&
$\subset$
&
\raisebox{-.5\height}{
\begin{tikzpicture}[scale=1.2]
\fill[fill=medgrey] (0.125,0) circle (0.875);
\draw[dashed] (0,0) circle (1);
\draw [<->] (-1.2,0) -- (1.2,0);
\draw [<->] (0,-1.2) -- (0,1.2);
\filldraw (.125,0) circle ({0.6*1.5/1.5pt});
\draw (0.575,0.1) node [above] {$\theta$};
\filldraw (1,0) circle ({0.6*1.5/1.2pt});
\draw [decorate,decoration={brace,amplitude=4.5pt}] (0.125,0) -- (1,0) ;
\draw (0,-1.4) node  {Averaged};
\end{tikzpicture}}
&
$\subset$&
\raisebox{-.5\height}{
\begin{tikzpicture}[scale=1.2]
\fill[fill=medgrey] (0,0) circle (1);
\draw[dashed] (0,0) circle (1);
\draw [<->] (-1.2,0) -- (1.2,0);
\draw [<->] (0,-1.2) -- (0,1.2);
\draw (1,0) node [above right] {$1$};
\filldraw (1,0) circle ({0.6*1.5/1.2pt});
\draw (0,-1.4) node  {Nonexpansive};
\end{tikzpicture}}
\end{tabular}
    \caption{
The classes of contractive, averaged, and nonexpansive operators represented with the scaled relative graph (SRG).
The notion of the SRG and the precise meaning of these figures will be defined soon in Section~\ref{s:srg}.
}
    \label{fig:cont-avg-nonexp}
\end{figure}

\subsection{Prior work and contribution}
Using circles or disks centered at the origin to illustrate contractive mappings is natural and likely common.
Eckstein and Bertsekas's illustration of firm-nonexpansiveness via the disk with radius $1/2$ centered at $(1/2,0)$ \cite{eckstein1989,eckstein1992} was, to the best of our knowledge, the first geometric illustration of notions from fixed-point theory other than nonexpansiveness and Lipschitz continuity.
Since then, Giselsson and Boyd used similar illustrations in earlier versions of the paper \cite{giselsson2017linear}
(the arXiv versions 1 through 3 have the geometric diagrams, but later versions do not)
and more thoroughly in the lecture slides \cite{giselsson_slides}.
Banjac and Goulart also utilize similar illustrations \cite{banjac2018}.

Through personal communication, we are aware that many have privately used geometric illustrations similar to those presented in this paper to initially build intuition, although the actual mathematics and proofs were eventually presented analytically, with inequalities.
To the best of our knowledge, the use of geometry for \emph{rigorous} proofs of results of nonlinear operators is new.

The notion of the SRG was first defined and presented in the authors' unpublished manuscript \cite{hannah2016}. The work shows how transformations of the operator such as inversion, addition of identity, unitary change in coordinates, and composition map to changes in the SRG
and used these transformations to geometrically rigorously prove many standard results.
It furthermore discusses the Baillon--Haddad Theorem and convergence rates for various operator methods.

Throughout this paper, we state known results as ``Facts''. 
Our contributions are the alternative geometric proofs, the novel results stated as ``Propositions'' and ``Theorems'', and the overall geometric approach based on the SRG.

%


\section{Preliminaries}
\label{s:prelim}
We refer readers to standard references for more information on 
convex analysis \cite{hiriarturruty1993,boyd2004,beck_book2017},
nonexpansive and monotone operators \cite{BCBook,ryu2016},
and geometry \cite{wentworth1913,morley1933,pedoe1970}.
Write $\hilbert$ for a real Hilbert space equipped with the inner product $\langle\cdot,\cdot\rangle$ and norm $\|\cdot\|$.
We use Minkowski-type set notation that generalizes operations on individual elements to sets.
For example, given $\alpha\in \reals$ and sets $U,V\subseteq\hilbert$, write
\[
\alpha U=\{\alpha u\,|\,u\in U\},\quad
U+V=\{u+v\,|\,u\in U,\,v\in V\},\quad U-V=U+(-V).
\]
Notice that if either $U$ or $V$ is $\emptyset$, then $U+V=\emptyset$.
In particular, $U+V$ is the Minkowski sum.
We use similar notation for sets of operators and complex numbers.
The meanings should be clear from context, but for the sake of precision, we provide the full definitions in the appendix.

\textbf{Multi-valued operators.}
For convex analytical and operator theoretic notions, we follow standard notation \cite{BCBook}.
In particular, we consider multi-valued operators, which map a point to a set.
The graph of an operator is defined as
\[
\graph(A)=\{(x,u)\,|\, u \in Ax\}.
\]
For convenience, we do not distinguish an operator from its graph, writing $(x,u)\in A$ to mean $u\in Ax$.
Define the inverse operator as
\[
A^{-1}=\{(u,x)\,|\,(x,u)\in A\},
\]
which always exists.
Define the resolvent of $A$ is $J_A=(I+A)^{-1}$.


We say $\cA$ is a \emph{class of operators} if  $\cA$ is a set of operators on Hilbert spaces.
Note that $A_1,A_2\in \cA$ need not be defined on the same Hilbert spaces, i.e., $A_1\colon\hilbert_1\rightrightarrows\hilbert_1$,
$A_2\colon\hilbert_2\rightrightarrows\hilbert_2$,
and $\hilbert_1\ne\hilbert_2$ is possible.

Given classes of operators $\cA$ and $\cB$, write
\begin{align*}
\cA+\cB&=\{A+B\,|\,A\in \cA,\,B\in \cB,\,A\colon\hilbert\rightrightarrows\hilbert,\,B\colon\hilbert\rightrightarrows\hilbert\}.
\end{align*}
To clarify, these definitions require that $A$ and $B$ or $A$ and $I$ are operators on the same (but arbitrary) Hilbert space $\hilbert$, as otherwise the operations would not make sense.
We define $\cA\cB$, $I+\alpha \cA$, and $J_{\alpha \cA}$ similarly.
For $L\in(0,\infty)$, define the class of $L$-Lipschitz operators as
\begin{align*}
\cL_{L}&=\big\{A\colon\dom{A}\rightarrow\hilbert\,|\,\| Ax-Ay\|^2\le L^2\|x-y\|^2,\,\forall\, x,y\in\dom{A}\subseteq\cH\big\}.
\end{align*}
For $\beta\in(0,\infty)$, define the class of $\beta$-cocoercive operators as
\begin{align*}
\cC_{\beta }&=\big\{A\colon\dom{A}\rightarrow\hilbert\,|\,\langle Ax-Ay,x-y\rangle\ge \beta\|Ax-Ay\|^2,\,\forall\, x,y\in\dom{A}\subseteq\cH\big\}.
\end{align*}
Define the class of monotone operators as
\begin{align*}
\cM&=\big\{A\colon\hilbert\rightrightarrows\hilbert\,|\,\langle Ax-Ay,x-y\rangle\ge 0,\,\forall\, x,y\in\hilbert\big\}.
\end{align*}
To clarify, $\langle Ax-Ay,x-y\rangle\ge 0$ means $\langle u-v,x-y\rangle\ge 0$ for all $(x,u),(y,v)\in A$. 
If $x\notin \dom{A}$, then the inequality is vacuous.
A monotone operator $A$ is \emph{maximal} if there is no other monotone operator $B$ such that $\graph(B)$ properly contains $\graph(A)$.
For $\mu\in(0,\infty)$, define the class of $\mu$-strongly monotone operators as
\begin{align*}
\cM_{\mu}&=\big\{A\colon\hilbert\rightrightarrows\hilbert\,|\,\langle Ax-Ay,x-y\rangle\ge \mu\|x-y\|^2,\,\forall\, x,y\in\hilbert\}.
\end{align*}
 For $\theta\in(0,1)$, define the class of $\theta$-averaged operators $\cN_\theta$ as
\begin{align*}
\cN_\theta=(1-\theta)I+\theta\cL_1.
\end{align*}
In these definitions, we do not impose any requirements on the domain or maximality of the operators.

Following the notation of \cite{Nesterov2013_introductory}, respectively write $\cF_{\mu,L}$, $\cF_{0,L}$, $\cF_{\mu,\infty}$, and $\cF_{0,\infty}$ for the sets of lower semi-continuous proper functions on all Hilbert spaces that are respectively $\mu$-strongly convex and $L$-smooth, convex and $L$-smooth, $\mu$-strongly convex, and convex, for $0 < \mu  < L < \infty$.
Write
\begin{align*}
\partial \cF_{\mu,L} = \{\partial f \,|\, f \in  \cF_{\mu,L}\},
\end{align*}
where $0 \leq  \mu  < L \leq  \infty$.

\textbf{Inversive geometry.}
We use the \emph{extended complex plane}
\index{extended complex plane}
$\ecomplex=\CC\cup\{\infty\}$ to represent the 2D plane and the point at infinity.
We call $z\mapsto\bar{z}^{-1}$, a one-to-one map from $\ecomplex$ to $\ecomplex$, the \emph{inversion} map. 
In polar form, it is $re^{i\varphi}\mapsto (1/r)e^{i\varphi}$ for $0\le r\le \infty$, i.e., inversion preserves the angle and inverts the magnitude.
In complex analysis, the inversion map is known as the M\"obius transformation \cite[p.\ 366]{ablowitz2003complex}.
 In classical Euclidean geometry, \emph{inversive geometry} considers generally the inversion of the 2D plane about any circle \cite[p.\ 75]{pedoe1970}.
Our inversion map $z\mapsto\bar{z}^{-1}$ is the inversion about the unit circle.

 \emph{Generalized circles} consist of (finite) circles and lines with $\{\infty\}$, and the interpretation is that a line is a circle with infinite radius.
Inversion maps generalized circles to generalized circles.
Using a compass and straightedge, the inversion of a generalized circle can be constructed fully geometrically.
In this paper, we use the following semi-geometric construction:
\begin{enumerate}
    \item Draw a line $L$ through the origin orthogonally intersecting the generalized circle.
\item Let $-\infty<x< y\le \infty$ represent the signed distance of the intersecting points from the origin along this line.
If the generalized circle is a line, then $y=\infty$.
\item Draw a generalized circle orthogonally intersecting $L$ at $(1/x)$ and $(1/y)$.
\item When inverting a region with a generalized circle as the boundary, pick a point on $L$ within the interior of the region to determine on which side of the boundary the inverted interior lies.
\end{enumerate}
Figures~\ref{fig:inversion_example} and \ref{fig:inversion_example2} illustrate these steps.

\begin{figure}
\begin{center}
\begin{tabular}{cccc}
\raisebox{-.5\height}{
\begin{tikzpicture}[scale=1.0]
\def\r{0.85}
\fill[fill=medgrey] (0,0) circle (\r);
\draw [<->] (-1.5,0) -- (1.5,0);
\draw (-1,0) node [above left] {$L$};
\draw [<->] (0,-1.5) -- (0,1.5);
\draw (\r,0) node [below left] {$y$};
\filldraw (\r,0) circle  ({0.6*1.5/1.0pt});
\draw (-\r,0) node [below right] {$x$};
\filldraw (-\r,0) circle  ({0.6*1.5/1.0pt});
\filldraw (1,0) circle  ({0.6*1.5/1.0pt});
\draw (1.1,0) node [above] {$1$};
\draw (.95,1.25) node {\phantom{$\cup \{\binfty\}$}};

\begin{scope}
\clip (0,0) circle (\r);
\coordinate (A) at (-0.4,0) {};
\coordinate (B) at ({-\r-0.01},0) {};
\coordinate (C) at ({-\r-0.01},1) {};
\tkzMarkRightAngle[size=.1](A,B,C);
\end{scope}

\begin{scope}
\clip (0,0) circle (\r);
\coordinate (A) at (0.4,0) {};
\coordinate (B) at ({\r+0.01},0) {};
\coordinate (C) at ({\r+0.01},1) {};
\tkzMarkRightAngle[size=.1](A,B,C);
\end{scope}

\end{tikzpicture}}
&
\raisebox{-.5\height}{
\begin{tikzpicture}[scale=1.0]

\def\r{0.85}
\fill[fill=medgrey] (-1.5,-1.5) rectangle (1.5,1.5);
\fill[fill=white] (0,0) circle (1/\r);
\draw [<->] (-1.5,0) -- (1.5,0);
\draw (-1.1,0) node [above left] {$L$};
\draw [<->] (0,-1.5) -- (0,1.5);
\filldraw (1,0) circle  ({0.6*1.5/1.0pt});
\draw (1,0) node [above left] {$1$};
\begin{scope}
\clip (0,0) circle (1/\r);
\draw ({1/\r-0.0},-0.05) node [below left, fill=white] {$y^{-1}$};
\draw ({-1/\r-0.0},-0.05) node [below right, fill=white] {$x^{-1}$};
\end{scope}
\filldraw ({1/\r},0) circle ({0.6*1.5/1.0pt});
\filldraw ({-1/\r},0) circle ({0.6*1.5/1.0pt});
\draw (1.05,1.25) node {$\cup \{\binfty\}$};

\begin{scope}
\clip (0,0) circle ({(1/\r)});
\coordinate (A) at (-0.4,0) {};
\coordinate (B) at ({-(1/\r)-0.01},0) {};
\coordinate (C) at ({-(1/\r)-0.01},1) {};
\tkzMarkRightAngle[size=.1](A,B,C);
\end{scope}

\begin{scope}
\clip (0,0) circle ({(1/\r)});
\coordinate (A) at (0.4,0) {};
\coordinate (B) at ({(1/\r)+0.01},0) {};
\coordinate (C) at ({(1/\r)+0.01},1) {};
\tkzMarkRightAngle[size=.1](A,B,C);
\end{scope}
\end{tikzpicture}}
\end{tabular}
\end{center}
\caption{Illustration of inverting a disk.
In step 1, we choose $L$ to be the $x$-axis (although any line through the origin works).
In steps 2 and 3, we identify $x$ and $y$, invert them to $x^{-1}$ and $y^{-1}$, and draw the generalized circle in the inverted plane to be the new boundary.
In step 4, we determine that the interior of the disk is mapped to the exterior by noting that $1$, a point invariant under the inversion map, is excluded in the original region and therefore is excluded in the inverted region.
}
\label{fig:inversion_example}
\end{figure}

\begin{figure}
\begin{center}
\begin{tabular}{ccc}
\raisebox{-.5\height}{
\begin{tikzpicture}[scale=1.0]
\def\a{0.5}
\def\b{1.4}
\fill[fill=medgrey] ({(\a+\b)/2},0) circle ({(\b-\a)/2});
\draw [<->] (-.5,0) -- (2.5,0);
\draw [<->] (0,-1.2) -- (0,1.2);
\filldraw (\a,0) circle  ({0.6*1.5/1.0pt});
\draw (\a,0) node [below left] {$x$};
\filldraw (\b,0) circle  ({0.6*1.5/1.0pt});
\draw (\b,0) node [below right] {$y$};
\filldraw (1,0) circle  ({0.6*1.5/1.0pt});
\draw (1,0) node [above] {$1$};
\draw ({1/\a+.23},0) node [below] {\phantom{$x^{-1}$}};
\end{tikzpicture}}
&
\raisebox{-.5\height}{
\begin{tikzpicture}[scale=1.0]
\def\m{1.2}
\fill[fill=medgrey] (\m,-1.2) rectangle (2.5,1.2);
\draw [<->] (-.5,0) -- (2.5,0);
\draw [<->] (0,-1.2) -- (0,1.2);
\filldraw ({(\m)},0) circle  ({0.6*1.5/1.0pt});
\draw ({(\m)+0.1},0) node [below left] {$x$};
\filldraw (1,0) circle  ({0.6*1.5/1.0pt});
\draw (1,0) node [above] {$1$};
\draw (2.0,0.95) node {$\cup \{\binfty\}$};
\draw (1.45,0) node [below right] {$y=\infty$};
\draw (0.1,0) node [below left] {\phantom{$y^{-1}$}};
\end{tikzpicture}}
&
\raisebox{-.5\height}{
\begin{tikzpicture}[scale=1.0]
\fill[fill=medgrey] (0,-1.2) rectangle (2,1.2);
\draw [<->] (-0.7,0) -- (2,0);
\draw [<->] (0,-1.2) -- (0,1.2);
\filldraw (1,0) circle  ({0.6*1.5/1.0pt});
\draw (1,0) node [above] {$1$};
\filldraw (0,0) circle  ({0.6*1.5/1.0pt});
\draw (0,0) node [below left] {$x$};
\draw (1.55,.95) node {$\cup \{\binfty\}$};
\draw (2,0) node [below left] {$y=\infty$};
\end{tikzpicture}}
\\
\raisebox{-.5\height}{
\begin{tikzpicture}[scale=1.0]
\def\a{0.5}
\def\b{1.4}
\fill[fill=medgrey] ({(1/\a+1/\b)/2},0) circle ({(1/\a-1/\b)/2});
\draw [<->] (-.5,0) -- (2.5,0);
\draw [<->] (0,-1.2) -- (0,1.2);
\filldraw ({1/\a},0) circle  ({0.6*1.5/1.0pt});
\draw ({1/\a+.23},0) node [below] {$x^{-1}$};
\filldraw ({1/\b},0) circle  ({0.6*1.5/1.0pt});
\draw ({1/\b+.15},0) node [below left] {$y^{-1}$};
\filldraw (1,0) circle  ({0.6*1.5/1.0pt});
\draw (1,0) node [above] {$1$};
\end{tikzpicture}}
&
\raisebox{-.5\height}{
\begin{tikzpicture}[scale=1.0]
\def\m{1.2}
\fill[fill=medgrey] ({(1/\m)/2},0) circle ({(1/\m)/2});
\draw [<->] (-.5,0) -- (2.5,0);
\draw [<->] (0,-1.2) -- (0,1.2);
\filldraw (0,0) circle  ({0.6*1.5/1.0pt});
\draw (0.1,0) node [below left] {$y^{-1}$};
\filldraw ({(1/\m)},0) circle  ({0.6*1.5/1.0pt});
\draw ({(1/\m)+0.2},0) node [below ] {$x^{-1}$};
\filldraw (1,0) circle  ({0.6*1.5/1.0pt});
\draw (1,0) node [above] {$1$};
\draw (2.0,0.95) node {\phantom{$\cup \{\binfty\}$}};
\end{tikzpicture}}
&
\raisebox{-.5\height}{
\begin{tikzpicture}[scale=1.0]
\fill[fill=medgrey] (0,-1.2) rectangle (2,1.2);
\draw [<->] (-0.7,0) -- (2,0);
\draw [<->] (0,-1.2) -- (0,1.2);
\filldraw (1,0) circle  ({0.6*1.5/1.0pt});
\filldraw (0,0) circle  ({0.6*1.5/1.0pt});
\draw (0,0) node [below left] {$y^{-1}$};
\draw (1,0) node [above] {$1$};
\draw (1.55,.95) node {$\cup \{\binfty\}$};
\draw (2,0) node [below left] {$x^{-1}=\infty$};
\end{tikzpicture}}
\end{tabular}
\end{center}
\caption{The three vertical pairs illustrate inversion.
In steps 1, we choose $L$ to be the $x$-axis.
In steps 4 we determine the interior by examining point $1$:
if $1$ is included in the original region, it is included in the inverted region, and vice-versa.
}
\label{fig:inversion_example2}
\end{figure}

\section{Scaled relative graphs}
\label{s:srg}
In this section, we define the notion of \emph{scaled relative graphs} (SRG). Loosely speaking, SRG maps the action of an operator to a set on the extended complex plane.

We use the \emph{extended complex plane} $\ecomplex=\CC\cup\cp{\infty}$ to represent the 2D plane and the point at infinity.
Since complex numbers compactly represent rotations and scaling, this choice simplifies our notation compared to using $\reals^2\cup\cp{\infty}$.
We avoid the operations $\infty+\infty$, $0/0$, $\infty/\infty$, and $0\cdot\infty$.
Otherwise, we adopt the convention of $z+\infty=\infty$, $z/\infty=0$, $z/0=\infty$, and $z\cdot\infty=\infty$.


\begin{figure}
\centering
\begin{subfigure}[b]{0.3\textwidth}
\raisebox{-.5\height}{
\begin{tikzpicture}[scale=1.5]
\draw(-0.7,0) node {$\cG(P_L)=$};
\draw [line width=1.0pt] (0.5,0) circle (0.5);
\draw [<->] (-.2,0) -- (1.2,0);
\draw [<->] (0,-.7) -- (0,.7);
\draw (1,0)  node [above right] {1};
\filldraw (1,0) circle ({0.6*1.5/1.5pt});
\end{tikzpicture}
}
\end{subfigure}
\begin{subfigure}[b]{0.28\textwidth}
\raisebox{-.5\height}{
\begin{tikzpicture}[scale=1.5]
\draw(-0.85,0.3) node {$\cG(A)=$};
\draw [<->] (-.8,0) -- (.8,0);
\draw [<->] (0,-1.2) -- (0,1.2);
\filldraw (0,1) circle ({0.6*1.5/1.5pt});
\draw (0,1) node [above left] {$i$};
\draw [line width=1.0pt] (0,1/2) circle (1/2);
\draw [line width=1.0pt] (0,-1/2) circle (1/2);
\end{tikzpicture}
}
\end{subfigure}
\begin{subfigure}[b]{0.38\textwidth}
\raisebox{-.5\height}{
\begin{tikzpicture}[scale=1.5]
\draw(-0.6,0.4) node {$\cG(\partial \|\cdot\|)=$};
\draw(0.2,-1.1) node {$\{z \,|\,\Re z>0\}\cup\{0,\infty\}$};
\def\m{.9}
\fill [fill=medgrey] (0,{-\m})--(0,{\m})--({\m},{\m})--({\m},{-\m})--cycle;
\draw [<->] (-0.2,0) -- ({\m},0);
\draw [dashed] (0,{-\m}) -- (0,{\m});
\draw ({\m-0.4},{\m-0.2}) node {$\cup \{\binfty\}$};
\filldraw (0,0) circle ({0.6*1.5/1.5pt});
\end{tikzpicture}
}
\end{subfigure}

\begin{subfigure}[b]{0.45\textwidth}
\raisebox{-.5\height}{
\begin{tikzpicture}[scale=1.5]
\draw(.6,0.4) node {$\cG(B)=$};
\fill [fill=medgrey] (2,0) circle (1);
\fill [fill=white] (1.5,0) circle (.5);
\fill [fill=white] (2.5,0) circle (.5);

\filldraw (1,0) circle ({0.6*1.5/1.5pt});
\draw (1,0)   node [above left] {$1$};
\filldraw (2,0) circle ({0.6*1.5/1.5pt});
\draw (2,0)   node [above left] {$2$};
\filldraw (3,0) circle ({0.6*1.5/1.5pt});
\draw (3,0)   node [above right] {$3$};
\draw [<->] (-0.2,0) -- (3.2,0);
\draw [<->] (0,-1.) -- (0,1.);
\end{tikzpicture}}
\end{subfigure}
\caption{
SRGs of the operators:
$P_L\colon\reals^2 \to \reals^2$ is the projection onto an arbitrary line $L$;
$A\colon\mathbb{R}^2 \to \mathbb{R}^2$ is defined as $A(u,v) = (0,u)$;
 $\partial \|\cdot\|$ is the subdifferential of the Euclidean norm on $\reals^n$ with $n\ge 2$;
 $B\colon\mathbb{R}^3\to\mathbb{R}^3$ is defined as $B(u,v,w) = (u,2v,3w)$.
 The shapes were obtained by plugging the operators into the definition of the SRG and performing direct calculations.
 See \cite{huang2019scaled,huang2019matrix} for follow-up work on drawing the SRG of individual operators.
}
\label{fig:srg-exmaples}
\end{figure}
\subsection{SRG of operators}
\label{ss:srg-operator}
Consider an operator $A\colon\hilbert\rightrightarrows\hilbert$.
Let $x,y\in\hilbert$ be a pair of inputs and let $u,v\in \hilbert$ be their corresponding outputs, i.e., $u\in Ax$, and $v\in Ay$.
The goal is to understand the change in output relative to the change in input.

First, consider the case $x\ne y$.
Consider the complex conjugate pair
\[
z=
\frac{\|u-v\|}{\|x-y\|}
\exp\left[\pm i \angle (u-v,x-y)\right],
\]
where given any $a,b\in\hilbert$
\begin{align*}
    \angle(a,b) =
    \left\{
    \begin{array}{ll}
    \arccos\p{\tfrac{\dotp{a,b}}{\n{a}\n{b}}}&\text{ if }a\ne 0,\,b\ne 0\\
    0&\text{ otherwise}
    \end{array}
    \right.
\end{align*}
denotes the angle between them.
The absolute value (magnitude) $|z|=\tfrac{\|u-v\|}{\|x-y\|}$ represents
the size of the change in outputs relative to the size of the change in inputs.
The argument (angle) $\angle (u-v,x-y)$ represents how much the change in outputs is aligned with the change in inputs.
Equivalently, $\Re z$ and $\Im z$ respectively represent the
components of $u-v$ aligned with and perpendicular to $x-y$, i.e.,
\begin{align}
\Re z
&=
\textrm{sgn}(\langle u-v,x-y\rangle)\frac{\|P_{\mathrm{span}\{x-y\}}(u-v)\|}{\|x-y\|}
=\frac{\langle u-v,x-y\rangle}{\|x-y\|^2}\nonumber\\
\qquad
\Im z
&=\pm
\frac{\|P_{\{x-y\}^\perp}(u-v)\|}{\|x-y\|}
\label{eq:srg-alternative2}
\end{align}
where $P_{\mathrm{span}\{x-y\}}$ is the projection onto the span of $x-y$
and $P_{\{x-y\}^\perp}$ is the projection onto the subspace orthogonal to $x-y$.

Define the SRG of an operator $A\colon\hilbert\rightrightarrows\hilbert$ as
\begin{align*}
\mathcal{G}(A)&=
\left\{
\frac{\|u-v\|}{\|x-y\|}
\exp\left[\pm i \angle (u-v,x-y)\right]
\,\Big|\,
u\in Ax,\,v\in Ay,\, x\ne y \right\}\\
&\qquad\qquad\qquad\qquad\qquad\qquad\qquad\qquad\qquad
\bigg(\cup \{\infty\}\text{ if $A$ is multi-valued}\bigg).
\end{align*}
We clarify several points:
(i) $\cG(A)\subseteq \ecomplex$.
(ii) $\infty\in \cG(A)$ if and only if there is a point $x\in\hilbert$ such that $Ax$ is multi-valued.
(In this case, there exists $(x,y),(u,v)\in A$ such that $x=y$ and $u\ne v$, and the idea is that $|z|=\|u-v\|/0=\infty$, i.e.,  $u-v$ is infinitely larger than $x-y=0$.)
(iii) the $\pm$ makes $\cG(A)$ symmetric about the real axis.
(We include the $\pm$ because $\angle (u-v,x-y)$ always returns a nonnegative angle.)
See Figure~\ref{fig:srg-exmaples} for examples.


\begin{figure}
\begin{center}
\begin{tikzpicture}[scale=1.05]
\def\a{1/2};
\def\b{2};
\def\c{1/2};
\fill [fill=medgrey] ({\a},{(\b+\c)/2}) circle ({(\b-\c)/2});

\def\x{0.896284298022351};
\def\y{0.613243566863332};
\def\u{0.852941212550304};
\def\v{1.911764686639837};

\fill[fill = medgrey] plot[line width=1.0pt, domain=0:{acos((-4 + \y^2 + 4 *\x - (\x)^2)/(4 + \y^2 - 4 *(\x) + (\x)^2))},variable=\p] ({( 2-(-4 + (\y)^2 + (\x)^2)/(2 *(-2 + (\x))) )*cos(\p)+(-4 + (\y)^2 + (\x)^2)/(2 *(-2 + (\x)))},{( 2-(-4 + (\y)^2 + (\x)^2)/(2 *(-2 + (\x))) )*sin(\p)}) --  plot[line width=1.0pt, domain={acos((-4 + \v^2 + 4 *\u - (\u)^2)/(4 + \v^2 - 4 *(\u) + (\u)^2)):0},variable=\p] ({( 2-(-4 + (\v)^2 + (\u)^2)/(2 *(-2 + (\u))) )*cos(\p)+(-4 + (\v)^2 + (\u)^2)/(2 *(-2 + (\u)))},{( 2-(-4 + (\v)^2 + (\u)^2)/(2 *(-2 + (\u))) )*sin(\p)});

\fill [fill=medgrey] ({\a},{-(\b+\c)/2}) circle ({(\b-\c)/2});

\def\x{0.896284298022351};
\def\y{0.613243566863332};
\def\u{0.852941212550304};
\def\v{1.911764686639837};

\fill[fill = medgrey] plot[line width=1.0pt, domain=0:{acos((-4 + \y^2 + 4 *\x - (\x)^2)/(4 + \y^2 - 4 *(\x) + (\x)^2))},variable=\p] ({( 2-(-4 + (\y)^2 + (\x)^2)/(2 *(-2 + (\x))) )*cos(\p)+(-4 + (\y)^2 + (\x)^2)/(2 *(-2 + (\x)))},{-( 2-(-4 + (\y)^2 + (\x)^2)/(2 *(-2 + (\x))) )*sin(\p)}) --  plot[line width=1.0pt, domain={acos((-4 + \v^2 + 4 *\u - (\u)^2)/(4 + \v^2 - 4 *(\u) + (\u)^2)):0},variable=\p] ({( 2-(-4 + (\v)^2 + (\u)^2)/(2 *(-2 + (\u))) )*cos(\p)+(-4 + (\v)^2 + (\u)^2)/(2 *(-2 + (\u)))},{-( 2-(-4 + (\v)^2 + (\u)^2)/(2 *(-2 + (\u))) )*sin(\p)});

\draw (0.15,1.05) node[above right]{$1/2+i$};
\draw (0.15,-1.05) node[below right]{$1/2-i$};
\draw (2,0) node[above right]{$2$};
\draw (1/2,1) node[cross, line width=1.5pt]{};
\draw (1/2,-1) node[cross, line width=1.5pt]{};
\draw (2,0) node[cross, line width=1.5pt] {};

\draw (3.25,1) node[]{$=\cG\left(\begin{bmatrix}
1/2&2&0\\
-1/2&1/2&0\\
0&0&2
\end{bmatrix}\right)$};

\draw [<->] (-0.4,0) -- (2.4,0);
\draw [<->] (0,-2.1) -- (0,2.1);
\end{tikzpicture}
\end{center}
\vspace{-0.15in}
    \captionsetup{singlelinecheck=off}
\caption[.]{SRG of a $3\times 3$ matrix. Crosses denote the eigenvalues.
}
\label{fig:matrix_SRG}
\end{figure}
For linear operators, the SRG generalizes eigenvalues.
Given  $A\in \reals^{n\times n}$, write $\Lambda(A)$ for the set of eigenvalues of $A$.

\begin{theorem}
\label{thm:eigen-srg}
If $A\in \reals^{n\times n}$ and $n = 1$ or $n\ge 3$, then $\Lambda(A)\subseteq \cG(A)$.
\end{theorem}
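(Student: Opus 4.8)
The plan is to produce, for every $\lambda\in\Lambda(A)$, a ``witnessing'' vector $w\neq 0$, using that for a (single-valued, so no point at infinity) linear operator
\[
\cG(A)=\Bigl\{\tfrac{\|Aw\|}{\|w\|}\exp\bigl[\pm i\,\angle(Aw,w)\bigr]\ \Big|\ w\in\reals^n,\ w\neq 0\Bigr\}.
\]
If $\lambda$ is real, take a real eigenvector $w$; then $\|Aw\|/\|w\|=|\lambda|$ and $\angle(Aw,w)$ is $0$ when $\lambda\ge 0$ and $\pi$ when $\lambda<0$, so the associated point of $\cG(A)$ is exactly $\lambda$. This settles $n=1$ and all real eigenvalues, so from now on $\lambda=a+ib$ with $b\neq 0$ (which already forces $n\ge 2$). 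The elementary observation driving everything else is that $\lambda\in\cG(A)$ holds as soon as we find $w\neq 0$ with $\langle Bw,w\rangle=0$ and $\|Bw\|^2=b^2\|w\|^2$, where $B:=A-aI$: these two identities pin the SRG value of $w$ to $a\pm i|b|$, one of which is $\lambda$, and $\cG(A)$ is symmetric about the real axis.

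To find such a $w$, work inside a real $2$-dimensional $A$-invariant subspace $V$ (spanned by the real and imaginary parts of a $\lambda$-eigenvector). Fix an orthonormal basis $e_1,e_2$ of $V$; then $A|_V=\bigl[\begin{smallmatrix}\alpha&\beta\\\gamma&\delta\end{smallmatrix}\bigr]$ has trace $2a$ and determinant $a^2+b^2$, so with $m:=\tfrac{\alpha-\delta}{2}$ we get $\alpha=a+m$, $\delta=a-m$, and $\beta\gamma=-m^2-b^2<0$. Restricting $g_1(w):=\langle Bw,w\rangle$ and $g_2(w):=\|Bw\|^2-b^2\|w\|^2$ to the unit circle $w_\theta=\cos\theta\,e_1+\sin\theta\,e_2$ of $V$ and expanding with double-angle formulas gives
\[
g_1(w_\theta)=m\cos2\theta+\tfrac{\beta+\gamma}{2}\sin2\theta,\qquad
g_2(w_\theta)=2R^2+(\beta-\gamma)\bigl(-\tfrac{\beta+\gamma}{2}\cos2\theta+m\sin2\theta\bigr),
\]
where $R^2:=m^2+(\tfrac{\beta+\gamma}{2})^2$. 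The parts of $g_1$ and of $g_2-2R^2$ that are linear in $(\cos2\theta,\sin2\theta)$ have coefficient vectors $(m,\tfrac{\beta+\gamma}{2})$ and $(\beta-\gamma)(-\tfrac{\beta+\gamma}{2},m)$, which are orthogonal of norms $R$ and $|\beta-\gamma|R$. Hence $\theta\mapsto(g_1(w_\theta),g_2(w_\theta))$ traces the axis-aligned ellipse $g_1^2/R^2+(g_2-2R^2)^2/((\beta-\gamma)^2R^2)=1$ — or collapses to the single point $\{0\}$ in the degenerate case $R=0$, where $A|_V$ is a scaled rotation and $w=e_1$ already works. Since $(\beta-\gamma)^2=4R^2+4b^2$, substituting the origin into the ellipse equation yields $R^2/(R^2+b^2)<1$, so whenever $R>0$ this closed curve strictly encloses $0\in\reals^2$ and therefore has nonzero winding number about it.

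This is exactly where $n\ge 3$ is indispensable: choose a unit vector $r\perp V$ and let $D\subseteq S^{n-1}$ be a $2$-disk — a hemisphere of $(V\oplus\reals r)\cap S^{n-1}\cong S^2$ — whose boundary is the great circle $V\cap S^{n-1}$ on which we just computed. If $(g_1,g_2)$ were nowhere zero on $D$, its restriction to $\partial D$, being the origin-enclosing ellipse curve, would be null-homotopic in $\reals^2\setminus\{0\}$ (slide $\partial D$ across $D$), contradicting the nonzero winding number; hence $(g_1,g_2)$ vanishes at some $w\in D\subseteq S^{n-1}$, and this $w$ is the witness, so $\lambda\in\cG(A)$. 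I expect the main obstacle to be the careful bookkeeping in the equator computation showing the curve is an origin-enclosing ellipse (the topology step is routine), together with pinning down why the argument genuinely collapses for $n=2$: there the ``great circle'' is all of $S^{1}$ and bounds no disk, and the statement is in fact false — e.g.\ $\bigl[\begin{smallmatrix}0&-2\\1&0\end{smallmatrix}\bigr]$ has eigenvalue $i\sqrt2$ not in its SRG — so the dimension hypothesis is sharp and is used essentially.
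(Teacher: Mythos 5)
Your proof is correct and takes essentially the same route as the paper: restrict to a two-dimensional $A$-invariant subspace, show the SRG data along the corresponding great circle of $S^{n-1}$ trace a closed curve that winds around the target eigenvalue, and use simple connectedness of (a piece of) $S^{n-1}$ for $n\ge 3$ to conclude the curve's interior is hit. The bookkeeping differs slightly --- you track the pair of real quadratics $(g_1,g_2)$ and show they sweep an origin-enclosing ellipse, rather than (as the paper does) normalizing via the real Schur form and observing the complex-valued map $x\mapsto\|Bx\|e^{i\angle(Bx,x)}$ traces a $\lambda$-enclosing circle, and you contract across a hemisphere disk in a 3-dimensional slice rather than invoking simple connectedness of all of $S^{n-1}$ --- but the winding-number-plus-simple-connectedness argument at the core is the same.
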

The result fails for $n=2$ because $S^{n-1}$, the sphere in $\reals^n$, is not simply connected for $n=2$; the proof constructs a loop in $S^{n-1}$ and argues the image of the loop on complex plane is nullhomotopic.
Figure~\ref{fig:matrix_SRG} illustrates an SRG of a matrix.
The SRG of a matrix does not seem to be directly related to the numerical range%
\footnote{After the publication of the paper, Pates \cite{pates2021} established a connection of the SRG with the numerical range.}
(field of values) \cite{horn_johnson_1991} or the pseudospectrum \cite{trefethen2005spectra}.

\begin{proof}
If $\lambda$ is a real eigenvalue of $A$, then
considering \eqref{eq:srg-alternative2} with $x$ as the corresponding eigenvector and $y=0$ tells us $\lambda\in \cG(A)$.

Next consider a complex conjugate eigenvalue pair $\lambda,\overline{\lambda}\in \Lambda(A)$, where $\Im\lambda>0$.
(This case excludes $n=1$.)
$A$ has a real Schur decomposition of the form
\[
A=Q^T
\underbrace{\begin{bmatrix}
B_{11}&A_{12}&A_{13}&\cdots\\
0&B_{22}&A_{23}&\cdots\\
&&\ddots\\
\end{bmatrix}}_{=B}
Q,\qquad
B_{11}=\begin{bmatrix}
a&b\\
-c&a
\end{bmatrix}
\in \reals^{2\times 2},
\]
where $b,c>0$, $\lambda=a+i\sqrt{bc}$, and $Q\in \reals^{n\times n}$ is orthogonal.
(To obtain this decomposition, take the construction of \cite{Murnaghan417} and apply a $\pm 45$ degree rotation to the leading $2\times 2$ block.) Since an orthogonal change of coordinates does not change the SRG, we have $\cG(A)=\cG(B)$. 
Write $S^{n-1}$ for the sphere in $\reals^n$.
Consider the continuous map
$z:S^{n-1}\rightarrow\complex$ defined by
$z(x)=\|Bx\|\exp\left[i\angle (Bx,x)\right]$.
Since $B$ is a linear operator, we have $z(S^{n-1})=\cG(B)$.
Consider the curve
$\gamma(t)=\cos(2\pi t)e_1+\sin(2\pi t)e_2\in S^{n-1}$ from $t\in [0,1]$,
where $e_1$ and $e_2$ are the first and second unit vectors in $\reals^n$.
With simple computation, we get
\[
z(\gamma(t))=
a+\frac{1}{2}(b-c)\sin(4\pi t)+ i
\frac{1}{2}(b+c-(b-c)\cos(4\pi t)).
\]
If $b=c$, then $z(\gamma(t))=\lambda$ and we conclude $\lambda\in z(S^{n-1})=\cG(A)$.

Assume $b\ne c$, and assume for contradiction that $\lambda\notin z(S^{n-1})=\cG(A)$.
The curve $z(\gamma(t))$
strictly encloses the eigenvalue $\lambda=a+i\sqrt{bc}$ since $\min(b,c)\leq\sqrt{bc}\leq\max(b,c)$.
Since $S^{n-1}$ is simply connected for $n\ge 3$,
we can continuously contract $\gamma(t)$ to a point in $S^{n-1}$,
and the continuous map $z$ provides a continuous contraction of $z(\gamma(t))$ to a point in $z(S^{n-1})$.
However, $z( \gamma(t))$ has a nonzero winding number\footnote{The winding number of a closed curve in the plane around a given point is an integer representing the total number of times that curve travels counterclockwise around the point.
A definition, based on the complex analysis and the Cauchy residue theorem can be found in Section 4.1 of \cite{abramowitz_stegun}.
} around $\lambda$ and $\lambda\notin z(S^{n-1})$.
Therefore, $z( \gamma(t))$ cannot be continuously contracted to a point in $z(S^{n-1})$.
This is a contradiction and we conclude 
$\lambda\in z(S^{n-1})=\cG(A)$.
\qed\end{proof}

The SRG $\cG(A)$ maps the action of the operator $A$ to points in $\ecomplex$.
In the following sections, we will need to conversely take any point in $\ecomplex$ and find an operator whose action maps to that point.
Lemma~\ref{lem:complex-srg} provides such constructions.
\begin{lemma}
\label{lem:complex-srg}
Take any $z=z_r+z_ii\in \complex$. 
Define
$A_z\colon\reals^2\rightarrow\reals^2$
and $A_\infty\colon\reals^2\rightrightarrows\reals^2$ as
\[
A_z\begin{bmatrix}
\zeta_1\\\zeta_2
\end{bmatrix}
=
\begin{bmatrix}
z_r\zeta_1-z_i\zeta_2\\
z_r\zeta_2+z_i\zeta_1
\end{bmatrix}
\qquad
A_\infty(x)=\left\{
\begin{array}{ll}
\reals^2&\text{if }x=0\\
\emptyset&\text{otherwise.}
\end{array}
\right.
\]
Then, 
\[
\mathcal{G}(A_z)=\{z,\bar{z}\},
\qquad
\mathcal{G}(A_\infty)=\{\infty\}.
\]
\end{lemma}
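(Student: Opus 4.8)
The plan is to verify both identities directly from the definition of the SRG, treating the two operators separately. For $A_\infty$, the computation is immediate: the only input with a nonempty output is $x = 0$, and $A_\infty(0) = \reals^2$. Since there is no pair $x \ne y$ with both $A_\infty x$ and $A_\infty y$ nonempty, the "main" part of the SRG (the set of $z = \frac{\|u-v\|}{\|x-y\|}\exp[\pm i\angle(u-v,x-y)]$) is empty. However, $A_\infty$ is multi-valued (indeed $A_\infty(0)$ has more than one point), so by the definition of $\cG$ we adjoin $\{\infty\}$, giving $\cG(A_\infty) = \{\infty\}$.

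For $A_z$ with $z = z_r + z_i i \in \complex$, the key observation is that under the identification $\reals^2 \cong \complex$, the map $A_z$ is multiplication by the fixed complex number $z$, hence it is linear and single-valued (so $\infty \notin \cG(A_z)$). First I would take arbitrary distinct $\zeta, \eta \in \reals^2 \cong \complex$, write $w = \zeta - \eta \ne 0$, and note $A_z\zeta - A_z\eta = z w$. Then $\frac{\|A_z\zeta - A_z\eta\|}{\|\zeta - \eta\|} = \frac{|zw|}{|w|} = |z|$, which is independent of the chosen pair. For the angle, $\angle(zw, w)$ is the (unsigned) angle between the complex numbers $zw$ and $w$, which equals $|\arg z|$ whenever $z \ne 0$ (and is $0$ when $z = 0$, consistent with the convention in the definition of $\angle$). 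Therefore every point of the main part of $\cG(A_z)$ equals $|z|\exp[\pm i |\arg z|] = \{z, \bar z\}$. Conversely, both $z$ and $\bar z$ are attained: taking $\zeta - \eta = 1$ (i.e. $\zeta = (1,0)$, $\eta = (0,0)$) gives the pair $\{z, \bar z\}$ directly, since the $\pm$ in the SRG definition symmetrizes about the real axis. Hence $\cG(A_z) = \{z, \bar z\}$.

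There is essentially no obstacle here — as the paper already notes, "the proofs follow from definitions and basic computation." The only mild subtlety worth stating explicitly is the handling of $z = 0$: then $A_0 \equiv 0$, so $A_0\zeta - A_0\eta = 0$ for all pairs, giving $|z| = 0$ and $\angle(0, w) = 0$ by convention, so $\cG(A_0) = \{0\} = \{z, \bar z\}$; the formula $|z|\exp[\pm i|\arg z|]$ should not be read as requiring $\arg 0$ to be defined, since the magnitude is zero. A second point to confirm is that for $z \ne 0$ the map $w \mapsto \angle(zw, w)$ is genuinely constant in $w$: this follows because $\angle(zw,w) = \arccos\frac{\Re(\overline{zw}\, w)}{|zw||w|} = \arccos\frac{\Re \bar z \,|w|^2}{|z||w|^2} = \arccos\frac{\Re \bar z}{|z|} = |\arg z|$, independent of $w$. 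With these two checks the argument is complete.
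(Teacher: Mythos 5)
Your argument is correct and matches the paper's (appendix) proof in substance: both proceed by direct computation, using the identification $\reals^2\cong\complex$ so that $A_z$ acts by complex multiplication, and noting that the ratio of norms and the angle $\angle(zw,w)$ are independent of the difference $w=\zeta-\eta$. Your explicit handling of the degenerate case $z=0$ is a small welcome addition but not a departure in method.
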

If we write $\cong$ to identify an element of $\mathbb{R}^2$ with an element in $\mathbb{C}$ in that
\[
\begin{bmatrix}
x\\y
\end{bmatrix}
\cong x+yi,
\]
then we can view $A_z$ as complex multiplication with $z$ in the sense that
\[
A_z\begin{bmatrix}
\zeta_1\\\zeta_2
\end{bmatrix}
\cong z(\zeta_1+\zeta_2i).
\]
\begin{proof}
Again, we write $\cong$ to identify an element of $\mathbb{R}^2$ with an element in $\mathbb{C}$.
Write $z=r_ze^{i\theta_{z}}$.
Consider any $x,y\in \reals^2$ where $x\ne y$ and define $u=A_zx$ and $v=A_zy$.
Then we can write
\[
x-y=r_w
\begin{bmatrix}
\cos(\theta_w)\\
\sin(\theta_w)
\end{bmatrix}
\]
where $r_w>0$, and
\[
u-v=A_z(x-y)\cong r_{z}r_we^{i(\theta_{z}+\theta_w)}.
\]
This gives us
\[
\frac{\|u-v\|}{\|x-y\|}= r_z,\quad
\angle (u-v,x-y)=|\theta_z|,
\]
and
\[
\cG(A_z)=\left\{r_{z}e^{i\theta_{z}},r_{z}e^{- i\theta_{z}}\right\}.
\]

Now consider $A_\infty$.
By definition, $\infty\in \cG(A_\infty)$.
For any $u\in A_\infty x$ and $v\in A_\infty y$, we have $x=y=0$,
and therefore $\cG(A_\infty)$ contains no finite $z\in \complex$.
We conclude $\cG(A_\infty)=\{\infty\}$.
\qed\end{proof}


\subsection{SRG of operator classes}
\label{ss:srg-class}
Let $\cA$ be a collection of operators. We define the SRG of the class $\cA$ as
\[
\mathcal{G}(\cA)=\bigcup_{A\in \cA}\mathcal{G}(A).
\]
We focus more on SRGs of operator classes, rather than individual operators, because theorems are usually stated with operator classes.
For example, one might say ``If $A$ is $1/2$-cocoercive, i.e., if $A\in \cC_{1/2}$, then $I-A$ is nonexpansive.''
We now characterize the SRG of the Lipschitz, averaged, monotone, strongly monotone, and cocoercive operator classes.

\begin{proposition}
\label{prop:monotone-srg}
Let $\mu,\beta,L\in (0,\infty)$ and $\theta\in(0,1)$.
The SRGs of $\cL_L$, $\cN_\theta$, $\cM$, $\cM_\mu$, and $\cC_\beta$ are, respectively, given by
\begin{center}
\begin{tabular}{cc}
\raisebox{-.5\height}{
\begin{tikzpicture}[scale=1.5]
\draw (-1.1,0.5) node {$\cG(\cL_L)=$};
\fill[fill=medgrey] (0,0) circle (0.75);
\draw [<->] (-1,0) -- (1,0);
\draw [<->] (0,-1) -- (0,1);
\draw (0.75,0) node [above right] {$L$};
\draw (-0.75,0) node [above left] {$-L$};
\filldraw (.75,0) circle ({0.6*1.5/1.5pt});
\filldraw (-.75,0) circle ({0.6*1.5/1.5pt});
\draw (0,-1.2) node  {$\left\{z\in \complex\,\big|\,|z|^2\le L^2\right\}$};
\end{tikzpicture}}
&
\raisebox{-.5\height}{
\begin{tikzpicture}[scale=1.5]
\draw (-0.85,0.5) node {$\cG(\cN_\theta)=$};
\fill[fill=medgrey] (0.25,0) circle (0.75);
\draw [<->] (-.8,0) -- (1.2,0);
\draw [<->] (0,-1) -- (0,1);
\draw (1,-0) node [above right] {$1$};
\draw (-0.5,-0) node [below] {$1-2\theta$};
\filldraw (1,0) circle ({0.6*1.5/1.5pt});
\filldraw (.25,0) circle ({0.6*1.5/1.5pt});
\filldraw (-.5,0) circle ({0.6*1.5/1.5pt});
\draw (0.625,0.1) node [above] {$\theta$};
\draw [decorate,decoration={brace,amplitude=4.5pt}] (0.25,0) -- (1,0) ;
\draw (0.4,-1.2) node  {$\left\{z\in \complex\,\big|\,|z|^2+(1-2\theta)\le 2(1-\theta)\Re z\right\}$};
\end{tikzpicture}}
\end{tabular}
\end{center}
\begin{center}
\begin{tabular}{ccc}
\raisebox{-.5\height}{
\begin{tikzpicture}[scale=1.5]
\draw (-0.5,0.5) node {$\cG(\cM)=$};
\fill[fill=medgrey] (0,-1) rectangle (1,1);
\draw [<->] (-0.2,0) -- (1,0);
\draw [<->] (0,-1) -- (0,1);
\draw (0.7,.85) node {$\cup \{\binfty\}$};
\draw (0.4,-1.2) node {$\left\{z\in \complex\,|\,\Re z\ge 0\right\}\cup\{\infty\}$};
\end{tikzpicture}}
&
\!\!\!\!\!\!
\raisebox{-.5\height}{
\begin{tikzpicture}[scale=1.5]
\draw [<->] (0,-1) -- (0,1);

\draw (-0.2,0.5) node [fill=white] {$\cG(\cM_\mu)=$};
\fill[fill=medgrey] (0.3,-1) rectangle (1,1);

\draw [<->] (-0.2,0) -- (1,0);

\draw (0.7,.85) node {$\cup \{\binfty\}$};
\filldraw (.3,0) circle ({0.6*1.5/1.5pt});
\draw (0.3,0) node [above left] {$\mu$};
\draw (0.4,-1.2) node {$\left\{z\in \complex\,|\,\Re z\ge \mu\right\}\cup\{\infty\}$};

\end{tikzpicture}}
&
\!\!\!\!\!\!
\raisebox{-.5\height}{
\begin{tikzpicture}[scale=1.5]
\draw (-0.4,0.3) node {$\cG(\cC_\beta)=$};
\fill[fill=medgrey] (0.6,0) circle (0.6);
\draw [<->] (-0.5,0) -- (1.5,0);
\draw [<->] (0,-.8) -- (0,.8);
\filldraw (1.2,0) circle ({0.6*1.5/1.5pt});
\draw (1.2,0) node [above right] {\!$1/\beta$};
\draw (0.4,-1) node  {$\left\{z\in \complex\,\big|\,\Re z\ge \beta|z|^2\right\}$};
\end{tikzpicture}}
\end{tabular}
\end{center}
\end{proposition}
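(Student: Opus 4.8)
The plan is to establish each of the five SRG characterizations separately, by showing the two-sided inclusion $\cG(\cA)\subseteq S_\cA$ and $S_\cA\subseteq\cG(\cA)$, where $S_\cA$ is the claimed subset of $\ecomplex$. The ``$\subseteq$'' direction is in each case just a restatement of the defining inequality of the operator class in terms of the coordinates $z_r=\Re z$, $z_i=\Im z$ of a point $z\in\cG(A)$. Using \eqref{eq:srg-alternative2}, for a pair of evaluations $(x,u),(y,v)$ with $x\ne y$ we have $\Re z=\dotp{u-v,x-y}/\n{x-y}^2$ and $\abs{z}^2=\n{u-v}^2/\n{x-y}^2$. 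First I would translate: $A\in\cL_L$ means $\n{u-v}^2\le L^2\n{x-y}^2$, i.e.\ $\abs{z}^2\le L^2$; $A\in\cM$ means $\dotp{u-v,x-y}\ge 0$, i.e.\ $\Re z\ge 0$ (plus $\infty$ when multivalued); $A\in\cM_\mu$ means $\dotp{u-v,x-y}\ge\mu\n{x-y}^2$, i.e.\ $\Re z\ge\mu$; $A\in\cC_\beta$ means $\dotp{u-v,x-y}\ge\beta\n{u-v}^2$, i.e.\ $\Re z\ge\beta\abs{z}^2$. For $\cN_\theta$, write $A=(1-\theta)I+\theta R$ with $R\in\cL_1$; if $w\in\cG(R)$ is the point for the evaluations $(x,Rx),(y,Ry)$, then the corresponding point for $A$ is $(1-\theta)+\theta w$, so $\cG(\cN_\theta)=(1-\theta)+\theta\cG(\cL_1)=(1-\theta)+\theta\{\abs{w}\le 1\}$, which is the disk of radius $\theta$ centered at $1-\theta$; expanding $\abs{z-(1-\theta)}^2\le\theta^2$ gives $\abs{z}^2+(1-2\theta)\le 2(1-\theta)\Re z$. (Here I would note that pre-scalar multiplication and addition of $I$ act on the SRG exactly as multiplication/addition by a real scalar act on each point, since $I$ has SRG $\{1\}$ and the direction $x-y$ is unchanged — this is the kind of fact justified by the transformation results of Section~\ref{s:srg-transformation}, but for $\cN_\theta$ it can also be checked directly from the definition.)

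The ``$\supseteq$'' direction is where the real content lies: given any $z\in S_\cA$, I must exhibit an operator $A\in\cA$ with $z\in\cG(A)$. The natural candidate is the rank-one-type operator $A_z$ of Lemma~\ref{lem:complex-srg} acting on $\reals^2$, which has $\cG(A_z)=\{z,\bar z\}$; I would check membership in each class. For $\cL_L$: $A_z$ is complex multiplication by $z$, hence $\abs{z}$-Lipschitz, so $A_z\in\cL_L$ whenever $\abs{z}\le L$, covering all of the closed disk. For $\cM_\mu$ (and $\cM$ with $\mu=0$): $\dotp{A_z\zeta-A_z\eta,\zeta-\eta}=\Re(z)\n{\zeta-\eta}^2$ by the coordinate computation, so $A_z\in\cM_\mu$ iff $\Re z\ge\mu$; the point $\infty$ is realized by $A_\infty$, which is monotone (indeed strongly monotone vacuously, since its domain is the single point $0$). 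For $\cC_\beta$: $\dotp{A_z\zeta-A_z\eta,\zeta-\eta}=\Re(z)\n{\zeta-\eta}^2$ and $\n{A_z\zeta-A_z\eta}^2=\abs{z}^2\n{\zeta-\eta}^2$, so $A_z\in\cC_\beta$ iff $\Re z\ge\beta\abs{z}^2$. For $\cN_\theta$: writing $A_z=(1-\theta)I+\theta A_w$ with $w=(z-(1-\theta))/\theta$, we have $A_z\in\cN_\theta$ iff $A_w\in\cL_1$ iff $\abs{w}\le 1$, which is exactly $z$ in the claimed disk.

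There is one subtlety I should handle carefully rather than wave at: in the $\supseteq$ direction for $\cL_L$ and $\cC_\beta$, an operator's SRG can contain points of smaller magnitude than its ``worst'' pair — but here I do not need that, since for each target point $z$ I build a dedicated operator $A_z$ whose SRG is literally $\{z,\bar z\}$, so the only question is whether $A_z$ itself lies in the class, which the inequalities above settle exactly. The genuinely delicate case is the boundary/degenerate behavior: whether $0\in\cG(\cM)$ and whether the disks for $\cL_L,\cC_\beta$ include their centers/boundary. The point $0$ is attained, e.g., by the zero operator on $\reals^2$ (which is monotone, $L$-Lipschitz, and $\beta$-cocoercive for every $\beta$), and more generally $A_z$ with $z=0$; and all the defining inequalities are non-strict, so the claimed sets are closed, matching. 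I expect the main obstacle to be purely organizational — keeping the five cases and the two inclusions straight and making sure the coordinate dictionary \eqref{eq:srg-alternative2} is applied consistently, together with the one-line justification that pre-multiplication by a positive scalar and addition of $I$ commute with taking $\cG$ in the way used for $\cN_\theta$. Everything else is ``simple calculations'' of the kind the paper already flags as routine.
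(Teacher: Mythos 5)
Your proposal is correct and follows the same strategy the paper uses: the $\subseteq$ inclusion is a re-reading of each defining inequality through the dictionary \eqref{eq:srg-alternative2}, and the $\supseteq$ inclusion is obtained by plugging the target point into $A_z$ (and $A_\infty$) from Lemma~\ref{lem:complex-srg} and checking membership. The paper only writes out $\cL_L$ and $\cM$ in the main text and remarks that the remaining three "can be characterized with similar direct proofs"; your write-up just supplies those, with the $\cN_\theta$ case handled via the decomposition $A=(1-\theta)I+\theta R$ exactly as the paper's appendix also does through its scaling/translation theorem.
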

\begin{proof}
First, characterize $\cG(\cL_L)$.
We have $\cG(\cL_L)\subseteq\left\{z\in \complex\,\big|\,|z|^2\le L^2\right\}$ since
\[
A\in \cL_L
\;\;\Rightarrow\;\;
\frac{\|Ax-Ay\|}{\|x-y\|}\le L,\,\forall\,x,y\in \hilbert,\,x\ne y
\;\;\Rightarrow\;\;
\cG(A)\subseteq\left\{z\in \complex\,\big|\,|z|^2\le L^2\right\}.
\]
Conversely, given any $z\in \complex$ such that $|z|\le L$,
the operator $A_z$ of Lemma~\ref{lem:complex-srg} satisfies $\| A_zx-A_zy\|\le L\|x-y\|$ for any $x,y\in \reals^2$, i.e., $A_z\in \cL_L$, and $\cG(A_z)=\{z,\bar{z}\}$. Therefore
$\cG(\cL_L)\supseteq \left\{z\in \complex\,\big|\,|z|^2\le L^2\right\}$.

Next, characterize $\cG(\cM)$.
For any $A\in \cM$, monotonicity implies
\[
\frac{\langle u-v,x-y\rangle}{\|x-y\|^2}\ge 0,
\quad\forall\,u\in Ax,\,v\in Ay,\,x\ne y.
\]
Considering \eqref{eq:srg-alternative2}, we conclude $\cG(A)\backslash\{\infty\}\subseteq \{z\,|\,\Re z\ge 0\}$.
On the other hand, given any $z\in \{z\,|\,\Re z\ge 0\}$, the operator $A_z$ of Lemma~\ref{lem:complex-srg} satisfies $\langle A_zx-A_zy,x-y\rangle \ge 0$ for any $x,y\in \reals^2$, i.e., $A_z\in \cM$, and $\cG(A_z)=\{z,\bar{z}\}$.
Therefore, $z\in\cG(A_z)\subset \cG(\cM)$, and we conclude $\{z\,|\,\Re z\ge 0\}\subseteq \cG(\mathcal{M})$.
Finally, note that $\infty\in \cG(\cM)$ is equivalent to saying that there exists a multi-valued operator in $\cM$. The $A_\infty$ of Lemma~\ref{lem:complex-srg} is one such example.

The other SRGs $\cG(\cM_\mu)$, $\cG(\cC_\beta)$, and $\cG(\cN_\theta)$ can be characterized with similar direct proofs or by using operator and SRG transformations introduced later in \S\ref{s:srg-transformation}. In particular:
the fact $\cM_\mu=\mu I+\cM$, Theorem~\ref{thm:srg-scaling-translation}, and the characterization of $\cG(\cM)$ prove the characterization $\cG(\cM_\mu)$;
the fact $(\cM_\mu)^{-1}=\cC_{\mu}$, Theorem~\ref{thm:srg-inversion}, and the characterization $\cG(\cM_\mu)$ prove the characterization $\cG(\cC_{\mu})$; and 
the fact $(1-\theta)I+\theta\cL_1=\cN_\theta$, Theorem~\ref{thm:srg-scaling-translation}, and the characterization of $\cG(\cL_1)$ prove the characterization of $\cG(\cN_\theta)$.
Facts $\cM_\mu=\mu I+\cM$, $(\cM_\mu)^{-1}=\cC_{\mu}$, and $(1-\theta)I+\theta\cL_1=\cN_\theta$ are well known \cite{BCBook}.
\qed\end{proof}

\begin{proposition}
\label{proposition:cvx-srg-first}
Let $0<\mu<L<\infty$. Then
\vspace{0.05in}
\begin{center}
\begin{tabular}{cccc}
\!\!\!\!\!\!
\raisebox{-.5\height}{
\begin{tikzpicture}[scale=1]
\draw (-0.9,0.4) node {$\cG(\partial \mathcal{F}_{0,\infty})=$};
\fill[fill=medgrey] (0,-1) rectangle (1,1);
\draw [<->] (-0.5,0) -- (1,0);
\draw [<->] (0,-1) -- (0,1);
\draw (0.65,.85) node {$\scriptstyle\cup \{\binfty\}$};
\draw (-0.2,-1.2) node {$\left\{z\,|\,\Re z\ge 0\right\}\cup\{\infty\}$};
\end{tikzpicture}}
&\!\!\!\!\!\!\!\!\!\!\!\!\!\!\!
\raisebox{-.5\height}{
\begin{tikzpicture}[scale=1]
\draw [<->] (0,-1) -- (0,1);
\draw (-0.7,-0.4)  node [fill=white]{$\cG(\partial \mathcal{F}_{\mu,\infty})=$};
\fill[fill=medgrey] (0.2,-1) rectangle (1,1);
\draw [<->] (-0.5,0) -- (1,0);
\draw (0.65,.85) node {$\scriptstyle \cup \{\binfty\}$};
\draw (0.2,0pt) node [above right] {$\mu$};
\filldraw (.2,0) circle ({0.6*1.5/1pt});
\draw (0.2,-1.2) node{$\left\{z\,|\,\Re z\ge \mu\right\}\cup\{\infty\}$};
\end{tikzpicture}}
&\!\!\!\!\!\!\!\!\!\!\!\!\!\!\!\!\!\!
\raisebox{-.5\height}{
\begin{tikzpicture}[scale=1]
\draw (-0.9,0.4) node {$\cG(\partial \mathcal{F}_{0,L})=$};
\fill[fill=medgrey] (0.5,0) circle (0.5);
\draw [<->] (-0.5,0) -- (1.2,0);
\draw [<->] (0,-.8) -- (0,.8);
\draw (1,0) node [above right] {$L$};
\filldraw (1,0) circle ({0.6*1.5/1pt});
\end{tikzpicture}}
&\!\!\!\!\!\!\!\!\!\!\!\!
\raisebox{-.5\height}{
\begin{tikzpicture}[scale=1]
\draw [<->] (0,-.8) -- (0,.8);
\draw (-0.5,-0.3)  node [fill=white] {$\cG(\partial \mathcal{F}_{\mu,L})=$};
\fill[fill=medgrey] (0.65,0) circle (0.35);
\draw [<->] (-0.5,0)-- (1.2,0);
\filldraw (1,0) circle ({0.6*1.5/1pt});
\draw (1,0pt) node [above right] {$L$};
\filldraw (.3,0) circle ({0.6*1.5/1pt});
\draw (0.4,0pt) node [above left] {$\mu$};
\end{tikzpicture}
}
\end{tabular}
\end{center}
\vspace{0.05in}
\end{proposition}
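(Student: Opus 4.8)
The plan is to reduce the whole proposition to its two non-strongly-convex instances and then to treat those by hand. Two elementary facts drive the reduction. First, adding a multiple of the identity translates the SRG: for any operator $A$ and any $\mu\ge 0$ one has $\cG(\mu I+A)=\mu+\cG(A)$, since in \eqref{eq:srg-alternative2} passing from $A$ to $\mu I+A$ leaves $\Im z$ untouched (because $P_{\{x-y\}^\perp}(x-y)=0$) and shifts $\Re z$ by $\mu$. Second, the map $f\mapsto f-\tfrac{\mu}{2}\|\cdot\|^{2}$ is a bijection from $\cF_{\mu,\infty}$ onto $\cF_{0,\infty}$ and from $\cF_{\mu,L}$ onto $\cF_{0,L-\mu}$ (this is exactly the definition of $\mu$-strong convexity, together with the elementary fact that subtracting $\tfrac{\mu}{2}\|\cdot\|^{2}$ turns $L$-smoothness into $(L-\mu)$-smoothness and preserves lower semicontinuity and properness), and it intertwines with the subdifferential, so $\partial\cF_{\mu,\infty}=\mu I+\partial\cF_{0,\infty}$ and $\partial\cF_{\mu,L}=\mu I+\partial\cF_{0,L-\mu}$. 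Composing the two facts, $\cG(\partial\cF_{\mu,\infty})=\mu+\cG(\partial\cF_{0,\infty})$ and $\cG(\partial\cF_{\mu,L})=\mu+\cG(\partial\cF_{0,L-\mu})$, so it is enough to identify $\cG(\partial\cF_{0,\infty})$ and $\cG(\partial\cF_{0,L})$.

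For the ``$\subseteq$'' direction I would use that a subdifferential of a proper lsc convex function is monotone, giving $\partial\cF_{0,\infty}\subseteq\cM$ and hence $\cG(\partial\cF_{0,\infty})\subseteq\cG(\cM)=\{z\,|\,\Re z\ge 0\}\cup\{\infty\}$ by Proposition~\ref{prop:monotone-srg}; and, for the smooth class, the fact that the gradient of a convex $L$-smooth function is $\tfrac{1}{L}$-cocoercive (the Baillon--Haddad theorem \cite{BCBook}, which otherwise I would reprove in two lines by applying the descent lemma to $z\mapsto f(z)-\langle\nabla f(x),z\rangle$ and symmetrizing). This gives $\partial\cF_{0,L}\subseteq\cC_{1/L}$ and thus $\cG(\partial\cF_{0,L})\subseteq\cG(\cC_{1/L})=\{z\,|\,L\Re z\ge|z|^{2}\}$, the disk of radius $L/2$ centered at $L/2$. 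Translating by $\mu$ yields the advertised half-plane $\{\Re z\ge\mu\}\cup\{\infty\}$ and the disk from $\mu$ to $L$. I would stress here that the latter is \emph{strictly} contained in $\cG(\cM_\mu)\cap\cG(\cC_{1/L})$: combining the strong-convexity and smoothness inequalities separately only gives the larger lens, and the sharper disk comes precisely from the fact that $f\mapsto f-\tfrac{\mu}{2}\|\cdot\|^{2}$ consumes the strong convexity \emph{and} part of the smoothness at once.

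For the ``$\supseteq$'' direction I would exhibit operators realizing every point. A real $z\in[0,\infty)$ is realized by the one-dimensional quadratic $\tfrac{z}{2}\|\cdot\|^{2}$, and $z=0$ by the degenerate quadratic $(\zeta_1,\zeta_2)\mapsto\tfrac{L}{2}\zeta_1^{2}$ evaluated at $(0,1)$ and $(0,0)$ (where the two outputs coincide). A complex $z$ with $\Re z>0$ is handled by $\zeta\mapsto\tfrac12\langle\zeta,M\zeta\rangle$ on $\reals^{2}$ with $M=\mathrm{diag}(\lambda_1,\lambda_2)$, $0\le\lambda_2\le\lambda_1$: plugging in $\zeta=(\cos\phi,\sin\phi)$ shows $\cG(M)$ is the circle through $\lambda_1$ and $\lambda_2$ on the real axis, and then I would carry out the routine, slightly fiddly check that, as $(\lambda_1,\lambda_2)$ ranges over $[0,L]^{2}$ (resp.\ $[0,\infty)^{2}$, and resp.\ $[\mu,L]^{2}$ after the $\mu$-translation), the union of these circles recovers the disk from $0$ to $L$ (resp.\ all $z$ with $\Re z>0$, resp.\ the disk from $\mu$ to $L$). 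In the $L=\infty$ cases this leaves the nonzero imaginary axis and $\infty$, which I would obtain from the separable nonsmooth function $f(\zeta_1,\zeta_2)=\alpha|\zeta_1|$: at inputs $(0,b)$ and $(0,0)$, choosing the subgradients $(\alpha,0)$ and $(-\alpha,0)$ gives $z=\pm i(2\alpha/|b|)$, and the same two subgradients at the single input $(0,0)$ give $\infty\in\cG(\partial\cF_{0,\infty})$. Translating all of these examples by $\mu$ finishes the strongly convex classes.

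The step I expect to be the main obstacle is the ``$\subseteq$'' bound for $\cG(\partial\cF_{\mu,L})$ --- not the inequality manipulation, which is short, but recognizing that the correct answer is the small disk from $\mu$ to $L$ rather than the naive lens $\{\Re z\ge\mu\}\cap\{L\Re z\ge|z|^{2}\}$, and then confirming in the reverse direction that the quadratics really do sweep out the \emph{entire} disk (boundary circle included, and not a proper subset), so that the translated Baillon--Haddad bound is tight.
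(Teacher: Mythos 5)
Your proof is correct. Both you and the paper use the translations $\partial\cF_{\mu,\infty}=\mu I+\partial\cF_{0,\infty}$ and $\partial\cF_{\mu,L}=\mu I+\partial\cF_{0,L-\mu}$, and both establish $\cG(\partial\cF_{0,\infty})$ by combining $\partial\cF_{0,\infty}\subseteq\cM$ with a nonsmooth $\ell^1$-type example. The one place you genuinely diverge is $\cG(\partial\cF_{0,L})$: you obtain the upper bound from the Baillon--Haddad theorem ($\partial\cF_{0,L}\subseteq\cC_{1/L}$) and then sweep out the full disk with the family of two-dimensional quadratics $\tfrac12\langle\zeta,\mathrm{diag}(\lambda_1,\lambda_2)\zeta\rangle$, whereas the paper instead writes $\partial\cF_{0,L}=(\partial\cF_{1/L,\infty})^{-1}$ and applies the SRG-inversion theorem (Theorem~\ref{thm:srg-inversion}), which sends the half-plane $\{\Re z\ge 1/L\}\cup\{\infty\}$ to the disk from $0$ to $L$ in one step. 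The paper's route is structurally cleaner: inversion transports both inclusions simultaneously, so no separate ``$\supseteq$'' construction is needed and the ``slightly fiddly'' circle-union check you flag never arises; it also deliberately showcases the SRG-transformation calculus, which is the point of the paper. Your route has the compensating advantage of being self-contained (no appeal to the inversion theorem, only Baillon--Haddad) and of making the sharpness visibly concrete, since the quadratic $M=\mathrm{diag}(L,0)$ explicitly traces the boundary circle. Your side remark that the disk is strictly smaller than $\cG(\partial\cF_{\mu,\infty})\cap\cG(\partial\cF_{0,L})$ is also made in the paper's appendix, as an illustration that Theorem~\ref{thm:srg-intersection} fails without SRG-fullness.
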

\begin{proof}
Since $\partial \mathcal{F}_{0,\infty}\subset \mathcal{M}$, we  have $\cG(\partial \mathcal{F}_{0,\infty})\subseteq\cG(\cM)= \{z\in\complex\,|\,\Re z\ge 0\}\cup\{\infty\}$ by Proposition~\ref{prop:monotone-srg}.
We claim $f\colon\reals^2\rightarrow\reals$ defined by $f(x,y)=|x|$ satisfies $\cG(\partial f)=\{z\in\complex\,|\,\Re z\ge 0\}\cup\{\infty\}$. This tells us $\{z\in\complex\,|\,\Re z\ge 0\}\cup\{\infty\}\subseteq\cG(\partial \mathcal{F}_{0,\infty})$.

We prove the claim with basic computation.
Let $f(x,y)=|x|$.
The subgradient has the form $\partial f(x,y)=(h(x),0)$ for $h$ defined by:
\[
h(x) =
\left\{
\begin{array}{ll}
\{-1\}&\text{for }x<0\\
\{u\,|\,-1\le u\le 1\}&\text{for }x=0\\
\{1\}&\text{for }x>0.
\end{array}\right.
\]
Since $\partial f$ is multi-valued at $(0,0)$, we have $\infty\in \cG(\partial f)$. 
Since $\partial f(1,0)=\partial f(2,0)$, we have $0\in \cG(\partial f)$. 
The input-output pairs
$(0,0)\in \partial f(0,0)$ and $(h(R\cos(\theta)),0)\in\partial f(R\cos(\theta),R\sin(\theta))$ map to the point $R^{-1}(\abs{\cos(\theta)},\pm\sin(\theta))\in\CC$. Clearly the image of this map over the range $R\in(0,\infty)$, $\theta\in[0,2\pi)$ is the right-hand plane except the origin. Hence $\cG(\partial f)=\{z\in\complex\,|\,\Re z\ge 0\}\cup\{\infty\}$.

The SRGs $\cG(\partial \cF_{\mu,\infty})$, $\cG(\partial \cF_{0,L})$, and $\cG(\partial \cF_{\mu,L})$ can be characterized with similar direct proofs or by using operator and SRG transformations introduced later in \S\ref{s:srg-transformation}. In particular:
the fact $\partial \cF_{\mu,\infty}=\mu I+\partial \cF_{0,\infty}$, Theorem~\ref{thm:srg-scaling-translation}, and the characterization of $\cG(\partial \cF_{0,\infty})$ prove the characterization of $\cG(\partial \cF_{\mu,\infty})$;
the fact $\partial \cF_{0,L}=\left(\partial \cF_{1/L,\infty}\right)^{-1}$, Theorem \ref{thm:srg-inversion}, and the characterization of $\cG(\partial \cF_{1/L,\infty})$ prove the characterization of $\cG(\partial \cF_{0,L})$; and
the fact $\partial \cF_{\mu,L}= \mu I+\partial \cF_{0,L-\mu}$, Theorem~\ref{thm:srg-scaling-translation}, and the characterization of $\cG(\partial \cF_{0,L-\mu})$ prove the characterization of $\cG(\partial \cF_{\mu,L})$.
Facts $\partial \cF_{\mu,\infty}=\mu I+\partial \cF_{0,\infty}$, $\partial \cF_{0,L}=\left(\partial \cF_{1/L,\infty}\right)^{-1}$, and $\partial \cF_{\mu,L}= \mu I+\partial \cF_{0,L-\mu}$ are well known \cite{Taylor2017}.
\qed\end{proof}


\subsection{SRG-full classes} \label{ss:srg-full}
Section~\ref{ss:srg-operator} discussed how given an operator we can draw its SRG.
Conversely, can we examine the SRG and conclude something about the operator?
To perform this type of reasoning, we need further conditions.

We say class of operators $\cA$ is \emph{SRG-full} if
\begin{align*}
A\in \cA
\quad\Leftrightarrow\quad
\cG(A)\subseteq\cG(\cA).
\end{align*}
Since the implication $A\in \cA\Rightarrow\cG(A)\subseteq \cG(\cA)$ already follows from the SRG's definition, the substance of this definition is the implication $\cG(A)\subseteq \cG(\cA)\Rightarrow A\in \cA$.
Essentially, a class is SRG-full if it can be fully characterized by its SRG; given an SRG-full class $\cA$ and an operator $A$, we can check membership $A\in \cA$ by verifying (through geometric arguments) the containment $\cG(A)\subseteq \cG(\cA)$ in the 2D plane. 

SRG-fullness assumes the desirable property $\cG(A)\subseteq \cG(\cA)\Rightarrow A\in \cA$.
We now discuss which classes possess this property.

\begin{theorem} \label{thm:srg-full}
An operator class $\cA$ is SRG-full if it is defined by
\[
A\in \cA\quad\Leftrightarrow\quad
h\left(\|u-v\|^2,\|x-y\|^2,\langle u-v,x-y\rangle\right)\le 0,
\quad \forall u\in Ax,\,v\in Ay
\]
for some nonnegative homogeneous function $h\colon\reals^3\rightarrow\reals$.
\end{theorem}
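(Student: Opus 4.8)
The plan is to translate the defining inequality of $\cA$ into a scale-invariant membership condition on the complex number attached to each input--output pair, use this to compute $\cG(\cA)$ exactly in terms of $h$, and then read off SRG-fullness directly.

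First I would set up a dictionary between the triple $\p{\n{u-v}^2,\n{x-y}^2,\dotp{u-v,x-y}}$ and the point of the SRG it produces. Fix $A$ and a pair $(x,u),(y,v)\in A$ with $x\ne y$, and let $z=\tfrac{\n{u-v}}{\n{x-y}}\exp\sp{i\angle(u-v,x-y)}\in\cG(A)$. By \eqref{eq:srg-alternative2} we have $\n{u-v}^2=|z|^2\n{x-y}^2$ and $\dotp{u-v,x-y}=(\Re z)\n{x-y}^2$, so the triple equals $\n{x-y}^2\cdot(|z|^2,1,\Re z)$. Since $h$ is homogeneous of some positive degree, $h(ta,tb,tc)\le 0\Leftrightarrow h(a,b,c)\le 0$ for every $t>0$, and $h(0,0,0)=0$; applying this with $t=\n{x-y}^2$ gives
\[
h\!\p{\n{u-v}^2,\n{x-y}^2,\dotp{u-v,x-y}}\le 0
\quad\Longleftrightarrow\quad
h\!\p{|z|^2,1,\Re z}\le 0 .
\]
Writing $S=\{z\in\CC\mid h(|z|^2,1,\Re z)\le 0\}$, a set symmetric about the real axis since $|z|$ and $\Re z$ depend only on the conjugacy class of $z$, this says the defining inequality holds for the pair $(x,u),(y,v)$ exactly when $z\in S$. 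The degenerate pairs are handled directly: if $x=y$ and $u=v$ the triple is $(0,0,0)$ and $h(0,0,0)=0$; if $x=y$ and $u\ne v$ the triple is $(\n{u-v}^2,0,0)$, which after scaling by $\n{u-v}^{-2}$ reduces to the single scalar condition $h(1,0,0)\le 0$.

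Next I would compute $\cG(\cA)$. For any $B\in\cA$ the dictionary shows every finite point of $\cG(B)$ lies in $S$, so $\cG(\cA)\setminus\{\infty\}\subseteq S$; conversely, for each $z\in S$ the single-valued operator $A_z$ of Lemma~\ref{lem:complex-srg} satisfies $\cG(A_z)=\{z,\bar z\}\subseteq S$ and has no multi-valued point, so by the dictionary $A_z\in\cA$ and thus $z\in\cG(\cA)$. Hence $\cG(\cA)\setminus\{\infty\}=S$. As for $\infty$, it lies in $\cG(\cA)$ iff some operator in $\cA$ is multi-valued; the $x=y$, $u\ne v$ case above shows any such operator forces $h(1,0,0)\le 0$, and conversely if $h(1,0,0)\le 0$ then $A_\infty$ of Lemma~\ref{lem:complex-srg} is in $\cA$ and multi-valued. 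So $\cG(\cA)=S$ when $h(1,0,0)>0$ and $\cG(\cA)=S\cup\{\infty\}$ otherwise.

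Finally I would close the loop. The implication $A\in\cA\Rightarrow\cG(A)\subseteq\cG(\cA)$ is immediate from the definition of $\cG(\cA)$ as a union, so assume $\cG(A)\subseteq\cG(\cA)$ and take any $(x,u),(y,v)\in A$. If $x\ne y$, the associated point $z$ is finite and lies in $\cG(A)\subseteq\cG(\cA)$, hence $z\in S$, so the defining inequality holds for this pair; if $x=y$ and $u=v$ it holds because $h(0,0,0)=0$; and if $x=y$ with $u\ne v$ then $A$ is multi-valued, so $\infty\in\cG(A)\subseteq\cG(\cA)$, which by the previous paragraph forces $h(1,0,0)\le 0$ and hence the inequality for this pair too. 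Thus $A\in\cA$, so $\cA$ is SRG-full. I expect the only delicate point to be the bookkeeping around the point at infinity and the degenerate pair $x=y$: one has to check that the scalar conditions $h(0,0,0)\le 0$ (automatic once the homogeneity degree is positive) and $h(1,0,0)\le 0$ match up on both sides of the equivalence, and it is precisely the explicit operators $A_z$ and $A_\infty$ from Lemma~\ref{lem:complex-srg} that let us pin $\cG(\cA)$ down exactly rather than merely bound it.
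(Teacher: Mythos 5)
Your proposal is correct and takes essentially the same approach as the paper: both exploit positive homogeneity of $h$ to normalize the triple $\p{\n{u-v}^2,\n{x-y}^2,\dotp{u-v,x-y}}$, identify the resulting scale-invariant condition with a condition on the SRG point $z$ via \eqref{eq:srg-alternative2}, and handle the multi-valued case through $\infty\in\cG(\cA)$. The only difference is that you package this by first computing $\cG(\cA)$ explicitly as the set $\{z\mid h(|z|^2,1,\Re z)\le 0\}$ (possibly with $\infty$), whereas the paper transfers the inequality directly from a witness $B\in\cA$ to $A$ without naming that set; this is cosmetic, not a different argument.
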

To clarify, $h$ is nonnegative homogeneous if $\theta h(a,b,c)= h(\theta a,\theta b,\theta c)$ for all $\theta\ge 0$. (We do not assume $h$ is smooth.)
When a class $\cA$ is defined by $h$ as in  Theorem~\ref{thm:srg-full}, we say $h$ \emph{represents} $\cA$.
For example,
the $\mu$-strongly monotone class $\cM_\mu$ is represented by $h(a,b,c)=\mu b-c$,
since
\[
A\in \cM_{\mu}
\quad\Leftrightarrow\quad
\mu\|x-y\|^2\le \langle u-v,x-y\rangle,\quad\forall u\in Ax,\,v\in Ay.
\]
As another example, 
firmly-nonexpansive class $\cN_{1/2}$  is represented by $h(a,b,c)=a-c$,
since
\[
A\in \cN_{1/2}
\quad\Leftrightarrow\quad
\|u-v\|^2\le \langle u-v,x-y\rangle,\quad\forall u\in Ax,\,v\in Ay.
\]
By Theorem~\ref{thm:srg-full}, the classes $\cM$, $\cM_\mu$, $\cC_\beta$, $\cL_L$, and $\cN_\theta$ are all SRG-full.
Respectively, 
\begin{itemize}
\item $\cM$ is represented by $h=-c$,
\item $\cM_\mu$   is represented by $h=\mu b-c$,
\item $\cC_\beta$ is represented by $h=\beta a-c$,
\item $\cL_L$ is represented by $h= a-Lb$,
\item $\cN_{\theta}$ is represented by $h=a+(1-2\theta)b-2(1-\theta)c$.
\end{itemize}
If $h$ and $g$ represent SRG-full classes $\cA$ and $\cB$, then $\max\{h,g\}$ represents $\cA\cap \cB$
and $\min\{h,g\}$ represents $\cA\cup \cB$.

On the other hand, the classes $\partial \cF_{0,\infty}$, $\partial \cF_{\mu,\infty}$, $\partial \cF_{0,L}$, and $\partial \cF_{\mu,L}$ are not SRG-full.
For example, the operator
\[
A(z_1,z_2)=
\begin{bmatrix}
0&-1\\
1&0
\end{bmatrix}
\begin{bmatrix}
z_2\\z_2
\end{bmatrix}
\]
satisfies $\cG(A)=\{-i,i\}\subseteq \cG( \partial \cF_{0,\infty})$.
However, $A\notin \partial \cF_{0,\infty}$ because there is no convex function $f$ for which $\nabla f$ = $DA$.
\begin{proof}
Since $A\in \cA\Rightarrow \cG(A)\subseteq \cG(\cA)$ always holds, we show $\cG(A)\subseteq \cG(\cA)\Rightarrow A\in \cA$.
Assume $\cA$ is represented by $h$ and an operator $A\colon\hilbert\rightrightarrows\hilbert$ satisfies $\cG(A)\subseteq\cG(\cA)$.
Let $u_A\in Ax_A$ and $v_A\in Ay_A$ represent distinct evaluations, i.e., $x_A\ne y_A$ or $u_A\ne v_A$.

First consider the case $x_A\ne y_A$.
Then \[
z=(\|u_A-v_A\|/\|x_A-y_A\|)\exp[i\angle(u_A-v_A,x_A-y_A)]
\]
satisfies $z\in \cG(A)\subseteq \cG(\cA)$.
Since $z\in \cG(\cA)$, there is an operator $B\in \cA$ such that $u_B\in Bx_B$ and $v_B\in By_B$ with
\[
\frac{\|u_B-v_B\|^2}{\|x_B-y_B\|^2}=|z|^2,\quad
\frac{\langle u_B-v_B,x_B-y_B\rangle}{\|x_B-y_B\|^2}=\Re z.
\]
Since $h$ represents $\cA$, we have
\[
0\ge 
h\left(\|u_B-v_B\|^2,\|x_B-y_B\|^2,\langle u_B-v_B,x_B-y_B\rangle\right),
\]
and homogeneity gives us
\begin{align*}
0&\ge
h\left(\frac{\|u_B-v_B\|^2}{\|x_B-y_B\|^2},1,\frac{\langle u_B-v_B,x_B-y_B\rangle}{\|x_B-y_B\|^2}\right)\\
&=
h\left(|z|^2,1,\Re z\right)=
h\left(\frac{\|u_A-v_A\|^2}{\|x_A-y_A\|^2},1,\frac{\langle u_A-v_A,x_A-y_A\rangle}{\|x_A-y_A\|^2}\right).
\end{align*}
Finally, by homogeneity we have
\[
h\left(\|u_A-v_A\|^2,\|x_A-y_A\|^2,\langle u_A-v_A,x_A-y_A\rangle\right)\le 0.
\]

Now consider the case $x_A=y_A$ and $u_A\ne v_B$.
Then $A$ is multi-valued and $\infty\in\cG(A)\subseteq \cG(\cA)$.
Since $\infty\in \cG(\cA)$, there is a multi-valued operator $B\in \cA$ such that $u_B\in Bx_B$ and $v_B\in Bx_B$ with $u_B\ne v_B$. This implies
$h(\|u_B-v_B\|^2,0,0)\le 0$. Therefore, $h(\|u_A-v_A\|^2,0,0)\le 0$. 

In conclusion, $(x_A,u_A)$ and $(y_A,v_A)$, which represent arbitrary evaluations of $A$, satisfy the inequality defined by $h$, and we conclude $A\in \cA$.
\qed\end{proof}

\section{Operator and SRG transformations}
\label{s:srg-transformation}
In this section, we show how transformations of operators map to changes in their SRGs.
We then use these results and geometric arguments to analyze convergence of various fixed-point iterations.
The convergence analyses are tight in the sense that they cannot be improved without additional assumptions.


\subsection{SRG intersection}
\begin{theorem} \label{thm:srg-intersection}
If $\cA$ and $\cB$ are SRG-full classes, then  $\cA\cap \cB$ is SRG-full, and
\[
\cG(\cA\cap \mathcal{B})=\cG(\cA)\cap \cG(\mathcal{B}).
\]
\end{theorem}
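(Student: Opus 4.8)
The plan is to prove the two assertions—SRG-fullness of $\cA\cap\cB$ and the set identity $\cG(\cA\cap\cB)=\cG(\cA)\cap\cG(\cB)$—using two elementary facts. First, $\cG$ is monotone with respect to class inclusion: since $\cA\cap\cB\subseteq\cA$ and $\cA\cap\cB\subseteq\cB$, we get $\cG(\cA\cap\cB)\subseteq\cG(\cA)$ and $\cG(\cA\cap\cB)\subseteq\cG(\cB)$ directly from the definition $\cG(\cdot)=\bigcup_{A\in\cdot}\cG(A)$. Second, the SRG of any operator, hence of any class, is symmetric under complex conjugation, because of the $\pm$ appearing in the definition of $\cG$; so $\cG(\cA)\cap\cG(\cB)$ is closed under $z\mapsto\bar z$.

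For SRG-fullness of $\cA\cap\cB$, only the implication $\cG(C)\subseteq\cG(\cA\cap\cB)\Rightarrow C\in\cA\cap\cB$ needs an argument (the converse always holds). Given such a $C$, the monotonicity fact yields $\cG(C)\subseteq\cG(\cA)$ and $\cG(C)\subseteq\cG(\cB)$; since $\cA$ and $\cB$ are SRG-full, this forces $C\in\cA$ and $C\in\cB$, i.e. $C\in\cA\cap\cB$. Notably, this step does not require the set identity.

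For the set identity, $\cG(\cA\cap\cB)\subseteq\cG(\cA)\cap\cG(\cB)$ is again the monotonicity fact, so the content lies in the reverse inclusion. I would take $z\in\cG(\cA)\cap\cG(\cB)$ and produce an operator in $\cA\cap\cB$ whose SRG contains $z$, using Lemma~\ref{lem:complex-srg}. If $z=\infty$, take $C=A_\infty$: then $\cG(C)=\{\infty\}\subseteq\cG(\cA)$ and $\subseteq\cG(\cB)$, so SRG-fullness gives $C\in\cA\cap\cB$, whence $\infty\in\cG(\cA\cap\cB)$. If $z\in\complex$, take $C=A_z$, so $\cG(C)=\{z,\bar z\}$; by the conjugation-symmetry fact $\bar z\in\cG(\cA)\cap\cG(\cB)$ as well, hence $\cG(C)\subseteq\cG(\cA)$ and $\cG(C)\subseteq\cG(\cB)$, and SRG-fullness of $\cA$ and $\cB$ yields $C\in\cA\cap\cB$. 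Therefore $z\in\cG(C)\subseteq\cG(\cA\cap\cB)$, completing the inclusion.

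The one point that requires care—the \emph{main obstacle}, though a mild one—is precisely this reverse inclusion: one cannot conclude $A_z\in\cA$ from $z\in\cG(\cA)$ alone, because SRG-fullness tests membership against the \emph{entire} SRG of the operator, namely $\{z,\bar z\}$. This is exactly where the conjugation symmetry of $\cG$ is used, and it is why the singleton-output constructions $A_z$ and $A_\infty$ of Lemma~\ref{lem:complex-srg}, whose SRGs are as small as possible, are the appropriate test operators.
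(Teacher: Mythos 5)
Your proof is correct and follows essentially the same route as the paper's: both establish SRG-fullness of $\cA\cap\cB$ by the same two-step implication chain, and both prove the reverse inclusion $\cG(\cA)\cap\cG(\cB)\subseteq\cG(\cA\cap\cB)$ by instantiating $A_z$ and $A_\infty$ from Lemma~\ref{lem:complex-srg}. The one place you are more careful than the paper—explicitly invoking conjugation symmetry of $\cG(\cA)$ and $\cG(\cB)$ to pass from $z\in\cG(\cA)\cap\cG(\cB)$ to $\{z,\bar z\}\subseteq\cG(\cA)\cap\cG(\cB)$—is a detail the paper smooths over by simply assuming $\{z,\bar z\}$ from the outset, so your observation is a legitimate and welcome clarification rather than a divergence.
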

The containment $\cG(\cA\cap \mathcal{B})\subseteq \cG(\cA)\cap \cG(\mathcal{B})$ holds regardless of SRG-fullness since, by definition,
$\cG(\cA \cap \cB)= \{\cG(A)\,|\, A\in \cA,\, A\in \cB\}$
and 
$\cG(\cA) \cap \cG(\cB)= \{\cG(A) \cap \cG(B)\,|\, A\in \cA,\, B\in \cB\}$.
Therefore, the substance of Theorem~\ref{thm:srg-intersection} is $\cG(\cA\cap \mathcal{B})\supseteq\cG(\cA)\cap \cG(\mathcal{B})$.
This result is useful for setups with multiple assumptions on a single operator such as
Facts~\ref{thm:cont-sm-Lipschitz}, \ref{prop:sm-coco-forward}, \ref{prop:refl-sm-coco}.
A similar result holds with the union.

\begin{proof}
Since $\cA$ and $\cB$ are SRG-full
\begin{align*}
\cG(C)\subseteq\cG(\cA\cap\cB)\subseteq \cG(\cA)\cap \cG(\cB)
\quad&\Rightarrow\quad \cG(C)\subseteq \cG(\cA)\text{ and }\cG(C)\subseteq \cG(\cB)\\
\quad&\Rightarrow\quad C\in \cA\text{ and }C\in \cB\\
\quad&\Rightarrow\quad C\in \cA\cap \cB
\end{align*}
for an operator $C$, and we conclude $\cA\cap\cB$ is SRG-full.

Assume $z\in\complex$ satisfies $\{z,\bar{z}\}\subseteq \cG(\cA)\cap \cG(\cB)$.
Then $A_z$ of Lemma~\ref{lem:complex-srg} satisfies $\cG(A_z)=\{z,\bar{z}\}\subseteq \cG(\cA)\cap \cG(\cB)$.
Since $\cA$ and $\cB$ are SRG-full, $A_z\in \cA$ and $A_z\in \cB$
and $\{z,\bar{z}\}=\cG(A_z)\subseteq \cG(\cA\cap \cB)$.
If $\infty\in \cG(\cA)\cap \cG(\cB)$, then
a similar argument using $A_\infty$ of Lemma~\ref{lem:complex-srg} proves $\infty \in \cG(\cA\cap \cB)$.
Therefore $\cG(\cA)\cap \cG(\mathcal{B})\subseteq \cG(\cA\cap \mathcal{B})$.
Since the other containment $\cG(\cA\cap \mathcal{B})\subseteq \cG(\cA)\cap \cG(\mathcal{B})$ holds by definition, we have the equality.
\qed\end{proof}

\subsection{SRG scaling and translation}
\begin{theorem}
\label{thm:srg-scaling-translation}
Let $\alpha\in \reals$ and $\alpha\ne 0$.
If $\cA$ is a class of operators, then 
\[
\cG(\alpha \cA)=\cG(\cA\alpha )=\alpha\cG(\cA),\qquad\cG(I+ \cA)=1+\cG(\cA).
\]
If $\cA$ is furthermore SRG-full, then $\alpha \cA$, $\cA\alpha$, and $I+ \cA$ are SRG-full.
\end{theorem}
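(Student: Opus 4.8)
The plan is to prove each of the three identities by unwinding the definition of the SRG in terms of the pointwise construction $z = (\|u-v\|/\|x-y\|)\exp[\pm i\angle(u-v,x-y)]$, establishing a bijective correspondence between evaluation pairs of an operator $A$ and evaluation pairs of the transformed operator. For the scaling identity $\cG(\alpha A) = \alpha\cG(A)$: if $u\in Ax$ and $v\in Ay$, then $\alpha u\in(\alpha A)x$ and $\alpha v\in(\alpha A)y$, so the magnitude scales by $|\alpha|$ while the angle is preserved when $\alpha>0$ and flipped by $\pi$ when $\alpha<0$; since $\cG$ is symmetric about the real axis, in both cases the point $z$ maps to $\alpha z$ (interpreting $\alpha z$ in $\ecomplex$, with $\alpha\cdot\infty=\infty$ since $\alpha\ne 0$), and this correspondence is a bijection on evaluation pairs, so $\cG(\alpha A)=\alpha\cG(A)$; taking the union over $A\in\cA$ gives $\cG(\alpha\cA)=\alpha\cG(\cA)$. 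For the pre-scaling identity $\cG(A\alpha)=\alpha\cG(\cA)$: the pair $(x,u)\in A$ corresponds to $(\alpha^{-1}x, u)\in A\alpha$; replacing $x,y$ by $\alpha^{-1}x, \alpha^{-1}y$ scales $\|x-y\|$ by $|\alpha^{-1}|$ and leaves the inner product angle unchanged up to sign, so again the point $z$ moves to $\alpha z$ — hence $\cG(A\alpha)=\alpha\cG(A)$ and the union version follows. For the translation identity $\cG(I+A)=1+\cG(A)$: the pair $(x,u)\in A$ corresponds to $(x, x+u)\in (I+A)$, so the output difference becomes $(x+u)-(y+v) = (x-y)+(u-v)$; using the representation \eqref{eq:srg-alternative2}, $\Re z$ becomes $\langle (x-y)+(u-v),x-y\rangle/\|x-y\|^2 = 1 + \langle u-v,x-y\rangle/\|x-y\|^2$ and $\Im z$ is unchanged, so $z\mapsto 1+z$; when $A$ is multi-valued, $I+A$ is too, and $\infty = 1+\infty$, so $\cG(I+A)=1+\cG(A)$, and again take unions.

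For the SRG-fullness claims, I would invoke Theorem~\ref{thm:srg-full} by exhibiting representing functions. Suppose $\cA$ is SRG-full; by the remark following Theorem~\ref{thm:srg-full} it need not a priori be representable by an $h$, so instead I would argue directly from the definition of SRG-fullness: $\cB$ is SRG-full iff ($B\in\cB \Leftrightarrow \cG(B)\subseteq\cG(\cB)$). Given an operator $B$ with $\cG(B)\subseteq\cG(\alpha\cA) = \alpha\cG(\cA)$, we have $\cG(\alpha^{-1}B)\subseteq\cG(\cA)$ by the scaling identity just proved (applied with $\alpha^{-1}$), so $\alpha^{-1}B\in\cA$ by SRG-fullness of $\cA$, hence $B = \alpha(\alpha^{-1}B)\in\alpha\cA$ — this gives SRG-fullness of $\alpha\cA$. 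The same scheme works for $\cA\alpha$ (using that $\cG(\cdot)$-containment pulls back under pre-scaling) and for $I+\cA$ (if $\cG(B)\subseteq 1+\cG(\cA)$ then $\cG(B - I) = -1 + \cG(B)\subseteq\cG(\cA)$ by the translation identity applied with $-1$ in place of $1$, so $B-I\in\cA$, hence $B\in I+\cA$; here I need $\cG(B-I)=\cG(B)-1$, which is the translation identity noting $-I + (I+A) = A$ and the map is involutive).

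The main obstacle I anticipate is bookkeeping around the exceptional element $\infty\in\ecomplex$ and around the sign ambiguity in the angle: I must be careful that each claimed pointwise map ($z\mapsto\alpha z$, $z\mapsto 1+z$) genuinely respects the $\pm$-symmetrization built into $\cG$, and that it behaves correctly at $\infty$ (here the hypothesis $\alpha\ne 0$ is exactly what prevents $\alpha\cdot\infty$ or $1/\alpha\cdot 0$ from being the forbidden $0\cdot\infty$). A secondary subtlety is the case $x=y$, $u\ne v$ in the translation identity: then $(x-y)+(u-v) = u-v\ne 0$ but $x-y=0$, so the transformed pair still has $x=y$ and contributes $\infty$ to $\cG(I+A)$, matching $1+\infty=\infty$; I should state this explicitly rather than let it fall through the general computation. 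Everything else is a routine chase through the definitions, and since the three identities are used to derive the SRG-fullness conclusions, I would present the identities first and the fullness arguments second, exactly as above.
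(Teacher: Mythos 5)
Your proof is correct and follows essentially the same route as the paper's: the identities are obtained by unwinding the definition of $\cG$ (the paper says they ``follow directly from the definition and \eqref{eq:srg-alternative2}''), and the SRG-fullness claims are established via the reversibility argument $\cG(B)\subseteq\cG(\alpha\cA)\Rightarrow\cG(\alpha^{-1}B)\subseteq\cG(\cA)\Rightarrow\alpha^{-1}B\in\cA\Rightarrow B\in\alpha\cA$ (and likewise for $\cA\alpha$ and $I+\cA$), which is word for word the paper's strategy. You supply more of the pointwise bookkeeping — the conjugate-pair symmetry that makes $z\mapsto\alpha z$ well defined for $\alpha<0$, and the preservation of the $\infty$ point — than the paper spells out, which is a sound way to fill in the gap.
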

\begin{proof}
 $\cG(\alpha A)=\alpha\cG(A)$ and $\cG(A\alpha )=\alpha\cG(A)$ follow from the definition of the SRG, and 
 $\cG(I+ A)=1+\cG(A)$ follows from  \eqref{eq:srg-alternative2}.
The scaling and translation operations are reversible and $\cG((1/\alpha)\cA)=\cG(\cA(1/\alpha))=(1/\alpha)\cG(\cA)$ and $\cG(\cA-I)=\cG(A)-1$.
For any $B\colon\hilbert\rightrightarrows\hilbert$,
\[
\cG(B)\subseteq \cG(\alpha \cA)
\quad\Rightarrow\quad
\cG((1/\alpha)B)\subseteq \cG(\cA)
\quad\Rightarrow\quad
(1/\alpha)B\in \cA
\quad\Rightarrow\quad
B\in\alpha \cA,
\]
and we conclude $\alpha \cA$ is SRG-full.
By a similar reasoning,  $\cA\alpha$ and $I+ \cA$ are SRG-full.
\qed\end{proof}
Since a class of operators can consist of a single operator,
if $A\colon\hilbert\rightrightarrows\hilbert$, then
\[
\cG(\alpha A)=\cG(A\alpha )=\alpha\cG(A),\qquad\cG(I+ A)=1+\cG(A).
\]
To clarify, $\alpha\cG(A)$ corresponds to scaling $\cG(A)\subseteq \ecomplex$ by $|\alpha|$ and reflecting about the vertical axis (imaginary axis) if $\alpha<0$.

\subsubsection{Convergence analysis: gradient descent}
Consider the optimization problem
\begin{equation}
\begin{array}{ll}
\underset{x\in \cH}{\mbox{minimize}}& f(x),
\end{array}
\end{equation}
where $f$ is a differentiable function with a minimizer.
Consider gradient descent \cite{cauchy1847}
\begin{equation}
x^{k+1}=x^k-\alpha \nabla f(x^k)
\tag{GD}
\label{eq:gd}
\end{equation}
where $\alpha>0$ and $x^0\in\hilbert$ is a starting point.
We can use the Krasnosel'ski\u{\i}--Mann theorem to establish convergence of \eqref{eq:gd}.

\begin{fact}
\label{fact:gd-conv}
Assume $f$ is convex and $L$-smooth with $L>0$.
For $\alpha\in (0,2/L)$,
the iterates of \eqref{eq:gd} converge in that $x^k\rightarrow x^\star$ weakly for some $x^\star$ such that $\nabla f(x^\star)=0$.
\end{fact}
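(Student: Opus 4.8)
The strategy is to recognize the gradient descent operator $T_\alpha = I - \alpha\nabla f$ as an averaged operator and then invoke the Krasnosel'ski\u{\i}--Mann theorem, exactly following the rubric laid out in the introduction. The key observation is that for $f\in\cF_{0,L}$ the class $\partial\cF_{0,L} = \{\nabla f\}$ has SRG contained in the disk of radius $L$ centered at $L/2$ on the real axis (Proposition~\ref{proposition:cvx-srg-first}), equivalently $\nabla f$ is $(1/L)$-cocoercive. So the real work is a geometric containment check in the 2D plane: show that $\cG(I - \alpha\nabla f)$ lands inside $\cG(\cN_\theta)$ for an appropriate $\theta\in(0,1)$ whenever $\alpha\in(0,2/L)$.

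\textbf{Key steps, in order.} First, by Theorem~\ref{thm:srg-scaling-translation}, $\cG(I-\alpha\nabla f) = 1 - \alpha\,\cG(\nabla f)$; this is the disk $\cG(\partial\cF_{0,L})$ scaled by $\alpha$ and reflected across the imaginary axis, then shifted right by $1$. Concretely, the disk of radius $L$ centered at $L/2$ becomes the disk of radius $\alpha L$ centered at $1 - \alpha L/2$. Second, recall from Proposition~\ref{prop:monotone-srg} that $\cG(\cN_\theta)$ is the disk of radius $\theta$ centered at $1-\theta$ (it passes through $1$ and $1-2\theta$), which is exactly the set $\{z : |z|^2 + (1-2\theta) \le 2(1-\theta)\Re z\}$. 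Third, pick $\theta = \alpha L/2$; since $\alpha\in(0,2/L)$ we have $\theta\in(0,1)$, and the disk of radius $\alpha L$ centered at $1-\alpha L/2$ is precisely the disk of radius $2\theta$ centered at $1-\theta$ — wait, this is radius $2\theta$, not $\theta$, so it is \emph{not} inside $\cG(\cN_\theta)$. The correct choice requires $I - \alpha\nabla f$ to be $\theta$-averaged with $\theta = \alpha L/2$: indeed the standard fact is that $\alpha L < 2$ makes $I-\alpha\nabla f$ averaged, and the averaging constant is $\alpha L/2$. One checks this directly: $\cG(\cN_{\alpha L/2})$ is the disk of radius $\alpha L/2$ centered at $1 - \alpha L/2$, which contains the disk of radius $\alpha L$ centered at... no. Let me instead use the cleanest route: $I - \alpha\nabla f$ being $\theta$-averaged is equivalent to $\alpha\nabla f$ being $\theta$-averaged composed appropriately; more directly, use that $\nabla f \in \cC_{1/L}$ means $\alpha\nabla f \in \cC_{1/(\alpha L)}$, and the identity $\cN_\theta = I - \theta\,(2\cC_{1}/\theta - I)$-type relation — the upshot is the known equivalence ``$\nabla f$ is $\beta$-cocoercive and $\alpha < 2\beta$ $\Rightarrow$ $I - \alpha\nabla f$ is $\alpha/(2\beta)$-averaged''. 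With $\beta = 1/L$ this gives averaging constant $\theta = \alpha L/2 \in (0,1)$. The geometric verification: $\cG(I-\alpha\nabla f)$ is the disk centered at $1-\alpha L/2$ of radius $\alpha L/2$ — this follows because the circle $\{z : \Re z = \beta|z|^2\}$ bounding $\cG(\cC_\beta)$, after $z\mapsto 1-\alpha z$, maps to the circle bounding $\cG(\cN_{\alpha/(2\beta)})$; one verifies the two defining quadratics agree. Then $\cG(I-\alpha\nabla f)\subseteq\cG(\cN_\theta)$, and since $\cN_\theta$ is SRG-full (Theorem~\ref{thm:srg-full}, $h = a + (1-2\theta)b - 2(1-\theta)c$), we conclude $I-\alpha\nabla f\in\cN_\theta$.

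\textbf{Finish.} Having shown $T_\alpha = I - \alpha\nabla f$ is $\theta$-averaged with $\theta\in(0,1)$, apply the Krasnosel'ski\u{\i}--Mann theorem quoted in the introduction: the iterates $x^{k+1} = T_\alpha x^k$ converge weakly to a fixed point of $T_\alpha$, provided one exists. A fixed point of $T_\alpha$ is exactly a point $x^\star$ with $\nabla f(x^\star) = 0$, i.e., a minimizer, which exists by hypothesis. Hence $x^k \rightharpoonup x^\star$ with $\nabla f(x^\star) = 0$.

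\textbf{Main obstacle.} The only real content is the geometric containment $1 - \alpha\,\cG(\partial\cF_{0,L}) \subseteq \cG(\cN_{\alpha L/2})$ — i.e., verifying that the inversive/affine image of the cocoercivity disk under $z\mapsto 1-\alpha z$ sits inside the averagedness disk, which reduces to matching two explicit quadratic inequalities in $\Re z, |z|^2$. Everything else (the transformation rules, SRG-fullness of $\cN_\theta$, and the Krasnosel'ski\u{\i}--Mann step) is cited machinery. The subtle point to get right is the bookkeeping on the averaging constant: $\theta = \alpha L/2$, and one must confirm $\alpha \in (0, 2/L) \iff \theta \in (0,1)$.
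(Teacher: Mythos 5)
Your proposal follows the same route as the paper: use Propositions~\ref{prop:monotone-srg} and~\ref{proposition:cvx-srg-first} together with Theorem~\ref{thm:srg-scaling-translation} to pin down $\cG(I-\alpha\nabla f)$, place it inside $\cG(\cN_\theta)$ for a $\theta\in(0,1)$, invoke SRG-fullness of $\cN_\theta$ (Theorem~\ref{thm:srg-full}) to upgrade the SRG containment to class membership, and finish with the Krasnosel'ski\u{\i}--Mann theorem. Your final answer, $\theta=\alpha L/2$ with validity exactly for $\alpha\in(0,2/L)$, is correct.

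There is, however, an avoidable arithmetic slip in the middle that generates your visible detour. You write that $\cG(\partial\cF_{0,L})$ is ``the disk of radius $L$ centered at $L/2$,'' but by Proposition~\ref{proposition:cvx-srg-first} (and equivalently from your own remark that $\nabla f\in\cC_{1/L}$, whose SRG from Proposition~\ref{prop:monotone-srg} is $\{z:\Re z\ge|z|^2/L\}$) that SRG is the disk of \emph{radius $L/2$} centered at $L/2$, i.e., the disk through $0$ and $L$. Carrying the wrong radius through $z\mapsto 1-\alpha z$ produces a phantom disk of radius $\alpha L$, which is precisely what triggers the ``wait\dots not inside $\cG(\cN_\theta)$'' confusion and the subsequent detour through the cocoercivity formulation. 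With the correct radius the computation is a single line: $1-\alpha\,\cG(\partial\cF_{0,L})$ is the disk of radius $\alpha L/2$ centered at $1-\alpha L/2$, which is exactly $\cG(\cN_{\alpha L/2})$, and $\alpha L/2\in(0,1)$ iff $\alpha\in(0,2/L)$. No appeal to the Baillon--Haddad/cocoercivity characterization is needed. (For what it's worth, the paper's figure labels the bounding disk $\cG(\cN_{1-\alpha L/2})$, but the disk actually drawn there is centered at $1-\alpha L/2$ with radius $\alpha L/2$ and passes through $1$ and $1-\alpha L$, i.e., it is $\cG(\cN_{\alpha L/2})$; your averagedness constant is the right one.)
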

\begin{proof}
By Propositions~\ref{prop:monotone-srg} and \ref{proposition:cvx-srg-first} and Theorem~\ref{thm:srg-scaling-translation}, we have the geometry
\begin{center}
\raisebox{-.5\height}{
\begin{tikzpicture}[scale=1.2]
\draw[dashed] (0,0) circle (1);
\draw (-1.65,-.25) node [fill=white] {$\cG\left(I-\alpha \nabla f \right)\subseteq $};
\fill[fill=medgrey] (0.125,0) circle (0.875);
\draw [<->] (-1.2,0) -- (1.2,0);
\draw [<->] (0,-1.2) -- (0,1.2);
\filldraw (-0.75,0) circle ({0.6*1.5/1.5pt});
\draw (-0.0,0.05) node [fill=medgrey, above] {\phantom{$1-\alpha L$}};
\draw (-0.25,0.05) node [above] {$1-\alpha L$};
\filldraw (1,0) circle ({0.6*1.5/1.2pt});
\def\w{-0.8};
\draw[->] (0.9,-.9)--({1/8*(1+sqrt(49-64*(\w)^2))},\w);
\draw (0.9,-.9) node [right] {$\cG(\cN_{\alpha L/2})$};
\end{tikzpicture}}
\end{center}
Since $\cN_\theta$ is SRG-full by Theorem~\ref{thm:srg-full}, the containment of the SRG in $\ecomplex$ equivalent to the containment of the class.
Therefore $I-\alpha \nabla f$ is averaged, and the iteration converges by the Krasnosel'ski\u{\i}--Mann theorem.
\qed\end{proof}

With stronger assumptions, we can establish an exponential rate of convergence for \eqref{eq:gd}.

\begin{fact}
Assume $f$ is $\mu$-strongly convex and $L$-smooth with $0<\mu<L<\infty$.
For $\alpha\in (0,2/L)$, the iterates of \eqref{eq:gd} converge exponentially to the minimizer $x^\star$ with rate 
\[
\|x^k-x^\star\|\le
\left(\max\{|1-\alpha\mu|,|1-\alpha L|\}\right)^k
\|x^0-x^\star\|
.
\]
\end{fact}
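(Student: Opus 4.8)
The plan is to use the SRG machinery exactly as in the proof of Fact~\ref{fact:gd-conv}, but now with the two-sided bound coming from strong convexity. First I would invoke Proposition~\ref{proposition:cvx-srg-first}, which gives $\cG(\partial\cF_{\mu,L})\subseteq\{z\in\complex\,|\,\mu\le\Re z,\ |z-\tfrac{\mu+L}{2}|\le\tfrac{L-\mu}{2}\}$; since $f$ is $\mu$-strongly convex and $L$-smooth, $\nabla f\in\partial\cF_{\mu,L}$, so $\cG(\nabla f)$ lies in the disk of center $(\mu+L)/2$ and radius $(L-\mu)/2$, i.e.\ in the segment of the real axis $[\mu,L]$ as far as real parts go but more precisely in that disk. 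Then by Theorem~\ref{thm:srg-scaling-translation}, $\cG(I-\alpha\nabla f)=1-\alpha\cG(\nabla f)$ is contained in the disk centered at $1-\alpha(\mu+L)/2$ with radius $\alpha(L-\mu)/2$. The key geometric observation is that every point of this disk has magnitude at most $\max\{|1-\alpha\mu|,|1-\alpha L|\}$: the farthest points of a disk on the real axis from the origin are its two real-axis endpoints, which here are exactly $1-\alpha\mu$ and $1-\alpha L$, and since the disk is centered on the real axis, no point off the axis can be farther from the origin than the more distant of these two endpoints. Hence $\cG(I-\alpha\nabla f)\subseteq\{z\,|\,|z|\le L_\alpha\}$ where $L_\alpha=\max\{|1-\alpha\mu|,|1-\alpha L|\}$, and I would note $L_\alpha<1$ precisely when $\alpha\in(0,2/L)$.

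Next I would translate this containment back into an operator-norm statement. The class $\cL_{L_\alpha}$ is SRG-full (it is represented by $h(a,b,c)=a-L_\alpha^2 b$, as recorded after Theorem~\ref{thm:srg-full}) and $\cG(\cL_{L_\alpha})=\{z\,|\,|z|^2\le L_\alpha^2\}$ by Proposition~\ref{prop:monotone-srg}. Therefore $\cG(I-\alpha\nabla f)\subseteq\cG(\cL_{L_\alpha})$ implies, by SRG-fullness (Theorem~\ref{thm:srg-full}), that $I-\alpha\nabla f\in\cL_{L_\alpha}$, i.e.\ the gradient-descent operator $T=I-\alpha\nabla f$ is $L_\alpha$-Lipschitz. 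If $\alpha\in(0,2/L)$ then $L_\alpha<1$, so $T$ is a contraction, and by the Banach contraction principle the iterates converge to the unique fixed point $x^\star$ of $T$, which satisfies $\nabla f(x^\star)=0$ and hence is the (unique, by strong convexity) minimizer, with $\|x^k-x^\star\|=\|Tx^{k-1}-Tx^\star\|\le L_\alpha\|x^{k-1}-x^\star\|$ and thus $\|x^k-x^\star\|\le L_\alpha^k\|x^0-x^\star\|$, which is the claimed bound.

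The main obstacle is essentially just the elementary geometric lemma that a disk centered on the real axis, of center $c\in\reals$ and radius $r$, satisfies $\max_{z\in D}|z|=\max\{|c-r|,|c+r|\}$. This is intuitively obvious and can be argued in one line — write $z=c+\rho e^{i\phi}$ with $\rho\le r$, then $|z|^2=c^2+2c\rho\cos\phi+\rho^2$, which as a function of $(\rho,\cos\phi)\in[0,r]\times[-1,1]$ is maximized at a corner, giving $(c\pm r)^2$ — so I would include it inline rather than as a separate lemma. A secondary point worth stating carefully is the identity $\max\{|1-\alpha\mu|,|1-\alpha L|\}<1\iff\alpha\in(0,2/L)$: for $\alpha>0$ we have $|1-\alpha\mu|<1\iff\alpha<2/\mu$ and $|1-\alpha L|<1\iff\alpha<2/L$, and since $\mu<L$ the binding constraint is $\alpha<2/L$; this confirms the hypothesis $\alpha\in(0,2/L)$ is exactly what makes the contraction factor strictly less than one.
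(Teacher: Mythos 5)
Your proof is correct and follows essentially the same route as the paper: characterize $\cG(\partial\cF_{\mu,L})$ via Proposition~\ref{proposition:cvx-srg-first}, apply the scaling/translation theorem to get a disk for $\cG(I-\alpha\nabla f)$, enclose that disk in the disk of radius $\max\{|1-\alpha\mu|,|1-\alpha L|\}$, and pass to operator Lipschitz continuity via SRG-fullness; the paper simply packages these steps as an appeal to Fact~\ref{fact:forward-step}, where the disk-containment is established by figure rather than by the algebraic one-liner you supply. (As a side note, your representing function $h(a,b,c)=a-L_\alpha^2 b$ for $\cL_{L_\alpha}$ is the correct one; the $h=a-Lb$ appearing in the paper's bullet list is a typo.)
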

\begin{proof}
This follows from Fact~\ref{fact:forward-step}, which we state and prove below.
\qed\end{proof}

\begin{fact} \label{fact:forward-step}
Let $0<\mu<L<\infty$ and $\alpha\in (0,\infty)$.
If $\cA= \partial \cF_{\mu,L}$, then
$I-\alpha\cA\subseteq \cL_R$
for 
\[
R= \max\{|1-\alpha\mu|,|1-\alpha L|\}.
\]
This result is tight in the sense that $I-\alpha\cA\nsubseteq \cL_R$ for any smaller value of $R$.
\end{fact}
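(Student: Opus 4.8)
The plan is to turn the claim into a statement about disks in $\ecomplex$ using the scaling/translation rule, and then to invoke SRG-fullness of the \emph{target} class $\cL_R$. First I would recall from Proposition~\ref{proposition:cvx-srg-first} that $\cG(\partial\cF_{\mu,L})$ is the closed disk whose horizontal diameter has endpoints $\mu$ and $L$, i.e.\ the disk centered at $(\mu+L)/2$ on the real axis with radius $(L-\mu)/2$. Since $\alpha>0$, Theorem~\ref{thm:srg-scaling-translation} gives $\cG(I-\alpha\cA)=1+(-\alpha)\cG(\cA)$. Scaling $\cG(\cA)$ by $-\alpha$ (scale by $\alpha$, reflect across the imaginary axis) produces the closed disk with horizontal diameter $[-\alpha L,-\alpha\mu]$, and adding $1$ shifts it to the closed disk $D$ whose horizontal diameter has endpoints $1-\alpha L$ and $1-\alpha\mu$, i.e.\ the disk centered at $1-\alpha(\mu+L)/2$ with radius $\alpha(L-\mu)/2$.

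Next I would use the elementary observation that for a disk centered on the real axis with center $c\in\reals$ and radius $r\ge 0$ one has $\max_{|z-c|\le r}|z|=|c|+r=\max\{|c-r|,|c+r|\}$; that is, the largest modulus is attained at an endpoint of the horizontal diameter. Applied to $D$ this yields $\sup_{z\in D}|z|=\max\{|1-\alpha L|,|1-\alpha\mu|\}=R$, so $\cG(I-\alpha\cA)=D\subseteq\{z\in\complex\,:\,|z|\le R\}=\cG(\cL_R)$. To pass from this containment of SRGs to a containment of classes I cannot appeal to SRG-fullness of $\partial\cF_{\mu,L}$ (it is not SRG-full, as noted after Theorem~\ref{thm:srg-full}); instead I would use that $\cL_R$ \emph{is} SRG-full by Theorem~\ref{thm:srg-full}: for every $B\in I-\alpha\cA$ we have $\cG(B)\subseteq\cG(I-\alpha\cA)\subseteq\cG(\cL_R)$, hence $B\in\cL_R$, and therefore $I-\alpha\cA\subseteq\cL_R$. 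This step — applying fullness to the target class rather than the source class — is the main subtlety; the rest is routine bookkeeping with disks.

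For tightness, fix $R'<R$. Since $R$ equals $|1-\alpha\mu|$ or $|1-\alpha L|$, set $f(x)=\tfrac{\mu}{2}\|x\|^2$ in the first case and $f(x)=\tfrac{L}{2}\|x\|^2$ in the second; in either case $f\in\cF_{\mu,L}$ (it is lower semicontinuous, proper, $\mu$-strongly convex, and $L$-smooth, since its gradient is Lipschitz with constant $\mu$ or $L$, both $\le L$), so $\nabla f\in\partial\cF_{\mu,L}=\cA$. Then $I-\alpha\nabla f$ is the scalar operator $(1-\alpha\mu)I$ or $(1-\alpha L)I$, whose Lipschitz constant is exactly $R>R'$; hence this operator lies in $I-\alpha\cA$ but not in $\cL_{R'}$, so $I-\alpha\cA\nsubseteq\cL_{R'}$. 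This shows $R$ is the smallest constant for which the inclusion holds, completing the proof.
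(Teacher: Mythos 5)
Your proposal is correct and follows essentially the same route as the paper: characterize $\cG(\partial\cF_{\mu,L})$ via Proposition~\ref{proposition:cvx-srg-first}, push it through $z\mapsto 1-\alpha z$ using Theorem~\ref{thm:srg-scaling-translation}, verify the resulting disk sits inside the disk of radius $R$, and then pass from SRGs to classes via SRG-fullness of the \emph{target} $\cL_R$ (Theorem~\ref{thm:srg-full}) — exactly the step the paper relies on. The only small difference is in tightness: the paper simply reads off from the picture that the shifted disk touches the circle of radius $R$, while you exhibit the explicit quadratic $f(x)=\tfrac{\mu}{2}\|x\|^2$ or $\tfrac{L}{2}\|x\|^2$ whose gradient step realizes the extreme point; both are valid, and yours is a touch more self-contained.
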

\begin{proof}
By Proposition~\ref{proposition:cvx-srg-first} and Theorem~\ref{thm:srg-scaling-translation}, we have the geometry
\begin{center}
\begin{tikzpicture}[scale=1]
\def\x{-0.6};
\def\w{0.8};
\fill [fill=lightgrey] (0,0) circle (1);

\fill[fill=medgrey] (-0.35,0) circle (0.65);
\draw [<->] (-1.2,0) -- (1.2,0);
\draw [<->] (0,-1.2) -- (0,1.2);
\draw (-1,0)node [above left] {$1-\alpha L$};
\filldraw (-1,0) circle ({0.6*1.5/1pt});
\filldraw (.3,0) circle ({0.6*1.5/1pt});
\draw (0.3,0) node [above right] {$1-\alpha \mu$};
\draw[->] (-0.8,-1.)--(\x,{-sqrt(3-7*\x-10*((\x)^2))/sqrt(10)});
\draw (-0.8,-1.) node [below] {$\cG\left(I-\alpha \cA\right)$};
\draw[->] (1.2,-.6)--(\w,{-sqrt(1-\w^2)});
\draw (1.2,-.6) node [right] {$\cG\left(\cL_R\right)$};
\draw (.6,-.8) node [below right] {$R={\max\{|1-\alpha\mu|,|1-\alpha L|\}}$};
\end{tikzpicture}
\end{center}
The containment of $\cG(I-\alpha\cA)$ holds for $R$ and fails for smaller $R$.
Since $\cL_R$ is SRG-full by Theorem~\ref{thm:srg-full}, the containment of the SRG in $\ecomplex$ equivalent to the containment of the class.
\qed\end{proof}


\subsubsection{Convergence analysis: forward step method}
Consider the monotone inclusion problem
\[
\mbox{find $x\in \cH$ \quad such that \quad }0\in Ax
\]
where $A$ is a maximal monotone operator with a zero.
Consider the forward step method \cite{bruck1977}
\begin{equation}
x^{k+1}=x^k-\alpha A(x^k)
\tag{FS}
\label{eq:fs}
\end{equation}
where $\alpha>0$ and $x^0\in\hilbert$ is a starting point.
The forward step method is analogous to gradient descent.
Under the following two setups, \eqref{eq:fs} converges exponentially.

\begin{fact}
Assume $A$ is $\mu$-strongly monotone and $L$-Lipschitz with $0<\mu<L<\infty$.
For $\alpha\in (0,2\mu/L)$, the iterates of \eqref{eq:fs} converge exponentially to the zero $x^\star$ with rate 
\[
\|x^k-x^\star\|\le
\left(1-2\alpha\mu+\alpha^2 L^2\right)^{k/2}
\|x^0-x^\star\|
.
\]
\end{fact}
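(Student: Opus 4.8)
The plan is to certify that $T:=I-\alpha A$ is a contraction by reading off its SRG, and then to invoke the Banach fixed-point theorem. First, since $A$ is $\mu$-strongly monotone and $L$-Lipschitz, $A\in\cM_\mu\cap\cL_L$. Both $\cM_\mu$ and $\cL_L$ are SRG-full by Theorem~\ref{thm:srg-full}, so Theorem~\ref{thm:srg-intersection} gives that $\cM_\mu\cap\cL_L$ is SRG-full and
\[
\cG(\cM_\mu\cap\cL_L)=\cG(\cM_\mu)\cap\cG(\cL_L)=\{z\in\complex\,|\,\Re z\ge\mu,\ |z|\le L\},
\]
the point $\infty$ dropping out because $L$-Lipschitz operators are single-valued (and this set is nonempty precisely because $\mu<L$). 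Note also that, since $A$ is $L$-Lipschitz and maximal monotone, it is single-valued and defined on all of $\hilbert$, so the iteration $x^{k+1}=Tx^k$ is well defined.

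Next, by Theorem~\ref{thm:srg-scaling-translation}, $\cG(I-\alpha A)\subseteq 1-\alpha\,\cG(\cM_\mu\cap\cL_L)$, which is the intersection of the closed disk of radius $\alpha L$ centered at $1$ with the half-plane $\{w\,|\,\Re w\le 1-\alpha\mu\}$ --- a circular segment. The geometric heart of the argument is to compute the largest modulus attained on this segment. Writing $|1-\alpha z|^2=1-2\alpha\Re z+\alpha^2|z|^2$, one sees this quantity is nondecreasing in $|z|$ and nonincreasing in $\Re z$ over the region $\{\Re z\ge\mu,\ |z|\le L\}$, so its maximum is attained at the two corner points $z=\mu\pm i\sqrt{L^2-\mu^2}$, where the chord $\Re z=\mu$ meets the arc $|z|=L$, and equals $R^2:=1-2\alpha\mu+\alpha^2L^2$. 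Hence $\cG(I-\alpha A)\subseteq\{w\,|\,|w|\le R\}=\cG(\cL_R)$.

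Since $\cL_R$ is SRG-full, Theorem~\ref{thm:srg-full} upgrades this containment of SRGs to $I-\alpha A\in\cL_R$, i.e.\ $T=I-\alpha A$ is $R$-Lipschitz; for $\alpha$ in the admissible range, $R=\sqrt{1-2\alpha\mu+\alpha^2L^2}<1$, so $T$ is a contraction. Because $x^\star$ is a zero of $A$ if and only if $x^\star=Tx^\star$, the Banach contraction principle applied to $x^{k+1}=Tx^k$ yields $x^k\to x^\star$ with $\|x^k-x^\star\|\le R^k\|x^0-x^\star\|$, which is exactly the claimed rate.

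The main obstacle is the elementary-but-careful geometric optimization in the second paragraph: one must confirm that the point of the circular segment $1-\alpha\,\cG(\cM_\mu\cap\cL_L)$ farthest from the origin lies at the corner where the chord meets the arc, rather than in the relative interior of either boundary piece. This reduces to the monotonicity observation above, after which the value $R^2=1-2\alpha\mu+\alpha^2L^2$ is immediate; everything else in the proof is bookkeeping with the transformation rules already established.
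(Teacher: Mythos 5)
Your route is essentially the same as the paper's: certify the SRG containment $\cG(I-\alpha A)\subseteq\cG(\cL_R)$ via the characterizations of $\cG(\cM_\mu)$ and $\cG(\cL_L)$, the intersection/scaling/translation theorems, and SRG-fullness, then invoke a contraction principle. The only real difference is cosmetic: where the paper's proof of Fact~\ref{thm:cont-sm-Lipschitz} locates the enclosing circle with a Pythagorean-theorem diagram (points $O,A,B,C$), you coordinatize and maximize $|1-\alpha z|^2=1-2\alpha\Re z+\alpha^2|z|^2$ over the segment $\{\Re z\ge\mu,\,|z|\le L\}$ algebraically. Both arguments are correct and yield the same corner points $z=\mu\pm i\sqrt{L^2-\mu^2}$; the paper's diagrammatic version is more in the spirit of the SRG program, but yours is a perfectly valid rendering of the same geometric fact.

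One step does not hold as written: you assert that ``for $\alpha$ in the admissible range, $R=\sqrt{1-2\alpha\mu+\alpha^2 L^2}<1$.'' The inequality $1-2\alpha\mu+\alpha^2 L^2<1$ is equivalent to $\alpha<2\mu/L^2$, not $\alpha<2\mu/L$. With $\mu=1$, $L=2$, $\alpha=0.8\in(0,2\mu/L)=(0,1)$, one gets $R^2=1.96>1$, so $I-\alpha A$ is not a contraction on that range. The SRG containment you derive is correct for every $\alpha>0$ (that is all Fact~\ref{thm:cont-sm-Lipschitz} claims); the failure is only in the final passage from $R$ to ``contraction.'' This issue is inherited from the stated hypothesis of the Fact, which appears to contain a typo ($2\mu/L$ for $2\mu/L^2$); a proof written without seeing the paper's own argument should have noticed that the stated step-size bound does not yield $R<1$ and flagged it rather than asserting the contraction without checking.
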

\begin{proof}
This follows from Fact~\ref{thm:cont-sm-Lipschitz}, which we state and prove below.
\qed\end{proof}

\begin{fact}[Proposition 26.16 \cite{BCBook}]
\label{thm:cont-sm-Lipschitz}
Let $0<\mu<L<\infty$ and $\alpha\in (0,\infty)$.
If $\cA=\mathcal{M}_\mu\cap \mathcal{L}_L$,
then $I-\alpha \cA\subseteq \cL_R$
for
\[
R= \sqrt{1-2\alpha\mu+\alpha^2 L^2}.
\]
This result is tight in the sense that $I-\alpha\cA\nsubseteq \cL_R$ for any smaller value of $R$.
\end{fact}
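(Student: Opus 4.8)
The plan is to compute $\cG(I-\alpha\cA)$ explicitly and then read off the smallest centered disk containing it. By the list following Theorem~\ref{thm:srg-full}, both $\cM_\mu$ and $\cL_L$ are SRG-full, so Theorem~\ref{thm:srg-intersection} gives that $\cA=\cM_\mu\cap\cL_L$ is SRG-full with
\[
\cG(\cA)=\cG(\cM_\mu)\cap\cG(\cL_L)=\{z\in\complex\,|\,\Re z\ge\mu,\ |z|\le L\},
\]
using Proposition~\ref{prop:monotone-srg}; note $\infty\notin\cG(\cL_L)$, so it drops out of the intersection and the set is a bounded circular cap. Applying Theorem~\ref{thm:srg-scaling-translation} (scale by $\alpha$, reflect about the imaginary axis since $-\alpha<0$, then translate by $1$), $I-\alpha\cA$ is SRG-full and
\[
\cG(I-\alpha\cA)=1-\alpha\cG(\cA)=\{z\in\complex\,|\,|z-1|\le\alpha L,\ \Re z\le 1-\alpha\mu\},
\]
again a circular cap, a subset of the disk of radius $\alpha L$ centered at $1$ (drawing this cap, as in the proof of Fact~\ref{fact:forward-step}, makes the argument visibly rigorous).

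Next I would show $\cG(I-\alpha\cA)\subseteq\cG(\cL_R)=\{z\,|\,|z|\le R\}$ with $R=\sqrt{1-2\alpha\mu+\alpha^2L^2}$, and that no smaller $R$ works, i.e.\ $R=\max\{|z|\,:\,z\in\cG(I-\alpha\cA)\}$. Since the cap is convex and compact and $z\mapsto|z|^2$ is convex, the maximum is attained at an extreme point: either a corner point $z=(1-\alpha\mu)\pm i\alpha\sqrt{L^2-\mu^2}$ (these lie in the cap because $\mu<L$), or a point of the bounding arc $\{z\,:\,|z-1|=\alpha L,\ \Re z\le 1-\alpha\mu\}$. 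At a corner, $|z|^2=(1-\alpha\mu)^2+\alpha^2(L^2-\mu^2)=1-2\alpha\mu+\alpha^2L^2=R^2$. Parametrizing the arc by $z=1+\alpha L e^{i\phi}$ with $\cos\phi\le-\mu/L$ gives $|z|^2=1+2\alpha L\cos\phi+\alpha^2L^2$, which is increasing in $\cos\phi$ and hence maximized at $\cos\phi=-\mu/L$, once more yielding $R^2$. So the maximum modulus over the cap equals $R$ exactly.

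Finally I would use SRG-fullness to pass back to operators. Since $\cG(I-\alpha\cA)\subseteq\cG(\cL_R)$ and $\cL_R$ is SRG-full (Theorem~\ref{thm:srg-full}, with $h=a-Rb$), every $B\in I-\alpha\cA$ satisfies $\cG(B)\subseteq\cG(\cL_R)$ and hence $B\in\cL_R$; that is, $I-\alpha\cA\subseteq\cL_R$. For tightness, the corner point $z_0=(1-\alpha\mu)+i\alpha\sqrt{L^2-\mu^2}$ has $|z_0|=R$ and lies in $\cG(I-\alpha\cA)$, so by definition of the SRG of a class there is $B\in I-\alpha\cA$ with $z_0\in\cG(B)$; since $|z_0|=R>R'$ for any $R'<R$, we get $\cG(B)\not\subseteq\cG(\cL_{R'})$, so $B\notin\cL_{R'}$ and $I-\alpha\cA\not\subseteq\cL_{R'}$.

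The main obstacle is the planar optimization in the middle step: recognizing that the half-plane constraint $\Re z\le 1-\alpha\mu$ is precisely what cuts the naive maximizer $1+\alpha L$ of the disk down to $R$, and checking that the maximum of $|z|^2$ over the cap is attained jointly at the two corners and at the arc's limiting direction. Everything else is bookkeeping with the SRG transformation theorems and the passage between planar containments and operator-class inclusions afforded by SRG-fullness.
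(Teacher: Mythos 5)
Your proposal is correct and follows essentially the same route as the paper: characterize $\cG(\cM_\mu\cap\cL_L)$ as a circular cap via Proposition~\ref{prop:monotone-srg} (with Theorem~\ref{thm:srg-intersection} making the intersection explicit), push it through Theorem~\ref{thm:srg-scaling-translation}, and then show the translated cap lies inside the centered disk of radius $R$ with equality at the corner points, before passing back to operators via SRG-fullness. The only difference is cosmetic: you verify the extremal radius by parametrizing the arc and invoking convexity of $|z|^2$, whereas the paper's proof computes $\overline{OC}^{2}$ with two applications of the Pythagorean theorem and argues containment of the cap via chord-and-arc reasoning; both are the same elementary planar geometry.
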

\begin{proof}
First consider the case $\alpha\mu>1$.
By Proposition~\ref{prop:monotone-srg} and Theorem~\ref{thm:srg-scaling-translation}, we have the geometry
\begin{center}
\begin{tabular}{ccc}
\raisebox{-.5\height}{
\begin{tikzpicture}[scale=.8]
\def\m{1.55};
\def\L{1.8};

\begin{scope}
\clip (\m,-2) rectangle (2,2);
\fill[fill=medgrey] (0,0) circle (\L);
\end{scope}

\begin{scope}
\clip (-2,-2) rectangle (\m,2);
\draw[dashed] (0,0) circle (\L);
\end{scope}

\draw[dashed] (\m,{sqrt(\L^2-\m^2)})--(\m,2);
\draw[dashed] (\m,-{sqrt(\L^2-\m^2)})--(\m,-2);

\draw [<->] (0,-2) -- (0,2);
\draw [<->] (-2,0) -- (2,0);
\filldraw (\m,-0.0) circle ({0.6*1.5/.8pt});
\draw (\m,-0.05) node [below left] {$\alpha \mu$};
\filldraw (\L,0.0) circle ({0.6*1.5/.8pt});
\draw (\L,0.05) node [below right] {$\alpha L$};

\draw[->] (.9,.6)--(\m,.2);
\draw (.7,.5) node [above] {$\cG\left(\alpha\cA\right)$};
\end{tikzpicture}
}
&
\!\!\!\!\!\!\!\!\!
\raisebox{-.5\height}{
\begin{tikzpicture}[scale=.8]
\def\m{1.55};
\def\L{1.8};

\begin{scope}
\clip ({-\m},-2) rectangle (-2,2);
\fill[fill=medgrey] (0,0) circle (\L);
\end{scope}

\begin{scope}
\clip (2,-2) rectangle ({-\m},2);
\draw[dashed] (0,0) circle (\L);
\end{scope}

\draw[dashed] ({-\m},{sqrt(\L^2-\m^2)})--({-\m},2);
\draw[dashed] ({-\m},-{sqrt(\L^2-\m^2)})--({-\m},-2);

\draw [<->] (0,-2) -- (0,2);
\draw [<->] (-2,0) -- (2,0);
\filldraw ({-\m},-0.0) circle ({0.6*1.5/.8pt});
\draw ({-\m},-0.05) node [below right] {$-\alpha \mu$};
\filldraw ({-\L},0.0) circle ({0.6*1.5/.8pt});
\draw ({-\L},0.05) node [below left] {$-\alpha L$};

\draw (0,.5) node [above, fill=white] {$\cG\left(-\alpha\cA\right)$};
\draw[->] (-.9,.75)--({-\m},.4);
\end{tikzpicture}
}
&
\!\!\!\!\!\!\!\!\!
\raisebox{-.5\height}{
\begin{tikzpicture}[scale=.8]
\def\m{1.55};
\def\L{1.8};

\begin{scope}
\clip ({1-\m},-2) rectangle (-1,2);
\fill[fill=medgrey] (1,0) circle (\L);
\end{scope}

\begin{scope}
\clip (3.0,-2) rectangle ({1-\m},2);
\draw[dashed] (1,0) circle (\L);
\end{scope}

\draw[dashed] ({1-\m},{sqrt(\L^2-\m^2)})--({1-\m},2);
\draw[dashed] ({1-\m},-{sqrt(\L^2-\m^2)})--({1-\m},-2);

\draw [<->] (0,-2) -- (0,2);
\draw [<->] (-1,0) -- (3.0,0);
\filldraw ({1-\m},-0.0) circle ({0.6*1.5/.8pt});
\draw ({1-\m},-0.05) node [below right] {$1-\alpha \mu$};
\filldraw ({1-\L},0.0) circle ({0.6*1.5/.8pt});
\draw ({1-\L},0.05) node [below left] {$1-\alpha L$};

\draw[->] (0.2,.75)--({1-\m},.4);
\draw (1.3,.5) node [above] {$\cG\left(I-\alpha\cA\right)$};
\end{tikzpicture}
}
\end{tabular}
\end{center}
\begin{center}
\begin{tabular}{cc}
\raisebox{-.5\height}{
\begin{tikzpicture}[scale=1.5]
\def\m{1.55};
\def\L{1.8};
\fill[fill=lightgrey] (0,0) circle ({sqrt(1-2*\m+\L^2)});

\begin{scope}
\clip ({1-\m},-1.3) rectangle (-1.3,1.3);
\fill[fill=medgrey] (1,0) circle (\L);
\end{scope}
\draw [<->] (-1.3,0) -- (1.3,0);
\draw [<->] (0,-1.3) -- (0,1.3);
\filldraw ({1-\m},-0.0) circle ({0.6*1.5/1.5pt});
\draw ({1-\m},-0.05) node [below right, fill=lightgrey] {$1-\alpha \mu$};
\filldraw ({1-\L},0.0) circle ({0.6*1.5/1.5pt});
\draw ({1-\L},0.05) node [below left] {$1-\alpha L$};
\filldraw (1,0) circle ({0.6*1.5/1.5pt})  node [below] {$1$};

\def\w{-.72};

\draw[->] (-1.2,-.6)--(\w,{-sqrt(\L^2-(\w-1)^2)});
\draw (-1.2,-.6) node [left] {$\cG\left(I-\alpha\cA\right)$};

\def\y{.85};
\draw[->] (1.1,-.73)--(\y,{-sqrt(1-2*\m+\L^2-\y^2)});
\draw (1.1,-.73) node [right] {$\cG\left(\cL_R\right)$};
\draw (1.2,-1.) node [below] {$R=\sqrt{1-2\alpha\mu+\alpha^2 L^2}$};
\end{tikzpicture}
}
&\!\!\!\!\!\!\!\!\!\!\!\!\!\!\!
\raisebox{-.5\height}{
\begin{tikzpicture}[scale=1]
\def\m{1.55};
\def\L{1.8};
\fill[fill=lightgrey] (0,0) circle ({sqrt(1-2*\m+\L^2)});

\begin{scope}
\clip ({1-\m},-1.5) rectangle (-1.5,1.5);
\fill[fill=medgrey] (1,0) circle (\L);
\end{scope}
\begin{scope}
\clip ({1-\m},-2) rectangle (3,2);
\draw[dashed] (1,0) circle (\L);
\end{scope}
\draw [<->] (-1.5,0) -- (3,0);
\draw [<->] (0,-2) -- (0,2);
\coordinate (I1) at (1,0);
\coordinate (I2) at ({1-\m},0);
\coordinate (I3) at ({1-\m},{sqrt(\L^2-\m^2)});
\coordinate (I4) at ({1-\m},{-sqrt(\L^2-\m^2)});
\coordinate (I5) at ({1-\L},0);
\draw [] (I1)-- (I3);
\tkzMarkRightAngle[size=.1](I1,I2,I3);
\draw (I1) node[below right] {$A$};
\draw (I2) node[below right] {$B$};
\draw (I3) node[above left] {$C$};
\draw (I4) node[below left] {$C'$};
\draw (I5) node[below left] {$D$};
\filldraw (I1) circle ({0.6*1.5/1pt});
\filldraw (I2) circle ({0.6*1.5/1pt});
\filldraw (I3) circle ({0.6*1.5/1pt});
\filldraw (I4) circle ({0.6*1.5/1pt});
\filldraw (I5) circle ({0.6*1.5/1pt});
\filldraw (0,0) circle ({0.6*1.5/1pt});
\draw (0,0) node[below right] {$O$};
\draw [] (0,0)-- (I3);

%
%
%
%

\end{tikzpicture}
}
\end{tabular}
\end{center}
To clarify, $O$ is the center of the circle with radius $\overline{OC}$ (lighter shade) and $A$ is the center of the circle with radius $\overline{AC}=\overline{AD}$ defining the inner region (darker shade).
With $2$ applications of the Pythagorean theorem, we get 
\begin{align*}
\overline{OC}^2&=\overline{CB}^2+\overline{BO}^2
=
\overline{AC}^2-\overline{BA}^2+
\overline{BO}^2\\
&=(\alpha L)^2-(\alpha\mu)^2+(1-\alpha\mu)^2=1-2\alpha\mu+\alpha^2L^2.
\end{align*}
Since $\overline{C'C}$ is a chord of circle $O$, it is within the circle.
Since $2$ non-identical circles intersect at no more than 2 points, and since $D$ is within circle $O$, arc
$\arc{CDC'}$ is within circle $O$.
Finally, the region bounded by $\overline{C'C}\cup \arc{CDC'}$ (darker shade) is within circle $O$ (lighter shade).

The previous diagram illustrates the case $\alpha \mu>1$. 
In the cases $\alpha\mu=1$ and $\alpha\mu< 1$, we have a slightly different geometry, but the same arguments and calculations hold.
\begin{center}
\begin{tabular}{cc}
\raisebox{-.5\height}{
\begin{tikzpicture}[scale=1.5]
\def\m{1};
\def\L{1.2};
\fill[fill=lightgrey] (0,0) circle ({sqrt(1-2*\m+\L^2)});

\begin{scope}
\clip ({1-\m},-1.5) rectangle (-1.,1.5);
\fill[fill=medgrey] (1,0) circle (\L);
\end{scope}
\begin{scope}
\clip ({1-\m},-1.5) rectangle (2.4,1.5);
\draw[dashed] (1,0) circle (\L);
\end{scope}
\draw [<->] (-1.,0) -- (2.4,0);
\draw [<->] (0,-1.5) -- (0,1.5);
\coordinate (I1) at (1,0);
\coordinate (I2) at ({1-\m},0);
\coordinate (I3) at ({1-\m},{sqrt(\L^2-\m^2)});
\coordinate (I4) at ({1-\m},{-sqrt(\L^2-\m^2)});
\coordinate (I5) at ({1-\L},0);
\draw [] (I1)-- (I3);
\tkzMarkRightAngle[size=.066666](I1,I2,I3);
\draw (I1) node[below right] {$A$};
\draw (I3) node[above left] {$C$};
\draw (I4) node[below left] {$C'$};
\draw (I5) node[below left] {$D$};
\filldraw (I1) circle ({0.6*1.5/1.5pt});
\filldraw (I3) circle ({0.6*1.5/1.5pt});
\filldraw (I4) circle ({0.6*1.5/1.5pt});
\filldraw (I5) circle ({0.6*1.5/1.5pt});
\filldraw (0,0) circle ({0.6*1.5/1.5pt});
\draw (-0.05,0) node[below right] {$B=O$};
\draw [] (0,0)-- (I3);
\draw (1.4,-1.2) node[fill=white] {Case $\alpha\mu=1$};

%
%
%
%

\end{tikzpicture}
}
&
\raisebox{-.5\height}{
\begin{tikzpicture}[scale=1.5]
\def\m{.5};
\def\L{1.2};
\fill[fill=lightgrey] (0,0) circle ({sqrt(1-2*\m+\L^2)});

\begin{scope}
\clip ({1-\m},-1.5) rectangle (-1.4,1.5);
\fill[fill=medgrey] (1,0) circle (\L);
\end{scope}
\begin{scope}
\clip ({1-\m},-1.5) rectangle (2.4,1.5);
\draw[dashed] (1,0) circle (\L);
\end{scope}
\draw [<->] (-1.4,0) -- (2.4,0);
\draw [<->] (0,-1.5) -- (0,1.5);
\coordinate (I1) at (1,0);
\coordinate (I2) at ({1-\m},0);
\coordinate (I3) at ({1-\m},{sqrt(\L^2-\m^2)});
\coordinate (I4) at ({1-\m},{-sqrt(\L^2-\m^2)});
\coordinate (I5) at ({1-\L},0);
\draw [] (I1)-- (I3);
\tkzMarkRightAngle[size=.066666](I1,I2,I3);
\draw (I1) node[below right] {$A$};
\draw (I2) node[below right] {$B$};
\draw (I3) node[above] {$C$};
\draw (I4) node[below] {$C'$};
\draw (I5) node[below left] {$D$};
\filldraw (I1) circle ({0.6*1.5/1.5pt});
\filldraw (I2) circle ({0.6*1.5/1.5pt});
\filldraw (I3) circle ({0.6*1.5/1.5pt});
\filldraw (I4) circle ({0.6*1.5/1.5pt});
\filldraw (I5) circle ({0.6*1.5/1.5pt});
\filldraw (0,0) circle ({0.6*1.5/1.5pt});
\draw (0,0) node[below right] {$O$};
\draw [] (0,0)-- (I3);
\draw (1.4,-1.2) node[fill=white] {Case $\alpha\mu>1$};

%
%
%
%

\end{tikzpicture}
}
\end{tabular}
\end{center}

The containment holds for $R$ and fails for smaller $R$.
Since $\cL_R$ is SRG-full by Theorem~\ref{thm:srg-full}, the containment of the SRG in $\ecomplex$ equivalent to the containment of the class.
\qed\end{proof}


\begin{fact}
Assume $A$ is $\mu$-strongly monotone and $\beta$-cocoercive with $0<\mu<1/\beta<\infty$.
For $\alpha\in (0,2\beta)$, the iterates of \eqref{eq:fs} converge exponentially to the zero $x^\star$ with rate 
\[
\|x^k-x^\star\|\le
\left(1-2\alpha\mu+\alpha^2\mu/\beta\right)^{k/2}
\|x^0-x^\star\|
.
\]
\end{fact}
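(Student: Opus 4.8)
The plan is to mirror the structure of Facts~\ref{fact:forward-step} and \ref{thm:cont-sm-Lipschitz}: reduce the convergence statement to an operator-class inclusion, then obtain that inclusion from the SRG. Concretely, I would isolate and prove the auxiliary claim that for $0<\mu<1/\beta<\infty$ and $\alpha\in(0,2\beta)$ the class $\cA=\cM_\mu\cap\cC_\beta$ satisfies
\[
I-\alpha\cA\subseteq\cL_R,\qquad R=\sqrt{1-2\alpha\mu+\alpha^2\mu/\beta},
\]
with $R$ not improvable. Granting this, $A\in\cA$ gives that $T=I-\alpha A$ is $R$-Lipschitz; since $\alpha<2\beta$ forces $R<1$, $T$ is a contraction, and $x^\star=Tx^\star$ is equivalent to $0\in Ax^\star$, so the Banach contraction principle yields $\|x^k-x^\star\|\le R^k\|x^0-x^\star\|$, which is exactly the stated rate.

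For the auxiliary inclusion I would compute $\cG(I-\alpha\cA)$ and enclose it in a disk about the origin. By Proposition~\ref{prop:monotone-srg}, $\cG(\cM_\mu)=\{z\,|\,\Re z\ge\mu\}\cup\{\infty\}$ and $\cG(\cC_\beta)=\{z\,|\,\Re z\ge\beta|z|^2\}$, the latter being the closed disk of radius $1/(2\beta)$ centered at $1/(2\beta)$. Both $\cM_\mu$ and $\cC_\beta$ are SRG-full (Theorem~\ref{thm:srg-full}), so Theorem~\ref{thm:srg-intersection} gives $\cG(\cA)=\cG(\cM_\mu)\cap\cG(\cC_\beta)$. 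Because $0<\mu<1/\beta$, the line $\Re z=\mu$ properly cuts that disk, so $\cG(\cA)$ is the circular segment of the disk lying in $\{\Re z\ge\mu\}$, whose two corner points on the circle are $\mu\pm i\sqrt{\mu/\beta-\mu^2}$. By Theorem~\ref{thm:srg-scaling-translation}, $\cG(I-\alpha\cA)=1-\alpha\cG(\cA)$, i.e.\ the image of that segment under $z\mapsto 1-\alpha z$: a circular segment of the disk $\cD'$ of radius $\alpha/(2\beta)$ centered at $1-\alpha/(2\beta)$, bounded by the vertical chord on the line $\Re z=1-\alpha\mu$ together with the left arc of $\partial\cD'$, and with corners $1-\alpha\mu\mp i\alpha\sqrt{\mu/\beta-\mu^2}$. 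A short computation shows each corner has modulus exactly $R$.

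I would then close with an elementary Euclidean argument patterned on the proof of Fact~\ref{thm:cont-sm-Lipschitz}. Every point of the chord has modulus at most $R$, the corners being its extreme points, so the chord lies in the closed disk $\cD_R$ of radius $R$ about the origin. The circles $\partial\cD'$ and $\partial\cD_R$ are distinct (they coincide only when $\alpha=2\beta$, which is excluded) and hence meet in at most the two corner points; testing the leftmost point $1-\alpha/\beta$ of $\partial\cD'$ gives $(1-\alpha/\beta)^2\le R^2$ --- this reduces to $2(\mu-1/\beta)\le(\alpha/\beta)(\mu-1/\beta)$, true since $\mu-1/\beta<0$ and $\alpha\le2\beta$ --- so the whole left arc of $\partial\cD'$ between the corners lies in $\cD_R$. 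Therefore the circular segment, bounded by this chord and this arc, lies in $\cD_R$, giving $\cG(I-\alpha\cA)\subseteq\cG(\cL_R)$; as $\cL_R$ is SRG-full, Theorem~\ref{thm:srg-full} upgrades this to $I-\alpha\cA\subseteq\cL_R$. Tightness is immediate since a corner of $\cG(I-\alpha\cA)$ lies on $\partial\cD_R$ and is realized by complex multiplication as in Lemma~\ref{lem:complex-srg}. As in the parallel proof, whether $\alpha\mu<1$ or $\alpha\mu\ge1$ shifts the chord relative to the center of $\cD'$ and alters the picture slightly, but the same computations go through.

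The step I expect to be the main obstacle is the last one: passing rigorously from ``the boundary curve of the segment lies in $\cD_R$'' to ``the segment itself lies in $\cD_R$'' (the ``region bounded by chord $\cup$ arc is inside the circle'' step, which I would justify exactly as in Fact~\ref{thm:cont-sm-Lipschitz}), together with the parameter bookkeeping --- keeping the inequality direction correct when dividing by $\mu-1/\beta<0$ and confirming the two circles stay distinct over all $\alpha\in(0,2\beta)$. Everything else is a routine application of Proposition~\ref{prop:monotone-srg} and Theorems~\ref{thm:srg-full}, \ref{thm:srg-intersection}, and \ref{thm:srg-scaling-translation}.
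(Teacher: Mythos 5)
Your proof is correct and follows essentially the same route as the paper's, which reduces the convergence claim to Fact~\ref{prop:sm-coco-forward} ($I-\alpha\cA\subseteq\cL_R$ for $\cA=\cM_\mu\cap\cC_\beta$) and proves it by exactly the SRG/chord-plus-arc geometry you describe, with the radius obtained from two Pythagorean-theorem applications that are algebraically equivalent to your corner-point modulus computation. One cosmetic slip: the case split governing the picture is $\mu$ versus $1/(2\beta)$ (whether the chord $\Re z=1-\alpha\mu$ sits left or right of the center $1-\alpha/(2\beta)$ of $\cD'$), not $\alpha\mu$ versus $1$, though as you note the computations are unchanged.
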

\begin{proof}
This follows from Fact~\ref{prop:sm-coco-forward} below.
\qed\end{proof}

\begin{fact}
\label{prop:sm-coco-forward}
Let $0<\mu<1/\beta<\infty$ and $\alpha\in (0,2\beta)$.
If $\cA=\mathcal{M}_\mu\cap \mathcal{C}_\beta$, then $I-\alpha \cA\subseteq \cL_R$ for
\[
R= \sqrt{1-2\alpha\mu+\alpha^2\mu/\beta}.
\]
This result is tight in the sense that $I-\alpha\cA\nsubseteq \cL_R$ for any smaller value of $R$.
\end{fact}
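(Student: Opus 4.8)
The plan is to reuse the geometric template of the proof of Fact~\ref{thm:cont-sm-Lipschitz}, with the Lipschitz disk replaced by the cocoercivity disk. Since $\cM_\mu$ and $\cC_\beta$ are SRG-full (Theorem~\ref{thm:srg-full}), Theorem~\ref{thm:srg-intersection} together with Proposition~\ref{prop:monotone-srg} gives
\[
\cG(\cA)=\cG(\cM_\mu)\cap\cG(\cC_\beta)=\{z\,|\,\Re z\ge\mu\}\cap\{z\,|\,\Re z\ge\beta|z|^2\}.
\]
Here $\cG(\cC_\beta)$ is the closed disk of radius $1/(2\beta)$ centered at $1/(2\beta)$, and since $0<\mu<1/\beta$ the half-plane $\Re z\ge\mu$ cuts this disk properly; so $\cG(\cA)$ is the circular cap bounded by the chord $\{\Re z=\mu\}$ and the right-hand arc of the circle $\Re z=\beta|z|^2$. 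By Theorem~\ref{thm:srg-scaling-translation}, $\cG(I-\alpha\cA)=1-\alpha\cG(\cA)$, which is again a circular cap: the part of the disk $D'$ of radius $\alpha/(2\beta)$ centered at $1-\alpha/(2\beta)$ lying in the half-plane $\Re w\le 1-\alpha\mu$, bounded by that chord and by an arc of $\partial D'$.

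Next I would show this cap is contained in $\cG(\cL_R)$, the closed disk of radius $R$ about the origin, and that no smaller $R$ works. The two corners of the cap are the points where the chord $\{\Re w=1-\alpha\mu\}$ meets $\partial D'$; as in the proof of Fact~\ref{thm:cont-sm-Lipschitz}, two applications of the Pythagorean theorem (drop the foot $B=1-\alpha\mu$ on the real axis, use $\overline{AC}=\alpha/(2\beta)$ for the center $A$ of $D'$ to get $\overline{BC}$, then $\overline{OC}^2=\overline{OB}^2+\overline{BC}^2$) give the squared distance from the origin $O$ to a corner $C$ as $(1-\alpha\mu)^2+\big(\alpha^2\mu/\beta-\alpha^2\mu^2\big)=1-2\alpha\mu+\alpha^2\mu/\beta=R^2$. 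To see the corners are the farthest points of the cap, note that along the chord $w=(1-\alpha\mu)+it$ the quantity $|w|^2=(1-\alpha\mu)^2+t^2$ is maximized at the chord endpoints, while along $\partial D'$---whose center sits on the positive real axis---$|w|^2$ is strictly increasing in $\Re w$, so on the cap's arc it too is maximized exactly where the arc meets the chord. Hence $\max_{w\in\cG(I-\alpha\cA)}|w|=R$. (As in Fact~\ref{thm:cont-sm-Lipschitz}, the figure looks slightly different according to whether $1-\alpha\mu$ lies left of, at, or right of the center $1-\alpha/(2\beta)$, i.e.\ whether $2\beta\mu$ is less than, equal to, or greater than $1$, but the calculation and the monotonicity argument are the same in each case.)

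It then remains to conclude. Since $\cL_R$ is SRG-full, the containment $\cG(I-\alpha\cA)\subseteq\cG(\cL_R)$ is equivalent to $I-\alpha\cA\subseteq\cL_R$ by Theorem~\ref{thm:srg-full}. For tightness, the corner $z=\mu+i\sqrt{\mu/\beta-\mu^2}$ satisfies $\Re z=\mu$ and $\Re z=\beta|z|^2$, so $z\in\cG(\cA)$; then $A_z$ of Lemma~\ref{lem:complex-srg} belongs to $\cA$, and $\cG(I-\alpha A_z)=\{1-\alpha z,\,1-\alpha\bar z\}$ contains a point of modulus exactly $R$, so $I-\alpha\cA\nsubseteq\cL_{R'}$ whenever $R'<R$.

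I expect the only delicate point to be the geometric claim in the second step---that the curved portion of the cap's boundary never pokes outside the circle of radius $R$. The monotonicity of $|w|^2$ along a circle centered on the ray through the origin settles this uniformly, but one must check that the chord genuinely meets $\partial D'$ (which uses $0<\mu<1/\beta$), that the arc bounding the cap is the correct one, and that the farthest point $w=1$ of the unclipped disk $D'$ is excluded by the chord, so that the maximum is attained at a corner rather than in the interior of the arc.
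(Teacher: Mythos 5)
Your proposal is correct and follows essentially the same geometric route as the paper: compute $\cG(I-\alpha\cA)$ as a circular cap via Proposition~\ref{prop:monotone-srg}, Theorem~\ref{thm:srg-intersection}, and Theorem~\ref{thm:srg-scaling-translation}, use two applications of the Pythagorean theorem to find the squared distance from the origin to the corner of the cap, and conclude via SRG-fullness of $\cL_R$ and Theorem~\ref{thm:srg-full}. The one place you diverge is how you argue the curved portion of the boundary stays inside the disk $\cG(\cL_R)$---you observe that $|w|$ is monotone in $\Re w$ along a circle whose center $1-\alpha/(2\beta)$ lies on the positive real axis (this is exactly where the hypothesis $\alpha<2\beta$ enters), so the arc's maximum modulus occurs at the corners---whereas the paper invokes that two non-identical circles meet in at most two points and checks one interior point of the arc; both are correct, and your variant has the mild advantage of treating the three cases $\mu\lessgtr 1/(2\beta)$ uniformly and yielding the tightness witness in the same stroke.
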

\emph{Proof outline}\,
We quickly outline the geometric insight while deferring the full proof with precise geometric arguments to the Section~\ref{s:geo-proofs} in the appendix.
For the case $\mu<1/(2\beta)$, we have the geometry 
\begin{center}
\begin{tabular}{cc}
\raisebox{-.5\height}{
\begin{tikzpicture}[scale=1.5]
\def\m{.5};
\def\b{0.6};
\fill [fill=lightgrey] (0,0) circle ({sqrt(1-2*\m+\m/\b)});
\begin{scope}
\clip ({1-\m},-1.1) rectangle (-1.1,1.1);
\fill[fill=medgrey] ({1-1/2/\b},0) circle ({1/2/\b});
\end{scope}
\draw [<->] (-1.2,0) -- (1.2,0);
\draw [<->] (0,-1.2) -- (0,1.2);



\coordinate (I1) at  ({1-1/\b/2},0) ;
\coordinate (I2) at ({1-\m},0);
\coordinate (I3) at ({1-\m},{sqrt(1/4/\b^2-(\m-1/2/\b)^2)});

\filldraw (I2) circle ({0.6*1.5/1.5pt});

\filldraw ({1-\m},0) node [below ] {$1-\alpha \mu$};

\filldraw ({1-1/\b},0) circle ({0.6*1.5/1.5pt}) ;
\begin{scope}
\clip ({1-\m},-1.1) rectangle (-1.1,1.1);
\clip ({1-1/2/\b},0) circle ({1/2/\b});
\draw ({1-1/\b},0.03) node [above right, fill=medgrey] {$1-\alpha/\beta$};
\end{scope}
\filldraw (1,0) circle ({0.6*1.5/1.5pt})  node [above right] {$1$};

\coordinate (O1) at ({1-1/\b},0);
\coordinate (O2) at ({1-1/\b/2},0);
\coordinate (O3) at ({1-\m},{sqrt(1/4/\b^2-(\m-1/2/\b)^2)});

\draw[->] (1.1,.8)--({1-\m,0.4});
\draw(1.1,.8) node[above] {$\cG\left(I-\alpha\cA \right)$};

\def\x{-.8};
\draw[->] (-.9,-.8)--(\x,{-sqrt(1-2*\m+\m/\b-(\x)^2)});
\draw(-.9,-.8) node[below ] {$\cG\left(\cL_R\right)$};
\draw(1.2,-1.05) node[] {$R=\sqrt{1-2\alpha\mu+\alpha^2\mu/\beta}$};
\end{tikzpicture}
}
&
\!\!\!\!\!\!\!\!\!\!\!\!\!\!\!\!\!\!\!\!\!\!\!\!\!\!\!
\raisebox{-.5\height}{
\begin{tikzpicture}[scale=1.5]
\def\m{.5};
\def\b{0.6};
\fill [fill=lightgrey] (0,0) circle ({sqrt(1-2*\m+\m/\b)});

\begin{scope}
\clip ({1-\m},-1.1) rectangle (-1.1,1.1);
\fill[fill=medgrey] ({1-1/2/\b},0) circle ({1/2/\b});
\end{scope}

\begin{scope}
\clip ({1-\m},-1.1) rectangle (1.1,1.1);
\draw[dashed] ({1-1/2/\b},0) circle ({1/2/\b});
\end{scope}

\draw [<->] (-1.2,0) -- (1.2,0);
\draw [<->] (0,-1.2) -- (0,1.2);




\coordinate (I1) at  ({1-1/\b/2},0) ;
\coordinate (I2) at ({1-\m},0);
\coordinate (I3) at ({1-\m},{sqrt(1/4/\b^2-(\m-1/2/\b)^2)});
\coordinate (I4) at ({1-\m},{-sqrt(1/4/\b^2-(\m-1/2/\b)^2)});

\filldraw (I1) circle ({0.6*1.5/1.5pt});
\filldraw (I2) circle ({0.6*1.5/1.5pt});
\filldraw (I3) circle ({0.6*1.5/1.5pt});
\filldraw (I4) circle ({0.6*1.5/1.5pt});
\draw (I3) -- (I1);
\tkzMarkRightAngle[size=.06666](I1,I2,I3);

\draw (I1) node[below] {$C$};
\draw (I3) node[above] {$B$};
\draw (I4) node[below] {$B'$};
\draw (0,0) -- (I3);

\filldraw ({1-\m},0) node [below right ] {$D$};

\filldraw ({1-1/\b},0) circle ({0.6*1.5/1.5pt}) ;
\draw ({1-1/\b},0.03) node [above left,] {$A$};
\filldraw (1,0) circle ({0.6*1.5/1.5pt})  node [above right] {$1$};

\filldraw (0,0) circle ({0.6*1.5/1.5pt}) ;

\draw (0,0) node[above left] {$O$};

\coordinate (O1) at ({1-1/\b},0);
\coordinate (O2) at ({1-1/\b/2},0);
\coordinate (O3) at ({1-\m},{sqrt(1/4/\b^2-(\m-1/2/\b)^2)});

\end{tikzpicture}
}
\end{tabular}
\end{center}
where the calculations involve the use of the Pythagorean theorem.
In the cases $\mu=1/(2\beta)$ and $\mu>1/(2\beta)$, we have a slightly different geometry, but the same arguments and calculations hold.
\qed

\subsection{SRG inversion}
\label{ss:inversion}
In this subsection, we relate inversion of operators with inversion (reciprocal) of complex numbers.
This operation is intimately connected to inversive geometry.


\subsubsection{Operator inversion}

\begin{theorem}
\label{thm:srg-inversion}
If $\cA$ is a class of operators, then 
\[
\cG(\cA^{-1})=\left(\cG(\cA)\right)^{-1}.
\]
If $\cA$ is furthermore SRG-full, then $\cA^{-1}$ is  SRG-full.
\end{theorem}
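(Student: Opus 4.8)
The first claim $\cG(\cA^{-1}) = (\cG(\cA))^{-1}$ should follow almost directly from unwinding definitions, so I would dispatch it quickly. Recall that $A^{-1} = \{(u,x) \mid (x,u) \in A\}$, so a pair of evaluations $(x,u), (y,v) \in A$ with $x \ne y$ corresponds to the pair $(u,x), (v,y) \in A^{-1}$. I would consider three cases. If $x \ne y$ and $u \ne v$, then the point of $\cG(A)$ contributed by this pair is $z = (\|u-v\|/\|x-y\|)\exp[\pm i\angle(u-v,x-y)]$, while the point of $\cG(A^{-1})$ contributed by the swapped pair is $(\|x-y\|/\|u-v\|)\exp[\pm i\angle(x-y,u-v)]$; since $\angle$ is symmetric in its arguments, this is exactly $\bar{z}^{-1}$ (equivalently $z^{-1}$, after accounting for the $\pm$), which is the inversion of $z$. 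If $x \ne y$ but $u = v$, the pair contributes $0$ to $\cG(A)$ and the swapped pair (now with equal first coordinates and distinct second coordinates) contributes $\infty$ to $\cG(A^{-1})$, consistent with $0^{-1} = \infty$; symmetrically $\infty \in \cG(A)$ maps to $0 \in \cG(A^{-1})$. Since the inversion map $z \mapsto \bar z^{-1}$ is a bijection of $\ecomplex$ and $\cG(A^{-1})$ traverses exactly the swapped pairs, applying the union over $A \in \cA$ gives $\cG(\cA^{-1}) = (\cG(\cA))^{-1}$. The one point to state carefully is that inversion is an involution, so no information is lost; the $\pm$ symmetry of SRGs about the real axis makes $\bar z^{-1}$ and $z^{-1}$ interchangeable as set operations.

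For the SRG-fullness claim, I would mimic the argument used for scaling and translation in Theorem~\ref{thm:srg-scaling-translation}: exploit that inversion of operators is \emph{reversible}, since $(A^{-1})^{-1} = A$ always, and that the first part already gives $\cG((\cdot)^{-1}) = (\cG(\cdot))^{-1}$ with inversion of sets also being an involution on $\ecomplex$. Concretely, suppose $\cA$ is SRG-full and let $B\colon \hilbert \rightrightarrows \hilbert$ satisfy $\cG(B) \subseteq \cG(\cA^{-1})$. Applying set inversion (monotone under containment, being a bijection) gives $(\cG(B))^{-1} \subseteq (\cG(\cA^{-1}))^{-1} = \cG(\cA)$. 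By the first part, $(\cG(B))^{-1} = \cG(B^{-1})$, so $\cG(B^{-1}) \subseteq \cG(\cA)$, and SRG-fullness of $\cA$ yields $B^{-1} \in \cA$, hence $B = (B^{-1})^{-1} \in \cA^{-1}$. This establishes $\cG(B) \subseteq \cG(\cA^{-1}) \Rightarrow B \in \cA^{-1}$, which together with the trivial forward implication is exactly SRG-fullness of $\cA^{-1}$.

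The main subtlety — and the step I would be most careful about — is the handling of the extended-plane edge cases in the first part: the points $0$ and $\infty$, and the convention $0^{-1} = \infty$, $\infty^{-1} = 0$. In particular one should check that $\infty \in \cG(\cA^{-1})$ iff there is an operator in $\cA^{-1}$ with a multi-valued point, iff some $A \in \cA$ has a point $x$ with two distinct $u, v \in Ax$ ... wait, that is not quite it: $\infty \in \cG(A^{-1})$ means $A^{-1}$ is multi-valued somewhere, i.e., there exist $(u,x), (u,y) \in A^{-1}$ with $x \ne y$, i.e., $(x,u), (y,u) \in A$ with $x \ne y$ — precisely the pairs contributing $0$ to $\cG(A)$. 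So $\infty \in \cG(\cA^{-1}) \iff 0 \in \cG(\cA)$, matching $0^{-1} = \infty$. Everything is consistent, but it is worth spelling out this correspondence rather than waving at it, since the definition of SRG treats the multivalued/$\infty$ case separately from the generic case. No genuine obstacle is expected here; the content is entirely bookkeeping, and the geometric heart ("inversion of operators = inversion of the plane") is captured by the clean correspondence between swapped evaluation pairs and the map $z \mapsto \bar z^{-1}$.
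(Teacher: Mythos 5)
Your proposal is correct and follows the paper's proof essentially line for line: the paper also splits into finite non-zero points (unwound via $\angle(a,b)=\angle(b,a)$ and reciprocal magnitudes), checks the $0\leftrightarrow\infty$ correspondence through the multi-valuedness criterion exactly as you do, and proves SRG-fullness of $\cA^{-1}$ by the same reversibility chain $\cG(B)\subseteq\cG(\cA^{-1})\Rightarrow\cG(B^{-1})\subseteq\cG(\cA)\Rightarrow B^{-1}\in\cA\Rightarrow B\in\cA^{-1}$. No gap; the bookkeeping you flag as the main subtlety is handled correctly.
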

Since a class of operators can consist of a single operator,
if $A\colon\hilbert\rightrightarrows\hilbert$, then
$\cG(A^{-1})=(\cG(A))^{-1}$.
To clarify, $(\cG(\cA))^{-1}=\{z^{-1}\,|\,z\in \cG(\cA)\}\subseteq\ecomplex$.
Note that $(\cG(\cA))^{-1}=(\overline{\cG(\cA)})^{-1}$, since $\cG(\cA)$ is symmetric about the real axis, so we write the simpler $(\cG(\cA))^{-1}$ even though the inversion map we consider is $z\mapsto \bar{z}^{-1}$.


\begin{proof}
The equivalence of non-zero finite points, i.e.,
\[
\cG(A^{-1})\backslash\{0,\infty\}=\left(\cG(A)\right\backslash\{0,\infty\})^{-1},
\]
follows from
\begin{align*}
\mathcal{G}(A)\backslash\{0,\infty\}&=
\left\{
\frac{\|u-v\|}{\|x-y\|}
\exp\left[\pm i \angle (u-v,x-y)\right]
\,\Big|\,
(x,u),(y,v)\in A,\, x\ne y,\,u\ne v \right\}
\end{align*}
and
\begin{align*}
&\mathcal{G}(A^{-1})\backslash\{0,\infty\}\\
&\qquad=
\left\{
\frac{\|x-y\|}{\|u-v\|}
\exp\left[\pm i \angle (x-y,u-v)\right]
\,\Big|\,
(u,x),(v,y)\in A^{-1},\, x\ne y,\,u\ne v \right\}\\
&\qquad=
\left\{
\frac{\|x-y\|}{\|u-v\|}
\exp\left[\pm i \angle (u-v,x-y)\right]
\,\Big|\,
(x,u),(y,v)\in A,\, x\ne y,\,u\ne v \right\}\\
&\qquad=\left(\cG(A)\backslash\{0,\infty\}\right)^{-1}
\end{align*}
where we use the fact that $\angle(a,b)=\angle(b,a)$.

The equivalence of the zero and infinite points follow from
\begin{align*}
\infty\in \cG(A)
&\quad\Leftrightarrow\quad
\exists\, (x,u),(x,v)\in A,\,u\ne v\\
&\quad\Leftrightarrow\quad
\exists\, (u,x),(v,x)\in A^{-1},\,u\ne v\\
&\quad\Leftrightarrow\quad
0\in \cG(A^{-1}).
\end{align*}
With the same argument, we have
$0\in \cG(A) \Leftrightarrow \infty\in \cG(A^{-1})$.

The inversion operation is reversible.
For any $B\colon\hilbert\rightrightarrows\hilbert$,
\[
\cG(B)\subseteq \cG( \cA^{-1})
\quad\Rightarrow\quad
\cG(B^{-1})\subseteq \cG(\cA)
\quad\Rightarrow\quad
B^{-1}\in \cA
\quad\Rightarrow\quad
B\in\cA^{-1},
\]
and we conclude $\cA^{-1}$ is SRG-full.
\qed\end{proof}

\subsubsection{Convergence analysis: proximal point}
Consider the monotone inclusion problem
\[
\mbox{find $x\in \cH$ \quad such that \quad }0\in Ax
\]
where $A$ is a maximal monotone operator with a zero.
Consider the proximal point method \cite{martinet1970,martinet1972,rockafellar1976,brezis1978}
\begin{equation}
x^{k+1}=J_{\alpha A}x^k,\tag{PP}
\label{eq:pp}
\end{equation}
where $\alpha>0$ and $x^0\in \hilbert$ is a starting point. 
Since $J_{\alpha A}$ is $1/2$-averaged, we can use the Krasnosel'ski\u{\i}--Mann theorem to establish convergence of \eqref{eq:pp}.
Under stronger assumptions, \eqref{eq:pp} converges exponentially.

\begin{fact}
Assume $A$ is $\mu$-strongly monotone with $\mu>0$.
For $\alpha>0$, the iterates of \eqref{eq:pp} converge exponentially to the zero $x^\star$ with rate 
\[
\|x^k-x^\star\|\le
\left(\frac{1}{1+\alpha\mu}\right)^k
\|x^0-x^\star\|
.
\]
\end{fact}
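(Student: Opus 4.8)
The plan is to show that the resolvent $J_{\alpha A}$ is a contraction with modulus $1/(1+\alpha\mu)$ by locating its SRG in the plane, and then to invoke the Banach contraction principle. Since $A$ is $\mu$-strongly monotone, $A\in\cM_{\mu}$, so Proposition~\ref{prop:monotone-srg} gives $\cG(A)\subseteq\cG(\cM_{\mu})=\{z\in\CC\,|\,\Re z\ge\mu\}\cup\{\infty\}$. I would then chase this through the transformation theorems: scaling by $\alpha$ and translating by the identity (Theorem~\ref{thm:srg-scaling-translation}) yield $\cG(I+\alpha A)\subseteq\{z\,|\,\Re z\ge 1+\alpha\mu\}\cup\{\infty\}$, and since $J_{\alpha A}=(I+\alpha A)^{-1}$, inversion (Theorem~\ref{thm:srg-inversion}) gives $\cG(J_{\alpha A})=(\cG(I+\alpha A))^{-1}$, which is contained in the image of the half-plane $\{\Re z\ge 1+\alpha\mu\}\cup\{\infty\}$ under the inversion map $z\mapsto\bar z^{-1}$.

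The geometric heart of the argument is identifying that image. Writing $c=1+\alpha\mu>1$, the boundary line $\Re z=c$ together with $\{\infty\}$ is a generalized circle missing the origin, so it inverts to a genuine circle through the origin; since the point of the line nearest the origin is $c$, its image $1/c$ is the diametrically opposite point of that circle, so the circle has radius $1/(2c)$ and center $1/(2c)$ on the real axis. Because $\infty$ lies in the closed half-plane and maps to $0$, the half-plane maps onto the closed disk bounded by this circle. Hence $\cG(J_{\alpha A})$ lies inside the disk of radius $1/(2c)$ centered at $1/(2c)$, and since every point of that disk has modulus at most $1/(2c)+1/(2c)=1/c$, the disk — and therefore $\cG(J_{\alpha A})$ — is contained in $\{z\in\CC\,|\,|z|\le 1/(1+\alpha\mu)\}=\cG(\cL_{1/(1+\alpha\mu)})$.

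Finally I would invoke SRG-fullness of $\cL_{1/(1+\alpha\mu)}$ (Theorem~\ref{thm:srg-full}) to pass from the SRG containment back to class membership $J_{\alpha A}\in\cL_{1/(1+\alpha\mu)}$, i.e.\ $J_{\alpha A}$ is a contraction with modulus $1/(1+\alpha\mu)<1$. Noting that a zero $x^\star$ of $A$ satisfies $x^\star\in(I+\alpha A)x^\star$ and hence $x^\star=J_{\alpha A}x^\star$, the Banach contraction principle yields the stated rate $\|x^k-x^\star\|\le(1/(1+\alpha\mu))^k\|x^0-x^\star\|$. The only step requiring genuine care beyond the routine SRG bookkeeping is the inversive-geometry computation of the image of the half-plane, which is nonetheless standard given the rules for inverting generalized circles; a minor additional remark is that $J_{\alpha A}$ is single-valued with full domain by maximal monotonicity of $A$, so that the iteration and the Banach argument apply literally.
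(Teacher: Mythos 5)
Your proof is correct and follows essentially the same route as the paper's: it invokes Proposition~\ref{prop:monotone-srg} for $\cG(\cM_\mu)$, then the scaling/translation and inversion transformation theorems (Theorems~\ref{thm:srg-scaling-translation} and \ref{thm:srg-inversion}) to place $\cG(J_{\alpha A})$ in the disk of radius $\tfrac{1}{2(1+\alpha\mu)}$ centered at $\tfrac{1}{2(1+\alpha\mu)}$, and finally SRG-fullness of $\cL_R$ plus the Banach contraction principle. The only cosmetic difference is that you carry out the inversive-geometry computation in prose where the paper communicates it through a diagram (Fact~\ref{prop:sm-res}).
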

\begin{proof}
This follows from Fact~\ref{prop:sm-res}, which we state and prove below.
\qed\end{proof}

\begin{fact}[Proposition 23.13 \cite{BCBook}]
\label{prop:sm-res}
Let $\mu\in(0,\infty)$ and $\alpha\in(0,\infty)$.
If $\cA=\mathcal{M}_\mu$, then 
$J_{\alpha \cA}\subseteq \cL_R$
for
\[
R= \frac{1}{1+\alpha\mu}.
\]
This result is tight in the sense that $J_{\alpha \cA}\nsubseteq \cL_R$ for any smaller value of $R$.
\end{fact}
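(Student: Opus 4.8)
The plan is to follow the same SRG-based template as the preceding convergence analyses, reducing the claim to one inversive-geometry computation in $\ecomplex$. By Proposition~\ref{prop:monotone-srg}, $\cG(\cM_\mu)=\{z\in\complex\,|\,\Re z\ge \mu\}\cup\{\infty\}$. Applying the scaling and translation rules of Theorem~\ref{thm:srg-scaling-translation},
\[
\cG(\alpha\cM_\mu)=\{z\,|\,\Re z\ge \alpha\mu\}\cup\{\infty\},\qquad
\cG(I+\alpha\cM_\mu)=\{z\,|\,\Re z\ge 1+\alpha\mu\}\cup\{\infty\}.
\]
Since $J_{\alpha\cM_\mu}=(I+\alpha\cM_\mu)^{-1}$, Theorem~\ref{thm:srg-inversion} gives $\cG(J_{\alpha\cM_\mu})=\big(\{z\,|\,\Re z\ge 1+\alpha\mu\}\cup\{\infty\}\big)^{-1}$.

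Next I would make this image explicit. Set $c=1+\alpha\mu>0$, so $R=1/c$. The inversion $z\mapsto\bar z^{-1}$ carries the vertical line $\{\Re z=c\}$ to the circle having the real segment from $0$ to $1/c$ as a diameter, i.e.\ the circle centered at $R/2$ of radius $R/2$, as one checks by inverting the parametrization $z=c+it$ or via the rule for inverting generalized circles. The half-plane $\{\Re z\ge c\}$ contains $\infty\mapsto 0$ and the boundary point $c\mapsto 1/c=R$, while its interior point $2c$ maps to the center $R/2$; hence its image is the \emph{closed} disk bounded by that circle. Therefore
\[
\cG(J_{\alpha\cM_\mu})=\{z\in\complex\,|\,|z-R/2|\le R/2\}\subseteq\{z\in\complex\,|\,|z|\le R\}=\cG(\cL_R).
\]
Since $\cL_R$ is SRG-full by Theorem~\ref{thm:srg-full}, this containment of subsets of $\ecomplex$ upgrades to the containment of classes $J_{\alpha\cM_\mu}\subseteq\cL_R$.

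For tightness, observe that the disk above touches the circle $|z|=R$ at $z=R$, and that $R$ is genuinely attained: $c=1+\alpha\mu$ lies on the boundary line $\{\Re z=c\}\subseteq\cG(I+\alpha\cM_\mu)$, so $R\in\cG(J_{\alpha\cM_\mu})$. Consequently, if $J_{\alpha\cM_\mu}\subseteq\cL_{R'}$ then $R\in\cG(J_{\alpha\cM_\mu})\subseteq\cG(\cL_{R'})=\{z\,|\,|z|\le R'\}$, forcing $R'\ge R$. The only nonroutine step is the inversion computation for the half-plane; everything else is bookkeeping with transformation rules already established. The mild subtlety to watch is confirming that the image is the closed disk rather than the complementary region (settled by tracking the interior point $z=2c$ to the center $R/2$) and that the extremal point $z=R$ is actually hit, so that the tightness claim is exact rather than merely asymptotic.
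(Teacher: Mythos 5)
Your argument is correct and is essentially the paper's own proof: the paper arrives at $\cG(J_{\alpha\cM_\mu})$ by applying Proposition~\ref{prop:monotone-srg} and Theorems~\ref{thm:srg-scaling-translation} and~\ref{thm:srg-inversion}, displays the inversion of the half-plane as a disk of radius $\tfrac{1}{2(1+\alpha\mu)}$ via a diagram, and then invokes SRG-fullness of $\cL_R$; you have simply carried out that inversion computation explicitly rather than pictorially, and spelled out the tightness observation that $R$ is attained in $\cG(J_{\alpha\cM_\mu})$.
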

\begin{proof}
By Proposition~\ref{prop:monotone-srg} and Theorems~\ref{thm:srg-scaling-translation} and \ref{thm:srg-inversion}, we have the geometry 
\begin{center}
\begin{tabular}{ccc}
\raisebox{-.5\height}{
\begin{tikzpicture}[scale=1.5]
\fill[fill=medgrey] (1.2,-1.2) rectangle (2,1.2);
\draw (1.2,0.) node [above right] {$1+\alpha\mu$};
\draw (1.6,1) node {$\cup \{\binfty\}$};
\draw [<->] (-1.2,0) -- (2,0);
\draw [<->] (0,-1.2) -- (0,1.2);
\draw [dashed] (0,0) circle (1);
\draw (1,-0) node [above left] {$1$};
\filldraw (1,0) circle ({0.6*1.5/1.5pt});
\filldraw (1.2,0)  circle ({0.6*1.5/1.5pt});
\draw [->] (0.7,.5) -- (1.2,0.5);
\draw (0.7,.5) node [left, fill=white] {$\cG\left(I+\alpha \cM_\mu\right)$};
\end{tikzpicture}}
&
$\stackrel{\bar{z}^{-1}}{\longrightarrow}$
&
\raisebox{-.5\height}{
\begin{tikzpicture}[scale=1.5]
\fill [fill=lightgrey] (0,0) circle (0.83333333333);
\fill[fill=medgrey] (0.41666666666,0) circle (0.41666666666);
\draw [<->] (-1.2,0) -- (1.2,0);
\draw [<->] (0,-1.2) -- (0,1.2);
\draw [dashed] (0,0) circle (1);
\draw (1,-0pt) node [above right] {$1$};
\filldraw (1,0) circle ({0.6*1.5/1.5pt});
\filldraw ((0.83333333333,0)  circle ({0.6*1.5/1.5pt});
\draw (0.83,-.05) node [above left] {$\frac{1}{1+\alpha\mu}$};
\def\x{0.35}
\draw [->] (1.,.85) -- (\x,{sqrt((0.833333/2)^2-(\x-(0.833333/2))^2)});
\draw (1.,.85) node [above] {$\cG\left(J_{\alpha \cA}\right)$};

\def\y{0.65}
\draw [->] (.85,-.5) -- (\y,{-sqrt((0.833333)^2-(\y)^2)});
\draw (.85,-.5) node [right, fill=white] {$\cG\left(\cL_R\right)$};
\draw (.9,-.95) node [fill=white] {$R=\frac{1}{1+\alpha\mu}$};

\end{tikzpicture}}
\end{tabular}
\end{center}

The containment holds for $R$ and fails for smaller $R$.
Since $\cL_R$ is SRG-full by Theorem~\ref{thm:srg-full}, the containment of the SRG in $\ecomplex$ equivalent to the containment of the class.
\qed\end{proof}

\subsubsection{Convergence analysis: Douglas--Rachford}
Consider the monotone inclusion problem
\[
\mbox{find $x\in \cH$ \quad such that \quad }0\in (A+B)x,
\]
where $A$ and $B$ are operators and $A+B$ has a zero.
Consider Douglas--Rachford splitting \cite{douglas1956,lions1979}
\begin{equation}
z^{k+1}=\left(\tfrac{1}{2}I+\tfrac{1}{2}(2J_{\alpha A}-I)(2J_{\alpha B}-I)\right)z^k,\tag{DR}
\label{eq:dr}
\end{equation}
where $\alpha>0$ and $z^0\in \hilbert$ is a starting point. 
If $z^\star$ is a fixed point, then $J_{\alpha B}(z^\star)$ is a zero of $A+B$
(see tutorial \cite[p.\ 28]{ryu2016} or textbook  \cite[Proposition 26.1]{BCBook}).
We can use the Krasnosel'ski\u{\i}--Mann theorem to establish convergence of \eqref{eq:dr}.

\begin{fact}[Theorem 1~\cite{lions1979}]
Assume $A$ and $B$ are maximal monotone. 
For $\alpha>0$,
the iterates of \eqref{eq:dr} converge in that $z^k\rightarrow z^\star$ weakly for some fixed point $z^\star$.
\end{fact}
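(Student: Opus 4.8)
The plan is to show that the \eqref{eq:dr} operator $T = \tfrac12 I + \tfrac12 R_{\alpha A}R_{\alpha B}$, where $R_{\alpha A} = 2J_{\alpha A}-I$ and $R_{\alpha B} = 2J_{\alpha B}-I$, is $\tfrac12$-averaged and has a fixed point, and then invoke the Krasnosel'ski\u{\i}--Mann theorem. Recalling $\cN_{1/2} = \tfrac12 I + \tfrac12\cL_1$, membership $T\in\cN_{1/2}$ is exactly the statement that the reflected resolvent composition $R_{\alpha A}R_{\alpha B}$ lies in $\cL_1$, i.e.\ is nonexpansive. Since the composition of nonexpansive operators is nonexpansive (elementarily, or via the forthcoming SRG composition theorem together with $\cG(\cL_1)\cG(\cL_1)=\cG(\cL_1)$), it suffices to prove $R_{\alpha A}\in\cL_1$ and $R_{\alpha B}\in\cL_1$; by symmetry I only treat $R_{\alpha A}$.

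First I would trace the SRG through the chain of transformations, exactly as in the proof of Fact~\ref{prop:sm-res}. Since $A$ is maximal monotone, $A\in\cM$, so $\cG(A)\subseteq\cG(\cM)=\{z\in\complex\,|\,\Re z\ge 0\}\cup\{\infty\}$ by Proposition~\ref{prop:monotone-srg}. By Theorem~\ref{thm:srg-scaling-translation}, post-scaling by $\alpha>0$ leaves this half-plane unchanged, so $\cG(\alpha A)\subseteq\{\Re z\ge 0\}\cup\{\infty\}$, and translation gives $\cG(I+\alpha A)\subseteq\{z\,|\,\Re z\ge 1\}\cup\{\infty\}$. By Theorem~\ref{thm:srg-inversion}, $\cG(J_{\alpha A})=\cG\big((I+\alpha A)^{-1}\big)=\big(\cG(I+\alpha A)\big)^{-1}$; inversive geometry sends the generalized circle $\{\Re z = 1\}\cup\{\infty\}$ under $z\mapsto\bar z^{-1}$ to the circle through $0$ and $1$ that is symmetric about the real axis, namely the circle of radius $\tfrac12$ centered at $\tfrac12$, with the half-plane $\{\Re z\ge 1\}\cup\{\infty\}$ (which excludes the origin) mapping to the closed disk it bounds. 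Hence $\cG(J_{\alpha A})\subseteq\{z\,|\,|z-\tfrac12|\le\tfrac12\}$. Applying Theorem~\ref{thm:srg-scaling-translation} again, $\cG(2J_{\alpha A})\subseteq\{z\,|\,|z-1|\le 1\}$ and therefore $\cG(R_{\alpha A})=\cG(2J_{\alpha A}-I)\subseteq\{z\,|\,|z|\le 1\}=\cG(\cL_1)$. Since $\cL_1$ is SRG-full (Theorem~\ref{thm:srg-full}), the containment of the SRG in $\ecomplex$ is equivalent to $R_{\alpha A}\in\cL_1$; the same argument gives $R_{\alpha B}\in\cL_1$.

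It then follows that $T = \tfrac12 I + \tfrac12 R$ with $R = R_{\alpha A}R_{\alpha B}$ nonexpansive, so $T\in\cN_{1/2}$ is $\tfrac12$-averaged. Next I would record that $T$ has a fixed point and the iteration is well-defined: by Minty's theorem each of $J_{\alpha A}$ and $J_{\alpha B}$ is single-valued with full domain $\hilbert$, and since $A+B$ has a zero and $A,B$ are maximal monotone, the correspondence recalled below \eqref{eq:dr} (see \cite[p.\ 28]{ryu2016} or \cite[Proposition 26.1]{BCBook}) produces $z^\star$ with $z^\star = Tz^\star$ and $J_{\alpha B}z^\star\in\zer(A+B)$. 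Finally, the Krasnosel'ski\u{\i}--Mann theorem applied to the averaged operator $T$ with a fixed point yields $z^k\to z^\star$ weakly.

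I expect the inversion step to be the main obstacle to get right: one must verify that the \emph{closed disk} rather than its complement is the image of $\{\Re z\ge 1\}\cup\{\infty\}$ under $z\mapsto\bar z^{-1}$ (the test point $2\mapsto\tfrac12$, the disk's center, settles this), and check that the special points are handled consistently — $\infty$ in the half-plane maps to $0$ on the disk's boundary, which also encodes the fact that $J_{\alpha A}$ may fail to be multi-valued only in the limiting sense. A secondary point is that I deliberately invoke the elementary nonexpansiveness of compositions rather than the SRG product formula $\cG(\cA\cB)=\cG(\cA)\cG(\cB)$, whose hypotheses would otherwise need to be verified here.
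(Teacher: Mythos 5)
Your proposal is correct and follows essentially the same route as the paper: trace the SRG of $A\in\cM$ through scaling, translation by $I$, and inversion to conclude $\cG(2J_{\alpha A}-I)\subseteq\{|z|\le1\}$, invoke SRG-fullness of $\cL_1$ to get nonexpansiveness of each reflected resolvent, compose, and finish with Krasnosel'ski\u{\i}--Mann. The extra care you take with the test point for the inversion and with Minty's theorem for full domain are sound but not points the paper lingers on.
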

\begin{proof}
By Proposition~\ref{prop:monotone-srg} and  Theorems~\ref{thm:srg-scaling-translation} and \ref{thm:srg-inversion}, we have the geometry
\begin{center}
\begin{tabular}{ccccccc}
\raisebox{-.5\height}{
\begin{tikzpicture}[scale=1]
\fill[fill=medgrey] (1,-1.2) rectangle (2.8,1.2);
\draw (2.35,1) node {$\cup \{\binfty\}$};
\draw [<->] (-1.2,0) -- (2.8,0);
\draw [<->] (0,-1.2) -- (0,1.2);
\draw [dashed] (0,0) circle (1);
\draw (1,-0) node [above left] {$1$};
\filldraw (1,0) circle ({0.6*1.5/1pt});
\draw (1.875,-.5) node  {$\cG\left(I+ \alpha \cM\right)$};
\end{tikzpicture}}
&\!\!\!\!\!\!
$\stackrel{\bar{z}^{-1}}{\longrightarrow}$
&\!\!\!\!\!\!
\raisebox{-.5\height}{
\begin{tikzpicture}[scale=1]
\fill[fill=medgrey] (0.5,0) circle (0.5);
\draw [<->] (-1.2,0) -- (1.2,0);
\draw [<-] (0,-1.2) -- (0,1.);
\draw [dashed] (0,0) circle (1);
\draw (1,-0pt) node [above right] {$1$};
\filldraw (1,0) circle ({0.6*1.5/1pt});
\def\x{0.15}
\draw [->] (0.1,.7) -- (\x,{sqrt((1/2)^2-(\x-(1/2))^2)});
\draw (0.1,.7) node [above, fill=white] {$\cG\left(J_{\alpha \cM}\right)$};
\end{tikzpicture}}
&\!\!\!\!\!\!\!\!\!\!\!
&$\stackrel{2z-1}{\longrightarrow}$&
&\!\!\!\!\!\!\!\!\!\!\!\!\!\!\!\!\!\!\!\!\!\!\!\!\!\!\!\!\!\!\!\!\!\!\!
\raisebox{-.5\height}{
\begin{tikzpicture}[scale=1]
\fill[fill=medgrey] (0,0) circle (1);
\draw [<->] (-1.2,0) -- (1.2,0);
\draw [<->] (0,-1.2) -- (0,1.2);
\draw [dashed] (0,0) circle (1);
\draw (1,-0pt) node [above right] {$1$};
\filldraw (1,0) circle ({0.6*1.5/1pt});
\def\x{-0.25}
\draw [->] (-1.4,1.2) -- (\x,{sqrt(1-(\x)^2)});
\draw (-1.6,1.1) node[above]  {$\cG\left(2J_{\alpha \cM}-I\right)$};
\end{tikzpicture}}
\end{tabular}
\end{center}
\vspace{0.1in}
Since  $\cL_1$ is SRG-full, Theorem~\ref{thm:srg-full} implies $(2J_{\alpha A}-I)$ is nonexpansive.
By the same reasoning, $(2J_{\alpha B}-I)$ is nonexpansive, and, since the composition of nonexpansive operators is nonexpansive, $(2J_{\alpha A}-I)(2J_{\alpha B}-I)$ is nonexpansive.
So \eqref{eq:dr} is a fixed-point iteration with a $1/2$-averaged operator, and the iteration converges by the Krasnosel'ski\u{\i}--Mann theorem.
\qed\end{proof}

When we have further assumptions, we can provide a stronger rate of convergence.

\begin{fact}
\label{fact:tight_example}
Assume $A$ or $B$ is $\mu$-strongly monotone and $\beta$-cocoercive with $0<\mu<1/\beta<\infty$.
For $\alpha>0$, the iterates of \eqref{eq:dr} converge exponentially to the fixed point $z^\star$ with rate 
\[
\|z^k-z^\star\|\le
\left(
\frac{1}{2}+\frac{1}{2}
\sqrt{1-\frac{4\alpha\mu}{1+2\alpha\mu+\alpha^2\mu/\beta}}\right)^k
\|z^0-z^\star\|.
\]
\end{fact}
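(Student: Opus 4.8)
The plan is to bound $\cG(\TDRS)$, where $\TDRS=\tfrac{1}{2}I+\tfrac{1}{2}R_AR_B$ with $R_A=2J_{\alpha A}-I$ and $R_B=2J_{\alpha B}-I$ the reflected resolvents, and to conclude $\TDRS\in\cL_R$ for $R=\tfrac{1}{2}+\tfrac{1}{2}\sqrt{1-\tfrac{4\alpha\mu}{1+2\alpha\mu+\alpha^2\mu/\beta}}$; since $R<1$, the Banach contraction principle then yields $\|z^k-z^\star\|\le R^k\|z^0-z^\star\|$.

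Assume first $A\in\cM_\mu\cap\cC_\beta$; the case where $B$ is the strongly monotone cocoercive operator is identical, because the modulus bound below is symmetric in the two factors. By Proposition~\ref{prop:monotone-srg}, the SRG-fullness of $\cM_\mu$ and $\cC_\beta$ together with Theorem~\ref{thm:srg-intersection}, and Theorems~\ref{thm:srg-scaling-translation} and~\ref{thm:srg-inversion}, I would push the region $\cG(\cM_\mu\cap\cC_\beta)=\cG(\cM_\mu)\cap\cG(\cC_\beta)=\{\Re z\ge\mu\}\cap\{\beta|z|^2\le\Re z\}$ through the operations $\cA\mapsto\alpha\cA\mapsto I+\alpha\cA\mapsto(I+\alpha\cA)^{-1}\mapsto 2(I+\alpha\cA)^{-1}-I$ that assemble $R_A$. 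Scaling by $\alpha$ and translating by $1$ turns the circular segment into the part of the disk $D\bigl(1+\tfrac{\alpha}{2\beta},\tfrac{\alpha}{2\beta}\bigr)$ with $\Re z\ge 1+\alpha\mu$; inversion sends generalized circles to generalized circles and acts on the real axis by $z\mapsto 1/z$, so the bounding circle goes to the circle with real-axis crossings $1$ and $\tfrac{\beta}{\alpha+\beta}$ and the bounding line $\Re z=1+\alpha\mu$ goes to the circle with real-axis crossings $0$ and $\tfrac{1}{1+\alpha\mu}$; finally $z\mapsto 2z-1$ shows that $\cG(R_A)$ is contained in the lens $D_1'\cap D_2'$, where $D_1'=D\bigl(\tfrac{\beta}{\alpha+\beta},\tfrac{\alpha}{\alpha+\beta}\bigr)$ comes from cocoercivity and $D_2'=D\bigl(\tfrac{-\alpha\mu}{1+\alpha\mu},\tfrac{1}{1+\alpha\mu}\bigr)$ from strong monotonicity.

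The heart of the proof is the value $M:=\max\{|z|\,:\,z\in D_1'\cap D_2'\}$. Subtracting the equations of $\partial D_1'$ and $\partial D_2'$ gives the common abscissa of their two intersection points, and substituting back into one circle equation yields, after simplification, $|z|^2=\tfrac{1-2\alpha\mu+\alpha^2\mu/\beta}{1+2\alpha\mu+\alpha^2\mu/\beta}=1-\tfrac{4\alpha\mu}{1+2\alpha\mu+\alpha^2\mu/\beta}$ at both intersection points (one also checks the two circles genuinely cross, i.e.\ the intersection points are non-real). To see this equals $M$, note the maximum of $|z|$ over the closed lens is attained on its boundary, which consists of one arc of $\partial D_1'$ and one arc of $\partial D_2'$; the points of $\partial D_1'$ and of $\partial D_2'$ farthest from the origin are $1$ and $-1$ respectively, and a short computation shows $1\notin D_2'$ and $-1\notin D_1'$, so neither farthest point lies on the relevant boundary arc. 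Hence $|z|$ is monotone along each arc and is maximized at a shared endpoint, i.e.\ at a circle intersection point, so $M=\sqrt{1-\tfrac{4\alpha\mu}{1+2\alpha\mu+\alpha^2\mu/\beta}}$.

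To finish: $\cG(R_B)\subseteq\cG(\cL_1)=\{|z|\le 1\}$ since the reflected resolvent of a maximal monotone operator is nonexpansive, and every modulus appearing in $\cG(R_AR_B)$ is the product of a modulus from $\cG(R_A)$ and one from $\cG(R_B)$ (immediate from the definition of the SRG), so $\cG(R_AR_B)\subseteq\{|z|\le M\}$; this also exhibits the promised symmetry in $A$ and $B$. By Theorem~\ref{thm:srg-scaling-translation}, $\cG(\TDRS)=\tfrac{1}{2}+\tfrac{1}{2}\cG(R_AR_B)\subseteq\bigl\{\,|z-\tfrac{1}{2}|\le\tfrac{M}{2}\,\bigr\}\subseteq\{|z|\le R\}=\cG(\cL_R)$, and since $\cL_R$ is SRG-full, $\TDRS\in\cL_R$. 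The radicand lies in $(0,1)$ for every $\alpha>0$ (its denominator minus numerator equals $1-2\alpha\mu+\alpha^2\mu/\beta>0$, since $\mu<1/\beta$), so $R<1$ and $\TDRS$ is a strict contraction; therefore the iteration converges to its unique fixed point $z^\star$ at the stated rate. The main obstacle is the bookkeeping needed to track the two-circle region through the inversion and then to verify that the maximal modulus of the resulting lens sits at its corner rather than on one of its arcs; the composition step, delicate in general because angles do not simply add, is painless here precisely because $\cG(R_B)$ is the whole unit disk.
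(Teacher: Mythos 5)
Your argument is correct and follows the same overall architecture as the paper's: track $\cG(\cM_\mu\cap\cC_\beta)$ through $\alpha\,\cdot$, $I+\cdot$, inversion, and $2\,\cdot-1$ to obtain the lens $D_1'\cap D_2'$ as a bound on $\cG(2J_{\alpha\cA}-I)$, extract the maximal modulus $M$ as the distance from the origin to the lens's corner, and then use submultiplicativity of Lipschitz constants under composition together with the averaging by $\tfrac12$. The paper routes this through Fact~\ref{prop:refl-sm-coco}, whose proof computes $\overline{OA}^2$ at the intersection point via Stewart's theorem and then argues the lens sits inside the circle of radius $R$ by a chord-and-arc containment argument; you reach the same $M^2=1-\tfrac{4\alpha\mu}{1+2\alpha\mu+\alpha^2\mu/\beta}$ by subtracting the two circle equations (radical axis) and then observe that $|z|$ attains its max over the lens at the corner because neither farthest point of $\partial D_1'$ or $\partial D_2'$ (namely $1$ and $-1$) lies on the bounding arc. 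Both variants are valid and roughly equal in length; your radical-axis computation is slightly more mechanical, whereas Stewart's theorem avoids solving for the intersection abscissa outright. One small phrasing caveat: $|z|$ need not be strictly monotone along the bounding arc (it may pass through a local minimum if the arc contains the nearest point of its circle to the origin), but since it has no interior maximum on the arc, its supremum is still attained at an endpoint, and your conclusion stands.
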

\begin{proof}
If $S_1$ is $R_1$-Lipschitz continuous and $S_2$ is $R_2$-Lipschitz continuous, then $S_1S_2$ is $(R_1R_2)$-Lipschitz continuous.
If $S$ is $R$-Lipschitz continuous, then $\frac{1}{2}I+\frac{1}{2}S$ is $\left(\frac{1}{2}+\frac{R}{2}\right)$-Lipschitz continuous.
The result follows from these observations and Fact~\ref{prop:refl-sm-coco}, which we state and prove below.
\qed\end{proof}

\begin{fact}[Theorem 7.2 \cite{giselsson20152}]
\label{prop:refl-sm-coco}
Let $0<\mu<1/\beta<\infty$ and $\alpha\in(0,\infty)$.
If $\cA= \cM_\mu\cap \cC_\beta$, then $2J_{\alpha \cA}-I\subseteq \cL_R$ for
\[
R=\sqrt{1-\frac{4\alpha\mu}{1+2\alpha\mu+\alpha^2\mu/\beta}}.
\]
This result is tight in the sense that $2J_{\alpha \cA}-I\nsubseteq \cL_R$ for any smaller value of $R$.
\end{fact}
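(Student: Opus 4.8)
The plan is to compute $\cG(2J_{\alpha\cA}-I)$ explicitly as a region of $\ecomplex$ using the transformation theorems of this section, show it lies inside the disk of radius $R$, and read off tightness from the fact that the circle $|z|=R$ is actually touched. By Theorem~\ref{thm:srg-intersection} and the SRG-fullness of $\cM_\mu$ and $\cC_\beta$, $\cG(\cA)=\cG(\cM_\mu)\cap\cG(\cC_\beta)$, which by Proposition~\ref{prop:monotone-srg} is the circular segment $\{z\,:\,\Re z\ge\mu\}\cap\{z\,:\,|z-\tfrac{1}{2\beta}|\le\tfrac{1}{2\beta}\}$; it omits $\infty$, and is nonempty since $\mu<1/\beta$. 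Applying Theorem~\ref{thm:srg-scaling-translation} twice, $\cG(I+\alpha\cA)=1+\alpha\cG(\cA)$ is $\{z\,:\,\Re z\ge 1+\alpha\mu\}\cap\{z\,:\,|z-(1+\tfrac{\alpha}{2\beta})|\le\tfrac{\alpha}{2\beta}\}$, a region lying in $\{\Re z\ge 1\}$ and in particular avoiding $0$ and $\infty$.

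Next comes the inversion $z\mapsto\bar z^{-1}$: by Theorem~\ref{thm:srg-inversion}, $\cG(J_{\alpha\cA})=(\cG(I+\alpha\cA))^{-1}$, and here inversive geometry does the work. The vertical line $\Re z=1+\alpha\mu$ maps to the circle with diameter $[0,\tfrac{1}{1+\alpha\mu}]$ and the half-plane $\Re z\ge 1+\alpha\mu$ (which avoids $0$) maps to the closed disk it bounds; the bounding circle $|z-(1+\tfrac{\alpha}{2\beta})|=\tfrac{\alpha}{2\beta}$, which avoids $0$, maps to the circle with diameter $[\tfrac{1}{1+\alpha/\beta},1]$, and the disk it bounds maps to the disk bounded by that image circle. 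Hence $\cG(J_{\alpha\cA})$ is the intersection of these two disks, and after $w\mapsto 2w-1$ (Theorem~\ref{thm:srg-scaling-translation} once more, for the scaling and for $\cA-I$), $\cG(2J_{\alpha\cA}-I)$ is the lens $\tilde D_1\cap\tilde D_2$, where $\tilde D_1$ has center $c_1=-\tfrac{\alpha\mu}{1+\alpha\mu}$ and radius $r_1=\tfrac{1}{1+\alpha\mu}$ (so it passes through $-1$) and $\tilde D_2$ has center $c_2=\tfrac{\beta}{\beta+\alpha}$ and radius $r_2=\tfrac{\alpha}{\beta+\alpha}$ (so it passes through $1$). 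Tracking these images correctly — in particular, which side of each image circle is the image of the original region — is the fussiest part; a clean way to bookkeep is to follow the real interval $[\mu,1/\beta]$ through all the transformations, and one checks that the hypothesis $\mu<1/\beta$ is exactly what makes the real spans of $\tilde D_1$ and $\tilde D_2$ overlap with neither disk containing the other, so the two boundary circles cross at a conjugate pair of points.

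Finally I would maximize $|z|$ over the lens $\tilde D_1\cap\tilde D_2$. Since $|z|$ is convex, the maximum is attained at an extreme point. The point of $\tilde D_1$ farthest from the origin is $c_1-r_1=-1$, which violates the defining inequality of $\tilde D_2$, and the point of $\tilde D_2$ farthest from the origin is $c_2+r_2=1$, which violates that of $\tilde D_1$; hence the maximum occurs at one of the two circle–circle intersection points $p$. Subtracting $|p-c_1|^2=r_1^2$ from $|p-c_2|^2=r_2^2$ determines $\Re p$ linearly, and then $|p|^2=(r_1^2-c_1^2)+2c_1\Re p$. Using $r_1^2-c_1^2=\tfrac{1-\alpha\mu}{1+\alpha\mu}$ and $r_2^2-c_2^2=\tfrac{\alpha-\beta}{\alpha+\beta}$, this simplifies to $|p|^2=\tfrac{1-2\alpha\mu+\alpha^2\mu/\beta}{1+2\alpha\mu+\alpha^2\mu/\beta}=R^2$. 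Therefore $\cG(2J_{\alpha\cA}-I)\subseteq\{z\,:\,|z|\le R\}=\cG(\cL_R)$, and since $\cL_R$ is SRG-full, Theorem~\ref{thm:srg-full} yields $2J_{\alpha\cA}-I\subseteq\cL_R$. For tightness, $p\in\cG(2J_{\alpha\cA}-I)$ with $|p|=R$, so $\cG(2J_{\alpha\cA}-I)\nsubseteq\cG(\cL_{R'})$ for any $R'<R$, and SRG-fullness of $\cL_{R'}$ forbids $2J_{\alpha\cA}-I\subseteq\cL_{R'}$. As in Fact~\ref{prop:sm-coco-forward}, the relative position of the line $\Re z=\mu$ and the disk center splits the picture into the cases $\mu<1/(2\beta)$, $\mu=1/(2\beta)$, $\mu>1/(2\beta)$, but these only change the diagram, not the computation. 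The main obstacle is the inversive-geometry step: getting the orientations of the image disks right and confirming that the extremal point of the lens is indeed the circle intersection rather than an arc point.
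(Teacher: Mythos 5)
Your proposal is correct and follows the same overall route as the paper: use Proposition~\ref{prop:monotone-srg} and Theorems~\ref{thm:srg-intersection}, \ref{thm:srg-scaling-translation}, and \ref{thm:srg-inversion} to identify $\cG(2J_{\alpha\cA}-I)$ as the lens $\tilde D_1\cap\tilde D_2$, bound this lens by the circle of radius $R$ centered at the origin, and invoke SRG-fullness of $\cL_R$ via Theorem~\ref{thm:srg-full}. The difference is confined to the computation of $R$: where the paper applies Stewart's theorem to the triangle $OBC$ with cevian $\overline{OA}$ to read off $\overline{OA}^2$, you instead subtract the two circle equations $|p-c_i|^2=r_i^2$ to solve for $\Re p$ linearly and then back-substitute for $|p|^2$, which is a routine analytic-geometry calculation that avoids citing a named theorem. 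Your lens data check out ($c_1=-\tfrac{\alpha\mu}{1+\alpha\mu}$, $r_1=\tfrac{1}{1+\alpha\mu}$, $c_2=\tfrac{\beta}{\beta+\alpha}$, $r_2=\tfrac{\alpha}{\beta+\alpha}$, and $r_1+r_2-|c_1-c_2|=\tfrac{2\alpha(1-\beta\mu)}{(1+\alpha\mu)(\alpha+\beta)}>0$ precisely because $\mu<1/\beta$), and your formula $|p|^2=(r_1^2-c_1^2)+2c_1\Re p$ does simplify to $R^2$. One small point worth tightening: rather than appealing vaguely to convexity and extreme points, the precise reason the maximizer of $|z|$ on the lens sits at a corner is that $|z|^2$ restricted to a circle centered on the real axis is monotone in $\cos\theta$, and the relevant arc of each circle excludes the circle's farthest-from-origin point ($-1$ for $\partial\tilde D_1$, $1$ for $\partial\tilde D_2$), so on each arc $|z|$ attains its maximum at the arc endpoints $p,\bar p$. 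This is exactly what the paper's ``two non-identical circles intersect at at most two points'' argument delivers; your version gets there by calculus on the boundary instead.
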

\emph{Proof outline}\,
We quickly outline the geometric insight while deferring the full proof with precise geometric arguments to the Section~\ref{s:geo-proofs} in the appendix.
We have the geometry
\begin{center}
\begin{tabular}{ccccc}
\raisebox{-.5\height}{
\begin{tikzpicture}[scale=1.8]
\def\m{0.5};
\def\b{.8};
\begin{scope}
\clip ({1+\m},-1.2) rectangle (-1.2,1.2);
\draw[dashed] ({1+1/2/\b},0) circle ({1/2/\b});
\end{scope}
\draw [dashed] ({1+\m},{sqrt((1/2/\b)^2-(1/2/\b-\m)^2)}) -- ({1+\m},1.2);
\draw [dashed] ({1+\m},{-sqrt((1/2/\b)^2-(1/2/\b-\m)^2)}) -- ({1+\m},-1.2);

\begin{scope}
\clip ({1+\m},-1.2) rectangle (2.5,1.2);
\fill[fill=medgrey] ({1+1/2/\b},0) circle ({1/2/\b});
\end{scope}
\begin{scope}
\clip ({1+1/2/\b},0) circle ({1/2/\b});
\end{scope}
\filldraw ({1+\m},0)circle ({0.6*1.5/1.8pt});
\filldraw ({1+1/\b},0)  circle ({0.6*1.5/1.8pt});

\draw ({1+1/\b},0.0) node [above left] {$\scriptstyle1+\frac{\alpha}{\beta}$};
\draw [<->] (-1.2,0) -- (2.5,0);
\draw [<->] (0,-1.2) -- (0,1.2);
\draw [dashed] (0,0) circle (1);
\draw ({1+\m},0.05) node [above left,fill=white] {$\scriptstyle 1+\alpha\mu$};
\draw (1,-0pt) node [below left] {$\scriptstyle 1$};
\filldraw (1,0)circle ({0.6*1.5/1.8pt});
\draw (.1,-.6) node [below,fill=white] {$\scriptstyle\cG\left(I+\alpha\cA\right)$};
\draw [->] (.1,-.6) -- ({1+\m},-0.2);
\end{tikzpicture}}
&\!\!\!\!\!\!\!
$\stackrel{\bar{z}^{-1}}{\longrightarrow}$
&\!\!\!\!\!\!\!\!\!\!
\raisebox{-.5\height}{
\begin{tikzpicture}[scale=1.8]
\def\m{0.5};
\def\b{.8};

\begin{scope}
\clip ({1/2*(-\m/(1+\m)+sqrt(1/(1 + \m)/(1 + \m) + (4*\b*\m*(-1 + \b*\m))/(\b + \m + 2*\b*\m)/(\b + \m + 2*\b*\m)))+1/2},-1.2)  rectangle (-1.2,1.2);
\draw[dashed] ({1/(1+\m)/2},0) circle ({1/(1+\m)/2});
\end{scope}

\begin{scope}
\clip ({1/2*(-\m/(1+\m)+sqrt(1/(1 + \m)/(1 + \m) + (4*\b*\m*(-1 + \b*\m))/(\b + \m + 2*\b*\m)/(\b + \m + 2*\b*\m)))+1/2},-1.2)  rectangle (1.2,1.2);
\draw[dashed] ({(1+2*\b)/(2+2*\b)},0) circle ({(1)/(2+2*\b)});
\end{scope}

\begin{scope}
\clip ({1/(1+\m)/2},0) circle ({1/(1+\m)/2});
\fill[fill=medgrey] ({(1+2*\b)/(2+2*\b)},0) circle ({(1)/(2+2*\b)});
\end{scope}
\begin{scope}
\clip ({(1+2*\b)/(2+2*\b)},0) circle ({(1)/(2+2*\b)});
\end{scope}
\draw [<->] (-1.2,0) -- (1.2,0);
\draw [<->] (0,-1.2) -- (0,1.2);
\draw [dashed] (0,0) circle (1);
\draw (1,-0pt) node [below right] {$\scriptstyle 1$};
\filldraw (1,0)circle ({0.6*1.5/1.8pt});

\filldraw ({1/(1+\m)},0)circle ({0.6*1.5/1.8pt});
\draw[->] (1.05,.4)--({1/(1+\m)},0);
\draw (1.15,.4) node [above] {$\scriptstyle\frac{1}{1+\alpha\mu}$};

\filldraw ({1/(1+1/\b)},0)circle ({0.6*1.5/1.8pt});
\draw[->] (.3,.5)--({1/(1+1/\b)},0);
\draw (.3,.45) node [above] {$\scriptstyle\frac{1}{1+\alpha/\beta}$};

\def\x{.5}
\draw (.1,-.6) node [below,fill=white] {$\scriptstyle\cG\left(J_{\alpha \cA}\right)$};
\draw [->] (.1,-.7) -- (\x,{-sqrt(((1)/(2+2*\b))^2-(\x-(1+2*\b)/(2+2*\b))^2)});
\end{tikzpicture}}\\
&\!\!\!\!\!\!\!\!\!\!\!\!\!\!\!\!
$\stackrel{2z-1}{\longrightarrow}$
&\!\!\!\!\!\!\!\!\!\!\!\!\!\!\!\!\!\!\!\!\!\!\!\!\!\!\!\!\!
\raisebox{-.5\height}{
\begin{tikzpicture}[scale=1.8]
\def\m{0.5};
\def\b{.8};

\fill [fill=lightgrey] (0,0) circle ({sqrt((\b + \m - 2*\b*\m)/   (\b + \m + 2*\b*\m))});

\begin{scope}
\clip ({-\m/(1+\m)+sqrt(1/(1 + \m)/(1 + \m) + (4*\b*\m*(-1 + \b*\m))/(\b + \m + 2*\b*\m)/(\b + \m + 2*\b*\m))},-1.2)  rectangle (-1.2,1.2);
\draw[dashed] ({-\m/(1+\m)},0) circle ({1-\m/(1+\m)});
\end{scope}

\begin{scope}
\clip ({-\m/(1+\m)+sqrt(1/(1 + \m)/(1 + \m) + (4*\b*\m*(-1 + \b*\m))/(\b + \m + 2*\b*\m)/(\b + \m + 2*\b*\m))},-1.2)  rectangle (1.2,1.2);
\draw[dashed] ({1/(1+1/\b)},0) circle ({1-1/(1+1/\b)});
\end{scope}

\begin{scope}
\clip ({-\m/(1+\m)},0) circle ({1-\m/(1+\m)});
\fill[fill=medgrey] ({1/(1+1/\b)},0) circle ({1-1/(1+1/\b)});
\end{scope}
\begin{scope}
\clip ({1/(1+1/\b)},0) circle ({1-1/(1+1/\b)});
\end{scope}
\draw [<->] (-1.2,0) -- (1.2,0);
\draw [<->] (0,-1.2) -- (0,1.2);
\draw [dashed] (0,0) circle (1);
\draw (1,-0pt) node [below right] {$\scriptstyle1$};
\filldraw (1,0)circle ({0.6*1.5/1.8pt});
\filldraw ({(1-\m)/(1+\m)},0)circle ({0.6*1.5/1.8pt});
\draw ({(1-\m)/(1+\m)-0.1},0) node [above right] {$\scriptstyle\frac{1-\alpha\mu}{1+\alpha\mu}$};
\filldraw ({(1-1/\b)/(1+1/\b)},0)circle ({0.6*1.5/1.8pt});
\draw ({(1-1/\b)/(1+1/\b)+0.07},0) node [above left] {$\scriptstyle\frac{\beta-\alpha}{\beta+\alpha}$};

\def\x{.05}
\draw (-.05,-.6) node [below,fill=white] {$\scriptstyle\cG\left(2J_{\alpha \cA}-I\right)$};
\draw [->] (-.05,-.6) -- (\x,{-sqrt(((2)/(2+2*\b))^2-(\x-2*(1+2*\b)/(2+2*\b)+1)^2)});

%

\draw (-1.2,.9) node [fill=white] {$R=\sqrt{1-\frac{4\alpha\mu}{1+2\alpha\mu+\alpha^2\mu/\beta}}$};

\def\y{-.30}
\draw [->] (-1.25,.7) -- (\y,{sqrt((sqrt((\b + \m - 2*\b*\m)/   (\b + \m + 2*\b*\m)))^2-(\y)^2)});
\end{tikzpicture}}
\end{tabular}
\end{center}
The radius $R$ is obtained with Stewart's theorem \cite{stewart}.
\qed

As a special case, consider the optimization problem
\[
\begin{array}{ll}
\underset{x\in \cH}{\mbox{minimize}}& f(x)+g(x),
\end{array}
\]
where $f$ and $g$ are functions (not necessarily differentiable) and a minimizer exists.
Then \eqref{eq:dr} with $A=\partial f$ and $B=\partial g$ can be written as
\begin{align*}
x^{k+1/2}&=J_{\alpha \partial g}(z^k)\\
x^{k+1}&=J_{\alpha \partial f}(2x^{k+1/2}-z^k)\\
z^{k+1}&=z^k+x^{k+1}-x^{k+1/2},
\end{align*}
where $\alpha>0$ and $z^0\in \hilbert$ is a starting point. 
As an aside, the popular method ADMM is equivalent to this instance of Douglas--Rachford splitting \cite{gabay1983}.

\begin{fact}
Assume $f$ is $\mu$-strongly convex and $L$-smooth with $0<\mu<L<\infty$.
Assume $g$ is convex, lower semi-continuous, and proper.
For $\alpha>0$, the iterates of \eqref{eq:dr} converge exponentially to the fixed point $z^\star$ with rate 
\[
\|z^k-z^\star\|\le
\left(
\frac{1}{2}+\frac{1}{2}
\max\left\{
\left|\frac{1-\alpha\mu}{1+\alpha \mu}\right|,\left|\frac{1-\alpha L}{1+\alpha L}\right|
\right\}\right)^k
\|z^0-z^\star\|
\]
\end{fact}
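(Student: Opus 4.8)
The plan is to mirror the argument of Fact~\ref{prop:refl-sm-coco}, tracking the SRG through the sequence of operations that builds the Douglas--Rachford operator and then combining the resulting Lipschitz constants. Since $g$ is convex, lower semi-continuous, and proper, $\partial g$ is maximal monotone, so $\cG(\partial g)\subseteq\cG(\cM)$, and exactly as in the convergence proof for \eqref{eq:dr} above, $2J_{\alpha\partial g}-I$ is nonexpansive. The bulk of the work is to show that $2J_{\alpha\partial f}-I$ is $R$-Lipschitz with $R=\max\{|(1-\alpha\mu)/(1+\alpha\mu)|,|(1-\alpha L)/(1+\alpha L)|\}$.

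For this, I would follow $\partial f$ through the chain $\partial f\mapsto I+\alpha\partial f\mapsto J_{\alpha\partial f}=(I+\alpha\partial f)^{-1}\mapsto 2J_{\alpha\partial f}-I$ at the level of operator classes. By Proposition~\ref{proposition:cvx-srg-first}, $\cG(\partial\cF_{\mu,L})$ is the closed disk with horizontal diameter $[\mu,L]$. Theorem~\ref{thm:srg-scaling-translation} gives $\cG(I+\alpha\partial\cF_{\mu,L})=1+\alpha\cG(\partial\cF_{\mu,L})$, the disk with diameter $[1+\alpha\mu,1+\alpha L]$, which lies strictly to the right of the origin. Theorem~\ref{thm:srg-inversion} then gives $\cG(J_{\alpha\partial\cF_{\mu,L}})=(\cG(I+\alpha\partial\cF_{\mu,L}))^{-1}$; since inversion about the unit circle maps generalized circles to generalized circles, commutes with conjugation, and sends a disk not containing the origin to a disk, this image is the disk with diameter $[1/(1+\alpha L),1/(1+\alpha\mu)]$. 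Applying $z\mapsto 2z-1$ (again Theorem~\ref{thm:srg-scaling-translation}), $\cG(2J_{\alpha\partial\cF_{\mu,L}}-I)$ is the disk with horizontal diameter
\[
\left[\frac{1-\alpha L}{1+\alpha L},\ \frac{1-\alpha\mu}{1+\alpha\mu}\right].
\]
The maximum modulus over a disk centered on the real axis is attained at one of the two endpoints of its real diameter, so this disk is contained in $\{z\,|\,|z|\le R\}=\cG(\cL_R)$. Since $\partial f\in\partial\cF_{\mu,L}$, we get $\cG(2J_{\alpha\partial f}-I)\subseteq\cG(2J_{\alpha\partial\cF_{\mu,L}}-I)\subseteq\cG(\cL_R)$, and since $\cL_R$ is SRG-full, Theorem~\ref{thm:srg-full} yields that $2J_{\alpha\partial f}-I$ is $R$-Lipschitz.

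Finally, the composition $(2J_{\alpha\partial f}-I)(2J_{\alpha\partial g}-I)$ is $R\cdot 1=R$-Lipschitz, so the Douglas--Rachford operator $\tfrac12 I+\tfrac12(2J_{\alpha\partial f}-I)(2J_{\alpha\partial g}-I)$ is $(\tfrac12+\tfrac R2)$-Lipschitz. Because $0<\mu<L<\infty$ and $\alpha>0$ force $|(1-\alpha\mu)/(1+\alpha\mu)|<1$ and $|(1-\alpha L)/(1+\alpha L)|<1$, we have $R<1$, hence $\tfrac12+\tfrac R2<1$, and the iteration is a Banach contraction; the stated rate then follows from the contraction mapping principle. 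The step I expect to require the most care is the inversive-geometry computation of $(\cG(I+\alpha\partial\cF_{\mu,L}))^{-1}$, i.e.\ checking that the image of the disk is the disk with diameter $[1/(1+\alpha L),1/(1+\alpha\mu)]$ and not the complementary region — this is where one must use that the original disk does not contain or touch the origin.
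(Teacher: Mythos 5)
Your proof is correct and follows essentially the same route as the paper: the paper's own proof reduces the statement to the composition/averaging Lipschitz observations plus the containment $2J_{\alpha\partial\cF_{\mu,L}}-I\subseteq\cL_R$, and you re-derive that containment inline by tracing the SRG disk through the translation, inversion, and rescaling maps exactly as the paper does in its proof of Fact~\ref{prop:refl-cvx}. (One small note: the paper's proof of this particular fact contains a citation typo pointing to Fact~\ref{prop:refl-sm-coco}; the intended reference, which you correctly target, is Fact~\ref{prop:refl-cvx}.)
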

\begin{proof}
If $S_1$ is $R_1$-Lipschitz continuous and $S_2$ is $R_2$-Lipschitz continuous, then $S_1S_2$ is $(R_1R_2)$-Lipschitz continuous.
If $S$ is $R$-Lipschitz continuous, then $\frac{1}{2}I+\frac{1}{2}S$ is $\left(\frac{1}{2}+\frac{R}{2}\right)$-Lipschitz continuous.
The result follows from these observations and Fact~\ref{prop:refl-sm-coco}, which we state and prove below.
\qed\end{proof}

\begin{fact}[Theorem 1 \cite{giselsson2017linear}]
\label{prop:refl-cvx}
Let $0<\mu<L<\infty$ and $\alpha\in(0,\infty)$.
If $\cA= \partial \cF_{\mu,L}$, then 
$2J_{\alpha \cA}-I\subseteq \cL_R$ for
\[
R=
\max\left\{
\left|\frac{1-\alpha\mu}{1+\alpha \mu}\right|,\left|\frac{1-\alpha L}{1+\alpha L}\right|
\right\}.
\]
This result is tight in the sense that $2J_{\alpha \cA}-I\nsubseteq \cL_R$ for any smaller value of $R$.
\end{fact}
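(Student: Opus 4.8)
The plan is to mimic the SRG-transformation proofs of Facts~\ref{prop:sm-res} and~\ref{prop:refl-sm-coco}: compute $\cG(2J_{\alpha\cA}-I)$ by pushing a single generalized circle through a chain of elementary transformations, check by inspection that the resulting region lies in the disk $\cG(\cL_R)=\{z\in\ecomplex\,|\,|z|\le R\}$, and then invoke SRG-fullness of $\cL_R$.

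\emph{Building the region.} By Proposition~\ref{proposition:cvx-srg-first}, $\cG(\partial\cF_{\mu,L})$ is the closed disk whose horizontal diameter has endpoints $\mu$ and $L$. Scaling by $\alpha$ and translating by $1$ (Theorem~\ref{thm:srg-scaling-translation}) turns this into the disk with horizontal diameter $[1+\alpha\mu,\,1+\alpha L]$; since its leftmost point $1+\alpha\mu\ge 1>0$, this disk lies in the open right half-plane and, in particular, avoids the origin. Next apply the inversion $z\mapsto\bar z^{-1}$ (Theorem~\ref{thm:srg-inversion}): using the facts about inversion recalled in Section~\ref{ss:inversion} — a finite circle not through the origin maps to a finite circle, and a circle symmetric about the real axis maps to one symmetric about the real axis — the image is the disk whose real-axis intersections are the images $\tfrac{1}{1+\alpha L}$ and $\tfrac{1}{1+\alpha\mu}$ of the old ones, i.e. the disk with horizontal diameter $[\tfrac{1}{1+\alpha L},\tfrac{1}{1+\alpha\mu}]$; this is $\cG(J_{\alpha\cA})$. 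Finally $z\mapsto 2z-1$ (Theorem~\ref{thm:srg-scaling-translation} once more) yields
\[
\cG(2J_{\alpha\cA}-I)=\Big\{z\in\ecomplex\ \Big|\ \big|z-\tfrac{a+b}{2}\big|\le\tfrac{a-b}{2}\Big\},\qquad a:=\tfrac{1-\alpha\mu}{1+\alpha\mu},\quad b:=\tfrac{1-\alpha L}{1+\alpha L},
\]
the closed disk with horizontal diameter $[b,a]$ (note $a>b$, since $t\mapsto\tfrac{1-t}{1+t}$ is strictly decreasing on $(0,\infty)$ and $\mu<L$).

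\emph{Containment and tightness.} The farthest point of a disk from the origin is at distance $\big|\tfrac{a+b}{2}\big|+\tfrac{a-b}{2}$, and a short case check on the sign of $a+b$ (equivalently, whether $0\in[b,a]$, i.e. whether $\alpha\mu<1<\alpha L$) shows this equals $\max\{|a|,|b|\}=R$. Hence $\cG(2J_{\alpha\cA}-I)\subseteq\{z\,|\,|z|\le R\}=\cG(\cL_R)$, and since $\cL_R$ is SRG-full (Theorem~\ref{thm:srg-full}), this SRG containment upgrades to the class containment $2J_{\alpha\cA}-I\subseteq\cL_R$. For tightness, the larger of $|a|,|b|$ equals $R$ and is realized by the quadratic $f(x,y)=\tfrac{\mu}{2}x^2+\tfrac{L}{2}y^2\in\cF_{\mu,L}$, for which $2J_{\alpha\partial f}-I$ is the linear map $(x,y)\mapsto(ax,by)=\diag(a,b)$ with operator norm $R$; this map is not $R'$-Lipschitz for any $R'<R$.

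\emph{Main obstacle.} The only step requiring genuine care is the inversion: one must confirm that the image of the disk is the \emph{bounded} region enclosed by the image circle (rather than its unbounded complement), which is precisely where the fact that the circle avoids the origin — indeed avoids the whole imaginary axis — is used. Everything else is affine bookkeeping of diameter endpoints together with the final $\max$ computation, and, exactly as in Fact~\ref{prop:refl-sm-coco}, the sub-cases $\alpha\mu\lessgtr 1$ and $\alpha L\lessgtr 1$ only alter the picture cosmetically without changing the argument or the constants.
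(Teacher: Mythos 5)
Your proposal is correct and follows essentially the same chain of transformations as the paper's proof: $\cG(\partial\cF_{\mu,L})\xrightarrow{\alpha z}\xrightarrow{1+z}\xrightarrow{\bar z^{-1}}\xrightarrow{2z-1}$ a disk with horizontal diameter $[b,a]$, followed by SRG-fullness of $\cL_R$. The paper presents the containment and tightness pictorially ("holds for larger $R$, fails for smaller $R$"), whereas you add a welcome explicit verification that the farthest distance is $\max\{|a|,|b|\}$ and a concrete worst-case quadratic $f(x,y)=\tfrac{\mu}{2}x^2+\tfrac{L}{2}y^2$ for tightness, but the underlying argument is the same.
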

\begin{proof}
By Proposition~\ref{proposition:cvx-srg-first} and Theorems~\ref{thm:srg-scaling-translation} and \ref{thm:srg-inversion}, we have the geometry 
\begin{center}
\begin{tabular}{ccccc}
\!\!\!\!\!\!
\raisebox{-.5\height}{
\begin{tikzpicture}[scale=1.8]
\def\m{0.3};
\def\L{1.3};

\draw (1.4,.6) node [above] {$\scriptstyle\cG\left(I+\alpha \cA\right)$};
\def\t{120};
\draw[->] (1.4,.7)--({(\L-\m)/2*cos(\t)+(2+\m+\L)/2},{(\L-\m)/2*sin(\t)});

\fill[fill=medgrey] ({(2+\m+\L)/2},0) circle ({(\L-\m)/2});
\begin{scope}
\clip (-0.2,-1.2) rectangle (2.5,1.2);
\draw [dashed] (0,0) circle (1);
\end{scope}

\draw [<->] (-0.2,0) -- (2.5,0);
\draw [<->] (0,-1.2) -- (0,1.2);
\draw (1,0)  node [above left] {$\scriptstyle 1$};
\filldraw (1,0) circle ({0.6*1.5/1.8pt});
\filldraw ({1+\L},0) circle ({0.6*1.5/1.8pt});
\draw ({1+\L},0)  node [above left] {$\scriptstyle 1+\alpha L$};
\draw ({1+\m},0)  node [below right] {$\scriptstyle 1+\alpha\mu$};
\filldraw ({1+\m},0) circle ({0.6*1.5/1.8pt});
\end{tikzpicture}
}
&\!\!\!\!\!\!
$\stackrel{\bar{z}^{-1}}{\longrightarrow}$
&\!\!\!\!\!\!
\raisebox{-.5\height}{
\begin{tikzpicture}[scale=1.8]
\def\m{0.3};
\def\L{1.3};
\draw [<->] (0,-1.2) -- (0,1.2);
\draw [dashed] (0,0) circle (1);
\draw ({1/(1+\m)},0)  node [above right,fill=white] {$\scriptstyle\frac{1}{1+\alpha \mu}$};
\draw ({1/(1+\L)+.05},0)  node [above left, fill=white] {$\scriptstyle\frac{1}{1+\alpha L}$};

\draw (0.5,.6) node [above,fill=white] {$\scriptstyle\cG\left(J_{\alpha \cA}\right)$};
\def\t{90};
\draw[->] (.5,.6)--({1/(1+\m)-1/(1+\L))/2*cos(\t)+(1/(1+\m)+1/(1+\L))/2},{1/(1+\m)-1/(1+\L))/2*sin(\t)});

\fill [fill=medgrey] ({(1/(1+\m)+1/(1+\L))/2},0) circle ({(1/(1+\m)-1/(1+\L))/2});
\draw [<->] (-1.2,0) -- (1.2,0);

\filldraw ({1/(1+\L)},0) circle ({0.6*1.5/1.8pt});
\filldraw ({1/(1+\m)},0) circle ({0.6*1.5/1.8pt});
\draw (1,0)  node [below right] {$1$};
\filldraw (1,0) circle ({0.6*1.5/1.8pt});
\end{tikzpicture}}\\
&\!\!\!\!\!\!
$\stackrel{2z-1}{\longrightarrow}$
&\!\!\!\!\!\!\!\!\!\!\!\!\!\!\!\!\!\!\!\!\!\!\!\!\!\!\!\!\!\!\!\!\!
\!\!\!\!\!\!
\raisebox{-.5\height}{
\begin{tikzpicture}[scale=1.8]
\def\m{0.3};
\def\L{1.3};
 
\draw (0.7,.6) node [above,fill=white] {$\scriptstyle\cG\left(2J_{\alpha \cA}-I\right)$};
\draw (-1.3,-.65) node [below,fill=white] {$\scriptstyle R=\max\left\{\left|\frac{1-\alpha\mu}{1+\alpha \mu}\right|,\left|\frac{1-\alpha L}{1+\alpha L}\right|\right\}$};
\draw ({2/(1+\m)-1-0.05},0)  node [above right, fill=white] {$\scriptstyle\frac{1-\alpha \mu}{1+\alpha \mu}$};

\fill [fill=lightgrey] (0,0) circle ({(1-\m)/(1+\m)});

\draw[->] (-.45,.45)--({2/(1+\L)-1},0);

\draw (-.4,.4) node [above left] {$\scriptstyle\frac{1-\alpha L}{1+\alpha L}$};

\fill [fill=medgrey] ({(1/(1+\m)+1/(1+\L))-1},0) circle ({(1/(1+\m)-1/(1+\L))});
\draw [<->] (0,-1.2) -- (0,1.2);
\draw [<->] (-1.2,0) -- (1.2,0);
\filldraw ({2/(1+\L)-1},0) circle ({0.6*1.5/1.8pt});
\filldraw ({2/(1+\m)-1},0) circle ({0.6*1.5/1.8pt});

\def\t{80}
\draw[->] (0.7,.6)--({(1/(1+\m)+1/(1+\L))-1+(1/(1+\m)-1/(1+\L))*cos(\t)},{(1/(1+\m)-1/(1+\L))*sin(\t)});

\def\s{200}
\draw[->] (-1.05,-.45)-- ({(1-\m)/(1+\m)*cos(\s)},{(1-\m)/(1+\m)*sin(\s)});

\draw (-1.05,-.45) node [left] {$\scriptstyle\cG\left(\cL_R\right)$};

\end{tikzpicture}
}
\end{tabular}
\end{center}
The containment holds for $R$ and fails for smaller $R$.
Since $\cL_R$ is SRG-full by Theorem~\ref{thm:srg-full}, the containment of the SRG in $\ecomplex$ equivalent to the containment of the class.
\qed\end{proof}

\subsection{Sum of operators} \label{s:srg-sum}
\begin{wrapfigure}[8]{r}{0.32\textwidth}
\begin{center}
\vspace{-0.5in}
\begin{tabular}{c}
\raisebox{-.5\height}{
\begin{tikzpicture}[scale=1.2]
\fill[fill=medgrey] (0.85,0) circle (0.55);
\draw [<->] (-0.2,0) -- (1.7,0);
\draw [<->] (0,-.8) -- (0,.8);

\filldraw (1,.4) circle ({0.6*1.5/1.2pt});
\filldraw (1,-.4) circle ({0.6*1.5/1.2pt});
\draw [line width=1.0] (1,.4) -- (1,-.4);

\draw (1,.4) node [left] {$z$};
\draw (1,-.4) node [left] {$\bar{z}$};
\end{tikzpicture}
}
\end{tabular}
\end{center}
\vspace{-0.1in}
\caption{The chord property.}
\label{fig:chord_property}
\end{wrapfigure}
Given $z,w\in \complex$, define the \emph{line segment} between $z$ and $w$ as
\[
[z,w]=\{\theta z+(1-\theta)w\,|\,\theta\in[0,1]\}.
\]
We say an SRG-full class $\cA$
satisfies the \emph{chord property} if\\
$z\in \cG(\cA)\backslash\{\infty\}$ implies $[z,\bar{z}]\subseteq \cG(\cA)$.
See Figure~\ref{fig:chord_property}.

\begin{theorem}
\label{thm:srg-sum}
Let $\cA$ and $\cB$ be SRG-full classes such that $\infty\notin \cG(A)$ and $\infty\notin \cG(B)$.
Then
\[
\cG(\cA+\cB)\supseteq \cG(\cA)+\cG(\cB).
\]
If $\cA$ or $\cB$ furthermore satisfies the chord property, then
\[
\cG(\cA+\cB)=\cG(\cA)+\cG(\cB).
\]
\end{theorem}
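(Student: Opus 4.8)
The plan is to prove the two inclusions separately: the easy containment $\cG(\cA+\cB)\supseteq\cG(\cA)+\cG(\cB)$ comes straight from the model operators of Lemma~\ref{lem:complex-srg}, while the reverse containment is where the chord property does the real work. First I would record that, under the hypothesis that $\infty$ belongs to neither $\cG(\cA)$ nor $\cG(\cB)$, no operator in $\cA$ or $\cB$ is multi-valued, hence neither is any operator in $\cA+\cB$; thus the Minkowski sum $\cG(\cA)+\cG(\cB)$ is unambiguous and $\infty$ lies in none of $\cG(\cA)$, $\cG(\cB)$, $\cG(\cA+\cB)$.

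For $\supseteq$: fix $z\in\cG(\cA)$ and $w\in\cG(\cB)$. Since $\cG(A_z)=\{z,\bar z\}\subseteq\cG(\cA)$ and $\cA$ is SRG-full, $A_z\in\cA$; likewise $A_w\in\cB$. Both act on $\reals^2$, so $A_z+A_w\in\cA+\cB$, and as $A_z+A_w$ is complex multiplication by $z+w$, Lemma~\ref{lem:complex-srg} gives $z+w\in\cG(A_z+A_w)\subseteq\cG(\cA+\cB)$. Hence $\cG(\cA)+\cG(\cB)\subseteq\cG(\cA+\cB)$.

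For $\subseteq$, assume without loss of generality that $\cA$ satisfies the chord property and, by symmetry of the SRGs about the real axis, that $\zeta\in\cG(\cA+\cB)$ has $\Im\zeta\ge0$. Since $\zeta\ne\infty$, there are $A\in\cA$, $B\in\cB$ on a common space $\hilbert$, points $x\ne y$, and outputs $u=u_A+u_B$, $v=v_A+v_B$ with $u_A\in Ax,\ u_B\in Bx,\ v_A\in Ay,\ v_B\in By$, such that $\zeta$ is the SRG point determined by $(x,u),(y,v)$. Put $a=u_A-v_A$, $b=u_B-v_B$, $d=x-y$, and decompose $a=\alpha_a d+a^\perp$, $b=\alpha_b d+b^\perp$ with $a^\perp,b^\perp\perp d$. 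By \eqref{eq:srg-alternative2} the point $z_A:=\alpha_a+i\|a^\perp\|/\|d\|$ lies in $\cG(A)\subseteq\cG(\cA)$, similarly $z_B:=\alpha_b+i\|b^\perp\|/\|d\|\in\cG(\cB)$, and $\zeta=(\alpha_a+\alpha_b)+i\,\|a^\perp+b^\perp\|/\|d\|$. Writing $p=\|a^\perp\|/\|d\|$, $q=\|b^\perp\|/\|d\|$, $\gamma=\|a^\perp+b^\perp\|/\|d\|$, the triangle inequality in $\hilbert$ gives $|p-q|\le\gamma\le p+q$.

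Now I would invoke the chord property: $[z_A,\bar z_A]=\{\alpha_a+is:|s|\le p\}\subseteq\cG(\cA)$. Choosing $w'=z_B=\alpha_b+iq\in\cG(\cB)$ and $z'=\zeta-w'=\alpha_a+i(\gamma-q)$, the inequalities $|p-q|\le\gamma\le p+q$ yield $|\gamma-q|\le p$, so $z'\in\cG(\cA)$ and hence $\zeta=z'+w'\in\cG(\cA)+\cG(\cB)$. Together with the first inclusion this gives equality. The main obstacle — and the precise role of the chord property — is that $a^\perp$ and $b^\perp$ can be non-parallel vectors in a high-dimensional space orthogonal to $d$, so in general $\zeta\ne z_A+z_B$; the chord property lets us slide $z_A$ along its vertical chord to the point $z'$ that exactly absorbs the angular mismatch between $a^\perp$ and $b^\perp$, with the triangle inequality guaranteeing that we never slide past the chord's endpoints.
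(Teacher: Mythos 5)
Your proof is correct and follows essentially the same route as the paper: the $\supseteq$ inclusion via the model operators $A_z$, $A_w$ of Lemma~\ref{lem:complex-srg}, and the $\subseteq$ inclusion by decomposing $u-v$ into components parallel and orthogonal to $x-y$ and applying the triangle inequality together with the chord property (with the degenerate case $\cG(\cA)=\emptyset$ absorbed automatically, since a nonempty $\cG(\cA+\cB)$ would produce an element of $\cG(A)$). Your execution is in fact a touch cleaner than the paper's: by choosing the chord point $\alpha_a+i(\gamma-q)$ directly you avoid the case split on whether $\Im z_A\le\Im z_B$, and you explicitly record the reverse triangle inequality $|p-q|\le\gamma$ that the paper's argument needs but leaves implicit.
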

Although we do not pursue this, one can generalize Theorem~\ref{thm:srg-sum} to allow $\infty$ by excluding the following exception:
if $\emptyset=\cG(\cA)$ and $\infty\in \cG(\cB)$, then $\{\infty\}=\cG(\cA+\cB)$.
\begin{proof}
We first show $\cG(\cA+\cB)\supseteq \cG(\cA)+\cG(\cB)$.
Assume $\cG(\cA)\ne \emptyset$ and $\cG(\cB)\ne \emptyset$ as otherwise there is nothing to show.
Let $z\in \cG(\cA)$ and $w\in \cG(\cB)$
and let $A_z$ and $A_w$ be their corresponding operators as defined in Lemma~\ref{lem:complex-srg}.
Then it is straightforward to see that
$A_z+A_w$ corresponds to complex multiplication
with respect to $(z+w)$,
and $z+w\in \cG(A_z+A_w)\subseteq \cG(\cA+\cB)$.

Next, we show $\cG(\cA+\cB)\subseteq \cG(\cA)+\cG(\cB)$.
Consider the case $\cG(\cA)\ne \emptyset$ and $\cG(\cB)\ne \emptyset$.
Without loss of generality, assume it is $\cA$ that satisfies the chord property.
Consider $A+B\in \cA+\cB$ such that $A\in \cA$ and $B\in \cB$ .
Consider $(x,u_A+u_B),(y,v_A+v_B)\in A+B$ such that $x\ne y$, $(x,u_A),(y,v_A)\in A$, and $(x,u_B),(y,v_B)\in B$.
Define
\begin{align*}
z_A&=\frac{\|u_A-v_A\|}{\|x-y\|}
\exp\left[
i\angle (u_A-v_A,x-y)
\right]\in\cG(A)\\
z_B&=\frac{\|u_B-v_B\|}{\|x-y\|}
\exp\left[
i\angle (u_B-v_B,x-y)
\right]\in\cG(B)\\
z&=\frac{\|u_A+u_B-v_A-v_B\|}{\|x-y\|}
\exp\left[
i\angle (u_A+u_B-v_A-v_B,x-y)
\right]\in\cG(A+B).
\end{align*}
(Note that $\Im z_A,\Im z_B,\Im z\ge 0$.)
Since
\begin{gather*}
\Re z_A=\frac{\langle u_A-v_A,x-y\rangle}{\|x-y\|^2},\qquad
\Re z_B=\frac{\langle u_B-v_B,x-y\rangle}{\|x-y\|^2},
\\
\Re z=\frac{\langle (u_A+u_B)-(v_A+v_B),x-y\rangle}{\|x-y\|^2},
\end{gather*}
we have $\Re z = \Re z_A + \Re z_B$.
Using \eqref{eq:srg-alternative2} and the triangle inequality, we have
\begin{align*}
\Im z&=\frac{\|P_{\{x-y\}^\perp}(u_A+u_B-v_A-v_B)\|}{\|x-y\|}\\
&\le \frac{\|P_{\{x-y\}^\perp}(u_A-v_A)\|+\|P_{\{x-y\}^\perp}(u_B-v_B)\|}{\|x-y\|}\\
&=\Im z_A+\Im z_B,
\end{align*}
and using the reverse triangle inequality, we have $\Im z\ge -\Im z_A+\Im z_B$.
Together, we conclude
\[
-\Im z_A+\Im z_B\le \Im z\le \Im z_A+\Im z_B
\]
and 
\[
z\in [z_A,\overline{z_A}]+z_B,\qquad
\overline{z}\in [z_A,\overline{z_A}]+\overline{z_B}.
\]
This shows
\begin{align*}
\cG (\cA+\cB)&\subseteq
\left\{
w_A+z_B
\,|\,
w_A\in \left[z_A,\overline{z_A}\right],\,
z_A\in \cG (\cA),\,z_B\in \cG (\cB)
\right\}\\
&=
\left\{
w_A+z_B
\,|\,
w_A\in \cG (\cA),\,z_B\in \cG (\cB)
\right\}
=\cG (\cA)+\cG (\cB),
\end{align*}
where the equality follows from the chord property.


%
Now, consider the case $\cG(\cA)= \emptyset$ or $\cG(\cB)= \emptyset$ (or both).
(We also discuss this degenerate case in Section~\ref{ss:srg-full-appendix}).
Assume $\cG(\cA)= \emptyset$ without loss of generality and let $A\in \cA$ and $B\in \cB$.
Then $\dom{A}$ is empty or a singleton, and if $\{x\}=\dom{A}$ then $Ax$ is a singleton.
Therefore $\dom{A+B}\subseteq \dom{A}$ is empty or a singleton,
 and if $\{x\}=\dom{A}$ then $(A+B)x$ is empty or a singleton since $B$ is single-valued.
Therefore, $\cG(A+B)=\emptyset$ and we conclude $\cG(\cA+\cB)=\emptyset$.
\qed\end{proof}

\subsection{Composition of operators}
\label{s:srg-composition}
Given $z\in \complex$, define the \emph{right-hand arc} between $z$ and $\bar{z}$ as
\[
\rarc(z,\bar{z})
=\left\{re^{i(1-2\theta)\varphi}\,\Big|\,
z=re^{i\varphi},\,
\varphi\in(-\pi,\pi],\,\theta\in[0,1],\,r\ge 0
\right\}
\]
\begin{wrapfigure}[12]{r}{0.48\textwidth}
\begin{center}
\vspace{-0.3in}
\begin{tabular}{cc}
\!\!\!\!\!\!\!\!\!\!\!\!\!\!\!
\raisebox{-.5\height}{
\begin{tikzpicture}[scale=1.5]
\def\x{0.5}
\fill[fill=medgrey] (-.4,0) circle (0.8);
\draw [<->] (-1.4,0) -- (0.7,0);
\draw [<->] (0,-1.) -- (0,1.);

\filldraw (.1,\x) circle[radius={0.6*1.5/1.5pt}];
\filldraw (.1,{-\x}) circle[radius={0.6*1.5/1.5pt}];
\begin{scope}
\clip (.1,-1.) rectangle (0.7,1.);
\draw [dashed] (0,0) circle ({sqrt(.1^2+\x^2)});
\end{scope}
\begin{scope}
\clip (-1.4,-1.) rectangle (0.1,1.);
\draw [line width=1.0] (0,0) circle (({sqrt(.1^2+\x^2)});
\end{scope}

\draw (.1,{\x}) node [above right] {$z$};
\draw (.1,{-\x}) node [below right] {$\bar{z}$};

\def\w{-.3}
\draw [->] (-0.75,-.8) -- (\w,{-sqrt(.1^2+\x^2-(\w)^2)});
\draw (-0.75,-.7)  node [below] {$\larc(z,\bar{z})$};
\end{tikzpicture}
}
&
\!\!\!\!\!\!\!\!\!\!\!\!\!\!\!\!\!\!
\raisebox{-.5\height}{
\begin{tikzpicture}[scale=1]
\fill[fill=medgrey] (0.85,0) circle (0.55);
\draw [<->] (-1.2,0) -- (1.6,0);
\draw [<->] (0,-1.4) -- (0,1.4);

\filldraw (1,.4) circle ({0.6*1.5/1pt});
\filldraw (1,-.4) circle ({0.6*1.5/1pt});
\begin{scope}
\draw [dashed] (0,0) circle ({sqrt(1+.4^2)});
\end{scope}
\begin{scope}
\clip (1,-1.2) rectangle (1.6,1.2);
\draw [line width=1.0] (0,0) circle ({sqrt(1+.4^2)});
\end{scope}

\draw (1,.4) node [left] {$z$};
\draw (1,-.4) node [left] {$\bar{z}$};

\def\w{1.05}
\draw [->] (-0.05,-.6) -- (\w,{sqrt(1+.4^2-\w^2)});
\draw (-0.05,-.6)  node [fill=white, left] {$\rarc(z,\bar{z})$};
\draw (-0,-.93)  node [below] {\phantom{$\larc(z,\bar{z})$}};
\end{tikzpicture}
}
\!\!\!\!\!\!\!\!\!
\end{tabular}
\vspace{-0.1in}
\end{center}
\caption{Left and right-arc properties.}
\label{fig:arc-properties}
\end{wrapfigure}
and the \emph{left-hand arc} as
\[
\larc(z,\bar{z})
=-\rarc(-z,-\bar{z}).
\]
We say an SRG-full class $\cA$ respectively satisfies the \emph{left-arc property} and \emph{right-arc property}
if $z\in \cG(\cA)\backslash\{\infty\}$
implies
$\larc{(z,\bar{z})}\subseteq \cG(\cA)$
and
$\rarc{(z,\bar{z})}\subseteq \cG(\cA)$, respectively.
We say $\cA$ satisfies \emph{an} arc property if the left or right-arc property is satisfied.
See Figure~\ref{fig:arc-properties}.

\begin{theorem}
\label{thm:composition-srg}
Let $\cA$ and $\cB$ be SRG-full classes such that $\infty\notin \cG(\cA)$, $\emptyset\ne\cG(\cA)$, $\infty\notin \cG(\cB)$, and $\emptyset\ne \cG(\cB)$.
Then
\[
\cG(\cA\cB)\supseteq \cG(\cA)\cG(\cB).
\]
If $\cA$ or $\cB$ furthermore satisfies a left or right arc property, then
\[
\cG(\cA\cB)=\cG(\cB\cA)=\cG(\cA)\cG(\cB).
\]
\end{theorem}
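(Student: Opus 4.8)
The plan is to establish the two containments between $\cG(\cA\cB)$ and $\cG(\cA)\cG(\cB)$ separately. For $\cG(\cA\cB)\supseteq\cG(\cA)\cG(\cB)$: given $z\in\cG(\cA)$ and $w\in\cG(\cB)$, the operators $A_z\in\cA$ and $A_w\in\cB$ of Lemma~\ref{lem:complex-srg} satisfy $A_z\circ A_w=A_{zw}$ because composition of complex multiplications is multiplication, so $zw\in\cG(A_zA_w)\subseteq\cG(\cA\cB)$; since $\infty\notin\cG(\cA)\cup\cG(\cB)$, the product $zw$ is always a legitimate (finite or zero) point. The same argument applied to $\cB\cA$ gives $\cG(\cB\cA)\supseteq\cG(\cB)\cG(\cA)=\cG(\cA)\cG(\cB)$ by commutativity of complex multiplication. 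Note that the standing hypotheses $\emptyset\ne\cG(\cA)$, $\emptyset\ne\cG(\cB)$, $\infty\notin\cG(\cA)$, $\infty\notin\cG(\cB)$ mean there is no degenerate empty case to treat and no operator involved is multi-valued, so every composition is single-valued and $\infty$ never appears.

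For the reverse inclusion, assume without loss of generality that $\cA$ satisfies an arc property. Fix $A\in\cA$, $B\in\cB$ on a common Hilbert space and distinct inputs $x,y$; since $A,B$ are single-valued put $u=Bx$, $v=By$, $w=Au$, $t=Av$. If $u=v$ or $w=t$, the point contributed to $\cG(A\circ B)$ is $0$, and $0\in\cG(\cA)\cG(\cB)$ because in these cases $0\in\cG(\cB)$ (resp.\ $0\in\cG(\cA)$) while the other SRG is nonempty. Otherwise $x\ne y$, $u\ne v$, $w\ne t$ all hold, and
\[
z_A:=\frac{\|w-t\|}{\|u-v\|}e^{i\angle(w-t,u-v)}\in\cG(\cA),\quad
z_B:=\frac{\|u-v\|}{\|x-y\|}e^{i\angle(u-v,x-y)}\in\cG(\cB),\quad
z:=\frac{\|w-t\|}{\|x-y\|}e^{i\angle(w-t,x-y)}\in\cG(A\circ B),
\]
with $|z|=|z_A||z_B|$. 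The key geometric fact is that the directions of $w-t$, $u-v$, $x-y$ are three unit vectors in a Hilbert space, so by the triangle inequality for angles of unit vectors applied both to this triple and to the triple obtained by negating the direction of $u-v$, the angles $\theta=\angle(w-t,x-y)$, $\theta_A=\angle(w-t,u-v)$, $\theta_B=\angle(u-v,x-y)$ in $[0,\pi]$ satisfy
\[
|\theta_A-\theta_B|\le\theta\le\min\{\theta_A+\theta_B,\pi\}\qquad\text{and}\qquad\theta+\theta_A+\theta_B\le2\pi.
\]

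It remains to factor $z$ through $\cG(\cA)$ and $\cG(\cB)$. Normalize so that $\arg z\ge0$ (possible since $\cG(A\circ B)$ is symmetric about the real axis) and, replacing $z_B$ by $\overline{z_B}\in\cG(\cB)$ if necessary, so that $\arg z_B=\theta_B$. If $\cA$ satisfies the \emph{right}-arc property, then $\rarc(z_A,\overline{z_A})\subseteq\cG(\cA)$ is the arc of radius $|z_A|$ whose arguments fill $[-\theta_A,\theta_A]$; the inequalities above give $\arg z\in[\theta_B-\theta_A,\theta_B+\theta_A]$, so $|z_A|e^{i(\arg z-\theta_B)}\in\rarc(z_A,\overline{z_A})$ and $z=\bigl(|z_A|e^{i(\arg z-\theta_B)}\bigr)z_B\in\cG(\cA)\cG(\cB)$. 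If $\cA$ satisfies the \emph{left}-arc property, then $\larc(z_A,\overline{z_A})\subseteq\cG(\cA)$ fills the arguments with $|\psi|\in[\theta_A,\pi]$, and now one uses $\theta\ge\theta_A-\theta_B$ together with the perimeter bound $\theta+\theta_A+\theta_B\le2\pi$ to see that $|z_A|e^{i(\arg z+\theta_B)}$ lies on this arc, whence $z=\bigl(|z_A|e^{i(\arg z+\theta_B)}\bigr)\overline{z_B}\in\cG(\cA)\cG(\cB)$. Thus $\cG(\cA\cB)\subseteq\cG(\cA)\cG(\cB)$; the same reasoning, spreading the $\cB$-factor instead or applied verbatim to $B\circ A$ (the angle bookkeeping does not care whether the arc-property class sits inside or outside the composition), gives $\cG(\cB\cA)\subseteq\cG(\cB)\cG(\cA)$. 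Combining all inclusions with commutativity of multiplication yields $\cG(\cA\cB)=\cG(\cB\cA)=\cG(\cA)\cG(\cB)$. The crux is the displayed angle lemma — recognizing that the admissible triples $(\theta,\theta_A,\theta_B)$ are exactly those obeying the ``spherical-triangle'' inequalities (the ordinary triangle inequality plus the perimeter bound) and matching precisely those triples to the arguments swept out by $\rarc$, respectively $\larc$ — together with the slightly tedious bookkeeping of the conjugation and orientation cases.
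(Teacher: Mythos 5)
Your proof is correct and mostly tracks the paper's, but it genuinely diverges in one place: the handling of the left-arc case. The paper avoids dealing with the left-arc directly and instead reduces to the right-arc case by observing that $-\cA$ satisfies the right-arc property whenever $\cA$ satisfies the left-arc property, then applies Theorem~\ref{thm:srg-scaling-translation} to carry the identity $\cG(-\cA\cB)=\cG(-\cA)\cG(\cB)$ back to $\cG(\cA\cB)=\cG(\cA)\cG(\cB)$ (and similarly via $\cB\circ(-I)$ when $\cB$ has the left-arc property). You instead tackle $\larc$ directly, which requires the additional constraint $\theta+\theta_A+\theta_B\le 2\pi$ on the three pairwise angles; you correctly note that this ``perimeter bound'' is itself a consequence of the spherical triangle inequality applied to the triple with the middle direction negated (using $\angle(a,-b)=\pi-\angle(a,b)$). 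Your direct argument buys self-containment (no appeal to the scaling/reflection theorem) and treats both arcs symmetrically from a single angle lemma; the paper's reduction is shorter given its infrastructure and avoids the modular-arithmetic case split (whether $\theta+\theta_B$ wraps past $\pi$) that your argument implicitly depends on. One small improvement in your write-up relative to the paper: you explicitly dispatch the $w=t$ (i.e.\ $s=t$ in the paper's notation) subcase as producing $z=0\in\cG(\cA)\cG(\cB)$, whereas the paper lets it be absorbed silently by the convention $\angle(0,\cdot)=0$ and the degenerate arc $\rarc(0,0)=\{0\}$. Your treatment of $\cG(\cB\cA)$ is as terse as the paper's (both just assert symmetry of the angle bookkeeping), but that is fine since the spherical-triangle argument is manifestly symmetric in whether the arc-bearing factor is inner or outer.
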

Although we do not pursue this, one can generalize Theorem~\ref{thm:composition-srg} to allow $\emptyset$ and $\infty$ by excluding the following exceptions:
if $\emptyset=\cG(\cA)$ and $\infty\in \cG(\cB)$, then $\{\infty\}=\cG(\cA\cB)$;
if $0\in \cG(\cA)$ and $\infty\in \cG(\cB)$, then $\infty\in\cG(\cA\cB)$;
if $\emptyset= \cG(\cA)$ and $0\in \cG(\cB)$, then $\{0\}=\cG(\cA\cB)$ and $\emptyset=\cG(\cB\cA)$.
\begin{proof}
We first show $\cG(\cA\cB)\supseteq \cG(\cA)\cG(\cB)$.
Assume $\cG(\cA)\ne \emptyset$ and $\cG(\cB)\ne \emptyset$ as otherwise there is nothing to show.
Let $z\in \cG(\cA)$ and $w\in \cG(\cB)$
and let $A_z$ and $A_w$ be their corresponding operators as defined in Lemma~\ref{lem:complex-srg}.
Then it is straightforward to see that
$A_zA_w$ corresponds to complex multiplication
with respect to $zw$,
and $zw\in \cG(A_zA_w)\subseteq \cG(\cA\cB)$.

Next, we show $\cG(\cA\cB)\subseteq \cG(\cA)\cG(\cB)$.
Let $A\in \cA$ and $B\in \cB$.
Consider $(u,s),(v,t)\in A$ and $(x,u),(y,v)\in B$, where $x\ne y$.
This implies $(x,s),(y,t)\in AB$.
Define
\[
z=\frac{\|s-t\|}{\|x-y\|}
\exp\left[
i\angle (s-t,x-y)
\right].
\]
Consider the case $u=v$. Then $0\in \cG(\cB)$.
Moreover, $s=t$, since $A$ is single-valued (by the assumption $\infty\notin \cG(\cA)$), and $z=0$.
Therefore, $z=0\in \cG(\cA)\cG(\cB)$.

Next, consider the case $u\ne v$.
Define
\[
z_A=\frac{\|s-t\|}{\|u-v\|}
e^{i\varphi_A}
,\quad
z_B=\frac{\|u-v\|}{\|x-y\|}
e^{i\varphi_B}
,
\]
where $\varphi_A=\angle (s-t,u-v)$ and $\varphi_B= \angle (u-v,x-y)$.
Consider the case where $\cA$ satisfies the right-arc property.
Using the spherical triangle inequality (further discussed in the appendix)
we see that either $\varphi_A\ge \varphi_B$ and 
\begin{align*}
z&\in \frac{\|s-t\|}{\|u-v\|}
\frac{\|u-v\|}{\|x-y\|}
\exp\left[i[\varphi_A-\varphi_B,\varphi_A+\varphi_B]\right]\\
&\subseteq
\frac{\|s-t\|}{\|u-v\|}
\frac{\|u-v\|}{\|x-y\|}
\exp\left[i[\varphi_B-\varphi_A,\varphi_B+\varphi_A]\right]\\
&=
z_B
\rarc\left(
z_A,\overline{z_A}
\right)
\end{align*}
or $\varphi_A< \varphi_B$ and 
\begin{align*}
z&\in 
\frac{\|s-t\|}{\|u-v\|}
\frac{\|u-v\|}{\|x-y\|}
\exp\left[i[\varphi_B-\varphi_A,\varphi_B+\varphi_A]\right]\\
&=z_B
\rarc\left(
z_A,\overline{z_A}
\right).
\end{align*}
This gives us
\[
z\in 
\underbrace{
z_B}_{\in \cG(\cB)}
\underbrace{\rarc\left(
z_A,\overline{z_A}\right)
}_{\subseteq  \cG(\cA)}
\subseteq \cG(\cA)\cG(\cB).
\]
That $\bar{z}\in \cG(\cA)\cG(\cB)$ follows from the same argument.
That $z,\bar{z}\in \cG(\cA)\cG(\cB)$ when instead $\cB$ satisfies the right-arc property follows from the same argument.

Putting everything together, we conclude $\cG(\cA\cB)= \cG(\cA)\cG(\cB)$
when $\cA$ or $\cB$ satisfies the right-arc property.
When $\cA$ satisfies the left-arc property, $-\cA$ satisfies the right-arc property.
So 
\[
-\cG(\cA\cB)=
\cG(-\cA\cB)=\cG(-\cA)\cG(\cB)
-\cG(\cA)\cG(\cB)
\]
by Theorem~\ref{thm:srg-scaling-translation},
and we conclude $\cG(\cA\cB)=\cG(\cA)\cG(\cB)$.
When $\cB$ satisfies the left-arc property,  $\cB\circ(-I)$ satisfies the right-arc property.
So 
\[
-\cG(\cA\cB)=
\cG(\cA\cB\circ(-I))=\cG(\cA)\cG(\cB\circ(-I))
=
-\cG(\cA)\cG(\cB)
\]
by Theorem~\ref{thm:srg-scaling-translation},
and we conclude $\cG(\cA\cB)=\cG(\cA)\cG(\cB)$.
\qed\end{proof}

We cannot fully drop the arc property from the second part of Theorem~\ref{thm:composition-srg}.
Consider the SRG-full operator class $\cA$ represented by $h(a,b,c)=|a-b|+|c|$, which has $\cG(\cA)=\{\pm i\}$.
Linear operators on $\reals^3$ representing 90 degrees rotations are in $\cA$.
With this, one can show the strict containment $\cG(\cA\cA)=\{z\in \complex\,|\,|z|=1\}\supset \cG(\cA)\cG(\cA)$.

As a consequence of Theorem~\ref{thm:composition-srg}, the SRGs of operator classes commute under composition even though individual operators, in general, do not commute when an arc property is satisfied.
Several results in operator theory involving 2 operators exhibit previously unexplained symmetry.
The Ogura--Yamada--Combettes averagedness factor \cite{ogura2002,combettes2015},
the contraction factor of Giselsson \cite{giselsson20152}, the contraction factor of Moursi and Vandenberghe \cite{moursi2018douglas},
the contraction factor of Ryu, Taylor, Bergeling, and Giselsson \cite{OSPEP} are all symmetric in the assumptions of the two operators. 
Theorem~\ref{thm:composition-srg} shows that this symmetry is not a coincidence.

\subsubsection{Convergence analysis: alternating projections}
Consider the convex feasibility problem
\[
\mbox{find $x\in \cH$ \quad such that \quad }x\in C\cap D
\]
where $C\subseteq\cH$ and $D\subseteq\cH$ are nonempty closed convex sets and $C\cap D\ne \emptyset$. Consider the alternating projections method \cite[Theorem 13.7]{vonneumann1950}
\begin{equation}
x^{k+1}=P_CP_Dx^k,
\tag{AP}
\label{eq:ap}
\end{equation}
where $P_C$ and $P_D$ are projections onto $C$ and $D$ and $x_0\in \cH$ is a starting point.

\begin{fact}
The iterates of \eqref{eq:ap} converge in that $x^k\rightarrow x^\star$ weakly for some $x^\star\in C\cap D$.
\end{fact}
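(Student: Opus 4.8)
The plan is to reduce the statement to an averagedness claim about the composition $P_CP_D$ and then invoke the Krasnosel'ski\u{\i}--Mann theorem. Two classical facts about metric projections onto a nonempty closed convex set will be used without reproof: first, $P_C$ and $P_D$ are firmly nonexpansive, i.e.\ $\|P_Cx-P_Cy\|^2\le\langle P_Cx-P_Cy,x-y\rangle$ for all $x,y$, which is exactly the inequality representing $\cN_{1/2}$, so $P_C\in\cN_{1/2}$ and $P_D\in\cN_{1/2}$; second, $\Fix(P_CP_D)=C\cap D$, which is nonempty by hypothesis. (The inclusion $\Fix(P_CP_D)\supseteq C\cap D$ is immediate; for the reverse, if $x=P_CP_Dx$ and $y\in C\cap D$, then $\|x-y\|=\|P_CP_Dx-P_CP_Dy\|\le\|P_Dx-P_Dy\|\le\|x-y\|$ forces equality in the firm nonexpansiveness of $P_D$ and of $P_C$, hence $P_Dx=x$ and $P_Cx=x$.) Thus the weak limit produced by Krasnosel'ski\u{\i}--Mann automatically lies in $C\cap D$.

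The geometric heart is to locate $\cG(P_CP_D)$. Since $P_C,P_D\in\cN_{1/2}$ and these are operators on the same space $\cH$, the definition of the SRG gives $\cG(P_CP_D)\subseteq\cG(\cN_{1/2}\cN_{1/2})$. By Proposition~\ref{prop:monotone-srg} with $\theta=1/2$, $\cG(\cN_{1/2})=\{z\in\complex\,|\,|z|^2\le\Re z\}$, the closed disk of radius $1/2$ centered at $1/2$; it is nonempty and does not contain $\infty$. I would next verify that $\cN_{1/2}$ satisfies the right-arc property: the circle $|w|=r$ meets $\{z\,|\,|z|^2\le\Re z\}$ in precisely the arc $\{re^{i\psi}\,|\,|\psi|\le\arccos r\}$, which is symmetric about the positive real axis, so whenever $z\in\cG(\cN_{1/2})$ with $|z|=r$ we have $|\arg z|\le\arccos r$ and therefore $\rarc(z,\bar z)\subseteq\cG(\cN_{1/2})$. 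With this, Theorem~\ref{thm:composition-srg} applies and yields
\[
\cG(P_CP_D)\subseteq\cG(\cN_{1/2}\cN_{1/2})=\cG(\cN_{1/2})\cG(\cN_{1/2}).
\]

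It remains to show $\cG(\cN_{1/2})\cG(\cN_{1/2})\subseteq\cG(\cN_{2/3})$, where by Proposition~\ref{prop:monotone-srg} the latter is the disk of radius $2/3$ centered at $1/3$; this is the one genuinely computational step and is where I expect the real work to be. Concretely, writing $z_j=r_je^{i\psi_j}$ with $|\psi_j|\le\pi/2$, the hypothesis $|z_j|^2\le\Re z_j$ reads $r_j\le\cos\psi_j$, and one must check $|z_1z_2|^2\le\tfrac23\Re(z_1z_2)+\tfrac13$, i.e.\ $r_1^2r_2^2\le\tfrac23 r_1r_2\cos(\psi_1+\psi_2)+\tfrac13$. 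The clean route is $r_1r_2\le\cos\psi_1\cos\psi_2=\tfrac12(\cos(\psi_1-\psi_2)+\cos(\psi_1+\psi_2))\le\tfrac12(1+\cos(\psi_1+\psi_2))$, after which, with $c=\cos(\psi_1+\psi_2)$ and $p=r_1r_2$, the target inequality $p^2\le\tfrac23 pc+\tfrac13$ holds because $p\le\tfrac12(1+c)\le\tfrac13(c+\sqrt{c^2+3})$, the last inequality reducing to $3(c-1)^2\ge0$; note this is tight exactly at $z_1=z_2=1$, which is why the constant is $2/3$ and not smaller. Granting this, $\cG(P_CP_D)\subseteq\cG(\cN_{2/3})$, and since $\cN_{2/3}$ is SRG-full (Theorem~\ref{thm:srg-full}), we get $P_CP_D\in\cN_{2/3}$, i.e.\ $P_CP_D$ is $2/3$-averaged. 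Combined with $\Fix(P_CP_D)=C\cap D\ne\emptyset$, the Krasnosel'ski\u{\i}--Mann theorem gives $x^k\rightharpoonup x^\star$ for some $x^\star\in C\cap D$.
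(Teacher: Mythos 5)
Your proof is correct and follows the same overall route as the paper's: reduce to showing $P_CP_D$ is $\tfrac{2}{3}$-averaged, then invoke Krasnosel'ski\u{\i}--Mann. Where you differ is in how that averagedness is established. The paper quotes Theorem~\ref{thm:fne-compose}, whose proof pins down $\cG(\cN_{1/2}\cN_{1/2})$ \emph{exactly} as the cardioid region $\{re^{i\varphi}\,|\,0\le r\le\cos^2(\varphi/2)\}$ via a rather elaborate inversive-geometry argument (half-spaces tangent to a parabola, focus--directrix, inverting back to a cardioid, enclosing it in a circle). You bypass all of that: after verifying the right-arc property of $\cN_{1/2}$ (correct, since $\cG(\cN_{1/2})=\{z\,|\,|z|^2\le\Re z\}$ meets each circle $|w|=r$ in an arc symmetric about the positive real axis) and checking the nondegeneracy hypotheses of Theorem~\ref{thm:composition-srg} (firmly nonexpansive operators are $1$-Lipschitz hence single-valued, so $\infty\notin\cG(\cN_{1/2})$), you reduce to the Minkowski product $\cG(\cN_{1/2})\cG(\cN_{1/2})$ and prove only the one inclusion $\cG(\cN_{1/2})\cG(\cN_{1/2})\subseteq\cG(\cN_{2/3})$ by a short polar-coordinate computation; that computation is sound ($r_j\le\cos\psi_j$ with $|\psi_j|\le\pi/2$ gives $r_1r_2\le\tfrac12(1+\cos(\psi_1+\psi_2))$, and $p^2\le\tfrac23pc+\tfrac13$ for $0\le p\le\tfrac12(1+c)$ reduces to $3(c-1)^2\ge0$). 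What you gain is a self-contained, elementary proof of exactly what this Fact needs; what you give up is the precise cardioid characterization that Theorem~\ref{thm:fne-compose} records (and which is not used here). You also usefully make explicit the identification $\Fix(P_CP_D)=C\cap D$, which the paper leaves to the reader but which is needed to conclude $x^\star\in C\cap D$ rather than merely $x^\star\in\Fix(P_CP_D)$.
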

\begin{proof}
By \cite[Proposition 4.16]{BCBook}, $P_C$ and $P_D$ are $1/2$-averaged.
By Fact~\ref{thm:fne-compose}, which we state and prove below,
$P_CP_D$ is $2/3$-averaged, and the iteration converges by the Krasnosel'ski\u{\i}--Mann theorem.
\qed\end{proof}

\begin{fact}[Propsition~4.42 \cite{BCBook}]
\label{thm:fne-compose}
Let $\cN_{1/2}$ be the class of firmly nonexpansive operators. Then
\[
\cN_{1/2}\cN_{1/2}\subset \cN_{2/3}.
\]
(containment is strict.)
Furthermore, 
\begin{center}
\begin{tabular}{c}
$\cG(\cN_{1/2}\cN_{1/2})=$
\raisebox{-.5\height}{
\begin{tikzpicture}[scale=1]
\def\w{.8};
\def\u{1.2};
\fill[fill=medgrey,samples=200,smooth] plot[domain=-pi:pi] (xy polar cs:angle=\x r,radius= {cos(\x r/2)^2});
\filldraw (1,0) circle ({0.6*1.5/1pt});
\draw (1,0) node [above right] {$1$};
\draw [<->] (-.3,0) -- (\u,0);
\draw [<->] (0,-\w) -- (0,\w);
\draw (0.7,-1) node {$\left\{re^{i\varphi}\,|\,0\le r\le \cos^2(\varphi/2)\right\}$};
\end{tikzpicture}}
\end{tabular}
\end{center}
\end{fact}
In Fact~\ref{thm:fne-compose}, the precise characterization $\cG(\cN_{1/2}\cN_{1/2})$ is new, but $\cN_{1/2}\cN_{1/2}\subset \cN_{2/3}$ is known.

\emph{Proof outline}\,
We quickly outline the geometric insight while deferring the full proof with precise geometric arguments to the Section~\ref{s:geo-proofs} in the appendix.

\newpage
Define
\begin{center}
\begin{tabular}{cc}
$Q=$
\raisebox{-.5\height}{
\begin{tikzpicture}[scale=1]
\fill [fill=medgrey] (1/2,0) circle (1/2);
\draw [<->] (-.2,0) -- (1.2,0);
\draw [<->] (0,-.6) -- (0,.6);
\filldraw (1,0) circle ({0.6*1.5/1pt});
\draw (1,0) node[above right] {$1$};
\end{tikzpicture}}
&\qquad
$C=$
\raisebox{-.5\height}{
\begin{tikzpicture}[scale=1]
\draw  (1/2,0) circle (1/2);
\draw [<->] (-.2,0) -- (1.2,0);
\draw [<->] (0,-.6) -- (0,.6);
\filldraw (1,0) circle ({0.6*1.5/1pt});
\draw (1,0) node[above right] {$1$};
\end{tikzpicture}}
\end{tabular}
\end{center}
and
\[
S=\bigcup_{0\le \varphi_1\le 2\pi}S_{\varphi_1},
\qquad
S_{\varphi_1}=
Q\left(\frac{1}{2}+\frac{1}{2}e^{i\varphi_1}\right).
\]
In geometric terms, this construction takes a point on the circle $C$, draws the disk whose diameter is the line segment between this point and the origin, and takes the union of such disks.
$S=\cG(\cN_{1/2})\cG(\cN_{1/2})$ follows from Theorem~\ref{thm:composition-srg}.
\begin{center}
\begin{tabular}{c}
\raisebox{-.5\height}{
\begin{tikzpicture}[scale=3]
\def\a{pi*.66/3};
\def\b{pi*.66*2/3};
\def\c{pi*.66};
\def\w{1.2};
\def\u{1.2};

\fill[fill=lightgrey] ({cos(\a/pi*180)/4+1/4},{sin(\a/pi*180)/4}) circle ({cos(\a/pi*180/2)/2});
\coordinate (A1) at ({1},{tan(\a/pi*180/2)}) ;
\coordinate (A2) at ({1/2*cos(\a/pi*180)+1/2},{sin(\a/pi*180)/2});
\filldraw (A2) circle[radius={0.6*1.5/3pt}];
\draw [line width=1.0pt] (0,0) -- (A2);

\fill[fill=medgrey] ({cos(\b/pi*180)/4+1/4},{sin(\b/pi*180)/4}) circle ({cos(\b/pi*180/2)/2});
\coordinate (B1) at ({1},{tan(\b/pi*180/2)}) ;
\coordinate (B2) at ({1/2*cos(\b/pi*180)+1/2},{sin(\b/pi*180)/2});
\filldraw (B2) circle[radius={0.6*1.5/3pt}];
\draw [line width=1.0pt] (0,0) -- (B2);

\fill[fill=darkgrey] ({cos(\c/pi*180)/4+1/4},{sin(\c/pi*180)/4}) circle ({cos(\c/pi*180/2)/2});
\coordinate (C1) at ({1},{tan(\c/pi*180/2)}) ;
\coordinate (C2) at ({1/2*cos(\c/pi*180)+1/2},{sin(\c/pi*180)/2});
\filldraw (C2) circle[radius={0.6*1.5/3pt}];
\draw [line width=1.0pt] (0,0) -- (C2);

\draw [dashed] (0,0) circle (1);
\draw [] (1/2,0) circle (1/2);


\filldraw (0,0) circle[radius={0.6*1.5/3pt}];

\draw [<->] (-1.2,0) -- (\u,0);
\draw [<->] (0,-\w) -- (0,\w);

\coordinate (O) at (0,0);
\coordinate (E) at (1,0);

\def\r{0.15};
\filldraw (1/2,0) circle[radius={0.6*1.5/3pt}];
\draw (1/2,0) -- (C2);
\draw ({1/2+\r},{0}) arc (0:{\c/pi*180}:\r) node[midway,right]{$\varphi_1$};

\def\f{-0.12};
\draw[->] (-0.3,0.3) --(\f,{sqrt((cos(\c/pi*180/2)/2)^2-(cos(\c/pi*180)/4+1/4-\f)^2)+sin(\c/pi*180)/4});
\draw  (-0.3,0.3) node[left] {$S_{\varphi_1}$};
\draw (0.5,-0.5) node[below] {$C$};
\end{tikzpicture}}

\end{tabular}
\end{center}


To show $S=\left\{re^{i\varphi}\,|\,0\le r\le \cos^2(\varphi/2)\right\}$, we analyze $S$ in the inverted space.
Write $\cI\colon\ecomplex\rightarrow\ecomplex$ for the mapping $\cI(z)=\bar{z}^{-1}$.
\newpage


\begin{center}
\begin{tabular}{c}
\raisebox{-.5\height}{
\begin{tikzpicture}[scale=3]
\def\a{pi*.66/3};
\def\b{pi*.66*2/3};
\def\c{pi*.66};
\def\w{2};
\def\u{2.5};

\begin{scope}
\clip (-1.2,-1.2) rectangle (\u,\w);

\coordinate (A3) at  ({1-tan(\a/pi*180/2)*(\w-tan(\a/pi*180/2))},{\w});
\fill[fill=lightgrey] (A3)
-- (\u,{\w-(\u-(1-tan(\a/pi*180/2)*(\w-tan(\a/pi*180/2))))/tan(\a/pi*180/2)})
-- (\u,\w)
-- cycle;

\draw (2,-1) node {$\cup \{\binfty\}$};


\coordinate (B3) at  ({1-tan(\b/pi*180/2)*(\w-tan(\b/pi*180/2))},{\w});
\fill[fill=medgrey] (B3)
-- (\u,{\w-(\u-(1-tan(\b/pi*180/2)*(\w-tan(\b/pi*180/2))))/tan(\b/pi*180/2)}) 
-- (\u,\w)
-- cycle;
\draw (2,.6) node {$\cup \{\binfty\}$};

\coordinate (C3) at ({1-tan(\c/pi*180/2)*(\w-tan(\c/pi*180/2))},{\w});
\fill[fill=darkgrey] (C3)
--(\u,{\w-(\u-(1-tan(\c/pi*180/2)*(\w-tan(\c/pi*180/2))))/tan(\c/pi*180/2)}) 
-- (\u,\w)
-- cycle;

\draw (2,1.7) node {$\cup \{\binfty\}$};

\draw (1.9,1.35) node {$\cI(S_{\varphi_1})$};

\end{scope}

\fill[fill=lightgrey] ({cos(\a/pi*180)/4+1/4},{sin(\a/pi*180)/4}) circle ({cos(\a/pi*180/2)/2});
\coordinate (A1) at ({1},{tan(\a/pi*180/2)}) ;
\coordinate (A2) at ({1/2*cos(\a/pi*180)+1/2},{sin(\a/pi*180)/2});
\filldraw (A2) circle[radius={0.6*1.5/3pt}];
\filldraw (A1)  circle[radius={0.6*1.5/3pt}];
\draw [line width=1.0pt] (0,0) -- ({1},{tan(\a/pi*180/2)});

\fill[fill=medgrey] ({cos(\b/pi*180)/4+1/4},{sin(\b/pi*180)/4}) circle ({cos(\b/pi*180/2)/2});
\coordinate (B1) at ({1},{tan(\b/pi*180/2)}) ;
\coordinate (B2) at ({1/2*cos(\b/pi*180)+1/2},{sin(\b/pi*180)/2});
\filldraw (B2) circle[radius={0.6*1.5/3pt}];
\filldraw (B1)  circle[radius={0.6*1.5/3pt}];
\draw [line width=1.0pt] (0,0) -- ({1},{tan(\b/pi*180/2)});

\fill[fill=darkgrey] ({cos(\c/pi*180)/4+1/4},{sin(\c/pi*180)/4}) circle ({cos(\c/pi*180/2)/2});
\coordinate (C1) at ({1},{tan(\c/pi*180/2)}) ;
\coordinate (C2) at ({1/2*cos(\c/pi*180)+1/2},{sin(\c/pi*180)/2});
\filldraw (C2) circle[radius={0.6*1.5/3pt}];
\filldraw (C1)  circle[radius={0.6*1.5/3pt}];
\draw [line width=1.0pt] (0,0) -- ({1},{tan(\c/pi*180/2)});

\draw [dashed] (0,0) circle (1);
\draw [] (1/2,0) circle (1/2);


\filldraw (0,0) circle[radius={0.6*1.5/3pt}];


\draw [<->] (-1.2,0) -- (\u,0);
\draw [<->] (0,-1.2) -- (0,\w);
\draw [] (1,-1.2) -- (1,\w);

\draw (0.5,-0.5) node[below] {$C$};

\draw (1,-1) node[right] {$\cI(C)$};


\tkzMarkRightAngle[size=.053](A2,A1,A3);
\tkzMarkRightAngle[size=.053](B2,B1,B3);
\tkzMarkRightAngle[size=.053](C2,C1,C3);

\coordinate (O) at (0,0);
\coordinate (E) at (1,0);
\tkzMarkRightAngle[size=.053](A1,A2,E);
\tkzMarkRightAngle[size=.053](B1,B2,E);
\tkzMarkRightAngle[size=.053](C1,C2,E);

\draw (O) node[below left] {$O$};
\draw (C2) node[above left] {$A$};
\draw (C1) node[above right] {$\cI(A)$};

\def\f{-0.12};
\draw[->] (-0.3,0.3) --(\f,{sqrt((cos(\c/pi*180/2)/2)^2-(cos(\c/pi*180)/4+1/4-\f)^2)+sin(\c/pi*180)/4});
\draw  (-0.3,0.3) node[left] {$S_{\varphi_1}$};
\end{tikzpicture}}

\end{tabular}
\end{center}

The union of the half-spaces $\cI(S)=\bigcup_{0\le \varphi_1\le 2\pi}\cI(S_{\varphi_1})$ forms a parabola.
\begin{center}
\begin{tabular}{c}
\raisebox{-.5\height}{
\begin{tikzpicture}[scale=3]
\def\a{pi*.17};
\def\b{pi*.45};
\def\c{pi*.66};
\def\w{2};
\def\u{2.5};

\begin{scope}
\clip (-1.2,-1) rectangle (\u,\w);

\coordinate (A3) at  ({1-tan(\a/pi*180/2)*(\w-tan(\a/pi*180/2))},{\w});
\fill[fill=lightgrey] (A3)
-- (\u,{\w-(\u-(1-tan(\a/pi*180/2)*(\w-tan(\a/pi*180/2))))/tan(\a/pi*180/2)})
-- (\u,\w)
-- cycle;


\coordinate (B3) at  ({1-tan(\b/pi*180/2)*(\w-tan(\b/pi*180/2))},{\w});
\fill[fill=medgrey] (B3)
-- (\u,{\w-(\u-(1-tan(\b/pi*180/2)*(\w-tan(\b/pi*180/2))))/tan(\b/pi*180/2)}) 
-- (\u,\w)
-- cycle;


\end{scope}

\coordinate (A1) at ({1},{tan(\a/pi*180/2)}) ;
\coordinate (A2) at ({1/2*cos(\a/pi*180)+1/2},{sin(\a/pi*180)/2});
\draw [line width=1.0pt] (0,0) -- ({2},{2*tan(\a/pi*180/2)});

\coordinate (B1) at ({1},{tan(\b/pi*180/2)}) ;
\coordinate (B2) at ({1/2*cos(\b/pi*180)+1/2},{sin(\b/pi*180)/2});
\draw [line width=1.0pt] (0,0) -- ({2},{2*tan(\b/pi*180/2)});

\begin{scope}
\clip (-1.2,-1) rectangle (\u,\w);

\coordinate (C1) at ({1},{tan(\c/pi*180/2)}) ;
\coordinate (C2) at ({1/2*cos(\c/pi*180)+1/2},{sin(\c/pi*180)/2});
\end{scope}


\filldraw (0,0) circle[radius={0.6*1.5/3pt}];
\filldraw (1,0) circle[radius={0.6*1.5/3pt}];


\draw [<->] (-1.2,0) -- (\u,0);
\draw [<->] (0,-1) -- (0,\w);
\draw [] (1,-1) -- (1,\w);


\tkzMarkRightAngle[size=.053](A2,A1,A3);
\tkzMarkRightAngle[size=.053](B2,B1,B3);


\draw[line width=1.0pt, domain=-1:\w,variable=\y] plot ({1-(\y)^2/4},{\y});

\coordinate (O) at (0,0);
\coordinate (E) at (2,-\w);

\coordinate (A7) at  ({1-tan(\a/pi*180/2)^2},{2*tan(\a/pi*180/2)}) ;
\coordinate (A8) at (2,{2*tan(\a/pi*180/2)}) ;

\coordinate (B7) at  ({1-tan(\b/pi*180/2)^2},{2*tan(\b/pi*180/2)}) ;
\coordinate (B8) at (2,{2*tan(\b/pi*180/2)}) ;

\filldraw (A7) circle[radius={0.6*1.5/3pt}];
\filldraw (B7) circle[radius={0.6*1.5/3pt}];

\tkzMarkRightAngle[size=.053](A7,A8,E);
\tkzMarkRightAngle[size=.053](B7,B8,E);

\draw (O) -- (A7) -- (A8) ;
\draw (O) -- (B7) -- (B8) ;

\tkzMarkSegment[pos=.5,mark=||](O,A7) ;
\tkzMarkSegment[pos=.5,mark=||](A7,A8) ;

\tkzMarkSegment[pos=.5,mark=|](O,B7) ;
\tkzMarkSegment[pos=.5,mark=|](B7,B8) ;

\draw (2.25,1.2) node {$\cup \{\binfty\}$};
\draw (2.25,-.85) node {$\cup \{\binfty\}$};

\draw (0,0) node [above left] {focus};
\draw (0,0) node [below left] {$O$};
\draw (1,0) node [above right] {vertex};
\draw (1,0) node [below right] {$1$};

\filldraw (2,0) circle[radius={0.6*1.5/3pt}];
\draw (2,0) node [below right] {$2$};
\draw [<->] (2,-1) -- (2,\w);

\filldraw (B1) circle[radius={0.6*1.5/3pt}];
\draw (B1) node[right] {$A'$};
\filldraw (B8) circle[radius={0.6*1.5/3pt}];
\draw (B8) node[right] {$E$};
\draw (B7) node[above ] {$B$};


\draw[->] (-0.2,-.3) node[left]{$x=1-\frac{y^2}{4}$} -- ({1-(-.5)^2/4},-.5) ;
\draw (1,-.75) node[right] {$\cI(C)$};
\draw (2.,.3) node[right] {$D$};
\draw (2,.17) node[right] {(directrix)};

\coordinate (Bp) at  ({1-tan(\b/pi*180/2)^2+0.6*cos(90-\b/pi*180/2)},{2*tan(\b/pi*180/2)-0.6*sin(90-\b/pi*180/2)}) ;
\filldraw (Bp) circle[radius={0.6*1.5/3pt}];
\draw (Bp) node[right] {$B'$};
\end{tikzpicture}}

\end{tabular}
\end{center}

Find the largest circle tangent to the parabola at point $1$ and invert back.

\begin{center}
\begin{tabular}{ccc}
\raisebox{-.5\height}{
\begin{tikzpicture}[scale=.9]
\def\a{pi*.17};
\def\b{pi*.45};
\def\c{pi*.66};
\def\w{2.5};
\def\u{3.5};

\fill[fill=medgrey] (-\u,-\w) rectangle (1.5,\w);
\begin{scope}
\clip (-\u,-\w) rectangle (1.5,\w);
\fill[fill=white] (-1,0) circle (2);
\end{scope}

\fill[fill=darkgrey, domain=-\w:\w,variable=\y] (1.5,{-\w}) -- plot ({1-(\y)^2/4},{\y}) -- (1.5,{\w});
\filldraw (-1,0) circle[radius={0.6*1.5/0.9pt}];
\draw (-1,0) node [below ] {$-1$};
\filldraw (1,0) circle[radius={0.6*1.5/0.9pt}];
\draw (1,0) node [below left] {$1$};
\filldraw (-3,0) circle[radius={0.6*1.5/0.9pt}];
\draw (-3,0) node [below right] {$-3$};
%
%
%

\draw [<->] (-\u,0) -- (1.5,0);
\draw [<->] (0,-\w) -- (0,\w);
%

\begin{scope}
\clip (-1,0) circle (2);
\draw[->] (-.65,.15) node[above, text width=3cm,fill=white]{\footnotesize $x=1-\frac{y^2}{4}$ and \phantom{xxxx} $x=\sqrt{4-y^2}-1$ \phantom{xx}  have matching curvature} -- (1,0) ;
\end{scope}

\draw (.95,2.2) node {$\cup \{\binfty\}$};

\draw (-2.7,2.2) node {$\cup \{\binfty\}$};

\end{tikzpicture}}
&$\stackrel{\cI}{\longrightarrow}$
&
\raisebox{-.5\height}{
\begin{tikzpicture}[scale=1.5]
\def\a{pi*.17};
\def\b{pi*.45};
\def\c{pi*.66};
\def\w{1.2};
\def\u{1.2};

\draw [dashed] (0,0) circle (1);
\fill[fill=medgrey] (1/3,0) circle (2/3);
\fill[fill=darkgrey,samples=200,smooth] plot[domain=-pi:pi] (xy polar cs:angle=\x r,radius= {cos(\x r/2)^2});
\filldraw (-1/3,0) circle[radius={0.6*1.5/1.5pt}];
\draw (-1/3,0) node [above left] {$-\frac{1}{3}$};
%
%
%



\filldraw (1,0) circle[radius={0.6*1.5/1.5pt}];
\draw (1,0) node [above right] {$1$};
%
%
%

\draw [<->] (-1.2,0) -- (\u,0);
\draw [<->] (0,-\w) -- (0,\w);
%
\end{tikzpicture}}
\end{tabular}
\end{center}
The largest circle to the left of the parabola is inverted to the smallest circle (i.e., tight averagedness circle) containing the SRG.
The known formula $r(\varphi)\le \cos^2(\varphi/2)$ describes the parabola under the inversion mapping.
\qed



\subsection{Tightness and constructing lower bounds}
An advantage of geometric proofs is that tightness is often immediate.
In the proof of Fact~\ref{prop:refl-sm-coco}, for example,
it is clear that finding a smaller circle containing the SRG is not possible.
Consequently, the rate of Fact~\ref{fact:tight_example} cannot be improved.



However, although tightness is proved with the geometric arguments, sometimes one may wish to construct an explicit counterexample achieving the tight rate.
This can be done by picking the extreme point on the complex plane, finding a corresponding $2\times 2$ matrix with Lemma~\ref{lem:complex-srg}, and reverse engineering the proof.


In the setup of Fact~\ref{prop:refl-sm-coco},
\begin{center}
\raisebox{-.5\height}{
\begin{tikzpicture}[scale=2.5]
\def\m{0.5};
\def\b{.8};
\begin{scope}
\clip ({-\m/(1+\m)+sqrt(1/(1 + \m)/(1 + \m) + (4*\b*\m*(-1 + \b*\m))/(\b + \m + 2*\b*\m)/(\b + \m + 2*\b*\m))},-0.7)  rectangle (-1.2,0.9);
\draw[dashed] ({-\m/(1+\m)},0) circle ({1-\m/(1+\m)});
\end{scope}

\begin{scope}
\clip ({-\m/(1+\m)+sqrt(1/(1 + \m)/(1 + \m) + (4*\b*\m*(-1 + \b*\m))/(\b + \m + 2*\b*\m)/(\b + \m + 2*\b*\m))},-0.7)  rectangle (1.2,0.9);
\draw[dashed] ({1/(1+1/\b)},0) circle ({1-1/(1+1/\b)});
\end{scope}

\begin{scope}
\clip ({-\m/(1+\m)},0) circle ({1-\m/(1+\m)});
\fill[fill=medgrey] ({1/(1+1/\b)},0) circle ({1-1/(1+1/\b)});
\end{scope}
\begin{scope}
\clip ({1/(1+1/\b)},0) circle ({1-1/(1+1/\b)});
\end{scope}
\draw [<->] (-1.2,0) -- (1.2,0);
\draw [<->] (0,-.7) -- (0,.7);
\draw (1,-0pt) node [below right] {$1$};
\draw (-1,-0pt) node [below left] {$-1$};
\filldraw (1,0)circle ({0.6*1.5/2.5pt});
\filldraw (-1,0)circle ({0.6*1.5/2.5pt});

\coordinate (O) at (0,0);
\filldraw (O) circle ({0.6*1.5/2.5pt});



\coordinate (C) at ({1/(1+1/\b)},0);
\filldraw (C)circle ({0.6*1.5/2.5pt});
\draw (.5,-.2) node [below] {$\frac{\beta}{\beta+\alpha}$};
\draw[->] (.5,-.2)--(C);

\coordinate (B) at ({-\m/(1+\m)},0);
\filldraw (B) circle ({0.6*1.5/2.5pt});
\draw (-.5,-.2) node [below] {$\frac{-\alpha\mu}{1+\alpha\mu}$};
\draw[->] (-.5,-.2)-- (B);



\coordinate (A) at  ({-\m/(1+\m)+sqrt(1/(1 + \m)/(1 + \m) + (4*\b*\m*(-1 + \b*\m))/(\b + \m + 2*\b*\m)/(\b + \m + 2*\b*\m))},{(2*sqrt(-(\b*\m*(-1 + \b*\m))))/(\b + \m + 2*\b*\m)}) ;


\filldraw (A) circle ({0.6*1.5/2.5pt});
\draw (A) node[above] {$z$};

\draw (A) -- (B) -- (C) -- (A);
\draw (A) -- (O);

\coordinate(M1) at (-1,0);
\tkzMarkSegment[pos=.5,mark=||](A,B) ;
\tkzMarkSegment[pos=.5,mark=||](B,M1) ;

\coordinate(M2)  at (1,0);
\tkzMarkSegment[pos=.5,mark=|](A,C) ;
\tkzMarkSegment[pos=.5,mark=|](M2,C) ;



\end{tikzpicture}}
\end{center}
the extreme point $z$ 
corresponds to the complex number
\[
z=
\frac{1-\alpha^2\mu/\beta}{
1+2\alpha\mu+\alpha^2\mu/\beta
}
+
\frac{
2\alpha\sqrt{(1-\mu\beta)\mu/\beta}
}{
1+2\alpha\mu+\alpha^2\mu/\beta
}
i.
\]
Lemma~\ref{lem:complex-srg} provides a corresponding operator $A_z\colon\reals^2\rightarrow\reals^2$
\[
A_z
\begin{bmatrix}
\zeta_1\\
\zeta_2
\end{bmatrix}
=
\underbrace{
\frac{1}{1+2\alpha\mu+\alpha^2\mu/\beta}
\begin{bmatrix}
1-\alpha^2\mu/\beta&
-2\alpha\sqrt{(1-\mu\beta)\mu/\beta}\\
2\alpha\sqrt{(1-\mu\beta)\mu/\beta}&
1-\alpha^2\mu/\beta
\end{bmatrix}
}_{=M}
\begin{bmatrix}
\zeta_1\\
\zeta_2
\end{bmatrix}
\]
In the proof, the depicted geometry was obtained through the transformations $A\mapsto I+\alpha A$, $A\mapsto A^{-1}$, and $A\mapsto 2A-I$.
We revert the transformations by applying
$A\mapsto \frac{1}{2}I+\frac{1}{2}A$, $A\mapsto A^{-1}$, and $A\mapsto \frac{1}{\alpha}(A-I)$
and define $A\colon\reals^2\rightarrow\reals^2$ a as
\[
A
\begin{bmatrix}
\zeta_1\\
\zeta_2
\end{bmatrix}=\frac{1}{\alpha}\left(\left(\frac{1}{2}I+\frac{1}{2}M\right)^{-1}-I\right)
\begin{bmatrix}
\zeta_1\\
\zeta_2
\end{bmatrix}.
\]
(We do not show the individual entries as they are very complicated.)
Finally, if $B=0$, then the fixed-point iteration
\[
z^{k+1}=\left(\tfrac{1}{2}I+\tfrac{1}{2}(2J_{\alpha A}-I)(2J_{\alpha B}-I)\right)z^k
\]
converges at the exact rate given by Fact~\ref{fact:tight_example}.

If $\mathcal{A}$ is SRG-full and $z\in\mathcal{G}( \mathcal{A})$, then there is an operator $A$ on $\mathbb{R}^2$ constructed such that $\{z,\overline{z}\}=\mathcal{G}(A)$ so explicit counterexamples providing the lower bounds can be constructed in $\mathbb{R}^2$.
When an operator class is not SRG-full, counter examples still exist, but they may not be in $\mathbb{R}^2$.

\section{Insufficiency of metric subregularity for linear convergence}
\label{s:metric-subregularity}
Recently, there has been much interest in analyzing optimization methods under assumptions weaker than strong convexity or strong monotonicity.
One approach is to assume \emph{metric subregularity} in place of strong monotonicity and establish linear convergence.

In this section, we show that it is not always possible to replace strong monotonicity with metric subregularity.
In particular, we show impossibility results proving  the insufficiency of metric subregularity in establishing linear convergence for certain setups where strong monotonicity is sufficient.

\subsection{Inverse Lipschitz continuity and metric subregularity}
Let
\begin{align*}
\cL^{-1}_{\gamma}&=\left\{A^{-1}\,|\,A\in \cL_{\gamma}\right\},
\\
&=\big\{A\colon\dom{A}\rightarrow\hilbert\,|\,\gamma^2\| Ax-Ay\|^2\ge \|x-y\|^2,\,\forall\, x,y\in\hilbert,\,\dom{A}\subseteq\hilbert\big\}
\end{align*}
be the class of inverse Lipschitz continuous operators
with parameter $\gamma\in (0,\infty)$, which has the SRG
\begin{center}
\begin{tabular}{c}
$\cG(\cL_\gamma^{-1})=$
\raisebox{-.5\height}{
\begin{tikzpicture}[scale=1.2]
\fill[fill=medgrey] (-1,-1) rectangle (1,1);
\fill[fill=white] (0,0) circle (0.55);
\draw [<->] (-1,0) -- (1,0);
\draw [<->] (0,-1) -- (0,1);
\draw (0.45,0) node [above right] {$1/\gamma$};
\filldraw (.55,0) circle ({0.6*1.5/1.2pt});
\draw (0.6,.8) node {$\cup \{\binfty\}$};
\draw (0,-1.25) node {$\left\{z\in \complex\,\big|\,|z|^2\ge 1/\gamma^2\right\}$};
\end{tikzpicture}}
\end{tabular}
\end{center}
It is clear that inverse Lipschitz continuity is weaker than strong monotonicity in the sense that
 $A\in \cM_{1/\gamma}$ implies $ A\in \cL^{-1}_{\gamma}$.

An operator $A\colon\hilbert\rightrightarrows\hilbert$ is $\gamma$-metrically subregular at $x_0$ for $y_0$ if 
$y_0\in Ax_0$ and there exists a neighborhood $V$ of $x_0$ such that
\[
d(x,A^{-1}y_0)\le \gamma d(y_0,A(x)),\quad
\forall\, x\in V.
\]
Although not necessarily obvious from first sight, metric subregularity is weaker than inverse Lipschitz continuity, i.e.,
 $A\in \cL^{-1}_{\gamma}$ implies $A$ is metrically subregular at $x$ for $y$ with parameter $\gamma$, for any $(x,y)\in A$.

Metric subregularity of $A$ is equivalent to ``calmness'' of $A^{-1}$ \cite{Dontchev2004}, and calmness is also known as ``Upper Lipschitz continuity'' \cite{Robinson1981}.
For subdifferential operators of convex functions, metric subregularity is equivalent to the ``error bound condition'' \cite{drusvyatskiy_error_bound_2018}.
See \cite{implicit_rockafellar} for an in-depth treatment of this subject.

Metric subregularity has been used in place of strong monotonicity to establish linear convergence for a wide range of setups.
Leventhal \cite[Theorem 3.1]{leventhal2009} used metric subregularity for the proximal point method;
Bauschke, Noll, and Phan \cite[Lemma 3.8]{bauschke_metric_sub_2015} and Liang, Fadili, and Peyr\'e \cite[Theorem 3]{Liang2016}  for the Krasnoselskii--Mann iteration;
Latafat and Patrinos \cite[Theorem 3.3]{latafat2017} for their splitting method AFBA;
Ye et al.\ \cite{ye_FBS_ms_2018} for the proximal gradient method, the proximal alternating linearized minimization algorithm,
and the randomized block coordinate proximal gradient method;
and Yuan, Zeng, and Zhang for ADMM, DRS, and PDHG \cite{yuan_admm_ms_2018}.
See \cite{schmidt2016,Bolte2017,drusvyatskiy_error_bound_2018,Necoara2018,Zhang2019} for a systematic study of this subject.
Although most recent work concerns sufficiency of metric subregularity or related assumptions in establishing linear convergence, Zhang \cite{Zhang2019} studied the necessary and sufficient conditions.


\subsection{Impossibility proofs}
Douglas--Rachford splitting (DRS) is known to be a strict contraction under the combined assumption of Lipschitz continuity and strong monotonicity: \cite[Proposition 4]{lions1979}, \cite[Theorem 4.1]{HanYuan2012_convergence}, \cite[Table 1 under $A=B=I$]{DengYin2015_global}, \cite[Theorems 5--7]{DavisYin2017_faster}, \cite[Theorem 6.3]{giselsson20152}, and \cite[Theorem 4]{OSPEP}.
Is it possible to establish linear convergence with Lipschitz continuity and metric subregularity or a variation of metric subregularity? Then answer is no in the sense of Corollaries~\ref{cor:ms_impossibility1} and \ref{cor:ms_impossibility2}.

Define the DRS operator with respect to operators $A$ and $B$ with parameters $\alpha$ and $\theta$ as
\[
D_{\alpha,\theta}(A,B)=(1-\theta)I+\theta(2J_{\alpha A}-I)(2J_{\alpha B}-I)
\]
and the class of DRS operators as
\[
D_{\alpha,\theta}(\cA,\cB)=\{D_{\alpha,\theta}(A,B)\,|\,A\in\cA,\,B\in\cB,\,
A\colon\hilbert\rightrightarrows\hilbert,\,
B\colon\hilbert\rightrightarrows\hilbert\}.
\]
Define $T(B,A,\alpha,\theta)$ and $T(\cB,\cA,\alpha,\theta)$ analogously.

\begin{theorem}
\label{thm:ms1}
Let $0<1/\gamma\le L<\infty$ and $\alpha\in(0,\infty)$.
Let $\cA=\cM\cap \cL_L\cap  (\cL_\gamma)^{-1}$
and $\cB=\cM$.
Then for any $\theta\ne 0$
\[
D_{\alpha,\theta}(\cA,\cB)\nsubseteq \cL_{1-\varepsilon}
\]
for any $\varepsilon>0$.
The same conclusion holds for $D_{\alpha,\theta}(\cB,\cA)$.
\end{theorem}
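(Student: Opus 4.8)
The plan is to prove the sharper statement that the identity operator $I$ itself lies in $D_{\alpha,\theta}(\cA,\cB)$ for every $\alpha\in(0,\infty)$ and every $\theta\ne 0$; since $\|Ix-Iy\|=\|x-y\|$ forces $I\notin\cL_{1-\varepsilon}$ for all $\varepsilon>0$, this is all that is needed, and it makes the role of the hypothesis $0<1/\gamma\le L<\infty$ transparent. The guiding picture is the SRG of $\cA$: since $\cM$, $\cL_L$, and $(\cL_\gamma)^{-1}$ are all SRG-full, Theorem~\ref{thm:srg-intersection}, Proposition~\ref{prop:monotone-srg}, and the SRG $\cG((\cL_\gamma)^{-1})=\{z\,|\,|z|\ge 1/\gamma\}\cup\{\infty\}$ give
\[
\cG(\cA)=\cG(\cM)\cap\cG(\cL_L)\cap\cG\big((\cL_\gamma)^{-1}\big)=\{z\in\complex\,|\,\Re z\ge 0,\ 1/\gamma\le|z|\le L\},
\]
an annular sector whose intersection with the imaginary axis — the feature that distinguishes the merely monotone case from the strongly monotone case of Fact~\ref{prop:refl-sm-coco} — is nonempty precisely because $\cA$ is not strongly monotone and $1/\gamma\le L$: we have $it_0\in\cG(\cA)$ for every $t_0\in[1/\gamma,L]$.

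Concretely, I would fix such a $t_0$ and take $A=A_{it_0}$ and $B=A_{-it_0}$ on $\reals^2$ from Lemma~\ref{lem:complex-srg}. A direct check places $A$ in $\cM\cap\cL_L\cap(\cL_\gamma)^{-1}=\cA$: for $w\in\reals^2$ one has $\langle A_{it_0}w,w\rangle=0\ge 0$ (monotonicity), $\|A_{it_0}w\|=t_0\|w\|\le L\|w\|$ (Lipschitz), and $t_0\|w\|\ge\gamma^{-1}\|w\|$ (inverse Lipschitz); likewise $B=A_{-it_0}\in\cM=\cB$. Since $A_{it_0}$ is complex multiplication by $it_0$, the map $I+\alpha A_{it_0}$ is multiplication by $1+i\alpha t_0\ne 0$ (so the resolvent is well defined), $J_{\alpha A_{it_0}}$ is multiplication by $(1+i\alpha t_0)^{-1}$, and hence $2J_{\alpha A_{it_0}}-I$ is multiplication by $\frac{2}{1+i\alpha t_0}-1=\frac{1-i\alpha t_0}{1+i\alpha t_0}$, a unimodular number; symmetrically $2J_{\alpha A_{-it_0}}-I$ is multiplication by $\frac{1+i\alpha t_0}{1-i\alpha t_0}$. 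The product of these two conjugate unimodular numbers is $1$, so $(2J_{\alpha A}-I)(2J_{\alpha B}-I)=I$ and therefore $D_{\alpha,\theta}(A,B)=(1-\theta)I+\theta I=I$ for every $\theta$. As $A$ and $B$ act on the same space $\reals^2$, this exhibits $I\in D_{\alpha,\theta}(\cA,\cB)$, whence $D_{\alpha,\theta}(\cA,\cB)\nsubseteq\cL_{1-\varepsilon}$ for all $\varepsilon>0$. The statement for $D_{\alpha,\theta}(\cB,\cA)$ follows verbatim because the product of the two reflected-resolvent multipliers equals $1$ in either order.

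The content here is not in any computation but in the two modelling choices, and the one place that needs care is uniformity in $\theta$. A weaker argument would only note that $\cG(2J_{\alpha\cA}-I)$ meets the unit circle (the imaginary-axis point of $\cG(\cA)$ is carried to $\zeta_A=\frac{1-i\alpha t_0}{1+i\alpha t_0}$ along the chain $\cG(\cA)\to\cG(I+\alpha\cA)\to\cG(J_{\alpha\cA})\to\cG(2J_{\alpha\cA}-I)$ of Theorems~\ref{thm:srg-scaling-translation} and \ref{thm:srg-inversion}), but for $\theta\in(0,1)$ the averaging $(1-\theta)I+\theta(\cdot)$ would then pull such a point strictly inside the unit disk. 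Choosing $B=A_{-it_0}$ so that the composition of reflected resolvents is \emph{exactly} $I$ makes the averaging inert and handles all $\theta\ne 0$ at once. Alternatively, one can run the whole argument in SRG language: $\cG(2J_{\alpha\cB}-I)$ is the closed unit disk, so $1=\zeta_A\overline{\zeta_A}\in\cG(2J_{\alpha\cA}-I)\,\cG(2J_{\alpha\cB}-I)\subseteq\cG\big((2J_{\alpha\cA}-I)(2J_{\alpha\cB}-I)\big)$ by Theorem~\ref{thm:composition-srg}, hence $1\in\cG(D_{\alpha,\theta}(\cA,\cB))$ by Theorem~\ref{thm:srg-scaling-translation}, and then $\cL_{1-\varepsilon}$ being SRG-full finishes by Theorem~\ref{thm:srg-full}; but the explicit operator is more economical.
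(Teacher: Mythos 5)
Your proof is correct and takes a genuinely different route from the paper. The paper's argument stays entirely in SRG language: it tracks the segment $[\alpha/\gamma,\alpha L]\,i\subset\cG(\alpha\cA)$ through the chain $z\mapsto\tfrac{2}{1+\alpha z}-1$ onto an arc of the unit circle, then invokes the composition theorem (Theorem~\ref{thm:composition-srg}) against $\cG(2J_{\alpha\cB}-I)=\{|z|\le1\}$ to show the resulting product SRG is the closed unit disk, and finally averages. That route needs the SRG characterizations of $\cG(\cM)$ and $\cG((\cL_\gamma)^{-1})$, the translation/inversion theorems, and—crucially—the arc property check that Theorem~\ref{thm:composition-srg} requires. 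You instead produce an explicit witness: the pair $A=A_{it_0}$, $B=A_{-it_0}$ with $t_0\in[1/\gamma,L]$, verified to lie in $\cA$ and $\cB$ by three one-line inequalities, for which $(2J_{\alpha A}-I)(2J_{\alpha B}-I)=I$ by a direct computation in $\complex$. This makes $D_{\alpha,\theta}(A,B)=I$ exactly, so all $\theta\ne0$ are handled at once without any appeal to the averaging step or the composition theorem, and the $D_{\alpha,\theta}(\cB,\cA)$ case follows verbatim since the conjugate unimodular multipliers commute. Your construction is more economical and arguably shows more (the identity operator itself is in the class, not merely some operator whose SRG contains $1$), at the cost of being less visual; the paper's version better illustrates the SRG machinery this section is advertising, which is presumably why the authors chose it. One small polish: the line asserting $\cG(\cA)=\{z\,|\,\Re z\ge 0,\,1/\gamma\le|z|\le L\}$ is not actually needed by your argument—you verify $A_{it_0}\in\cA$ directly from the defining inequalities—so you could demote it to a purely motivational remark rather than stating it as a step.
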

\begin{proof}
We have the geometry
\begin{center}
\begin{tabular}{ccc}
\raisebox{-.5\height}{
\begin{tikzpicture}[scale=1]
\def\L{1.2}
\def\g{2}
\begin{scope}
\clip (0,-1.2) rectangle (1.2,1.2);
\fill[fill=medgrey] (0,0) circle ({\L});
\fill[fill=white] (0,0) circle ({1/\g});
\end{scope}
\draw [<->] (-0.2,0) -- (1.5,0);
\draw [<->] (0,-1.4) -- (0,1.4);
\draw [line width=1.0pt] (0,{\L}) -- (0,{1/\g});
\filldraw (0,{\L})   circle({0.6*1.5/1pt});
\draw (0,{\L}) node[left] {$\alpha Li=A$};
\filldraw (0,{1/\g})   circle({0.6*1.5/1pt});
\draw (0,{1/\g}) node[ left] {$\frac{\alpha }{\gamma }i=B$};
\filldraw (1,0) circle({0.6*1.5/1pt});
\draw (1,0) node [above] {$1$};
\draw [->] (-.3,-.7) -- (0,-.75);
\draw (-.3,-.7)  node [left] {$\cG\left(\alpha \cA\right)$};
\end{tikzpicture}}
&
$\xrightarrow{\frac{2}{1+\alpha z}-1}$
&\!\!\!\!\!\!\!\!\!\!\!\!\!\!\!\!\!\!
\raisebox{-.5\height}{
\begin{tikzpicture}[scale=1]
\def\L{1.2}
\def\g{2}
\begin{scope}
\clip (0,0) circle (1);
\fill [fill=medgrey] (0,0) circle (1);
\fill [fill=white]({(1+(1/\L)^2)/(-1+(1/\L)^2)},0) circle ({2*(1/\L)/(1-(1/\L)^2)});
\fill [fill=white]({(1+\g^2)/(-1+\g^2)},0) circle ({2*\g/(1-\g^2)});
\end{scope}

\begin{scope}
\clip ({(-1+(1/\L)^2)/(1+(1/\L)^2)},1.2) rectangle ({(-1+\g^2)/(1+\g^2)},0.5);
\draw [line width=1.0pt] (0,0) circle (1);
\end{scope}

\draw [<->] (-1.2,0) -- (1.2,0);
\draw [<->] (0,-1.4) -- (0,1.4);
\draw [dashed] (0,0) circle (1);

\coordinate (A) at  ({(-1+\g^2)/(1+\g^2)},{(2*\g)/(1+\g^2)}) ;
\coordinate (B) at  ({(-1+(1/\L)^2)/(1+(1/\L)^2)},{(2*(1/\L))/(1+(1/\L)^2)}) ;

\filldraw ({2/(1+1/\g)-1},0) circle({0.6*1.5/1pt});
\filldraw ({2/(1+\L)-1},0) circle({0.6*1.5/1pt});

\filldraw (A)  circle({0.6*1.5/1pt});
\draw  ({(-1+\g^2)/(1+\g^2)-0.2},{(2*\g)/(1+\g^2)-0.05}) node  [above right] {$\frac{i-\alpha/\gamma}{i+\alpha/\gamma}=B'$};
\filldraw (B) circle({0.6*1.5/1pt});
\draw  ({(-1+(1/\L)^2)/(1+(1/\L)^2)+0.2},{(2*(1/\L))/(1+(1/\L)^2)-0.15}) node  [above left] {$\frac{i-\alpha L}{i+\alpha L}=A'$};

\def\w{-0.5};
\draw [->] (-.4,-.6) -- ({sqrt((2*(1/\L)/(1-(1/\L)^2))^2-(\w)^2)+(1+(1/\L)^2)/(-1+(1/\L)^2)},\w);
\draw (-.4,-.6)  node [left, fill=white] {$\cG\left(2J_{\alpha\cA}-I\right)$};
\end{tikzpicture}
}
\end{tabular}
\end{center}
Note the line segment $\overline{AB}$ is mapped to the (minor) arc $\arc{A'B'}$.
Using Theorem~\ref{thm:composition-srg}, we have
\begin{center}
\begin{tabular}{ccccccc}
\raisebox{-.5\height}{
\begin{tikzpicture}[scale=0.9]
\def\L{1.2}
\def\g{2}
\begin{scope}
\clip (0,0) circle (1);
\fill [fill=medgrey] (0,0) circle (1);
\fill [fill=white]({(1+(1/\L)^2)/(-1+(1/\L)^2)},0) circle ({2*(1/\L)/(1-(1/\L)^2)});
\fill [fill=white]({(1+\g^2)/(-1+\g^2)},0) circle ({2*\g/(1-\g^2)});
\end{scope}
\draw [<->] (-1.2,0) -- (1.2,0);
\draw [<->] (0,-1.2) -- (0,1.2);
\draw [dashed] (0,0) circle (1);

\filldraw (1,0) circle({0.6*1.5/0.9pt});
\draw (1,0) node [above left] {$1$};

\draw (0,-1.4) node {$\cG(2J_{\alpha \cA}-I)$};
\end{tikzpicture}
}
&\!\!\!\!\!\!
$\times$
&\!\!\!\!\!\!
\raisebox{-.5\height}{
\begin{tikzpicture}[scale=0.9]
\fill [fill=medgrey] (0,0) circle (1);
\draw [<->] (-1.2,0) -- (1.2,0);
\draw [<->] (0,-1.2) -- (0,1.2);
\filldraw (1,0) circle({0.6*1.5/0.9pt});
\draw (1,0) node [above left] {$1$};
\draw (0,-1.4) node {$\cG(2J_{\alpha \cB}-I)$};
\end{tikzpicture}
}
&\!\!\!\!\!\!
$=$
&\!\!\!\!\!\!\!\!\!\!\!\!\!\!\!\!\!\!
\raisebox{-.5\height}{
\begin{tikzpicture}[scale=0.9]
\fill [fill=medgrey] (0,0) circle (1);
\draw [<->] (-1.2,0) -- (1.2,0);
\draw [<->] (0,-1.2) -- (0,1.2);
\filldraw (1,0) circle({0.6*1.5/0.9pt});
\draw (1,0) node [above left] {$1$};
\draw (0,-1.4) node {$\cG\left((2J_{\alpha \cA}-I)(2J_{\alpha \cB}-I)\right)$};
\end{tikzpicture}}
&\!\!\!\!\!\!\!\!\!\!\!\!\!\!\!\!\!\!
$\xrightarrow{(1-\theta)+\theta z}$
&\!\!\!\!\!\!\!\!\!
\raisebox{-.5\height}{
\begin{tikzpicture}[scale=0.9]
\def\t{0.8}
\fill [fill=medgrey] ({1-\t},0) circle (\t);
\draw [<->] (-1.2,0) -- (1.2,0);
\draw [<->] (0,-1.2) -- (0,1.2);
\filldraw (1,0) circle({0.6*1.5/0.9pt});
\draw (1,0) node [above left] {$1$};
\filldraw ({1-\t},0) circle({0.6*1.5/0.9pt});
\draw ({1-(\t/2)},-0.1) node [below] {$\theta$};
\draw [decorate,decoration={brace,amplitude=4.5pt}]  (1,0) --({1-\t},0) ;
\draw (0,-1.4) node {$\cG\left(D_{\alpha,\theta}(\cA,\cB)\right)$};
\end{tikzpicture}}
\end{tabular}
\end{center}
because $\arc{A'B'}$ is on the unit circle, and since $\cG(2J_{\alpha \cB}-I)=\{z\,|\,|z|\le 1\}$.
So we have $1\in \cG(D_{\alpha,\theta}(\cA,\cB))$, but $1\notin  \cG(\cL_{1-\varepsilon})$ for any $\varepsilon>0$.
Therefore,  $\cG (D_{\alpha,\theta}(\cA,\cB))\nsubseteq \cG(\cL_{1-\varepsilon})$ and, with Theorem~\ref{thm:srg-full}, we conclude $D_{\alpha,\theta}(\cA,\cB)\nsubseteq \cL_{1-\varepsilon}$.
The result for
the operator $D_{\alpha,\theta}(\cB,\cA)$ follows from similar reasoning.
\qed\end{proof}

\begin{corollary}
\label{cor:ms_impossibility1}
Let $0<1/\gamma\le L<\infty$ and $\alpha\in(0,\infty)$.
Let $B\in\cM$ and let $A\in \cM\cap \cL_L$ satisfy a condition weaker than or equal to $\gamma$-inverse Lipschitz continuity, such as $\gamma$-metric subregularity.
It is not possible to establish a strict contraction of the DRS operators $D_{\alpha,\theta}(A,B)$ or $D_{\alpha,\theta}(B,A)$ for any $\alpha>0$ and $\theta\ne 0$ without further assumptions.
\end{corollary}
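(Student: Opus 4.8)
The plan is to obtain Corollary~\ref{cor:ms_impossibility1} as an immediate consequence of Theorem~\ref{thm:ms1}. First I would pin down what ``establishing a strict contraction without further assumptions'' means: it means producing an $\varepsilon>0$ together with a proof that $\|D_{\alpha,\theta}(A,B)x-D_{\alpha,\theta}(A,B)y\|\le(1-\varepsilon)\|x-y\|$ for all $x,y$, valid simultaneously for \emph{every} admissible $A$ and every $B\in\cM$. In the notation of the appendix this is precisely the containment $D_{\alpha,\theta}(\cA'',\cB)\subseteq\cL_{1-\varepsilon}$, where $\cB=\cM$ and $\cA''$ denotes the class of operators in $\cM\cap\cL_L$ satisfying the posited subregularity-type hypothesis (e.g.\ $\gamma$-metric subregularity at every point of the graph). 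Thus it suffices to show $D_{\alpha,\theta}(\cA'',\cB)\nsubseteq\cL_{1-\varepsilon}$ for all $\varepsilon>0$, and likewise $D_{\alpha,\theta}(\cB,\cA'')\nsubseteq\cL_{1-\varepsilon}$.

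Next I would invoke the implication chain already recorded in this section: $A\in(\cL_\gamma)^{-1}$ (i.e.\ $\gamma$-inverse Lipschitz) implies $A$ is $\gamma$-metrically subregular at $x$ for $y$ for every $(x,y)\in A$, and more generally implies any assumption that is weaker than or equal to $\gamma$-inverse Lipschitz continuity. A weaker assumption defines a \emph{larger} class, so the class $\cA:=\cM\cap\cL_L\cap(\cL_\gamma)^{-1}$ appearing in Theorem~\ref{thm:ms1} satisfies $\cA\subseteq\cA''$. Monotonicity of the DRS construction in its operator arguments then gives $D_{\alpha,\theta}(\cA,\cB)\subseteq D_{\alpha,\theta}(\cA'',\cB)$ and $D_{\alpha,\theta}(\cB,\cA)\subseteq D_{\alpha,\theta}(\cB,\cA'')$.

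Finally I would apply Theorem~\ref{thm:ms1}: for $\theta\ne 0$ it asserts $D_{\alpha,\theta}(\cA,\cB)\nsubseteq\cL_{1-\varepsilon}$ and $D_{\alpha,\theta}(\cB,\cA)\nsubseteq\cL_{1-\varepsilon}$ for every $\varepsilon>0$. Combining with the containments of the previous paragraph, $D_{\alpha,\theta}(\cA'',\cB)\nsubseteq\cL_{1-\varepsilon}$ and $D_{\alpha,\theta}(\cB,\cA'')\nsubseteq\cL_{1-\varepsilon}$ for every $\varepsilon>0$, so no uniform contraction factor $1-\varepsilon<1$ exists; this is exactly the assertion of the corollary.

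The main point is not technical — all the geometric work is already carried out in Theorem~\ref{thm:ms1} (where the extreme point maps onto the unit circle, forcing $1\in\cG(D_{\alpha,\theta}(\cA,\cB))$) and in the earlier remark that metric subregularity is weaker than inverse Lipschitz continuity. The only thing to handle with care is the logical bookkeeping: a ``weaker hypothesis'' enlarges the operator class, so impossibility for the smaller class $\cA$ transfers a fortiori to any class containing it; and a ``contraction established without further assumptions'' is by definition a statement quantified over the whole class, which is why a class-level non-containment in $\cL_{1-\varepsilon}$ is the correct object to rule it out.
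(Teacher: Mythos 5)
Your proposal is correct and follows exactly the argument the paper intends: Corollary~\ref{cor:ms_impossibility1} is stated as an immediate consequence of Theorem~\ref{thm:ms1}, via the observation that $\cM\cap\cL_L\cap(\cL_\gamma)^{-1}$ is contained in the class defined by any weaker hypothesis, so the witness constructed in Theorem~\ref{thm:ms1} already rules out a uniform contraction factor for the larger class. Your ``logical bookkeeping'' remark about the direction of the containment is precisely the point.
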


\begin{theorem}
Let $\gamma,L,\alpha \in(0,\infty)$.
Let $\cA=\cM\cap (\cL_\gamma)^{-1}$ and $\cB=\cM\cap  \cL_L$.
If $1/\gamma\le L$ and $\theta\ne 0$, then
\[D_{\alpha,\theta}(\cA,\cB)\nsubseteq \cL_{1-\varepsilon}\]
for any $\varepsilon>0$.
If $1/\gamma> L$ and $\theta\in(0,1)$, then 
$D_{\alpha,\theta}(\cA,\cB)\subseteq \cL_R$
for 
\[
R= \sqrt{1-\frac{4\theta(1-\theta)(1-\gamma L)^2}{(1+\gamma^2/\alpha^2)(1+\alpha^2 L^2)}}.
\]
This result is tight in the sense that 
$D_{\alpha,\theta}(\cA,\cB)\nsubseteq \cL_R$ for any smaller value of $R$.
The same conclusion holds for $D_{\alpha,\theta}(\cB,\cA)$.
\end{theorem}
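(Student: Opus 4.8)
The plan is to follow the exact same geometric recipe used throughout this section: track the SRG of $\cA$ and $\cB$ through the transformations $A \mapsto I + \alpha A$, $A \mapsto A^{-1}$, $A \mapsto 2A - I$, then composition, then the averaging map $z \mapsto (1-\theta) + \theta z$, and finally read off the smallest enclosing disk centered at the origin. First I would compute $\cG(\cA) = \cG(\cM) \cap \cG((\cL_\gamma)^{-1})$ using Theorem~\ref{thm:srg-intersection}: this is the intersection of the right half-plane with the exterior of the disk of radius $1/\gamma$ about the origin. Similarly $\cG(\cB) = \cG(\cM) \cap \cG(\cL_L)$ is the intersection of the right half-plane with the closed disk of radius $L$. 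Applying Theorem~\ref{thm:srg-scaling-translation} and Theorem~\ref{thm:srg-inversion}, $\cG(2J_{\alpha\cA} - I)$ and $\cG(2J_{\alpha\cB} - I)$ are each lens/region shapes bounded by circular arcs inside the unit disk; in particular $2J_{\alpha\cB}-I$ has SRG bounded away from the unit circle on the right when $L$ is finite, whereas $2J_{\alpha\cA}-I$ can touch the unit circle (since $(\cL_\gamma)^{-1}$ contains the exterior of a disk, which includes $\infty$, mapping to $0$ on the inside, but also includes points that invert to the unit circle's left portion).

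Next I would split into the two cases. When $1/\gamma \le L$, the constraint regions overlap so that after all transformations the product $\cG(2J_{\alpha\cA}-I)\cG(2J_{\alpha\cB}-I)$ — computed via Theorem~\ref{thm:composition-srg}, noting that $\cM \cap \cL_L$ and the half-plane/disk classes satisfy an arc property — still reaches the point $1$, exactly as in Theorem~\ref{thm:ms1}. Then the averaging map sends $1 \mapsto 1$, so $1 \in \cG(D_{\alpha,\theta}(\cA,\cB))$, and since $1 \notin \cG(\cL_{1-\varepsilon})$ for any $\varepsilon > 0$, Theorem~\ref{thm:srg-full} (applied to the SRG-full class $\cL_{1-\varepsilon}$) gives $D_{\alpha,\theta}(\cA,\cB) \nsubseteq \cL_{1-\varepsilon}$. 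The argument for $D_{\alpha,\theta}(\cB,\cA)$ is identical since composition of SRGs commutes (Theorem~\ref{thm:composition-srg}), and the case $\theta \notin (0,1)$ is handled by noting the averaging map with $\theta \notin (0,1)$ only moves points further from or at the same distance to the relevant boundary.

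When $1/\gamma > L$, the disk of radius $L$ lies strictly inside the disk of radius $1/\gamma$, so the regions $\cG(\cA)$ and $\cG(\cB)$ are "separated" in a way that forces the product SRG to stay strictly inside the unit disk. Here I would identify the extreme points of the resulting region after all transformations: the key vertices come from the corner points of $\cG(\alpha\cA)$ at $i\gamma/\alpha$ (mapping through $z \mapsto 2/(1+z) - 1$ to $(i - \gamma/\alpha)/(i+\gamma/\alpha)$ on the unit circle) and of $\cG(\alpha\cB)$ at $i\alpha L$ (mapping to $(i - \alpha L)/(i + \alpha L)$). The product of these two unit-modulus complex numbers, after the averaging map, gives a point whose distance from the origin is the claimed $R$; the enclosing circle passes through the images $A, A'$ of these extreme points and is centered at $O$. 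I would then verify, by two or three applications of the Pythagorean theorem (or Stewart's theorem, as in Fact~\ref{prop:refl-sm-coco}), that $\overline{OA}^2 = 1 - 4\theta(1-\theta)(1-\gamma L)^2 / ((1 + \gamma^2/\alpha^2)(1 + \alpha^2 L^2))$, and argue as in the earlier proofs that the arcs bounding the product region lie inside circle $O$ while the extreme points lie on it, establishing both containment and tightness, with $\cL_R$ being SRG-full closing the loop via Theorem~\ref{thm:srg-full}.

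The main obstacle I anticipate is the composition step in the second case: unlike Fact~\ref{prop:refl-sm-coco} where one of the factors is a disk, here both $\cG(2J_{\alpha\cA}-I)$ and $\cG(2J_{\alpha\cB}-I)$ are lune-shaped regions bounded by two circular arcs, so computing the Minkowski product $\cG(2J_{\alpha\cA}-I)\cG(2J_{\alpha\cB}-I)$ and confirming that its farthest point from the origin is precisely the product of the two extreme boundary points (rather than some interior combination) requires care — I would need the arc property from Theorem~\ref{thm:composition-srg} together with a monotonicity argument in polar coordinates showing $|zw|$ and the relevant angle are simultaneously extremized at the vertices. The algebraic verification of the $R$ formula via Stewart's theorem is routine but lengthy, and I would present only the setup and final value rather than every intermediate step.
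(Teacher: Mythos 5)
Your overall plan---tracking the SRG through $A\mapsto I+\alpha A$, inversion, $A\mapsto 2A-I$, composition, and the averaging map $z\mapsto(1-\theta)+\theta z$, then reading off the smallest origin-centered circle containing the result---is the same route the paper takes. However, two specific steps are wrong, and the one step you flag as difficult is the place where the paper does something you did not anticipate.

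The extreme point you name for the $\cA$ side is incorrect. The corner of $\cG(\alpha\cA)=\alpha\cG(\cA)=\{z : \Re z\ge 0,\ |z|\ge \alpha/\gamma\}\cup\{\infty\}$ sits at $i\alpha/\gamma$, not $i\gamma/\alpha$. Its image under the composite map is the unit-circle point at angle $\varphi_A$ with $\cos\varphi_A=\tfrac{1-\alpha^2/\gamma^2}{1+\alpha^2/\gamma^2}$, whereas your point $(i-\gamma/\alpha)/(i+\gamma/\alpha)$ sits at angle $\pi-\varphi_A$, the reflection across the imaginary axis. Moreover, even with the correct $\cA$ corner you must pair it with the $\cB$ extreme of \emph{opposite} sign of imaginary part: the extreme of the product SRG lies at angle $\varphi_A-\varphi_B$, i.e.\ it is $e^{i\varphi_A}\cdot e^{-i\varphi_B}$. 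If you multiply the two points you named (both with positive imaginary part, and one at the wrong angle), the product lands at angle $\pi-(\varphi_A-\varphi_B)$, and after averaging its modulus is $\sqrt{(1-\theta)^2+\theta^2-2\theta(1-\theta)\cos(\varphi_A-\varphi_B)}$, which is \emph{not} the stated $R$. So the sentence ``the product of these two unit-modulus complex numbers, after the averaging map, gives a point whose distance from the origin is the claimed $R$'' is false for the points you identify.

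The composition step you flag as the main obstacle is indeed where the work is, and your proposed remedy is not quite right: the quantity to be extremized after averaging is $|(1-\theta)+\theta zw|$, not $|zw|$ (the relevant boundary points all have $|zw|=1$, so $|zw|$ is constant there). The paper resolves the obstacle with a sandwich rather than a direct Minkowski-product computation. It introduces inner bounds $\underline{S_A},\underline{S_B}$ (the unit-circle arcs over angles $[\varphi_A,2\pi-\varphi_A]$ and $[-\varphi_B,\varphi_B]$ respectively) and outer bounds $\overline{S_A},\overline{S_B}$ (the chord-bounded region and a circular sector), with $\underline{S}\subseteq\cG(2J_{\alpha\cdot}-I)\subseteq\overline{S}$ on each side. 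The Minkowski product of the inner arcs is itself an arc; when $1/\gamma>L$ it runs from angle $\varphi_A-\varphi_B$ to $-(\varphi_A-\varphi_B)$ the long way around, so the extreme is attained, and when $1/\gamma\le L$ it wraps the entire unit circle, so $1$ lies in the SRG (and since $1$ is a fixed point of $z\mapsto(1-\theta)+\theta z$ for any $\theta$, the first case needs no further $\theta$-analysis, unlike what you sketch). The outer bounds multiply into a region contained in the half-angle-$(\varphi_A-\varphi_B)$ sector of the unit disk, which after averaging sits inside $\cG(\cL_R)$; the Pythagorean theorem then gives the same $R$ from both bounds, pinching the answer. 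This sandwich is the idea your plan is missing, and without it the direct ``extremize at the vertices'' argument is not justified.
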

\begin{proof}
Consider the case $\alpha/\gamma<1$ and $\alpha L<1$. We have
\begin{center}
\begin{tabular}{cc}
\raisebox{-.5\height}{
\begin{tikzpicture}[scale=1]
\def\g{2.2}

\fill[fill=medgrey] (0,0) circle (1);
\begin{scope}
\fill[fill=white]({2/(1-(1/\g)^2)-1},0) circle ({(2/\g)/(1-(1/\g)^2)});
\end{scope}
\begin{scope}
\clip  ({2/(1+(1/\g)^2)-1},-1.3) rectangle (2.7,1.3);
\draw[dashed]({2/(1-(1/\g)^2)-1},0) circle ({(2/\g)/(1-(1/\g)^2)});
\draw[dashed] (0,0) circle (1);
\end{scope}

\draw({2/(1-(1/\g))-1},0)  node [below left] {$\frac{1+\alpha/\gamma}{1-\alpha/\gamma}$};
\filldraw ({2/(1-(1/\g)^2)-1},0)   circle ({0.6*1.5/1pt});
\draw ({2/(1-(1/\g)^2)-1+0.1},0)   node [above ] {$\frac{1+\alpha^2/\gamma^2}{1-\alpha^2/\gamma^2}$};

\draw [<->] (-1.2,0) -- (3,0);
\draw [<->] (0,-1.2) -- (0,1.2);

\draw[->] (-.02,0.25)--({2/(1+(1/\g))-1},.05);
\draw  (.1,0.25) node [left] {$\frac{1-\alpha/\gamma}{1+\alpha/\gamma}$};
\filldraw ({2/(1+(1/\g))-1},0) circle ({0.6*1.5/1pt});
\filldraw ({2/(1-(1/\g))-1},0)  circle ({0.6*1.5/1pt});

\draw (1,-0pt) node [below right] {$1$};
\filldraw (1,0) circle ({0.6*1.5/1pt});
%
%
%
\draw (0.,-1.2)  node [below] {$\cG(2J_{\alpha \cA}-I)$};
\end{tikzpicture}}
&
\raisebox{-.5\height}{
\begin{tikzpicture}[scale=1]
\def\g{2.2}
\coordinate (B) at ({2/(1+(1/\g))-1},0) ;
\coordinate (A) at ({(1-(1/\g)^2)/(1+(1/\g)^2)},{2*(1/\g)/(1+(1/\g)^2)});
\coordinate (Ap) at ({(1-(1/\g)^2)/(1+(1/\g)^2)},{-2*(1/\g)/(1+(1/\g)^2)}) ;
\coordinate (D) at ({2/(1-(1/\g)^2)-1},0);
\coordinate (O) at (0,0);
\fill[fill=medgrey] (0,0) circle (1);
\begin{scope}
\clip (0,0) circle (1);
\fill[fill=white]({2/(1-(1/\g)^2)-1},0) circle ({(2/\g)/(1-(1/\g)^2)});
\end{scope}
\begin{scope}
\clip  ({2/(1+(1/\g)^2)-1},-1.3) rectangle (2.7,1.3);
\draw[dashed]({2/(1-(1/\g)^2)-1},0) circle ({(2/\g)/(1-(1/\g)^2)});
\draw[dashed] (0,0) circle (1);
\end{scope}

\begin{scope}
\clip ({2/(1-(1/\g)^2)-1},0) circle ({(2/\g)/(1-(1/\g)^2)});
\fill [fill=darkgrey] (A)--(Ap) -- (0,0)-- cycle;
\end{scope}

\begin{scope}
\clip  ({2/(1+(1/\g)^2)-1},-1.3) rectangle (-1.2,1.3);
\draw (0,0) circle (1);
\end{scope}


\draw [<->] (-1.2,0) -- (3,0);
\draw [<->] (0,-1.2) -- (0,1.2);

\fill (A)   circle ({0.6*1.5/1pt});
\fill (Ap) circle ({0.6*1.5/1pt});
\fill (D) circle ({0.6*1.5/1pt});

\draw (A) node[above] {$A$};
\draw ({(1-(1/\g)^2)/(1+(1/\g)^2)-0.05},{-2*(1/\g)/(1+(1/\g)^2)-0.05})  node[below] {$A'$};
\draw (D) node[below] {$D$};

\fill (O) circle ({0.6*1.5/1pt});

\draw  (O)--(A)--(D);

\fill (-1,0) circle ({0.6*1.5/1pt});

\def\r{0.2};
\draw (\r,{0}) arc (0:{acos((-1+\g^2)/(1+\g^2))}:\r) node[pos=0.9, right]{$\varphi_A$};

\draw (0.,-1.2)  node [below] {\phantom{$\cG(2J_{\alpha \cA}-I)$}};

\end{tikzpicture}}
\end{tabular}
\end{center}
Let $\underline{S_A}=\arc{ACA'}$  and let $\overline{S_A}$ as the region bounded by  $\arc{ACA'}\cup \overline{AA'}$.
These sets provide an inner and outer bound of $\cG(2J_{\alpha \cA}-1)$ in the sense that
\[
\underline{S_A}
\subseteq 
\cG(2J_{\alpha \cA}-1)
\subseteq 
\overline{S_A}.
\]
Note that $J_{\alpha \cA}$ satisfies the left-arc property.
By the law of cosines, we have
\begin{align*}
\cos(\varphi_A)&=
\frac{1}{2\cdot \overline{AO}\cdot
\overline{OD}}
\left(
\overline{AO}^2+
\overline{OD}^2-\overline{AD}^2\right)\\
&=
\tfrac{1}{2\cdot 1\cdot
\left(\tfrac{1+\alpha^2/\gamma^2}{1-\alpha^2/\gamma^2}\right)
}
\left(
1^2+
\left(\tfrac{1+\alpha^2/\gamma^2}{1-\alpha^2/\gamma^2}\right)^2
-
\left(
\tfrac{1+\alpha^2/\gamma^2}{1-\alpha^2/\gamma^2}
-
\tfrac{1-\alpha/\gamma}{1+\alpha/\gamma}
\right)^2
\right)=\tfrac{1-\alpha^2/\gamma^2}{1+\alpha^2/\gamma^2}.
\end{align*}
Likewise, we have
\begin{center}
\begin{tabular}{cc}
\raisebox{-.5\height}{
\begin{tikzpicture}[scale=1.2]
\def\L{.35};

\begin{scope}
\clip ({(1-\L^2)/(1+\L^2)},-1.2)  rectangle (2.3,1.2);
\draw[dashed] ({2*(1/(1-\L^2))-1},0) circle ({2*\L/(1-\L^2)});
\end{scope}

\begin{scope}
\clip (0,0) circle (1);
\fill[fill=medgrey] ({2*(1/(1-\L^2))-1},0) circle ({2*\L/(1-\L^2)});
\end{scope}

\draw [<->] (-1.2,0) -- (2.3,0);
\draw [<->] (0,-1.2) -- (0,1.2);

\begin{scope}
\clip ({(1-\L^2)/(1+\L^2)},-1.2)  rectangle (-1.2,1.2);
\draw [dashed] (0,0) circle (1);
\end{scope}

\filldraw (1,0) circle ({0.6*1.5/1.2pt});
\draw (1,-0) node [below right] {$1$};

\filldraw ({(1+\L^2)/(1-\L^2)},0)circle ({0.6*1.5/1.2pt});
\draw ({(1+\L^2)/(1-\L^2)-0.2-0.15},0) node [above right] {$\frac{1+\alpha^2 L^2}{1-\alpha^2 L^2}$};

\filldraw ({(1-\L)/(1+\L)},0)circle ({0.6*1.5/1.2pt});
\draw [->] (-0.15,.25) -- ({(1-\L)/(1+\L)},0);
\draw (-0.1,.25) node [left] {$\frac{1-\alpha L}{1+\alpha L}$};

\filldraw ({(1+\L)/(1-\L)},0)circle ({0.6*1.5/1.2pt});
\draw ({(1+\L)/(1-\L)},0) node [above right] {$\frac{1+\alpha L}{1-\alpha L}$};

\draw (1.,-1.)  node [below] {$\cG(2J_{\alpha \cB}-I)$};

%

%
\end{tikzpicture}}
&
\raisebox{-.5\height}{
\begin{tikzpicture}[scale=1.2]
\def\L{.35};

\coordinate (O) at (0,0) ;
\coordinate (A) at ({(1-\L^2)/(1+\L^2)},{2*\L/(1+\L^2)});
\coordinate (Ap) at ({(1-\L^2)/(1+\L^2)},{-2*\L/(1+\L^2)}) ;
\fill [fill=darkgrey] (O)--(A)--(Ap);

\begin{scope}
\clip ({(1-\L^2)/(1+\L^2)},-1.2)  rectangle (2.3,1.2);
\draw[dashed] ({2*(1/(1-\L^2))-1},0) circle ({2*\L/(1-\L^2)});
\end{scope}

\begin{scope}
\clip (0,0) circle (1);
\fill[fill=medgrey] ({2*(1/(1-\L^2))-1},0) circle ({2*\L/(1-\L^2)});
\end{scope}

\begin{scope}
\clip ({(1-\L^2)/(1+\L^2)},-1.2)  rectangle (1.2,1.2);
\draw (0,0) circle (1);
\end{scope}

\draw [<->] (-1.2,0) -- (2.3,0);
\draw [<->] (0,-1.2) -- (0,1.2);

\begin{scope}
\clip ({(1-\L^2)/(1+\L^2)},-1.2)  rectangle (-1.2,1.2);
\draw [dashed] (0,0) circle (1);
\end{scope}

\fill (O) circle ({0.6*1.5/1.2pt});
\fill (A)   circle ({0.6*1.5/1.2pt});
\fill (Ap) circle ({0.6*1.5/1.2pt});
\fill (1,0) circle ({0.6*1.5/1.2pt});

\draw (A) node[above] {$A$};
\draw (Ap) node[below] {$A'$};
\draw (O) node[below left] {$O$};
\draw (1,0) node[above right] {$C$};

\def\r{0.2};
\draw (\r,{0}) arc (0:{acos((-1+(1/\L^2))/(1+(1/\L^2)))}:\r) node[pos=0.9, right]{$\varphi_B$};

\draw (1.,-1.)  node [below] {\phantom{$\cG(2J_{\alpha \cB}-I)$}};

%

%
\end{tikzpicture}}
\end{tabular}
\end{center}
Let $\underline{S_B}=\arc{A'CA}$ and let $\overline{S_B}$ as the circular sector bounded by  $\arc{A'CA}\cup \overline{AO}\cup\overline{OA'}$.
Again, we have
\[
\underline{S_B}
\subseteq 
\cG(2J_{\alpha \cB}-1)
\subseteq 
\overline{S_B},
\]
and 
\[
\cos\varphi_B=\tfrac{1-\alpha^2 L^2}{1+\alpha^2 L^2}.
\]
Using the arccosine sum identity \cite[p. 80, 4.4.33]{abramowitz_stegun}, we get
\[
\cos(\varphi_A-\varphi_B)
=1-\tfrac{2(\alpha L-\alpha/\gamma)^2}{(1+\alpha ^2L^2)(1+\alpha^2/\gamma^2)}.
\]

When $1/\gamma\le L$, we have $\varphi_A\le \varphi_B$. 
In this case, 
\begin{center}
\begin{tabular}{ccccc}
\raisebox{-.5\height}{
\begin{tikzpicture}[scale=1]
\def\g{3}
\coordinate (B) at ({2/(1+(1/\g))-1},0) ;
\coordinate (A) at ({(1-(1/\g)^2)/(1+(1/\g)^2)},{2*(1/\g)/(1+(1/\g)^2)});
\coordinate (Ap) at ({(1-(1/\g)^2)/(1+(1/\g)^2)},{-2*(1/\g)/(1+(1/\g)^2)}) ;
\coordinate (O) at (0,0);
\begin{scope}
\clip  ({2/(1+(1/\g)^2)-1},-1.2) rectangle (-1.2,1.2);
\draw(0,0) circle (1);
\end{scope}

\draw [<->] (-1.2,0) -- (1.2,0);
\draw [<->] (0,-1.2) -- (0,1.2);
\filldraw (A) circle ({0.6*1.5/1pt});
\filldraw (Ap) circle ({0.6*1.5/1pt});

\draw[dashed]  (O)--(A);

\def\r{0.3};
\draw (\r,{0}) arc (0:{acos((-1+\g^2)/(1+\g^2))}:\r) node[pos=0.9, right]{$\varphi_A$};

\draw (0.,-1.2)  node [below] {$\underline{S_A}$};

\end{tikzpicture}}
&
$\times$
&
\raisebox{-.5\height}{
\begin{tikzpicture}[scale=1]
\def\L{.7};

\coordinate (O) at (0,0) ;
\coordinate (A) at ({(1-\L^2)/(1+\L^2)},{2*\L/(1+\L^2)});
\coordinate (Ap) at ({(1-\L^2)/(1+\L^2)},{-2*\L/(1+\L^2)}) ;

\draw[dashed] (O)--(A);

\begin{scope}
\clip ({(1-\L^2)/(1+\L^2)},-1.2)  rectangle (1.2,1.2);
\draw (0,0) circle (1);
\end{scope}

\draw [<->] (-1.2,0) -- (1.2,0);
\draw [<->] (0,-1.2) -- (0,1.2);

\def\r{0.3};
\draw (\r,{0}) arc (0:{acos((-1+(1/\L^2))/(1+(1/\L^2)))}:\r) node[pos=0.9, right]{$\varphi_B$};

\filldraw (A) circle ({0.6*1.5/1pt});
\filldraw (Ap) circle ({0.6*1.5/1pt});

\draw (0.,-1.2)  node [below] {$\underline{S_B}$};

\end{tikzpicture}}
&
$=$
&
\raisebox{-.5\height}{
\begin{tikzpicture}[scale=1]

\draw(0,0) circle (1);

\draw [<->] (-1.2,0) -- (1.2,0);
\draw [<->] (0,-1.2) -- (0,1.2);

\draw (0.,-1.2)  node [below] {$\{z\in \complex\,|\,|z|=1\}$};
\end{tikzpicture}}
\end{tabular}
\end{center}
Therefore
\[
1\in 
(1-\theta)1+\theta
\underline{S_A}\,
\underline{S_B}
\subseteq 
\cG (D_{\alpha,\theta}(\cA,\cB)),
\]
but $1\notin  \cG(\cL_{1-\varepsilon})$ for any $\varepsilon>0$.
Therefore, we conclude $\cG (D_{\alpha,\theta}(\cA,\cB))\nsubseteq \cG(\cL_{1-\varepsilon})$.


When $1/\gamma> L$, we have $\varphi_A>\varphi_B$. In this case,
\begin{center}
\begin{tabular}{ccccc}
\raisebox{-.5\height}{
\begin{tikzpicture}[scale=1]
\def\g{1.7}
\coordinate (B) at ({2/(1+(1/\g))-1},0) ;
\coordinate (A) at ({(1-(1/\g)^2)/(1+(1/\g)^2)},{2*(1/\g)/(1+(1/\g)^2)});
\coordinate (Ap) at ({(1-(1/\g)^2)/(1+(1/\g)^2)},{-2*(1/\g)/(1+(1/\g)^2)}) ;
\coordinate (O) at (0,0);
\begin{scope}
\clip  ({2/(1+(1/\g)^2)-1},-1.2) rectangle (-1.2,1.2);
\draw(0,0) circle (1);
\end{scope}

\draw [<->] (-1.2,0) -- (1.2,0);
\draw [<->] (0,-1.2) -- (0,1.2);

\filldraw (A) circle ({0.6*1.5/1pt});
\filldraw (Ap) circle ({0.6*1.5/1pt});
\draw[dashed]  (O)--(A);

\def\r{0.3};
\draw (\r,{0}) arc (0:{acos((-1+\g^2)/(1+\g^2))}:\r) node[pos=0.9, right]{$\varphi_A$};

\draw (0.,-1.2)  node [below] {$\underline{S_A}$};

\end{tikzpicture}}
&
$\times$
&
\raisebox{-.5\height}{
\begin{tikzpicture}[scale=1]
\def\L{.2};
\coordinate (O) at (0,0) ;
\coordinate (A) at ({(1-\L^2)/(1+\L^2)},{2*\L/(1+\L^2)});
\coordinate (Ap) at ({(1-\L^2)/(1+\L^2)},{-2*\L/(1+\L^2)}) ;

\draw[dashed] (O)--(A);

\begin{scope}
\clip ({(1-\L^2)/(1+\L^2)},-1.2)  rectangle (1.2,1.2);
\draw (0,0) circle (1);
\end{scope}

\draw [<->] (-1.2,0) -- (1.2,0);
\draw [<->] (0,-1.2) -- (0,1.2);

\def\r{0.3};
\draw (\r,{0}) arc (0:{acos((-1+(1/\L^2))/(1+(1/\L^2)))}:\r);
\draw (0.75,0.45) node [above] {$\varphi_B$};
\draw [->] (0.7,0.5) -- ({\r*cos(acos((-1+(1/\L^2))/(1+(1/\L^2)))/2)},{\r*sin(acos((-1+(1/\L^2))/(1+(1/\L^2)))/2)});
\filldraw (A) circle ({0.6*1.5/1pt});
\filldraw (Ap) circle ({0.6*1.5/1pt});

\draw (0.,-1.2)  node [below] {$\underline{S_B}$};

\end{tikzpicture}}
&
$=$
&
\raisebox{-.5\height}{
\begin{tikzpicture}[scale=1]
\def\g{1.7}
\def\L{.2};

\coordinate (A) at ({cos(acos((-1+\g^2)/(1+\g^2))-acos((-1+(1/\L^2))/(1+(1/\L^2))))},{sin(acos((-1+\g^2)/(1+\g^2))-acos((-1+(1/\L^2))/(1+(1/\L^2)))});
\coordinate (Ap) at ({cos(acos((-1+\g^2)/(1+\g^2))-acos((-1+(1/\L^2))/(1+(1/\L^2))))},{-sin(acos((-1+\g^2)/(1+\g^2))-acos((-1+(1/\L^2))/(1+(1/\L^2)))});

\coordinate (O) at (0,0);
\begin{scope}
\clip  (({cos(acos((-1+\g^2)/(1+\g^2))-acos((-1+(1/\L^2))/(1+(1/\L^2))))},-1.2) rectangle (-1.2,1.2);
\draw(0,0) circle (1);
\end{scope}


\draw [<->] (-1.2,0) -- (1.2,0);
\draw [<->] (0,-1.2) -- (0,1.2);

\draw[dashed]  (O)--(A);


\def\r{0.3};
\draw (\r,{0}) arc (0:{acos((-1+\g^2)/(1+\g^2))-acos((-1+(1/\L^2))/(1+(1/\L^2)))}:\r);
\draw (1,0.2) node {$\varphi_A-\varphi_B$};

\filldraw (A) circle ({0.6*1.5/1pt});
\filldraw (Ap) circle ({0.6*1.5/1pt});

\draw (0.,-1.2)  node [below] {$\underline{S_A}\,\underline{S_B}$};

\end{tikzpicture}}
\end{tabular}
\end{center}

\begin{center}
\begin{tabular}{ccccccc}
\raisebox{-.5\height}{
\begin{tikzpicture}[scale=1]
\def\g{1.7}
\coordinate (B) at ({2/(1+(1/\g))-1},0) ;
\coordinate (A) at ({(1-(1/\g)^2)/(1+(1/\g)^2)},{2*(1/\g)/(1+(1/\g)^2)});
\coordinate (Ap) at ({(1-(1/\g)^2)/(1+(1/\g)^2)},{-2*(1/\g)/(1+(1/\g)^2)}) ;
\coordinate (O) at (0,0);
\begin{scope}
\clip  ({2/(1+(1/\g)^2)-1},-1.2) rectangle (-1.2,1.2);
\fill[fill=medgrey](0,0) circle (1);
\end{scope}

\draw [<->] (-1.2,0) -- (1.2,0);
\draw [<->] (0,-1.2) -- (0,1.2);

\draw[dashed]  (O)--(A);

\def\r{0.3};
\draw (\r,{0}) arc (0:{acos((-1+\g^2)/(1+\g^2))}:\r) node[pos=0.9, right]{$\varphi_A$};

\draw (0.,-1.2)  node [below] {$\overline{S_A}$};

\end{tikzpicture}}
&
\!\!\!\!\!\!\!\!
$\times$
\!\!\!\!\!\!\!\!
&
\raisebox{-.5\height}{
\begin{tikzpicture}[scale=1]
\def\L{.2};
\coordinate (O) at (0,0) ;
\coordinate (A) at ({(1-\L^2)/(1+\L^2)},{2*\L/(1+\L^2)});
\coordinate (Ap) at ({(1-\L^2)/(1+\L^2)},{-2*\L/(1+\L^2)}) ;

\fill [fill=medgrey] (O)--(A)--(Ap);

\begin{scope}
\clip (0,0) circle (1);
\fill[fill=medgrey] ({2*(1/(1-\L^2))-1},0) circle ({2*\L/(1-\L^2)});
\end{scope}

\draw [<->] (-1.2,0) -- (1.2,0);
\draw [<->] (0,-1.2) -- (0,1.2);

\def\r{0.3};
\draw (\r,{0}) arc (0:{acos((-1+(1/\L^2))/(1+(1/\L^2)))}:\r);
\draw (0.85,0.45) node [above] {$\varphi_B$};
\draw [->] (0.8,0.5) -- ({\r*cos(acos((-1+(1/\L^2))/(1+(1/\L^2)))/2)},{\r*sin(acos((-1+(1/\L^2))/(1+(1/\L^2)))/2)});

\draw (0.,-1.2)  node [below] {$\overline{S_B}$};

\end{tikzpicture}}
&
\!\!\!\!\!\!\!\!
$=$
\!\!\!\!\!\!\!\!\!\!\!\!\!\!\!\!\!
&
\raisebox{-.5\height}{
\begin{tikzpicture}[scale=1]
\def\g{1.7}
\def\L{.2};

\coordinate (A) at ({cos(acos((-1+\g^2)/(1+\g^2))-acos((-1+(1/\L^2))/(1+(1/\L^2))))},{sin(acos((-1+\g^2)/(1+\g^2))-acos((-1+(1/\L^2))/(1+(1/\L^2)))});
\coordinate (Ap) at ({cos(acos((-1+\g^2)/(1+\g^2))-acos((-1+(1/\L^2))/(1+(1/\L^2))))},{-sin(acos((-1+\g^2)/(1+\g^2))-acos((-1+(1/\L^2))/(1+(1/\L^2)))});

\coordinate (B) at ({cos(-acos((-1+\g^2)/(1+\g^2))-acos((-1+(1/\L^2))/(1+(1/\L^2))))},{sin(-acos((-1+\g^2)/(1+\g^2))-acos((-1+(1/\L^2))/(1+(1/\L^2)))});\coordinate (Bp) at ({cos(-acos((-1+\g^2)/(1+\g^2))-acos((-1+(1/\L^2))/(1+(1/\L^2))))},{-sin(-acos((-1+\g^2)/(1+\g^2))-acos((-1+(1/\L^2))/(1+(1/\L^2)))});

\coordinate (O) at (0,0);



\begin{scope}
\clip  (A)--(B)--(0,-1.2)--(-1.2,-1.2)--(-1.2,1.2)--(1.2,1.2)--cycle;
\fill[fill=medgrey](0,0) circle (1);
\end{scope}

\begin{scope}
\clip  (Ap)--(Bp)--(0,1.2)--(-1.2,1.2)--(-1.2,-1.2)--(1.2,-1.2)--cycle;
\fill[fill=medgrey](0,0) circle (1);
\end{scope}
\draw[dashed]  (O)--(A);
\draw[dashed]  (O)--(B);
\draw[dashed]  (A)--(B);
\def\r{0.3};
\draw (\r,{0}) arc (0:{acos((-1+\g^2)/(1+\g^2))-acos((-1+(1/\L^2))/(1+(1/\L^2)))}:\r);
\draw (-1.2,.7) node [above] {$\varphi_A-\varphi_B$};

\draw [->] (-1.2,.8) --({\r*cos((acos((-1+\g^2)/(1+\g^2))-acos((-1+(1/\L^2))/(1+(1/\L^2))))/3)},{\r*sin((acos((-1+\g^2)/(1+\g^2))-acos((-1+(1/\L^2))/(1+(1/\L^2))))/3)});

\def\s{0.22};
\draw (\s,{0}) arc (0:{-acos((-1+\g^2)/(1+\g^2))-acos((-1+(1/\L^2))/(1+(1/\L^2)))}:\s);

\draw [->] (-1,-.9)--({\s*cos((-acos((-1+\g^2)/(1+\g^2))-acos((-1+(1/\L^2))/(1+(1/\L^2))))/3)},{\s*sin((-acos((-1+\g^2)/(1+\g^2))-acos((-1+(1/\L^2))/(1+(1/\L^2))))/3)});

\draw(-1,-.8) node[below] {$-\varphi_A-\varphi_B$};

\draw [<->] (-1.2,0) -- (1.2,0);
\draw [<->] (0,-1.2) -- (0,1.2);

\draw (0.,-1.2)  node [below] {$\overline{S_A}\,\overline{S_B}$};

\end{tikzpicture}}
&
\!\!\!\!\!\!\!\!
$\subseteq$
\!\!\!\!\!\!\!\!
&
\raisebox{-.5\height}{
\begin{tikzpicture}[scale=1]
\def\g{1.7}
\def\L{.2};

\coordinate (A) at ({cos(acos((-1+\g^2)/(1+\g^2))-acos((-1+(1/\L^2))/(1+(1/\L^2))))},{sin(acos((-1+\g^2)/(1+\g^2))-acos((-1+(1/\L^2))/(1+(1/\L^2)))});
\coordinate (Ap) at ({cos(acos((-1+\g^2)/(1+\g^2))-acos((-1+(1/\L^2))/(1+(1/\L^2))))},{-sin(acos((-1+\g^2)/(1+\g^2))-acos((-1+(1/\L^2))/(1+(1/\L^2)))});

\coordinate (B) at ({cos(-acos((-1+\g^2)/(1+\g^2))-acos((-1+(1/\L^2))/(1+(1/\L^2))))},{sin(-acos((-1+\g^2)/(1+\g^2))-acos((-1+(1/\L^2))/(1+(1/\L^2)))});\coordinate (Bp) at ({cos(-acos((-1+\g^2)/(1+\g^2))-acos((-1+(1/\L^2))/(1+(1/\L^2))))},{-sin(-acos((-1+\g^2)/(1+\g^2))-acos((-1+(1/\L^2))/(1+(1/\L^2)))});

\coordinate (O) at (0,0);



\begin{scope}
\clip  ({cos(acos((-1+\g^2)/(1+\g^2))-acos((-1+(1/\L^2))/(1+(1/\L^2))))},-1.2) rectangle (-1.2,1.2);
\fill[fill=medgrey](0,0) circle (1);
\end{scope}

\draw [<->] (-1.2,0) -- (1.2,0);
\draw [<->] (0,-1.2) -- (0,1.2);

\draw[dashed]  (O)--(A);


\def\r{0.3};
\draw (\r,{0}) arc (0:{acos((-1+\g^2)/(1+\g^2))-acos((-1+(1/\L^2))/(1+(1/\L^2)))}:\r);
\draw (1,0.2) node {$\varphi_A-\varphi_B$};

\draw (0.,-1.2)  node [below] {\phantom{$\overline{S_B}$}};

\end{tikzpicture}}
\end{tabular}
\end{center}
Using the outer bounds 
$\overline{S_A}$ and $\overline{S_B}$ we establish correctness. Using  the inner bounds
$\underline{S_A}$ and $\underline{S_B}$ we establish tightness.
\begin{center}
\raisebox{-.5\height}{
\begin{tikzpicture}[scale=1.5]
\def\g{1.7}
\def\L{.2};
\def\t{1.2};

\fill[fill=lightgrey] (0,0) circle ({sqrt(1-\t*(2-\t)*(1-\g*\L)^2/(1+(\g)^2)/(1+(\L)^2))});

\coordinate (A) at ({(1-(\t/2))+(\t/2)*cos(acos((-1+\g^2)/(1+\g^2))-acos((-1+(1/\L^2))/(1+(1/\L^2))))},{(\t/2)*sin(acos((-1+\g^2)/(1+\g^2))-acos((-1+(1/\L^2))/(1+(1/\L^2)))});

\coordinate (B) at ({(1-(\t/2))+(\t/2)*cos(-acos((-1+\g^2)/(1+\g^2))-acos((-1+(1/\L^2))/(1+(1/\L^2))))},{(\t/2)*sin(-acos((-1+\g^2)/(1+\g^2))-acos((-1+(1/\L^2))/(1+(1/\L^2)))});

\coordinate (O) at ({1-(\t/2)},0);

\begin{scope}
\clip  ({(1-(\t/2))+(\t/2)*cos(acos((-1+\g^2)/(1+\g^2))-acos((-1+(1/\L^2))/(1+(1/\L^2))))},-1.2) rectangle (-1.2,1.2);
\fill[fill=medgrey]({1-(\t/2)},0) circle (\t/2);
\draw ({1-(\t/2)},0) circle (\t/2);
\end{scope}

\draw [<->] (-1.2,0) -- (1.2,0);
\draw [<->] (0,-1.2) -- (0,1.2);

\draw[dashed]  (O)--(A);

 \draw [decorate,decoration={brace,amplitude=4.5pt}] (O) -- (A);
 \draw (0.5,0.55) node[below] {$\theta$};

\filldraw (O) circle ({0.6*1.5/1.5pt});
\draw (O) node[below] {$1-\theta$};

\def\r{0.3};
\draw ({(1-(\t/2))+\r},{0}) arc (0:{acos((-1+\g^2)/(1+\g^2))-acos((-1+(1/\L^2))/(1+(1/\L^2)))}:\r);
\draw (1.2,0.15) node {$\varphi_A-\varphi_B$};

\def\u{0.6};
\draw[->] (.8,-.9)--(\u,{-sqrt((\t/2)^2-(\u-1+(\t/2))^2)});
\draw (1.1,-.85) node [below] {$(1-\theta)+\theta \underline{S_A}\,\underline{S_B}$};

\def\u{0.6};
\draw[->] (1.2,-0.2)--({(1-(\t/2))+(\t/2)*cos(acos((-1+\g^2)/(1+\g^2))-acos((-1+(1/\L^2))/(1+(1/\L^2))))},-0.25);
\draw (1.2,-0.2) node [right] {$\subseteq (1-\theta)+\theta \overline{S_A}\,\overline{S_B}$};

\def\w{-.8};
\draw[->] (-1.2,-.6)--(\w,{-sqrt((1-\t*(2-\t)*(1-\g*\L)^2/(1+(\g)^2)/(1+(\L)^2))-(\w)^2)});
\draw (-1.15,-.6) node [left] {$\cG\left(\cL_R\right)$};
\draw (-1.45,-.9) node [below] {$R= \sqrt{1-\frac{4\theta(1-\theta)(1-\gamma L)^2}{(1+\gamma^2/\alpha^2)(1+\alpha^2 L^2)}}$};
\end{tikzpicture}}
\end{center}
With the Pythagorean theorem, we can verify that the containment holds for $R$ and fails for smaller $R$.
Since $\cL_R$ is SRG-full by Theorem~\ref{thm:srg-full}, the containment of the SRG in $\ecomplex$ equivalent to the containment of the class.

The result for the cases $\alpha/\gamma\ge 1$ or $\alpha L\ge 1$ and for the operator $D_{\alpha,\theta}(\cB,\cA)$ follows from similar reasoning.
\qed\end{proof}

\begin{corollary}
\label{cor:ms_impossibility2}
Let $0<1/\gamma\le L<\infty$ and $\alpha\in(0,\infty)$.
Let $B\in \cM\cap  \cL_L$ and let $A\in \cM$ satisfy a condition weaker than or equal to $\gamma$-inverse Lipschitz continuity, such as $\gamma$-metric subregularity.
It is not possible to establish a strict contraction of the DRS operators $T(A,B,\alpha,\theta)$ or $T(B,A,\alpha,\theta)$ for any $\alpha>0$ and $\theta\in \reals$ without further assumptions.
\end{corollary}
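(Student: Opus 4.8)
The plan is to obtain Corollary~\ref{cor:ms_impossibility2} as an immediate consequence of the theorem stated just before it, which shows that for $\cA=\cM\cap(\cL_\gamma)^{-1}$ and $\cB=\cM\cap\cL_L$ with $1/\gamma\le L$ and $\theta\ne 0$ one has $D_{\alpha,\theta}(\cA,\cB)\nsubseteq\cL_{1-\varepsilon}$ for every $\varepsilon>0$, and likewise for $D_{\alpha,\theta}(\cB,\cA)$ (equivalently, for the operators $T(A,B,\alpha,\theta)$ and $T(B,A,\alpha,\theta)$). The only additional ingredient is the assumption hierarchy recalled in Section~\ref{s:metric-subregularity}: $\gamma$-inverse Lipschitz continuity implies $\gamma$-metric subregularity at every point of the graph. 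Hence the collection of monotone operators that are $\gamma$-metrically subregular — and, a fortiori, the collection of monotone operators satisfying \emph{any} condition implied by $\gamma$-inverse Lipschitz continuity — contains $\cM\cap(\cL_\gamma)^{-1}$.

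First I would dispatch the degenerate case $\theta=0$, where $D_{\alpha,0}(A,B)=I$ is nonexpansive with Lipschitz constant exactly $1$, hence never a strict contraction regardless of the hypotheses on $A$ and $B$. For $\theta\ne 0$ I would argue by contradiction: suppose some result guaranteed a strict contraction, i.e., an $R=R(\gamma,L,\alpha,\theta)<1$ with $D_{\alpha,\theta}(A,B)\in\cL_R$ whenever $B\in\cM\cap\cL_L$ and $A\in\cM$ satisfies the stated weaker-than-inverse-Lipschitz condition. By the implication above every $A\in\cM\cap(\cL_\gamma)^{-1}$ would qualify, so $D_{\alpha,\theta}(\cM\cap(\cL_\gamma)^{-1},\cM\cap\cL_L)\subseteq\cL_R$; taking $\varepsilon=1-R>0$ contradicts the preceding theorem. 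The reversed-order claim for $T(B,A,\alpha,\theta)$ follows identically, invoking the ``same conclusion holds for $D_{\alpha,\theta}(\cB,\cA)$'' clause of that theorem.

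I would also spell out why the obstruction is realized by a concrete pair of operators and not merely as a limiting phenomenon: the proof of the preceding theorem in fact exhibits $1\in\cG\big(D_{\alpha,\theta}(\cA,\cB)\big)$, and since the SRG of a class is the union of the SRGs of its members, there is an explicit $(A,B)$ with $A\in\cM\cap(\cL_\gamma)^{-1}$, $B\in\cM\cap\cL_L$, and $1\in\cG(D_{\alpha,\theta}(A,B))$ — equivalently, a pair $x\ne y$ with $\|D_{\alpha,\theta}(A,B)x-D_{\alpha,\theta}(A,B)y\|=\|x-y\|$. This one pair simultaneously defeats every candidate contraction factor $R<1$.

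The step I expect to be the main (and essentially only) obstacle is formalizing the meta-claim ``it is not possible to establish a strict contraction without further assumptions'' into the precise statement that no uniform bound $R<1$ can hold over the indicated class of operators; once that is pinned down the argument is a one-line reduction to the preceding theorem together with the inverse-Lipschitz-implies-metric-subregularity implication. No new geometry is needed here, since all of the geometric content resides in the theorem being cited.
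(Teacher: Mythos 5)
Your proposal is correct and follows the same route the paper intends: the corollary is stated without proof precisely because it is an immediate consequence of the preceding theorem, and your chain (inverse Lipschitz implies metric subregularity and hence any weaker condition, so $\cM\cap(\cL_\gamma)^{-1}$ sits inside the class under discussion; the theorem then rules out any uniform $R<1$) together with the explicit dispatch of $\theta=0$ is exactly the argument the reader is meant to supply.
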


\section{Conclusion}
\label{s:conclusion}
In this work, we presented the scaled relative graph, a tool that maps the action of an operator to the extended complex plane.
This machinery enables us to analyze nonexpansive and monotone operators with geometric arguments.
The geometric ideas should complement the classical analytical approaches and bring clarity.


Extending this geometric framework to more general setups and spaces is an interesting future direction.
Some fixed-point iterations, such as the power iteration of non-symmetric matrices \cite{mises1929} or the Bellman iteration \cite{Bellman1952}, are analyzed most effectively through notions other than the norm induced by the inner product (the Euclidean norm for finite-dimensional spaces).
Whether it is possible to gain insight through geometric arguments in such setups would be worthwhile to investigate.

\section*{Acknowledgments}
We thank Pontus Giselsson for the illuminating discussion on metric subregularity.
We thank Minyong Lee and Yeoil Yoon for helpful discussions on inversive geometry and its use in high school mathematics competitions.
We thank Xinmeng Huang for the aid in drawing Figure~\ref{fig:matrix_SRG}.
We thank the Erwin Schr\"odinger Institute and the organizers of its workshop in February 2019, which provided us with fruitful discussions and helpful feedback that materially improved this paper.

\bibliographystyle{spmpsci} 
\bibliography{srg,Master_Bibliography}

\appendix



\section{Further discussion}



\subsection{The role of maximality}
A fixed-point iteration $x^{k+1}=Tx^k$ becomes undefined if its iterates ever escape the domain of $T$.
This is why we assume the monotone operators are maximal as maximality ensures $\dom{ J_{\alpha A}}=\hilbert$.
The results of this work are otherwise entirely independent of the notion of maximality.

In Section~\ref{s:prelim}, we define $\cM$ to contain all monotone operators (maximal or not).
This choice is necessary to make $\cM$ SRG-full.
For the other classes $\cL_{L}$, $\cC_{\beta }$, $\cM_{\mu}$, and $\cN_\theta$, we make no restriction on the domain or maximality so that they can be SRG-full.


\subsection{Minkowski-type set notation}
Given $\alpha\in \reals$ and sets $U,V\subseteq\hilbert$, write
\[
\alpha U=\{\alpha u\,|\,u\in U\},\quad
U+V=\{u+v\,|\,u\in U,\,v\in V\},\quad U-V=U+(-V).
\]
Notice that if either $U$ or $V$ is $\emptyset$, then $U+V=\emptyset$.

Given $Z,W\subseteq \complex$, write
\begin{gather*}
  Z+W=\{z+w\,|\,z\in Z,\,w\in W\},\quad ZW=\{zw\,|\,z\in Z,\,w\in W\}.
\end{gather*}
Given $\alpha\in \complex$, $\alpha\ne 0$, and $Z\subseteq\ecomplex$, write 
\[
\alpha Z = \{ \alpha z\,|\,z\in Z\}.
\]

Given a class of operators $\cA$ and $\alpha\ne 0$, write
\begin{gather*}
 \cA^{-1}=\{ A^{-1}\,|\,A\in \cA\},\qquad
\alpha \cA=\{\alpha A\,|\,A\in \cA\},\qquad
 \cA\alpha =\{A\alpha \,|\,A\in \cA\}.
\end{gather*}
Given classes of operators $\cA$ and $\cB$ and $\alpha>0$, write
\begin{align*}
\cA+\cB&=\{A+B\,|\,A\in \cA,\,B\in \cB,\,A\colon\hilbert\rightrightarrows\hilbert,\,B\colon\hilbert\rightrightarrows\hilbert\}\\
\cA\cB&=\{AB\,|\,A\in \cA,\,B\in \cB,\,A\colon\hilbert\rightrightarrows\hilbert,\,B\colon\hilbert\rightrightarrows\hilbert\}\\
J_{\alpha \cA}&=\{J_{\alpha A}\,|\,A\in \cA,\,A\colon\hilbert\rightrightarrows\hilbert\}.
\end{align*}




\subsection{SRG-full classes}
\label{ss:srg-full-appendix}

There is one degenerate case to keep in mind for the sake of rigor.
The SRG-full class of operators $\cA_\text{null}$  represented by 
$h(a,b,c)=a+b+|c|$
has $\cG(\cA_\text{null})=\emptyset$.
However, the class $\cA_\text{null}$ is not itself empty; it contains operators whose graph contains zero or one pair, i.e.,
$A\in \cA_\text{null}$ if and only if we have either a) $\dom{A}=\emptyset$ or b) $\dom{A}=x$ and $Ax=\{y\}$ for some $x,y\in \hilbert$.

Theorem~\ref{thm:srg-intersection} does not apply when the operator classes are not SRG-full.
For example, although
\[
\partial \cF_{\mu,L}=\partial \cF_{\mu,\infty}\cap \partial \cF_{0,L}
\]
we have the strict containment
\begin{center}
\begin{tabular}{lll}
\raisebox{-.5\height}{
\begin{tikzpicture}[scale=1.5]
\fill[fill=medgrey] (0.65,0) circle (0.35);
\draw [<->] (-0.5,0)-- (1.2,0);
\draw [<->] (0,-.7) -- (0,.7);
\filldraw (1,0) circle ({0.6*1.5/1.5pt});
\draw (1,0pt) node [above right] {$L$};
\filldraw (.3,0) circle ({0.6*1.5/1.5pt});
\draw (0.3,0pt) node [above left] {$\mu$};
\draw (0.6,-.65) node {$\cG(\partial \mathcal{F}_{\mu,L})$};
\end{tikzpicture}
}
&$\subset$ &
\raisebox{-.5\height}{
\begin{tikzpicture}[scale=1.5]
\begin{scope}
\clip (0.3,-.7) rectangle (1.2,.7);
\fill[fill=medgrey] (0.5,0) circle (0.5);
\end{scope}
\begin{scope}
\clip (0.3,-.7) rectangle (-0.5,.7);
\draw[dashed] (0.5,0) circle (0.5);
\end{scope}
\draw[dashed] (0.3,0.458258) -- (0.3,0.7);
\draw[dashed] (0.3,-0.458258) -- (0.3,-0.7);
\draw [<->] (-0.5,0) -- (1.2,0);
\draw [<->] (0,-.7) -- (0,.7);

\draw (0.3,0pt) node [above left] {$\mu$};
\filldraw (.3,0) circle ({0.6*1.5/1.5pt});
\draw (1.4,-.65) node{$\cG(\partial \mathcal{F}_{\mu,\infty})\cap \cG(\partial \mathcal{F}_{0,L})$};
\draw (1,0) node [above right] {$L$};
\filldraw (1,0) circle ({0.6*1.5/1.5pt});
\end{tikzpicture}}
\end{tabular}
\end{center}

\section{Invariant circle number}
\label{s:invariant-circle}
Let $\cA$ be an SRG-full class such that $\cG(\cA)\ne\emptyset$ and $\cG(\cA)\ne\ecomplex$.
Define the \emph{circle number} of $\cA$ as
\begin{align*}
\inf\Big\{k&\in \mathbb{N}\,\Big|\,\cG(\cA)=B_1\cap\dots\cap B_k,\,\text{$B_i$ is a disk or a half-space for $i=1,\dots,k$}
\Big\},
\end{align*}
which is a positive integer or $\infty$.
For example, $\cM\cap \cL_1$ has circle number $2$ since
\begin{center}
$\cG(\cM\cap \cL_1)=\left\{z\,|\,|z|\le 1\right\}\cap \left\{z\,|\,\Re z\ge 0\right\}=$
\raisebox{-.5\height}{
\begin{tikzpicture}[scale=.7]
\begin{scope}
\clip (-1.2,-1.2) rectangle (0,1.2);
\draw[dashed] (0,0) circle (1);
\end{scope}
\begin{scope}
\clip (0,-1.2) rectangle (1.2,1.2);
\fill[fill=medgrey] (0,0) circle (1);
\end{scope}
\draw[dashed] (0,1)--(0,1.4);
\draw[dashed] (0,-1)--(0,-1.4);
\filldraw (1,0) circle ({0.6*1.5/.7pt});
\draw (1,0) node [ right] {$1$};

\filldraw (0,0) circle ({0.6*1.5/.7pt});
\draw (0,0) node [left] {$0$};
\end{tikzpicture}}
\end{center}
In this section, we show that the circle number of an operator class is invariant under certain operations.
This is analogous to how the genus or the winding number are topological invariants under homeomorphisms.
That it is impossible to continuously deform a donut into a sphere since they have different numbers of holes, an invariant, is a standard argument of topology.
The circle number serves as an analogous invariant for operator classes.

\begin{theorem}
\label{thm:circle-invariant}
The circle number of an SRG-full operator class is invariant under non-zero pre and post-scalar multiplication, addition by identity, and inversion.
\end{theorem}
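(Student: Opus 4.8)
The strategy is to show that each of the four operations — pre-scalar multiplication $\cA\mapsto\cA\alpha$, post-scalar multiplication $\cA\mapsto\alpha\cA$, translation $\cA\mapsto I+\cA$, and inversion $\cA\mapsto\cA^{-1}$ — induces, on the level of SRGs, a bijective transformation of $\ecomplex$ that sends the family of disks-and-half-spaces to itself. Once that is established, the circle number is automatically preserved: if $\cG(\cA)=B_1\cap\dots\cap B_k$ is a minimal representation and $\Phi$ is the induced map on $\ecomplex$, then $\cG(\Phi(\cA))=\Phi(B_1)\cap\dots\cap\Phi(B_k)$ is a representation of the same length, and minimality transfers because $\Phi$ is invertible with an inverse of the same type, so any shorter representation of $\cG(\Phi(\cA))$ would pull back to a shorter representation of $\cG(\cA)$. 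The SRG-fullness of the transformed classes is already guaranteed by Theorems~\ref{thm:srg-scaling-translation} and \ref{thm:srg-inversion}, so there is nothing to check there.

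First I would record the explicit induced maps using the transformation theorems already proved: $\cG(\alpha\cA)=\alpha\cG(\cA)$ and $\cG(\cA\alpha)=\alpha\cG(\cA)$, so both scalar multiplications induce $z\mapsto\alpha z$ (with $\alpha\in\reals$, $\alpha\ne0$); $\cG(I+\cA)=1+\cG(\cA)$ induces $z\mapsto z+1$; and $\cG(\cA^{-1})=(\cG(\cA))^{-1}$ induces $z\mapsto\bar z^{-1}$. All four of these are M\"obius transformations of $\ecomplex$ (the real scaling and the translation are affine, hence M\"obius; inversion is the standard one), and each restricts to a bijection of $\ecomplex$ onto itself. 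The one subtlety worth a sentence is that the relevant objects here are \emph{closed} disks and closed half-spaces, and we should note these maps send closed half-planes/disks to closed half-planes/disks (they are homeomorphisms of $\ecomplex$ carrying generalized circles to generalized circles, hence carrying the closed region on one side to the closed region on one side).

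The main content is the lemma that M\"obius transformations of the specific forms $z\mapsto\alpha z$ ($\alpha$ real), $z\mapsto z+1$, and $z\mapsto\bar z^{-1}$ map $\{\text{disks and half-spaces}\}$ bijectively onto itself. For the affine maps this is elementary: scaling by a nonzero real sends a disk to a disk and a half-space to a half-space, likewise translation. For inversion $z\mapsto\bar z^{-1}$, this is exactly the classical fact from inversive geometry invoked in \S\ref{ss:inversion}: inversion carries generalized circles to generalized circles, and the four-step recipe given in the appendix shows how a region bounded by a generalized circle (with a chosen interior side) maps to another such region; the only care needed is to verify that the region that results is again a disk or a half-space and not, say, the complement of an open disk. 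This is where I expect the one real obstacle: a half-space or disk \emph{not} containing the origin inverts to a disk, one \emph{containing} the origin on its boundary inverts to a half-space, and one containing the origin in its interior inverts to the complement of an open disk. The last case would break the argument, so I would argue that $\cG(\cA)$, being an intersection of the $B_i$, can be assumed — after a harmless adjustment, or simply by the structure of the classes in question — to admit a minimal representation by $B_i$ none of which contains $0$ in its interior when inversion is applied; alternatively one observes $0\notin\mathrm{int}\,\cG(\cA)$ is not needed if instead one works with the fact that $\cG(\cA)$ is symmetric about the real axis and its representation can be chosen symmetric, reducing to half-spaces through or disks tangent appropriately. I would spell this case analysis out carefully, as it is the crux; the rest is routine. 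Finally I would assemble: circle number of $\Phi(\cA)$ equals circle number of $\cA$ because $\Phi$ and $\Phi^{-1}$ both preserve the class of admissible building blocks and preserve intersections.
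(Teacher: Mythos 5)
Your overall structure matches the paper's closely: for each of the four operations you identify the induced bijection $T'$ of $\ecomplex$, observe that it commutes with intersection, argue it carries the admissible building blocks into themselves, and conclude equality of the infima by invertibility. You are also right to single out inversion as the crux, and in doing so you have put your finger on a step the paper itself elides: the paper's proof baldly asserts that $T'(B_1),\dots,T'(B_k)$ are again disks or half-spaces, which under the naive planar reading of ``disk'' is simply false. The snag, however, cannot be repaired by either of the devices you sketch. Your first suggestion — arrange a minimal representation in which no $B_i$ has $0$ in its interior — fails already for $\cA=\cL_L$: here $\cG(\cL_L)=\{z\in\complex : |z|\le L\}$ is a single closed disk centered at the origin, it has exactly one minimal representation (itself), and that disk necessarily contains $0$ in its interior. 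Your second suggestion (symmetry about the real axis) buys nothing, since $\cG(\cL_L)$ is already symmetric and still inverts badly: $\cG(\cL_L^{-1})=\{z : |z|\ge 1/L\}\cup\{\infty\}$, the very class the paper denotes $\cL_{1/L}^{-1}$ in its metric-subregularity appendix. That set is not convex, hence not an intersection of any finite number of genuine planar disks and half-planes, so under the naive reading its circle number would be $\infty$ while $\cL_L$ has circle number $1$ — the theorem would be outright false.

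The intended reading — which neither you nor the paper makes explicit, and which you need in order to finish — is that ``disk or half-space'' must be taken in the inversive-geometric sense of $\ecomplex$: the admissible building blocks are \emph{generalized disks}, i.e.\ either of the two closed complementary regions bounded by a generalized circle on the Riemann sphere, so that $\{z : |z|\ge r\}\cup\{\infty\}$ and $\{z : \Re z\ge c\}\cup\{\infty\}$ both qualify. Under that reading, each of the four maps $z\mapsto \alpha z$ ($\alpha\in\reals\setminus\{0\}$), $z\mapsto z+1$, and $z\mapsto \bar z^{-1}$ is a M\"obius transformation of $\ecomplex$, carries generalized circles to generalized circles, and hence carries generalized disks to generalized disks; this is the one missing lemma, and with it your invertibility argument (and the paper's) closes. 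Without this reinterpretation the case analysis you propose cannot be completed, because the ``origin in the interior'' configuration is unavoidable.
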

\begin{proof}
Let $T$ be a one-to-one mapping from an operator to an operator
and let $T'$ the the corresponding one-to-one mapping from $\ecomplex$ to $\ecomplex$.
In particular, consider the following four cases:
first $T(A)=\alpha A$ and $T'(z)=\alpha z$,
second $T(A)= A\alpha $ and $T'(z)=\alpha z$,
third $T(A)= I+A $ and $T'(z)=1+z$,
and fourth $T(A)= A^{-1} $ and $T'(z)=\bar{z}^{-1}$.

If 
\[
\cG(\cA)=B_1\cap \dots\cap B_k,
\]
then
\[
\cG(T(\cA))=T'(B_1)\cap \dots\cap T'(B_k),
\]
where $T'(B_1),\dots,T'(B_k)$ are each a disk or a half-space.
Therefore, the circle number of $T(\cA)$ satisfies
\begin{align*}
\inf\Big\{k\,\Big|\,\cG(T(\cA))=T'(B_1)\cap\dots&\cap T'(B_k)
\Big\}\le
\inf\Big\{k\,\Big|\,\cG(\cA)=B_1\cap\dots\cap B_k
\Big\}.
\end{align*}
Since $T$ and $T'$ are invertible mappings, the argument goes in the other direction as well, and we conclude that the infimums are equal.
\qed\end{proof}
\begin{corollary}
\label{cor:impossible-mapping-mon-lip}
There is no one-to-one mapping from $\mathcal{M}$ to $\mathcal{M}\cap \mathcal{L}_L$
constructed via pre and post-scalar multiplication, addition with the identity operator, and operator inversion.
\end{corollary}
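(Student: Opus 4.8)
The plan is to play off Theorem~\ref{thm:circle-invariant} against the observation that $\mathcal{M}$ and $\mathcal{M}\cap\mathcal{L}_L$ have different circle numbers. First I would note that any mapping of the stated form is a finite composition $T=T_m\circ\cdots\circ T_1$, where each $T_j$ is pre- or post-scalar multiplication by a nonzero real, addition with the identity, or operator inversion. By Theorems~\ref{thm:srg-scaling-translation} and~\ref{thm:srg-inversion} each $T_j$ carries SRG-full classes to SRG-full classes and carries classes with nonempty, non-full SRG to classes of the same kind, and by Theorem~\ref{thm:circle-invariant} each $T_j$ preserves the circle number. Since $\mathcal{M}$ is SRG-full (represented by $h=-c$), it follows inductively that $T(\mathcal{M})$ is SRG-full and has the same circle number as $\mathcal{M}$.

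Next I would compute the two circle numbers. On the one hand, $\cG(\mathcal{M})=\{z\in\complex\mid\Re z\ge 0\}\cup\{\infty\}$ is a single half-space, so $\mathcal{M}$ has circle number $1$. On the other hand, $\mathcal{M}\cap\mathcal{L}_L$ is SRG-full by Theorem~\ref{thm:srg-intersection}, and by that theorem $\cG(\mathcal{M}\cap\mathcal{L}_L)=\cG(\mathcal{M})\cap\cG(\mathcal{L}_L)=\{z\in\complex\mid\Re z\ge 0,\ |z|\le L\}$ is a half-disk. Being the intersection of a disk and a half-space, it has circle number at most $2$; and it is neither a disk nor a half-space, since it is bounded (ruling out a half-space) and its boundary contains the straight segment $\{z\mid\Re z=0,\ |z|\le L\}$ (ruling out a disk, whose boundary is a circle). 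Hence its circle number is exactly $2$. This is just the $L$-scaled version of the example $\mathcal{M}\cap\mathcal{L}_1$ already worked out in Section~\ref{s:invariant-circle}.

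Finally, if some $T$ of the stated form carried $\mathcal{M}$ onto $\mathcal{M}\cap\mathcal{L}_L$, i.e.\ $T(\mathcal{M})=\mathcal{M}\cap\mathcal{L}_L$, then the first step would force the circle numbers of $\mathcal{M}$ and $\mathcal{M}\cap\mathcal{L}_L$ to be equal, i.e.\ $1=2$, a contradiction; hence no such mapping exists.

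The logical skeleton is immediate from Theorem~\ref{thm:circle-invariant}, so the only real care needed is in two places: pinning down the elementary geometric fact that a closed half-disk in $\ecomplex$ is neither a disk nor a half-space (used to get circle number exactly $2$), and reading ``one-to-one mapping from $\mathcal{M}$ to $\mathcal{M}\cap\mathcal{L}_L$'' as an onto correspondence $T(\mathcal{M})=\mathcal{M}\cap\mathcal{L}_L$. The latter reading is the natural one because each elementary operation is a bijection of the ambient space of operators, so a composition is too; under the weaker reading $T(\mathcal{M})\subseteq\mathcal{M}\cap\mathcal{L}_L$ the statement would actually fail, since, e.g., $\mathcal{C}_{1/L}=\big(\tfrac1L I+\mathcal{M}\big)^{-1}$ is a composition of the allowed operations whose image is a proper subclass of $\mathcal{M}\cap\mathcal{L}_L$.
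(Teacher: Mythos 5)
Your proof is correct and is exactly the argument the paper intends: the circle-number invariant from Theorem~\ref{thm:circle-invariant} (the paper leaves the computation implicit, stating the corollary without a separate proof). Your two additional observations—that the composition must be read as a bijection onto $\mathcal{M}\cap\mathcal{L}_L$ rather than merely into it, with $\mathcal{C}_{1/L}=(\tfrac1L I+\mathcal{M})^{-1}\subsetneq\mathcal{M}\cap\mathcal{L}_L$ as a witness, and that each elementary step preserves well-definedness of the circle number—are careful and accurate, though not spelled out in the paper.
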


Such one-to-one mappings between operator classes are used for translating a nice result on a simple operator class to another operator class.
In \cite{kvextension2010,kvextension2_2010,OSPEP} the maximal monotone extension theorem was translated to extension theorems of other operator classes.
Corollary~\ref{cor:impossible-mapping-mon-lip} shows that this approach will not work for $\mathcal{M}\cap \mathcal{L}_L$, a class of operators considered by the extragradient method \cite{korpelevich1976}, forward-backward-forward splitting \cite{tseng2000}, and other related methods \cite{davis2018,malitsky2018}.
In fact, \cite{OSPEP} shows that certain simple interpolation condition for $\cM$ fails for $\mathcal{M}\cap \mathcal{L}_L$.

\section{Deferred proofs}

\begin{figure}
\begin{center}
\includegraphics[width=0.4\textwidth]{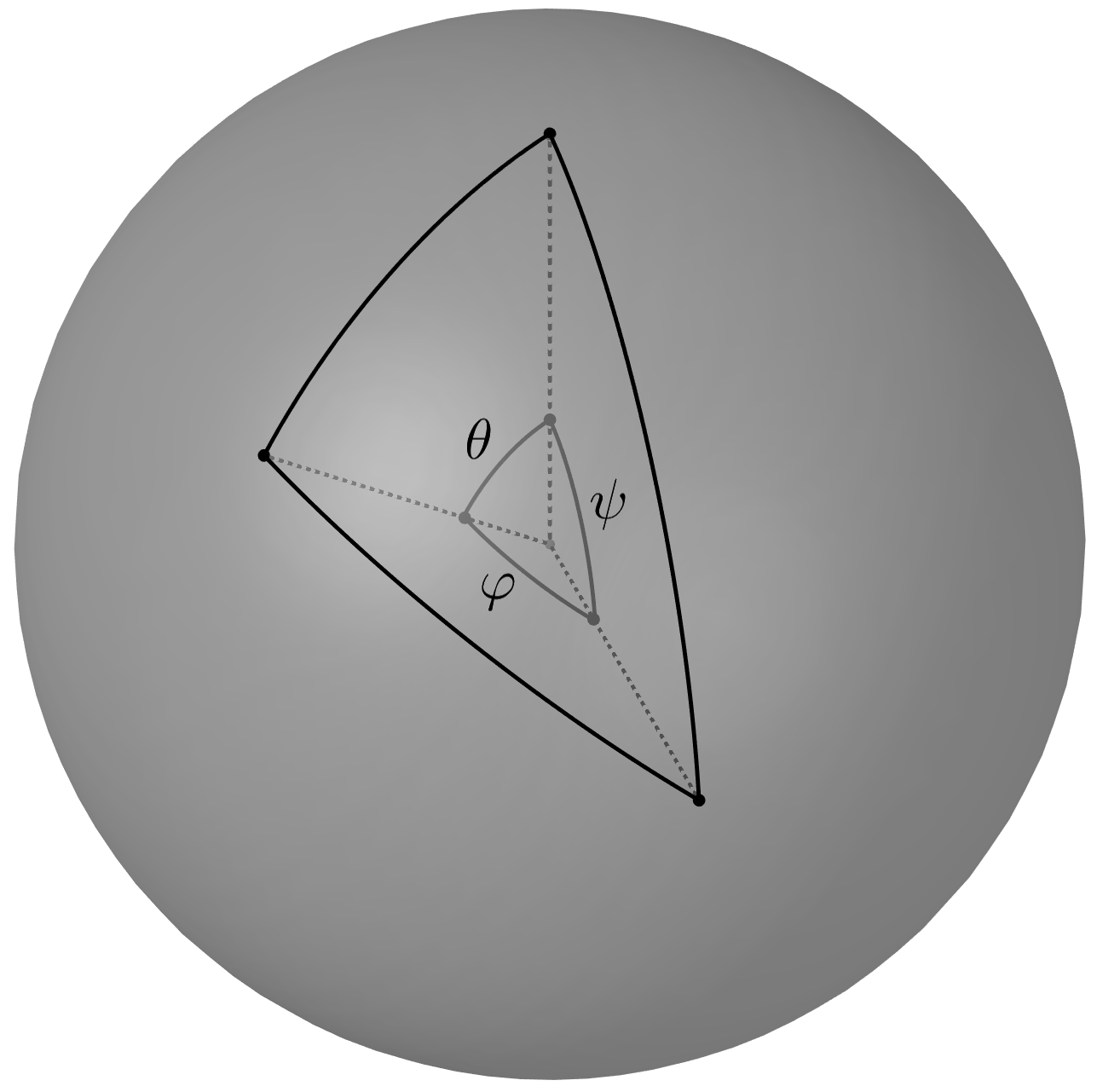}
\end{center}
\caption{Spherical triangle inequality:
$|\theta-\varphi|\le \psi\le \theta+\varphi$.}
\label{fig:spherical}
\end{figure}

\begin{fact}[Spherical triangle inequality]
Any nonzero $a,b,c\in \hilbert$ satisfies
\[\left|\angle (a,b)-\angle (b,c)\right|
\le \angle (a,c)\le 
 \angle (a,b)+\angle (b,c).
\]
\end{fact}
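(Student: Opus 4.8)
The statement is the classical spherical triangle inequality for the angles $\angle(a,b)$, $\angle(b,c)$, $\angle(a,c)$ between three nonzero vectors in a Hilbert space. The cleanest approach is to reduce to the two-dimensional, purely spherical-geometry statement and then verify that via a direct computation with the spherical law of cosines. First I would normalize: replacing $a,b,c$ by $a/\|a\|$, $b/\|b\|$, $c/\|c\|$ does not change any of the three angles, so we may assume $\|a\|=\|b\|=\|c\|=1$. Next I would observe that everything happens inside the (at most) three-dimensional subspace $V=\mathrm{span}\{a,b,c\}$, so we may work in $\reals^3$ with $a,b,c$ on the unit sphere $S^2$; the angles $\angle(a,b)$ etc.\ are then exactly the great-circle (geodesic) distances between the corresponding points.

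**Key steps.** Write $\alpha=\angle(b,c)$, $\beta=\angle(a,c)$, $\gamma=\angle(a,b)$, all in $[0,\pi]$. The upper bound $\beta\le\gamma+\alpha$ is the triangle inequality for the geodesic metric on $S^2$; since $d(x,y)=\arccos\langle x,y\rangle$ is a genuine metric on the sphere, I would either cite this or prove it directly from the spherical law of cosines $\cos\beta = \cos\gamma\cos\alpha + \sin\gamma\sin\alpha\cos\Phi$, where $\Phi$ is the dihedral angle at $a$ (the angle at vertex $a$ of the spherical triangle $abc$). Indeed, since $\cos\Phi\ge -1$ we get $\cos\beta \ge \cos\gamma\cos\alpha-\sin\gamma\sin\alpha=\cos(\gamma+\alpha)$; because $\cos$ is decreasing on $[0,\pi]$ and $\beta\in[0,\pi]$, this yields $\beta\le\gamma+\alpha$ when $\gamma+\alpha\le\pi$, and when $\gamma+\alpha>\pi$ the bound $\beta\le\pi$ makes it trivial. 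For the lower bound $|\gamma-\alpha|\le\beta$, I would symmetrically use $\cos\Phi\le 1$ to get $\cos\beta\le\cos\gamma\cos\alpha+\sin\gamma\sin\alpha=\cos(\gamma-\alpha)$, hence $\beta\ge|\gamma-\alpha|$ since $|\gamma-\alpha|\in[0,\pi]$ and $\cos$ is decreasing there. The one thing to justify carefully is the spherical law of cosines itself, which I would derive by expressing $a$, $b$, $c$ in an orthonormal frame adapted to $a$ (write $b = \cos\gamma\, a + \sin\gamma\, e$, $c=\cos\beta\, a + \sin\beta\, f$ with $e,f$ unit vectors orthogonal to $a$, so $\langle e,f\rangle=\cos\Phi$) and computing $\cos\alpha=\langle b,c\rangle$.

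**Degenerate cases and the main obstacle.** I would dispatch the degenerate cases first: if any of $a,b,c$ is a scalar multiple of another the relevant angles are $0$ or $\pi$ and the inequalities are immediate; the frame construction above also needs $b\notin\mathrm{span}\{a\}$ and $c\notin\mathrm{span}\{a\}$, i.e.\ $\gamma,\beta\in(0,\pi)$, which is exactly the non-degenerate case. The honest ``main obstacle'' here is not any deep difficulty but rather bookkeeping: making sure the reduction to $\mathrm{span}\{a,b,c\}$ is airtight (including the case where this span is only two-dimensional, where the spherical triangle is degenerate and $\Phi\in\{0,\pi\}$, forcing equality in one of the two bounds), and being careful that all the $\arccos$ manipulations stay within $[0,\pi]$ so that monotonicity of $\cos$ can be invoked. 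Once the law of cosines is in hand, both inequalities fall out of the single observation $-1\le\cos\Phi\le 1$, so the real content is entirely contained in that elementary identity.
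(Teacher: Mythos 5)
Your approach is essentially the same as the paper's: normalize to unit vectors, decompose the two outer vectors as a component along a common reference vector plus a component along an orthogonal unit vector, take the inner product, and bound the cross term using $|\langle u,v\rangle|\le 1$; then convert to angles via the monotonicity of $\cos$ on $[0,\pi]$. The paper does exactly this, with $b$ as the reference vertex, degenerate cases absorbed by allowing the orthogonal component to be any unit vector, and no detour through $\reals^3$ (the decomposition and inner-product computation work verbatim in any Hilbert space, so the reduction to $\mathrm{span}\{a,b,c\}$ is unnecessary).

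That said, there is a small but real miscoordination in your writeup. You want to bound $\beta=\angle(a,c)$ in terms of $\gamma=\angle(a,b)$ and $\alpha=\angle(b,c)$, and you state the spherical law of cosines as $\cos\beta=\cos\gamma\cos\alpha+\sin\gamma\sin\alpha\cos\Phi$ with $\Phi$ ``the dihedral angle at $a$.'' That vertex is wrong: the angle appearing in the formula for side $\beta$ is the one at the \emph{opposite} vertex $b$. Moreover the derivation you sketch (frame adapted to $a$, writing $b=\cos\gamma\,a+\sin\gamma\,e$, $c=\cos\beta\,a+\sin\beta\,f$, and computing $\langle b,c\rangle$) produces $\cos\alpha=\cos\gamma\cos\beta+\sin\gamma\sin\beta\,\langle e,f\rangle$, i.e.\ the law of cosines for $\alpha$, not the identity you stated. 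So as written the stated identity and its claimed derivation do not match. Either decompose relative to $b$ (as the paper does, which directly gives $\cos\beta=\cos\gamma\cos\alpha+\sin\gamma\sin\alpha\,\langle u,v\rangle$ and hence both bounds at once), or keep your $a$-framing and then observe that, since $a,b,c$ are arbitrary, the inequality you obtain for $\alpha$ implies the stated one for $\beta$ by relabeling $a\leftrightarrow b$; you would need to say this explicitly. These are fixable slips, not a wrong method.
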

Figure~\ref{fig:spherical} illustrates the inequality.
We use the spherical triangle inequality in Theorem~\ref{thm:composition-srg} to argue that there is no need to consider a third dimension and that we can continue the analysis in 2D.

\begin{proof}[Proof of spherical triangle inequality]
Although this result is known, we provide a proof for completeness.
Without loss of generality, assume $a$, $b$, and $c$ are unit vectors.
Let $\theta=\angle (a,b)$ and $\varphi=\angle (b,c)$, and
without loss of generality, assume $\theta\ge \varphi$.
Then we have
\[
a=b\cos\theta+u\sin\theta,\quad
c=b\cos \varphi+v\sin\varphi,
\]
where $u$ and $v$ are unit vectors orthogonal to $b$,
and
\[
\langle a,c\rangle
=\cos\theta\cos\varphi+\langle u,v\rangle \sin\theta\sin\varphi.
\]
Since $\theta,\varphi\in [0, \pi]$, we have $\sin\theta\sin\varphi\ge 0$.
Since $\|u\|=\|v\|=1$,  we have $|\langle u,v\rangle| \le 1$.
Therefore
\[
\cos\theta\cos\varphi-\sin\theta\sin\varphi
\le
\langle a,c\rangle
\le 
\cos\theta\cos\varphi+\sin\theta\sin\varphi.
\]
Since $\cos(\alpha\pm\beta)=\cos\alpha\cos\beta\mp\sin\alpha\sin\beta$, 
\cite[p. 72, 4.3.17]{abramowitz_stegun}
we have
\[
\cos(\theta+\varphi)\le\langle a,c\rangle\le \cos(\theta-\varphi),
\]
and we conclude 
\[
\angle (a,c)=\arccos(\langle a,c\rangle)\in [\theta-\varphi,\theta+\varphi].
\]
\qed\end{proof}
\label{s:geo-proofs}
\begin{proof}[Proof of Fact \ref{prop:sm-coco-forward}]
First consider the case  $\mu<1/(2\beta)$.
By Proposition~\ref{prop:monotone-srg} and Theorem~\ref{thm:srg-scaling-translation}, we have the geometry 
\begin{center}
\begin{tabular}{cc}
\raisebox{-.5\height}{
\begin{tikzpicture}[scale=1.5]
\def\m{.5};
\def\b{0.6};
\fill [fill=lightgrey] (0,0) circle ({sqrt(1-2*\m+\m/\b)});
\begin{scope}
\clip ({1-\m},-1.1) rectangle (-1.1,1.1);
\fill[fill=medgrey] ({1-1/2/\b},0) circle ({1/2/\b});
\end{scope}
\draw [<->] (-1.2,0) -- (1.2,0);
\draw [<->] (0,-1.2) -- (0,1.2);



\coordinate (I1) at  ({1-1/\b/2},0) ;
\coordinate (I2) at ({1-\m},0);
\coordinate (I3) at ({1-\m},{sqrt(1/4/\b^2-(\m-1/2/\b)^2)});

\filldraw (I2) circle ({0.6*1.5/1.5pt});

\filldraw ({1-\m},0) node [below ] {$1-\alpha \mu$};

\filldraw ({1-1/\b},0) circle ({0.6*1.5/1.5pt}) ;
\begin{scope}
\clip ({1-\m},-1.1) rectangle (-1.1,1.1);
\clip ({1-1/2/\b},0) circle ({1/2/\b});
\draw ({1-1/\b},0.03) node [above right, fill=medgrey] {$1-\alpha/\beta$};
\end{scope}
\filldraw (1,0) circle ({0.6*1.5/1.5pt})  node [above right] {$1$};

\coordinate (O1) at ({1-1/\b},0);
\coordinate (O2) at ({1-1/\b/2},0);
\coordinate (O3) at ({1-\m},{sqrt(1/4/\b^2-(\m-1/2/\b)^2)});

\draw[->] (1.1,.8)--({1-\m,0.4});
\draw(1.1,.8) node[above] {$\cG\left(I-\alpha\cA \right)$};

\def\x{-.8};
\draw[->] (-.9,-.8)--(\x,{-sqrt(1-2*\m+\m/\b-(\x)^2)});
\draw(-.9,-.8) node[below ] {$\cG\left(\cL_R\right)$};
\draw(1.2,-1.05) node[] {$R=\sqrt{1-2\alpha\mu+\alpha^2\mu/\beta}$};
\end{tikzpicture}
}
&
\!\!\!\!\!\!\!\!\!\!\!\!\!\!\!\!\!\!\!\!\!\!\!\!\!\!\!
\raisebox{-.5\height}{
\begin{tikzpicture}[scale=1.5]
\def\m{.5};
\def\b{0.6};
\fill [fill=lightgrey] (0,0) circle ({sqrt(1-2*\m+\m/\b)});

\begin{scope}
\clip ({1-\m},-1.1) rectangle (-1.1,1.1);
\fill[fill=medgrey] ({1-1/2/\b},0) circle ({1/2/\b});
\end{scope}

\begin{scope}
\clip ({1-\m},-1.1) rectangle (1.1,1.1);
\draw[dashed] ({1-1/2/\b},0) circle ({1/2/\b});
\end{scope}

\draw [<->] (-1.2,0) -- (1.2,0);
\draw [<->] (0,-1.2) -- (0,1.2);




\coordinate (I1) at  ({1-1/\b/2},0) ;
\coordinate (I2) at ({1-\m},0);
\coordinate (I3) at ({1-\m},{sqrt(1/4/\b^2-(\m-1/2/\b)^2)});
\coordinate (I4) at ({1-\m},{-sqrt(1/4/\b^2-(\m-1/2/\b)^2)});

\filldraw (I1) circle ({0.6*1.5/1.5pt});
\filldraw (I2) circle ({0.6*1.5/1.5pt});
\filldraw (I3) circle ({0.6*1.5/1.5pt});
\filldraw (I4) circle ({0.6*1.5/1.5pt});
\draw (I3) -- (I1);
\tkzMarkRightAngle[size=.06666](I1,I2,I3);

\draw (I1) node[below] {$C$};
\draw (I3) node[above] {$B$};
\draw (I4) node[below] {$B'$};
\draw (0,0) -- (I3);

\filldraw ({1-\m},0) node [below right ] {$D$};

\filldraw ({1-1/\b},0) circle ({0.6*1.5/1.5pt}) ;
\draw ({1-1/\b},0.03) node [above left,] {$A$};
\filldraw (1,0) circle ({0.6*1.5/1.5pt})  node [above right] {$1$};

\filldraw (0,0) circle ({0.6*1.5/1.5pt}) ;

\draw (0,0) node[above left] {$O$};

\coordinate (O1) at ({1-1/\b},0);
\coordinate (O2) at ({1-1/\b/2},0);
\coordinate (O3) at ({1-\m},{sqrt(1/4/\b^2-(\m-1/2/\b)^2)});

\end{tikzpicture}
}
\end{tabular}
\end{center}
To clarify, $O$ is the center of the circle with radius $\overline{OB}$ (lighter shade)
and $C$ is the center of the circle with radius $\overline{AC}=\overline{CB}$ defining the inner region (darker shade).
With two applications of the Pythagorean theorem, we get
\begin{align*}
\overline{OB}^2
&=\overline{OD}^2+\overline{DB}^2
=\overline{OD}^2+\overline{BC}^2-\overline{CD}^2\\
&=(1-\alpha \mu)^2+(\alpha/(2\beta))^2-(\alpha/(2\beta)-\alpha \mu)^2
=1-2\alpha\mu+\alpha^2\mu/\beta.
\end{align*}
Since $\overline{B'B}$ is a chord of circle $O$, it is within the circle.
Since $2$ non-identical circles intersect at at most $2$ points, and since $A$ is within circle $O$, arc $\arc{BAB'}$ is within circle $O$.
Finally, the region bounded by $\overline{B'B}\cup\arc{BAB'}$ (darker shade) is within circle $O$ (lighter shade).

In the cases $\mu=1/(2\beta)$ and $\mu>1/(2\beta)$, we have a slightly different geometry, but the same arguments and calculations hold.
\begin{center}
\begin{tabular}{cc}
\raisebox{-.5\height}{
\begin{tikzpicture}[scale=1.5]
\def\m{0.8333333};
\def\b{0.6};
\fill [fill=lightgrey] (0,0) circle ({sqrt(1-2*\m+\m/\b)});

\begin{scope}
\clip ({1-\m},-1.1) rectangle (-1.1,1.1);
\fill[fill=medgrey] ({1-1/2/\b},0) circle ({1/2/\b});
\end{scope}

\begin{scope}
\clip ({1-\m},-1.1) rectangle (1.1,1.1);
\draw[dashed] ({1-1/2/\b},0) circle ({1/2/\b});
\end{scope}

\draw [<->] (-1.2,0) -- (1.2,0);
\draw [<->] (0,-1.2) -- (0,1.2);




\coordinate (I1) at  ({1-1/\b/2},0) ;
\coordinate (I2) at ({1-\m},0);
\coordinate (I3) at ({1-\m},{sqrt(1/4/\b^2-(\m-1/2/\b)^2)});
\coordinate (I4) at ({1-\m},{-sqrt(1/4/\b^2-(\m-1/2/\b)^2)});

\filldraw (I1) circle ({0.6*1.5/1.5pt});
\filldraw (I3) circle ({0.6*1.5/1.5pt});
\filldraw (I4) circle ({0.6*1.5/1.5pt});
\draw (I3) -- (I1);
\tkzMarkRightAngle[size=.066666666666](I1,I2,I3);

\draw (I1) node[below right] {$C=D$};
\draw (I3) node[above] {$B$};
\draw (I4) node[below] {$B'$};
\draw (0,0) -- (I3);


\filldraw ({1-1/\b},0) circle ({0.6*1.5/1.5pt}) ;
\draw ({1-1/\b},0.03) node [above left,] {$A$};
\filldraw (1,0) circle ({0.6*1.5/1.5pt})  node [above right] {$1$};

\filldraw (0,0) circle ({0.6*1.5/1.5pt}) ;

\draw (0,0) node[below left] {$O$};

\coordinate (O1) at ({1-1/\b},0);
\coordinate (O2) at ({1-1/\b/2},0);
\coordinate (O3) at ({1-\m},{sqrt(1/4/\b^2-(\m-1/2/\b)^2)});

\draw (-1.2,-1) node {Case $\mu=1/(2\beta)$};

\end{tikzpicture}
}
&
\raisebox{-.5\height}{
\begin{tikzpicture}[scale=1.5]
\def\m{1.2};
\def\b{0.6};
\fill [fill=lightgrey] (0,0) circle ({sqrt(1-2*\m+\m/\b)});

\begin{scope}
\clip ({1-\m},-1.1) rectangle (-1.1,1.1);
\fill[fill=medgrey] ({1-1/2/\b},0) circle ({1/2/\b});
\end{scope}

\begin{scope}
\clip ({1-\m},-1.1) rectangle (1.1,1.1);
\draw[dashed] ({1-1/2/\b},0) circle ({1/2/\b});
\end{scope}

\draw [<->] (-1.2,0) -- (1.2,0);
\draw [<->] (0,-1.2) -- (0,1.2);




\coordinate (I1) at  ({1-1/\b/2},0) ;
\coordinate (I2) at ({1-\m},0);
\coordinate (I3) at ({1-\m},{sqrt(1/4/\b^2-(\m-1/2/\b)^2)});
\coordinate (I4) at ({1-\m},{-sqrt(1/4/\b^2-(\m-1/2/\b)^2)});

\filldraw (I1) circle ({0.6*1.5/1.5pt});
\filldraw (I2) circle ({0.6*1.5/1.5pt});
\filldraw (I3) circle ({0.6*1.5/1.5pt});
\filldraw (I4) circle ({0.6*1.5/1.5pt});
\draw (I3) -- (I1);
\tkzMarkRightAngle[size=.06666666666](I1,I2,I3);

\draw (I1) node[below] {$C$};
\draw (I3) node[above] {$B$};
\draw (I4) node[below] {$B'$};
\draw (0,0) -- (I3);

\filldraw ({1-\m},0) node [below left ] {$D$};

\filldraw ({1-1/\b},0) circle ({0.6*1.5/1.5pt}) ;
\draw ({1-1/\b},0.03) node [above left,] {$A$};
\filldraw (1,0) circle ({0.6*1.5/1.5pt})  node [above right] {$1$};

\filldraw (0,0) circle ({0.6*1.5/1.5pt}) ;

\draw (0.05,0) node[below left] {$O$};

\coordinate (O1) at ({1-1/\b},0);
\coordinate (O2) at ({1-1/\b/2},0);
\coordinate (O3) at ({1-\m},{sqrt(1/4/\b^2-(\m-1/2/\b)^2)});

\draw (1.2,-1) node {Case $\mu>1/(2\beta)$};

\end{tikzpicture}
}
\end{tabular}
\end{center}

The containment holds for $R$ and fails for smaller $R$.
Since $\cL_R$ is SRG-full by Theorem~\ref{thm:srg-full}, the containment of the SRG in $\ecomplex$ equivalent to the containment of the class.
\qed\end{proof}

We quickly state Stewart's theorem \cite{stewart}, which we use for the Proof of Fact~\ref{prop:refl-sm-coco}.
For a triangle $\triangle ABC$ and Cevian $\overline{CD}$ to the side $\overline{AB}$,
\vspace{-0.2in}
\begin{center}
\begin{tabular}{c}
\raisebox{-.5\height}{
\begin{tikzpicture}[scale=1.5]
\coordinate (A) at  (-1,0);
\coordinate (B) at  (1,0);
\coordinate (C) at  (-.3,.7) ;
\coordinate (D) at  (.2,0) ;

\filldraw (A) circle ({0.6*1.5/1.5pt});
\filldraw (B) circle ({0.6*1.5/1.5pt});
\filldraw (C) circle ({0.6*1.5/1.5pt});
\filldraw (D) circle ({0.6*1.5/1.5pt});
\draw (A) node [below left] {$A$};
\draw (B) node [below right] {$B$};
\draw (C) node [above ] {$C$};
\draw (D) node [below] {$D$};

\draw  (A) -- (B)--(C)-- cycle;
\draw  (D)--(C);
\end{tikzpicture}}
\end{tabular}
\end{center}
the lengths of the line segments satisfy 
\[
\overline{AD}\cdot
\overline{CB}^2
+
\overline{DB}\cdot
\overline{AC}^2
=
\overline{AB}\cdot
\overline{CD}^2
+\overline{AD}\cdot\overline{DB}^2
+\overline{AD}^2\cdot\overline{DB}.
\]

\begin{proof}[Full Proof of Fact~\ref{prop:refl-sm-coco}]
By Proposition~\ref{prop:monotone-srg} and Theorems~\ref{thm:srg-scaling-translation} and \ref{thm:srg-inversion}, we have the geometry
\begin{center}
\begin{tabular}{ccccc}
\raisebox{-.5\height}{
\begin{tikzpicture}[scale=1.8]
\def\m{0.5};
\def\b{.8};
\begin{scope}
\clip ({1+\m},-1.2) rectangle (-1.2,1.2);
\draw[dashed] ({1+1/2/\b},0) circle ({1/2/\b});
\end{scope}
\draw [dashed] ({1+\m},{sqrt((1/2/\b)^2-(1/2/\b-\m)^2)}) -- ({1+\m},1.2);
\draw [dashed] ({1+\m},{-sqrt((1/2/\b)^2-(1/2/\b-\m)^2)}) -- ({1+\m},-1.2);

\begin{scope}
\clip ({1+\m},-1.2) rectangle (2.5,1.2);
\fill[fill=medgrey] ({1+1/2/\b},0) circle ({1/2/\b});
\end{scope}
\begin{scope}
\clip ({1+1/2/\b},0) circle ({1/2/\b});
\end{scope}
\filldraw ({1+\m},0)circle ({0.6*1.5/1.8pt});
\filldraw ({1+1/\b},0)  circle ({0.6*1.5/1.8pt});

\draw ({1+1/\b},0.0) node [above left] {$\scriptstyle1+\frac{\alpha}{\beta}$};
\draw [<->] (-1.2,0) -- (2.5,0);
\draw [<->] (0,-1.2) -- (0,1.2);
\draw [dashed] (0,0) circle (1);
\draw ({1+\m},0.05) node [above left,fill=white] {$\scriptstyle 1+\alpha\mu$};
\draw (1,-0pt) node [below left] {$\scriptstyle 1$};
\filldraw (1,0)circle ({0.6*1.5/1.8pt});
\draw (.1,-.6) node [below,fill=white] {$\scriptstyle\cG\left(I+\alpha\cA\right)$};
\draw [->] (.1,-.6) -- ({1+\m},-0.2);
\end{tikzpicture}}
&\!\!\!\!\!\!\!
$\stackrel{\bar{z}^{-1}}{\longrightarrow}$
&\!\!\!\!\!\!\!\!\!\!
\raisebox{-.5\height}{
\begin{tikzpicture}[scale=1.8]
\def\m{0.5};
\def\b{.8};

\begin{scope}
\clip ({1/2*(-\m/(1+\m)+sqrt(1/(1 + \m)/(1 + \m) + (4*\b*\m*(-1 + \b*\m))/(\b + \m + 2*\b*\m)/(\b + \m + 2*\b*\m)))+1/2},-1.2)  rectangle (-1.2,1.2);
\draw[dashed] ({1/(1+\m)/2},0) circle ({1/(1+\m)/2});
\end{scope}

\begin{scope}
\clip ({1/2*(-\m/(1+\m)+sqrt(1/(1 + \m)/(1 + \m) + (4*\b*\m*(-1 + \b*\m))/(\b + \m + 2*\b*\m)/(\b + \m + 2*\b*\m)))+1/2},-1.2)  rectangle (1.2,1.2);
\draw[dashed] ({(1+2*\b)/(2+2*\b)},0) circle ({(1)/(2+2*\b)});
\end{scope}

\begin{scope}
\clip ({1/(1+\m)/2},0) circle ({1/(1+\m)/2});
\fill[fill=medgrey] ({(1+2*\b)/(2+2*\b)},0) circle ({(1)/(2+2*\b)});
\end{scope}
\begin{scope}
\clip ({(1+2*\b)/(2+2*\b)},0) circle ({(1)/(2+2*\b)});
\end{scope}
\draw [<->] (-1.2,0) -- (1.2,0);
\draw [<->] (0,-1.2) -- (0,1.2);
\draw [dashed] (0,0) circle (1);
\draw (1,-0pt) node [below right] {$\scriptstyle 1$};
\filldraw (1,0)circle ({0.6*1.5/1.8pt});

\filldraw ({1/(1+\m)},0)circle ({0.6*1.5/1.8pt});
\draw[->] (1.05,.4)--({1/(1+\m)},0);
\draw (1.15,.4) node [above] {$\scriptstyle\frac{1}{1+\alpha\mu}$};

\filldraw ({1/(1+1/\b)},0)circle ({0.6*1.5/1.8pt});
\draw[->] (.3,.5)--({1/(1+1/\b)},0);
\draw (.3,.45) node [above] {$\scriptstyle\frac{1}{1+\alpha/\beta}$};

\def\x{.5}
\draw (.1,-.6) node [below,fill=white] {$\scriptstyle\cG\left(J_{\alpha \cA}\right)$};
\draw [->] (.1,-.7) -- (\x,{-sqrt(((1)/(2+2*\b))^2-(\x-(1+2*\b)/(2+2*\b))^2)});
\end{tikzpicture}}\\
&\!\!\!\!\!\!\!\!\!\!\!\!\!\!\!\!
$\stackrel{2z-1}{\longrightarrow}$
&\!\!\!\!\!\!\!\!\!\!\!\!\!\!\!\!\!
\raisebox{-.5\height}{
\begin{tikzpicture}[scale=1.8]
\def\m{0.5};
\def\b{.8};

\begin{scope}
\clip ({-\m/(1+\m)+sqrt(1/(1 + \m)/(1 + \m) + (4*\b*\m*(-1 + \b*\m))/(\b + \m + 2*\b*\m)/(\b + \m + 2*\b*\m))},-1.2)  rectangle (-1.2,1.2);
\draw[dashed] ({-\m/(1+\m)},0) circle ({1-\m/(1+\m)});
\end{scope}

\begin{scope}
\clip ({-\m/(1+\m)+sqrt(1/(1 + \m)/(1 + \m) + (4*\b*\m*(-1 + \b*\m))/(\b + \m + 2*\b*\m)/(\b + \m + 2*\b*\m))},-1.2)  rectangle (1.2,1.2);
\draw[dashed] ({1/(1+1/\b)},0) circle ({1-1/(1+1/\b)});
\end{scope}

\begin{scope}
\clip ({-\m/(1+\m)},0) circle ({1-\m/(1+\m)});
\fill[fill=medgrey] ({1/(1+1/\b)},0) circle ({1-1/(1+1/\b)});
\end{scope}
\begin{scope}
\clip ({1/(1+1/\b)},0) circle ({1-1/(1+1/\b)});
\end{scope}
\draw [<->] (-1.2,0) -- (1.2,0);
\draw [<->] (0,-1.2) -- (0,1.2);
\draw [dashed] (0,0) circle (1);
\draw (1,-0pt) node [below right] {$\scriptstyle1$};
\filldraw (1,0)circle ({0.6*1.5/1.8pt});
\filldraw ({(1-\m)/(1+\m)},0)circle ({0.6*1.5/1.8pt});
\draw ({(1-\m)/(1+\m)-0.1},0) node [above right] {$\scriptstyle\frac{1-\alpha\mu}{1+\alpha\mu}$};
\filldraw ({(1-1/\b)/(1+1/\b)},0)circle ({0.6*1.5/1.8pt});
\draw ({(1-1/\b)/(1+1/\b)},0) node [above left] {$\scriptstyle\frac{\beta-\alpha}{\beta+\alpha}$};

\def\x{.05}
\draw (-.05,-.6) node [below,fill=white] {$\scriptstyle\cG\left(2J_{\alpha \cA}-I\right)$};
\draw [->] (-.05,-.6) -- (\x,{-sqrt(((2)/(2+2*\b))^2-(\x-2*(1+2*\b)/(2+2*\b)+1)^2)});

%

\end{tikzpicture}}
\end{tabular}
\end{center}
A closer look gives us
\begin{center}
\begin{tabular}{cc}
\raisebox{-.5\height}{
\begin{tikzpicture}[scale=2.5]
\def\m{0.5};
\def\b{.8};
\begin{scope}
\clip ({-\m/(1+\m)+sqrt(1/(1 + \m)/(1 + \m) + (4*\b*\m*(-1 + \b*\m))/(\b + \m + 2*\b*\m)/(\b + \m + 2*\b*\m))},-0.7)  rectangle (-1.2,0.9);
\draw[dashed] ({-\m/(1+\m)},0) circle ({1-\m/(1+\m)});
\end{scope}

\begin{scope}
\clip ({-\m/(1+\m)+sqrt(1/(1 + \m)/(1 + \m) + (4*\b*\m*(-1 + \b*\m))/(\b + \m + 2*\b*\m)/(\b + \m + 2*\b*\m))},-0.7)  rectangle (1.2,0.9);
\draw[dashed] ({1/(1+1/\b)},0) circle ({1-1/(1+1/\b)});
\end{scope}

\begin{scope}
\clip ({-\m/(1+\m)},0) circle ({1-\m/(1+\m)});
\fill[fill=medgrey] ({1/(1+1/\b)},0) circle ({1-1/(1+1/\b)});
\end{scope}
\begin{scope}
\clip ({1/(1+1/\b)},0) circle ({1-1/(1+1/\b)});
\end{scope}
\draw [<->] (-1.2,0) -- (1.2,0);
\draw [<->] (0,-.7) -- (0,.7);
\draw (1,-0pt) node [below right] {$1$};
\draw (-1,-0pt) node [below left] {$-1$};
\filldraw (1,0)circle ({0.6*1.5/2.5pt});
\filldraw (-1,0)circle ({0.6*1.5/2.5pt});

\coordinate (O) at (0,0);
\filldraw (O) circle ({0.6*1.5/2.5pt});
\draw (O) node[below right] {$O$};



\coordinate (C) at ({1/(1+1/\b)},0);
\draw (C) node[above right] {$C$};
\filldraw (C)circle ({0.6*1.5/2.5pt});
\draw (.5,-.2) node [below] {$\frac{\beta}{\beta+\alpha}$};
\draw[->] (.5,-.2)--(C);

\coordinate (B) at ({-\m/(1+\m)},0);
\draw (B) node[above] {$B$};
\filldraw (B) circle ({0.6*1.5/2.5pt});
\draw (-.5,-.2) node [below] {$\frac{-\alpha\mu}{1+\alpha\mu}$};
\draw[->] (-.5,-.2)-- (B);



\coordinate (A) at  ({-\m/(1+\m)+sqrt(1/(1 + \m)/(1 + \m) + (4*\b*\m*(-1 + \b*\m))/(\b + \m + 2*\b*\m)/(\b + \m + 2*\b*\m))},{(2*sqrt(-(\b*\m*(-1 + \b*\m))))/(\b + \m + 2*\b*\m)}) ;


\filldraw (A) circle ({0.6*1.5/2.5pt});
\draw (A) node[above] {$A$};

\draw (A) -- (B) -- (C) -- (A);
\draw (A) -- (O);

\coordinate(M1) at (-1,0);
\tkzMarkSegment[pos=.5,mark=||](A,B) ;
\tkzMarkSegment[pos=.5,mark=||](B,M1) ;

\coordinate(M2)  at (1,0);
\tkzMarkSegment[pos=.5,mark=|](A,C) ;
\tkzMarkSegment[pos=.5,mark=|](M2,C) ;



\end{tikzpicture}}
&
\!\!\!\!\!\!\!\!\!\!\!\!\!\!\!
\raisebox{-.5\height}{
\begin{tikzpicture}[scale=2.5]
\def\m{0.5};
\def\b{.8};

\begin{scope}
\clip ({-\m/(1+\m)+sqrt(1/(1 + \m)/(1 + \m) + (4*\b*\m*(-1 + \b*\m))/(\b + \m + 2*\b*\m)/(\b + \m + 2*\b*\m))},-0.7)  rectangle (-1.2,.9);
\draw[dashed] ({-\m/(1+\m)},0) circle ({1-\m/(1+\m)});
\end{scope}

\begin{scope}
\clip ({-\m/(1+\m)+sqrt(1/(1 + \m)/(1 + \m) + (4*\b*\m*(-1 + \b*\m))/(\b + \m + 2*\b*\m)/(\b + \m + 2*\b*\m))},-.7)  rectangle (1.2,.9);
\draw[dashed] ({1/(1+1/\b)},0) circle ({1-1/(1+1/\b)});
\end{scope}

\draw (-.7,.7) node [fill=white] {$R=\sqrt{1-\frac{4\alpha\mu}{1+2\alpha\mu+\alpha^2\mu/\beta}}$};

\fill [fill=lightgrey] (0,0) circle ({sqrt((\b + \m - 2*\b*\m)/   (\b + \m + 2*\b*\m))});

\begin{scope}
\clip ({-\m/(1+\m)},0) circle ({1-\m/(1+\m)});
\fill[fill=medgrey] ({1/(1+1/\b)},0) circle ({1-1/(1+1/\b)});
\end{scope}
\begin{scope}
\clip ({1/(1+1/\b)},0) circle ({1-1/(1+1/\b)});
\end{scope}
\draw [<->] (-1.2,0) -- (1.2,0);
\draw  [<->](0,-.7) -- (0,.7);

\coordinate (A) at  ({-\m/(1+\m)+sqrt(1/(1 + \m)/(1 + \m) + (4*\b*\m*(-1 + \b*\m))/(\b + \m + 2*\b*\m)/(\b + \m + 2*\b*\m))},{(2*sqrt(-(\b*\m*(-1 + \b*\m))))/(\b + \m + 2*\b*\m)}) ;
\coordinate (Ap) at  ({-\m/(1+\m)+sqrt(1/(1 + \m)/(1 + \m) + (4*\b*\m*(-1 + \b*\m))/(\b + \m + 2*\b*\m)/(\b + \m + 2*\b*\m))},{-(2*sqrt(-(\b*\m*(-1 + \b*\m))))/(\b + \m + 2*\b*\m)}) ;
\filldraw (A) circle ({0.6*1.5/2.5pt});
\draw (A) node[above] {$A$};
\filldraw (Ap) circle ({0.6*1.5/2.5pt});
\draw (Ap) node[below] {$A'$};

\filldraw ({(1-\m)/(1+\m)},0)circle ({0.6*1.5/2.5pt});
\draw ({(1-\m)/(1+\m)-0.1},0) node [above right] {$E$};
\filldraw ({(1-1/\b)/(1+1/\b)},0)circle ({0.6*1.5/2.5pt});
\draw ({(1-1/\b)/(1+1/\b)},0) node [above left] {$D$};

\draw (0,0) node [below right] {$O$};
\filldraw (0,0)circle ({0.6*1.5/2.5pt});

\def\x{-.05}
\draw (-.7,-.45) node [below,fill=white] {$\cG\left(2J_{\alpha \cA}-I\right)$};
\draw [->] (-.7,-.45) -- (\x,{-sqrt(((2)/(2+2*\b))^2-(\x-2*(1+2*\b)/(2+2*\b)+1)^2)});

\def\y{-.45}
\draw [->] (-.95,.45) -- (\y,{sqrt((sqrt((\b + \m - 2*\b*\m)/   (\b + \m + 2*\b*\m)))^2-(\y)^2)});
\draw (-.95,.45) node [left,] {$\cG\left(\cL_R\right)$};
\end{tikzpicture}}
\end{tabular}
\end{center}
To clarify, $B$ is the center of the circle with radius $\overline{BA}$ and $C$ is the center of the circle with radius $\overline{CA}$.
By Stewart's theorem \cite{stewart}, we have
\begin{align*}
\overline{OA}^2&=\frac{\overline{OC}\cdot \overline{AB}^2+\overline{BO}\cdot\overline{CA}^2-\overline{BO}\cdot\overline{OC}\cdot\overline{BC}}{\overline{BC}}\\
&=\frac{
\tfrac{\beta }{\alpha +\beta }\left(1-\tfrac{\alpha  \mu
   }{1+\alpha  \mu }\right)^2+
   \tfrac {\alpha  \mu } {1+\alpha \mu}
   \left (1 - \tfrac {\beta } {\alpha + \beta } \right)^2
   -\tfrac{\beta }{\alpha +\beta }
   \tfrac {\alpha  \mu } {1+\alpha \mu }
   \left(\tfrac{\beta }{\alpha +\beta}+\tfrac{\alpha  \mu }{1+\alpha  \mu }\right)
   }{\tfrac{\beta }{\alpha +\beta}+\tfrac{\alpha  \mu }{1+\alpha  \mu }}\\
&   =1-\frac{4\alpha\mu}{1+2\alpha\mu+\alpha^2\mu/\beta}.
\end{align*}
Since $2$ non-identitcal circles intersect at at most $2$ points, and since $D$ is within circle $B$, arc $\arc{ADA'}$ is within circle $O$.
By the same reasoning, 
arc $\arc{A'EA}$ is within circle $O$.
Finally, the region bounded by $\arc{ADA'}\cup \arc{A'EA}$ (darker shade) is within circle $O$ (lighter shade).

The containment holds for $R$ and fails for smaller $R$.
Since $\cL_R$ is SRG-full by Theorem~\ref{thm:srg-full}, the containment of the SRG in $\ecomplex$ equivalent to the containment of the class.
\qed\end{proof}

\begin{proof}[Proof of Fact~\ref{thm:fne-compose}]
Let
\begin{center}
\begin{tabular}{cc}
$Q=$
\raisebox{-.5\height}{
\begin{tikzpicture}[scale=1]
\fill [fill=medgrey] (1/2,0) circle (1/2);
\draw [<->] (-.2,0) -- (1.2,0);
\draw [<->] (0,-.6) -- (0,.6);
\filldraw (1,0) circle ({0.6*1.5/1pt});
\draw (1,0) node[above right] {$1$};
\end{tikzpicture}}
&\qquad
$C=$
\raisebox{-.5\height}{
\begin{tikzpicture}[scale=1]
\draw  (1/2,0) circle (1/2);
\draw [<->] (-.2,0) -- (1.2,0);
\draw [<->] (0,-.6) -- (0,.6);
\filldraw (1,0) circle ({0.6*1.5/1pt});
\draw (1,0) node[above right] {$1$};
\end{tikzpicture}}
\end{tabular}
\end{center}
First, we show $Q=[0,1]C=[0,1]Q$.
To clarify, $[0,1]$ is the set of real numbers between $0$ and $1$ and 
$[0,1]C$ and $[0,1]Q$ are Minkowski products of sets of complex numbers.
Given any point $A\in Q\backslash C$, define $A'$ as the nonzero intersection of the line extending $\overline{OA}$ and circle $C$.
Since $A$ is on the line and inside the circle, the nonzero intersection $A'$ exists.
\begin{center}
\begin{tabular}{c}
\raisebox{-.5\height}{
\begin{tikzpicture}[scale=1.5]
\draw [] (1/2,0) circle (1/2);


\draw [<->] (-.2,0) -- (1.2,0);
\draw [<->] (0,-.6) -- (0,.6);

\def\p{50};
\def\t{.6};
\coordinate (O) at (0,0);
\coordinate (Ap) at ({1/2+1/2*cos(\p)},{1/2*sin(\p)});
\coordinate (A) at ({\t*(1/2+1/2*cos(\p))},{\t*1/2*sin(\p)});

\filldraw (O) circle ({0.6*1.5/1.5pt});
\filldraw (Ap) circle ({0.6*1.5/1.5pt});
\filldraw (A) circle ({0.6*1.5/1.5pt});

\draw (A) node[above] {$A$};
\draw (Ap) node[above] {$A'$};
\draw (O) node[above left] {$O$};

\draw (1/2,-1/2) node[below] {$C$};

\draw (O) -- (Ap);
\end{tikzpicture}}

\end{tabular}
\end{center}
Since $A\in \overline{OA'}\subseteq [0,1]C$, we have
$Q\subseteq  [0,1]C$.
On the other hand, $Q\supseteq [0,1]C$ follows from noting that 
given any point $A'$ on $C$,
the line segment $\overline{OA'}$ is a chord of the circle $C$
and therefore is within the disk $Q$.
Therefore, $Q=[0,1]C$.
As a corollary, we have
$Q=[0,1]C=([0,1][0,1])C=[0,1]([0,1]C)=[0,1]Q$.

Next, define
\[
S=\bigcup_{0\le \varphi_1\le 2\pi}S_{\varphi_1},
\qquad
S_{\varphi_1}=
Q\left(\frac{1}{2}+\frac{1}{2}e^{i\varphi_1}\right).
\]

In geometric terms,  this construction takes a point on the circle $C$, draws the disk whose diameter is the line segment between this point and the origin, and takes the union of such disks.
\begin{center}
\begin{tabular}{c}
\raisebox{-.5\height}{
\begin{tikzpicture}[scale=3]
\def\a{pi*.66/3};
\def\b{pi*.66*2/3};
\def\c{pi*.66};
\def\w{1.2};
\def\u{1.2};

\fill[fill=lightgrey] ({cos(\a/pi*180)/4+1/4},{sin(\a/pi*180)/4}) circle ({cos(\a/pi*180/2)/2});
\coordinate (A1) at ({1},{tan(\a/pi*180/2)}) ;
\coordinate (A2) at ({1/2*cos(\a/pi*180)+1/2},{sin(\a/pi*180)/2});
\filldraw (A2) circle[radius={0.6*1.5/3pt}];
\draw [line width=1.0pt] (0,0) -- (A2);

\fill[fill=medgrey] ({cos(\b/pi*180)/4+1/4},{sin(\b/pi*180)/4}) circle ({cos(\b/pi*180/2)/2});
\coordinate (B1) at ({1},{tan(\b/pi*180/2)}) ;
\coordinate (B2) at ({1/2*cos(\b/pi*180)+1/2},{sin(\b/pi*180)/2});
\filldraw (B2) circle[radius={0.6*1.5/3pt}];
\draw [line width=1.0pt] (0,0) -- (B2);

\fill[fill=darkgrey] ({cos(\c/pi*180)/4+1/4},{sin(\c/pi*180)/4}) circle ({cos(\c/pi*180/2)/2});
\coordinate (C1) at ({1},{tan(\c/pi*180/2)}) ;
\coordinate (C2) at ({1/2*cos(\c/pi*180)+1/2},{sin(\c/pi*180)/2});
\filldraw (C2) circle[radius={0.6*1.5/3pt}];
\draw [line width=1.0pt] (0,0) -- (C2);

\draw [dashed] (0,0) circle (1);
\draw [] (1/2,0) circle (1/2);


\filldraw (0,0) circle[radius={0.6*1.5/3pt}];

\draw [<->] (-1.2,0) -- (\u,0);
\draw [<->] (0,-\w) -- (0,\w);

\coordinate (O) at (0,0);
\coordinate (E) at (1,0);

\def\r{0.15};
\filldraw (1/2,0) circle[radius={0.6*1.5/3pt}];
\draw (1/2,0) -- (C2);
\draw ({1/2+\r},{0}) arc (0:{\c/pi*180}:\r) node[midway,right]{$\varphi_1$};

\def\f{-0.12};
\draw[->] (-0.3,0.3) --(\f,{sqrt((cos(\c/pi*180/2)/2)^2-(cos(\c/pi*180)/4+1/4-\f)^2)+sin(\c/pi*180)/4});
\draw  (-0.3,0.3) node[left] {$S_{\varphi_1}$};
\draw (0.5,-0.5) node[below] {$C$};
\end{tikzpicture}}

\end{tabular}
\end{center}
The dashed circle is the unit circle.
The solid circle is $C$.
The shaded circles represent instances of $S_{\varphi_1}$.
We can characterize $\cG(\cN_{1/2}\cN_{1/2})$ by analyzing this construction
since
\begin{align*}
S&=QC=(Q [0,1]) C=
Q([0,1]C)\\
&=QQ=\cG(\cN_{1/2})\cG(\cN_{1/2})=\cG(\cN_{1/2}\cN_{1/2})
\end{align*}
by Proposition~\ref{prop:monotone-srg} and Theorem~\ref{thm:composition-srg}.

We now show $S=\left\{re^{i\varphi}\,|\,0\le r\le \cos^2(\varphi/2)\right\}$.
This fact is known and can be analytically derived through the envelope theorem \cite[Exercise 5.22]{riley2006}.
We provide a geometric proof, which was inspired by \cite[Exercise 4.15]{kline1977}.

Throughout this proof, we write $\cI\colon\ecomplex\rightarrow\ecomplex$ for the mapping $\cI(z)=\bar{z}^{-1}$.
We map $S$ into the inverted space, i.e.,
we analyze 
\[
\cI(S)=\bigcup_{0\le \varphi_1\le 2\pi}\cI(S_{\varphi_1}).
\]
\newpage

\begin{center}
\begin{tabular}{c}
\raisebox{-.5\height}{
\begin{tikzpicture}[scale=3]
\def\a{pi*.66/3};
\def\b{pi*.66*2/3};
\def\c{pi*.66};
\def\w{2};
\def\u{2.5};

\begin{scope}
\clip (-1.2,-1.2) rectangle (\u,\w);

\coordinate (A3) at  ({1-tan(\a/pi*180/2)*(\w-tan(\a/pi*180/2))},{\w});
\fill[fill=lightgrey] (A3)
-- (\u,{\w-(\u-(1-tan(\a/pi*180/2)*(\w-tan(\a/pi*180/2))))/tan(\a/pi*180/2)})
-- (\u,\w)
-- cycle;

\draw (2,-1) node {$\cup \{\binfty\}$};


\coordinate (B3) at  ({1-tan(\b/pi*180/2)*(\w-tan(\b/pi*180/2))},{\w});
\fill[fill=medgrey] (B3)
-- (\u,{\w-(\u-(1-tan(\b/pi*180/2)*(\w-tan(\b/pi*180/2))))/tan(\b/pi*180/2)}) 
-- (\u,\w)
-- cycle;
\draw (2,.6) node {$\cup \{\binfty\}$};

\coordinate (C3) at ({1-tan(\c/pi*180/2)*(\w-tan(\c/pi*180/2))},{\w});
\fill[fill=darkgrey] (C3)
--(\u,{\w-(\u-(1-tan(\c/pi*180/2)*(\w-tan(\c/pi*180/2))))/tan(\c/pi*180/2)}) 
-- (\u,\w)
-- cycle;

\draw (2,1.7) node {$\cup \{\binfty\}$};

\draw (1.9,1.35) node {$\cI(S_{\varphi_1})$};

\end{scope}

\fill[fill=lightgrey] ({cos(\a/pi*180)/4+1/4},{sin(\a/pi*180)/4}) circle ({cos(\a/pi*180/2)/2});
\coordinate (A1) at ({1},{tan(\a/pi*180/2)}) ;
\coordinate (A2) at ({1/2*cos(\a/pi*180)+1/2},{sin(\a/pi*180)/2});
\filldraw (A2) circle[radius={0.6*1.5/3pt}];
\filldraw (A1)  circle[radius={0.6*1.5/3pt}];
\draw [line width=1.0pt] (0,0) -- ({1},{tan(\a/pi*180/2)});

\fill[fill=medgrey] ({cos(\b/pi*180)/4+1/4},{sin(\b/pi*180)/4}) circle ({cos(\b/pi*180/2)/2});
\coordinate (B1) at ({1},{tan(\b/pi*180/2)}) ;
\coordinate (B2) at ({1/2*cos(\b/pi*180)+1/2},{sin(\b/pi*180)/2});
\filldraw (B2) circle[radius={0.6*1.5/3pt}];
\filldraw (B1)  circle[radius={0.6*1.5/3pt}];
\draw [line width=1.0pt] (0,0) -- ({1},{tan(\b/pi*180/2)});

\fill[fill=darkgrey] ({cos(\c/pi*180)/4+1/4},{sin(\c/pi*180)/4}) circle ({cos(\c/pi*180/2)/2});
\coordinate (C1) at ({1},{tan(\c/pi*180/2)}) ;
\coordinate (C2) at ({1/2*cos(\c/pi*180)+1/2},{sin(\c/pi*180)/2});
\filldraw (C2) circle[radius={0.6*1.5/3pt}];
\filldraw (C1)  circle[radius={0.6*1.5/3pt}];
\draw [line width=1.0pt] (0,0) -- ({1},{tan(\c/pi*180/2)});

\draw [dashed] (0,0) circle (1);
\draw [] (1/2,0) circle (1/2);


\filldraw (0,0) circle[radius={0.6*1.5/3pt}];


\draw [<->] (-1.2,0) -- (\u,0);
\draw [<->] (0,-1.2) -- (0,\w);
\draw [] (1,-1.2) -- (1,\w);

\draw (0.5,-0.5) node[below] {$C$};

\draw (1,-1) node[right] {$\cI(C)$};


\tkzMarkRightAngle[size=.053](A2,A1,A3);
\tkzMarkRightAngle[size=.053](B2,B1,B3);
\tkzMarkRightAngle[size=.053](C2,C1,C3);

\coordinate (O) at (0,0);
\coordinate (E) at (1,0);
\tkzMarkRightAngle[size=.053](A1,A2,E);
\tkzMarkRightAngle[size=.053](B1,B2,E);
\tkzMarkRightAngle[size=.053](C1,C2,E);

\draw (O) node[below left] {$O$};
\draw (C2) node[above left] {$A$};
\draw (C1) node[above right] {$\cI(A)$};

\def\f{-0.12};
\draw[->] (-0.3,0.3) --(\f,{sqrt((cos(\c/pi*180/2)/2)^2-(cos(\c/pi*180)/4+1/4-\f)^2)+sin(\c/pi*180)/4});
\draw  (-0.3,0.3) node[left] {$S_{\varphi_1}$};
\end{tikzpicture}}

\end{tabular}
\end{center}
Again, $\cI(z)=\bar{z}^{-1}$.
The dashed circle, the unit circle, is mapped onto itself.
Circle $C$, the solid circle, is mapped to  $\cI(C)$, the verticle line going through $1$.
Each shaded circle $S_{\varphi_1}$ is mapped to a half-space $\cI(S_{\varphi_1})$.
Let point $A$ be the nonzero intersection between $C$ and the boundary of $S_{\varphi_1}$.
Then point $\cI(A)$ is the non-infinite intersection between $\cI(C)$ and the boundary of $\cI(S_{\varphi_1})$.
By construction,  $\overline{OA}$ is the diameter of $S_{\varphi_1}$.
The (infinite) line containing $O$, $A$, and $\cI(A)$ is mapped onto itself, excluding the origin.
Since $\cI$ is conformal, the right angle at $A$ between the boundary of $S_{\varphi_1}$ and the diameter $\overline{OA}$
is mapped to a right angle between boundary of $\cI(S_{\varphi_1})$ and $\overline{O\cI(A)}$.
\newpage

\begin{center}
\begin{tabular}{c}
\raisebox{-.5\height}{
\begin{tikzpicture}[scale=3]
\def\a{pi*.17};
\def\b{pi*.45};
\def\c{pi*.66};
\def\w{2};
\def\u{2.5};

\begin{scope}
\clip (-1.2,-1) rectangle (\u,\w);

\coordinate (A3) at  ({1-tan(\a/pi*180/2)*(\w-tan(\a/pi*180/2))},{\w});
\fill[fill=lightgrey] (A3)
-- (\u,{\w-(\u-(1-tan(\a/pi*180/2)*(\w-tan(\a/pi*180/2))))/tan(\a/pi*180/2)})
-- (\u,\w)
-- cycle;


\coordinate (B3) at  ({1-tan(\b/pi*180/2)*(\w-tan(\b/pi*180/2))},{\w});
\fill[fill=medgrey] (B3)
-- (\u,{\w-(\u-(1-tan(\b/pi*180/2)*(\w-tan(\b/pi*180/2))))/tan(\b/pi*180/2)}) 
-- (\u,\w)
-- cycle;


\end{scope}

\coordinate (A1) at ({1},{tan(\a/pi*180/2)}) ;
\coordinate (A2) at ({1/2*cos(\a/pi*180)+1/2},{sin(\a/pi*180)/2});
\draw [line width=1.0pt] (0,0) -- ({2},{2*tan(\a/pi*180/2)});

\coordinate (B1) at ({1},{tan(\b/pi*180/2)}) ;
\coordinate (B2) at ({1/2*cos(\b/pi*180)+1/2},{sin(\b/pi*180)/2});
\draw [line width=1.0pt] (0,0) -- ({2},{2*tan(\b/pi*180/2)});

\begin{scope}
\clip (-1.2,-1) rectangle (\u,\w);

\coordinate (C1) at ({1},{tan(\c/pi*180/2)}) ;
\coordinate (C2) at ({1/2*cos(\c/pi*180)+1/2},{sin(\c/pi*180)/2});
\end{scope}


\filldraw (0,0) circle[radius={0.6*1.5/3pt}];
\filldraw (1,0) circle[radius={0.6*1.5/3pt}];


\draw [<->] (-1.2,0) -- (\u,0);
\draw [<->] (0,-1) -- (0,\w);
\draw [] (1,-1) -- (1,\w);


\tkzMarkRightAngle[size=.053](A2,A1,A3);
\tkzMarkRightAngle[size=.053](B2,B1,B3);


\draw[line width=1.0pt, domain=-1:\w,variable=\y] plot ({1-(\y)^2/4},{\y});

\coordinate (O) at (0,0);
\coordinate (E) at (2,-\w);

\coordinate (A7) at  ({1-tan(\a/pi*180/2)^2},{2*tan(\a/pi*180/2)}) ;
\coordinate (A8) at (2,{2*tan(\a/pi*180/2)}) ;

\coordinate (B7) at  ({1-tan(\b/pi*180/2)^2},{2*tan(\b/pi*180/2)}) ;
\coordinate (B8) at (2,{2*tan(\b/pi*180/2)}) ;

\filldraw (A7) circle[radius={0.6*1.5/3pt}];
\filldraw (B7) circle[radius={0.6*1.5/3pt}];

\tkzMarkRightAngle[size=.053](A7,A8,E);
\tkzMarkRightAngle[size=.053](B7,B8,E);

\draw (O) -- (A7) -- (A8) ;
\draw (O) -- (B7) -- (B8) ;

\tkzMarkSegment[pos=.5,mark=||](O,A7) ;
\tkzMarkSegment[pos=.5,mark=||](A7,A8) ;

\tkzMarkSegment[pos=.5,mark=|](O,B7) ;
\tkzMarkSegment[pos=.5,mark=|](B7,B8) ;

\draw (2.25,1.2) node {$\cup \{\binfty\}$};
\draw (2.25,-.85) node {$\cup \{\binfty\}$};

\draw (0,0) node [above left] {focus};
\draw (0,0) node [below left] {$O$};
\draw (1,0) node [above right] {vertex};
\draw (1,0) node [below right] {$1$};

\filldraw (2,0) circle[radius={0.6*1.5/3pt}];
\draw (2,0) node [below right] {$2$};
\draw [<->] (2,-1) -- (2,\w);

\filldraw (B1) circle[radius={0.6*1.5/3pt}];
\draw (B1) node[right] {$A'$};
\filldraw (B8) circle[radius={0.6*1.5/3pt}];
\draw (B8) node[right] {$E$};
\draw (B7) node[above ] {$B$};


\draw[->] (-0.2,-.3) node[left]{$x=1-\frac{y^2}{4}$} -- ({1-(-.5)^2/4},-.5) ;
\draw (1,-.75) node[right] {$\cI(C)$};
\draw (2.,.3) node[right] {$D$};
\draw (2,.17) node[right] {(directrix)};

\coordinate (Bp) at  ({1-tan(\b/pi*180/2)^2+0.6*cos(90-\b/pi*180/2)},{2*tan(\b/pi*180/2)-0.6*sin(90-\b/pi*180/2)}) ;
\filldraw (Bp) circle[radius={0.6*1.5/3pt}];
\draw (Bp) node[right] {$B'$};
\end{tikzpicture}}

\end{tabular}
\end{center}
Next, we show that the union of the half-spaces is described by a parabola.
Define line $D$ as the vertical line going through $2$.
Consider any point $A'$ on the line $\cI(C)$.
Consider the line through $A'$ perpendicular to $\overline{OA'}$ and the half-space to the right of the line including $\infty$.
Define point $E$ as the intersection of line $D$ and the line extending $\overline{OA}$.
Draw a line through point $E$ perpendicular to line $D$, and define point $B$ as the intersection of this line with the boundary of the half-space.
Since $E$ is within the half-space and the boundary of the half-space is not horizontal, this intersection exists and $B$ is to the left of $E$.
Since $\overline{OA'}=\overline{A'E}$, we have $\triangle BA'O\cong\triangle BA'E$ by the side-angle-side (SAS) congruence,  and $\overline{OB}=\overline{BE}$.
The union of all such points corresponding to $B$ forms a parabola. with directrix $D$, focus $(0,0)$ and vertex $(1,0)$.

The boundary of the half-space is tangent to the parabola at $B$, i.e., the line intersects the parabola at no other point.
To see why, consider any other point $B'$ on the boundary of the half-space.
Then $\overline{OB'}=\overline{B'E}$ by SAS congruence.
However, $\overline{B'E}$ is not perpendicular to $D$, i.e., $\overline{B'E}$ is not a horizontal line.
Therefore  $\overline{OB'}$ is longer than the distance of $B'$ to $D$, and therefore $B'$ is not on the parabola.
Since each half-space is tangent to the parabola at $B$, all points to the right of $B$ are in $\cI(S)$ and no points strictly to the left of the parabola are in $\cI(S)$.
Therefore $\cI(S)$ is characterized by the closed region to the right of the parabola including $\infty$.

\begin{center}
\begin{tabular}{ccc}
\raisebox{-.5\height}{
\begin{tikzpicture}[scale=.9]
\def\a{pi*.17};
\def\b{pi*.45};
\def\c{pi*.66};
\def\w{2.5};
\def\u{3.5};

\fill[fill=medgrey] (-\u,-\w) rectangle (1.5,\w);
\begin{scope}
\clip (-\u,-\w) rectangle (1.5,\w);
\fill[fill=white] (-1,0) circle (2);
\end{scope}

\fill[fill=darkgrey, domain=-\w:\w,variable=\y] (1.5,{-\w}) -- plot ({1-(\y)^2/4},{\y}) -- (1.5,{\w});
\filldraw (-1,0) circle[radius={0.6*1.5/0.9pt}];
\draw (-1,0) node [below ] {$-1$};
\filldraw (1,0) circle[radius={0.6*1.5/0.9pt}];
\draw (1,0) node [below left] {$1$};
\filldraw (-3,0) circle[radius={0.6*1.5/0.9pt}];
\draw (-3,0) node [below right] {$-3$};
%
%
%

\draw [<->] (-\u,0) -- (1.5,0);
\draw [<->] (0,-\w) -- (0,\w);
%

\begin{scope}
\clip (-1,0) circle (2);
\draw[->] (-.65,.15) node[above, text width=3cm,fill=white]{\footnotesize $x=1-\frac{y^2}{4}$ and \phantom{xxxx} $x=\sqrt{4-y^2}-1$ \phantom{xx}  have matching curvature} -- (1,0) ;
\end{scope}

\draw (.95,2.2) node {$\cup \{\binfty\}$};

\draw (-2.7,2.2) node {$\cup \{\binfty\}$};

\end{tikzpicture}}
&$\stackrel{\cI}{\longrightarrow}$
&
\raisebox{-.5\height}{
\begin{tikzpicture}[scale=1.5]
\def\a{pi*.17};
\def\b{pi*.45};
\def\c{pi*.66};
\def\w{1.2};
\def\u{1.2};

\draw [dashed] (0,0) circle (1);
\fill[fill=medgrey] (1/3,0) circle (2/3);
\fill[fill=darkgrey,samples=200,smooth] plot[domain=-pi:pi] (xy polar cs:angle=\x r,radius= {cos(\x r/2)^2});
\filldraw (-1/3,0) circle[radius={0.6*1.5/1.5pt}];
\draw (-1/3,0) node [above left] {$-\frac{1}{3}$};
%
%
%



\filldraw (1,0) circle[radius={0.6*1.5/1.5pt}];
\draw (1,0) node [above right] {$1$};
%
%
%

\draw [<->] (-1.2,0) -- (\u,0);
\draw [<->] (0,-\w) -- (0,\w);
%
\end{tikzpicture}}
\end{tabular}
\end{center}
The region exterior to the circle centered at $-1$ with radius $2$ contains the region towards the right of the parabola.
This is easily verified with calculus.
The circle with the lighter shade corresponds to $\cG(\cN_{2/3})$ by  Proposition~\ref{prop:monotone-srg}.
Since $\cN_{2/3}$ is SRG-full by Theorem~\ref{thm:srg-full}, strict containment of the SRG in $\ecomplex$ implies strict containment of the class.
The inverse curve of the parabola with the focus as the center of inversion is known as the \emph{cardioid}
and it has the polar coordinate representation $r(\varphi)\le \cos^2(\varphi/2)$. The expression of the Theorem is the region bounded by this curve.
\qed\end{proof}



\end{document}